\definecolor{DarkPurple}{rgb}{0.40,0.0,0.20}
\newtheorem{theorem}{Theorem}
\newtheorem*{theorem*}{Theorem}
\newtheorem{lemma}{Lemma}
\newtheorem{prop}[lemma]{Proposition}
\newtheorem{corollary}[lemma]{Corollary}
\newtheorem{externaltheorem}[lemma]{Theorem}
\theoremstyle{remark}
\theoremstyle{definition}
\numberwithin{lemma}{section}
\numberwithin{prop}{section}
\numberwithin{corollary}{section}
\numberwithin{externaltheorem}{section}
\newtheorem{df}[lemma]{Definition} \Crefname{df}{Definition}{Definitions}
 \Crefname{example}{Example}{Examples}
\newtheorem{rem}[lemma]{Remark}
\DeclareMathOperator{\Ker}{Ker}
\DeclareMathOperator{\Img}{Im}
\DeclareMathOperator{\St}{St}
\DeclareMathOperator{\E}{E}
\DeclareMathOperator{\HH}{H}
\DeclareMathOperator{\K}{K}
\DeclareMathOperator{\GG}{G}
\DeclareMathOperator{\KO}{KO}
\DeclareMathOperator{\GW}{GW}
\newcommand{\inv}{^{-1}}
\newcommand{\catname}[1]{{\normalfont\textbf{#1}}} 
\newcommand{\myol}[2][3]{{}\mkern#1mu\overline{\mkern-#1mu#2}}
\newcommand{\ZZ}{\mathbb{Z}}
\newcommand{\rA}{\mathsf{A}}
\newcommand{\rB}{\mathsf{B}}
\newcommand{\rC}{\mathsf{C}}
\newcommand{\rD}{\mathsf{D}}
\newcommand{\rE}{\mathsf{E}}
\newcommand{\rG}{\mathsf{G}}
\numberwithin{equation}{section}
\title{A Horrocks-type theorem for even orthogonal $K_2$}
\keywords {Steinberg group, $K_2$-functor, Quillen--Suslin theorem, Horrocks theorem, $\mathbb{P}^1$--glueing. {\em Mathematical Subject Classification (2010):} 19C20}
\author{Andrei Lavrenov}
\email{avlavrenov at gmail.com}
\author {Sergey Sinchuk}
\email {sinchukss at gmail.com}
\address{Chebyshev Laboratory, St. Petersburg State University, 14th. Line V.O. 29b, 199178, St. Petersburg, Russia}
\date {\today}
\begin{document}
\begin{abstract} We prove the Horrocks theorem for unstable even-dimensional orthogonal Steinberg groups. The Horrocks theorem for Steinberg groups is one of the principal ingredients needed for the proof of the $\K_2$-analogue of Serre's problem,  whose positive solution is currently known only in the linear case. \end{abstract}
\maketitle
\section{Introduction}
Recall that the classical Serre problem on projective modules asks if any projective module over a polynomial ring $R = k[x_1,\ldots, x_n]$ over a field $k$ is free. This problem was positively settled by D.~Quillen~\cite{Qu76} and A.~Suslin~\cite{Su76}, and its solution played an important role in the development of algebraic K-theory. We also refer the reader to the textbook~\cite{Lam10} for a comprehensive account on the problem, its history and the subsequent solution.

After the original Serre problem had been solved, numerous analogous questions drew the attention of specialists (see e.\,g.~\cite{Su77, Su82, Abe83, Tu83, Lam10, St-poly, St-Ded}). For example, A.~Suslin formulated and solved the so-called $\K_1$-analogue of Serre's problem. This result asserts that the functor $\K_1(n, R) = \mathrm{GL}_{n}(R)/\E_n(R)$ has the property $\K_1(n, k[x_1, \ldots x_n]) = \K_1(n, k) = k^\times$ for all fields $k$ and $n \geq 3$ see~\cite[Corollary~7.11]{Su77}. Suslin's results were subsequently generalized to $\K_1$-functors modeled on other linear groups (see the definition below). For example, for even-dimensional orthogonal groups the corresponding result was obtained by A.~Suslin and V. Kopeiko in~\cite{Su82}, while for more general types of Chevalley groups of rank $\geq 2$ this is a result of E.~Abe, see~\cite{Abe83}. Recently A.~Stavrova has obtained probably the most general results in this direction: she solved the analogue of Serre problem for the functor $\K_1^G$ modeled on an arbitrary isotropic reductive group scheme $G$ of isotropic rank $\geq 2$ over a field (see~\cite[Theorem~1.2]{St-poly}) and also generalized Abe's result to Dedekind domains (see~\cite[Corollary~1.2]{St-Ded}).

Recall that to every irreducible root system $\Phi$ and a commutative ring $R$ one can associate two groups: the {\it simply-connected Chevalley group} $\GG(\Phi, R)$ (see e.\,g.~\cite[\S~3]{St71} or~\cite{VP}) and the {\it Steinberg group} $\St(\Phi, R)$ (see~\cref{sec:Steinberg-intro} for the definition). There is a well-defined homomorphism $\pi \colon \St(\Phi, R) \to \GG(\Phi, R)$ sending each generator $x_\alpha(\xi)$ to the elementary root unipotent $t_\alpha(\xi)$. The cokernel and the kernel of this homomorphism are denoted $\K_1(\Phi, R)$ and $\K_2(\Phi, R)$, respectively. The latter groups are functorial in $R$. The functors $\K_1(\Phi, -)$ and $\K_2(\Phi, -)$ are called {\it the $\mathrm{K}_1$ and $\mathrm{K_2}$-functors modeled on the Chevalley group $\GG(\Phi, -)$}, see~\cite{St78}.

It turns out that an assertion similar to the Serre problem also holds for the functor $\K_2$. More precisely, in~\cite{Tu83} M. Tulenbaev demonstrated an ``early stability theorem'' from which the isomorphism $\K_2(\rA_\ell, k[x_1, \ldots x_n]) \cong \K_2(\rA_\ell, k) = \K^\mathrm{M}_2(k)$ follows for $\ell \geq 4$. Notice that $\K_2(\rA_\ell, R)$ here is just another notation for the unstable linear functor $\K_2(\ell+1, R)$.

While numerous results on the $\K_1$-analogue of Serre's problem have appeared in the literature since~\cite{Su77} (see e.\,g.~\cite{Su82, Abe83, St-poly, St-Ded}), little progress has been made on the $\K_2$-analogue. It has been conjectured by M.~Wendt, see~\cite[Vermutung~6.22]{Vo11} that a $\K_2$-analogue of the Serre problem holds for $\K_2(\Phi, -)$ for all $\Phi$ of rank $\geq 3$, however this conjecture still remains open for $\Phi$ different from $\rA_\ell$, $\ell \geq 4$.

In~\cite{LS17,La18} the authors have shown that the Steinberg groups $\St(\Phi, R)$ satisfy the Quillen--Suslin local-global principle provided $\Phi$ has rank $\geq 3$ and is either simply-laced or has type $\rC_\ell$. The local-global principle is one of the ingredients needed in the proof of the $\K_2$-analogue of the Serre problem for Chevalley groups. The aim of the present article is to make yet another step towards the solution of the problem, namely to prove an analogue of Horrocks theorem~\cite{Ho64} for Steinberg groups of type $\rD_\ell$.

Our main result is, thus, the following theorem, which is the orthogonal analogue of~\cite[Theorem~5.1]{Tu83} and the $\K_2$-analogue of~\cite[Theorem~6.8]{Su82} (cf. also with~\cite[Theorem~VI.5.2]{Lam10} and~\cite[Theorem~1.1]{St-poly}).
\begin{theorem}[Horrocks theorem for orthogonal $\K_2$]\label{thm:main} Let $A$ be a commutative ring in which $2$ is invertible. Then for any $\ell \geq 7$ the following commutative square is a pullback square in which all homomorphisms are injective:
\[\begin{tikzcd} \KO_2(2\ell, A) \arrow{r} \arrow{d} & \KO_2(2\ell, A[X]) \arrow{d} \\ \KO_2(2\ell, A[X\inv]) \arrow{r} & \KO_2(2\ell, A[X, X\inv]). \end{tikzcd}\]
Moreover, the same assertion holds if one replaces the functor $\KO_2(2\ell, -)$ with $\K_2(\rD_\ell, -)$ or $\St(\rD_\ell, -)$. \end{theorem}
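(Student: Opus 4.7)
My plan is first to establish the pullback and injectivity at the level of the Steinberg group $\St(\rD_\ell, -)$, and then transfer them to $\K_2(\rD_\ell, -)$ and $\KO_2(2\ell, -)$. Indeed, $\K_2(\rD_\ell, -)$ is by definition the kernel of $\pi \colon \St(\rD_\ell, -) \to \E(\rD_\ell, -)$, and the analogous injective pullback square for the elementary subgroup $\E(\rD_\ell, -)$ is a consequence of the $\K_1$-level Horrocks-type results of~\cite{Su82, St-poly}; a standard diagram chase on the short exact sequence $1 \to \K_2 \to \St \to \E \to 1$ then yields the pullback square for $\K_2$. The identification $\K_2(\rD_\ell, -) \cong \KO_2(2\ell, -)$ in the range $\ell \geq 7$ with $2 \in A^\times$ is standard, so the work reduces to the Steinberg level.

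For injectivity, the retractions $X \mapsto 0$ and $X\inv \mapsto 0$ split the inclusions $A \hookrightarrow A[X]$ and $A \hookrightarrow A[X\inv]$, so the maps $\St(\rD_\ell, A) \to \St(\rD_\ell, A[X])$ and $\St(\rD_\ell, A) \to \St(\rD_\ell, A[X\inv])$ are split injective. The remaining, nontrivial injectivity is that of $\St(\rD_\ell, A[X]) \to \St(\rD_\ell, A[X, X\inv])$. I would invoke the Quillen--Suslin-type local-global principle of~\cite{LS17, La18} to reduce to a local base $A$, and then argue via a Chevalley--Bruhat normal-form analysis that no element of $\St(\rD_\ell, A[X])$ becomes trivial after inverting $X$ unless it was already trivial.

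The pullback step is the main content. Given $u \in \St(\rD_\ell, A[X])$ and $v \in \St(\rD_\ell, A[X\inv])$ with common image in $\St(\rD_\ell, A[X, X\inv])$, the plan is to obtain a Tulenbaev-style decomposition of unipotents in type $\rD_\ell$: each of $u$ and $v$ is written as the product of its \emph{constant term} (at $X = 0$, resp. $X\inv = 0$) and an ``$X$-divisible'' (resp. ``$X\inv$-divisible'') factor lying in the kernel of the corresponding evaluation. Matching the two factorizations in the Laurent ring, together with the injectivity established above, should force the divisible factors to be trivial and the constant terms to coincide, producing the desired common lift in $\St(\rD_\ell, A)$.

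The main obstacle is constructing this decomposition of unipotents in type $\rD_\ell$ cleanly. Unlike the linear case of~\cite{Tu83}, the Chevalley commutator formulas for roots of shape $\pm e_i \pm e_j$ carry nontrivial signs and compose in intricate ways, so moving a single $X$-divisible root generator past a constant one produces correction terms that must themselves be absorbed into the divisible factor. The rank bound $\ell \geq 7$ is expected to enter precisely here, as the number of ``spare'' orthogonal indices needed to carry out this absorption while staying within the scope of the local-global principle of~\cite{LS17, La18}; working out this orthogonal bookkeeping is the technical heart of the proof.
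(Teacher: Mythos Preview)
Your high-level plan (reduce to $\St$, then to local $A$ via the local--global principle of~\cite{LS17}) matches the paper, but the two substantive steps are not close to what is actually needed.

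\textbf{Injectivity of $j_+$.} ``Chevalley--Bruhat normal-form analysis'' is not a method here: no such normal form is available for $\St(\rD_\ell, A[X])$ over a general local ring. The paper instead factors the map through the intermediate ring $B=A[X^{-1}]+M[X]$, so that $j_-=j_R\circ j_B^-$ with $j_R\colon\St(\rD_\ell,B)\to\St(\rD_\ell,A[X,X^{-1}])$. The injectivity of $j_R$ (\cref{thm41}) is obtained by Loday's theory of relative central extensions: one reduces to surjectivity of a map of groups $C(\rD_\ell,B,I)\to C(\rD_\ell,R,I)$, which in turn is identified with a map of relative $\HH_3$'s and then with a map of unstable $\KO_3$'s; this last is shown to be onto using Panin's stability theorem together with the Bass Fundamental Theorem for Grothendieck--Witt groups. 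This is precisely where the hypothesis $2\in A^\times$ enters, and Panin stability is what forces $\ell\geq 7$ --- not any ``spare orthogonal indices'' in a commutator-absorption argument.

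\textbf{The pullback step.} Your ``match the constant and $X$-divisible factors'' scheme does not work as stated: after moving root generators past each other you generate correction terms, and there is no mechanism to show these vanish in $\St$ (as opposed to in the Chevalley group). The paper's device is quite different: one proves a presentation theorem (\cref{lemma33}) showing that $\St(\Phi,B)$ is already presented by generators and relations of degree $\leq 1$ in $t=X^{-1}$, and then uses this to build an explicit action of $\St(\Phi,B)$ on a set $\overline{V}$ constructed from $\overline{\St}(\Phi,A[X],M[X])\times\St(\Phi,A[X^{-1}])\times(1+M)^\times$. That $\overline{V}$ is a $\St(\Phi,B)$-torsor is what yields both the injectivity of $j_B^-$ and the pullback property (\cref{thm:P1glueing}). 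The hard part is checking that the operators $T_\alpha$ satisfy the degree-$\leq 1$ Steinberg relations, which requires the delicate analysis of the subgroups $P_\alpha(0)$, $P_\alpha(*)$, $K(\alpha,\beta)$ in Sections~\ref{sec:Pa0-basic}--\ref{sec:S-a}.

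A minor point: $\K_2(\rD_\ell,-)$ and $\KO_2(2\ell,-)$ are \emph{not} isomorphic (they differ by $\mu_2$, see~\eqref{eq:Bass-ES} and the sequence following it); the transfer from $\St$ to $\K_2$ and $\KO_2$ is just the observation that an injective pullback square of groups restricts to one on compatible subgroups.
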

In the above statement $\KO_2(2\ell, -)$ denotes the unstable orthogonal $\K_2$-functor (see~\cref{sec:quillen}). Notice also that in the special case when $A=k$ is a field the assertion of the above theorem is a consequence of the results of U.~Rehmann and J.~Hurrelbrink (see~\cref{field-injectivity} below).

The proof of~\cref{thm:main} goes as follows. We notice that it suffices to prove the $\St(\rD_\ell, -)$-variant of the theorem. Moreover, the proof of the injectivity of
$j_- \colon \St(\rD_\ell, A[X\inv]) \to \St(\rD_\ell, A[X, X\inv])$ turns out to be the hardest part. After invoking the local-global principle~\cite[Theorem~2]{LS17} the proof reduces to the special case when $A$ is local.
Now if $M$ is the maximal ideal of $A$, the proof of the injectivity of $j_-$ comes down to proving the injectivity of the following two homomorphisms:
\[\begin{tikzcd} \St(\rD_\ell, A[X^{-1}]) \arrow{r}{j_B^-} & \St(\rD_\ell, B) \arrow{r}{j_R} & \St(\rD_\ell, R), \end{tikzcd} \]
where $B = A[X\inv] + M[X]$ and $R = A[X, X\inv]$.
The injectivity of $j_R$ is obtained in~\cref{thm41}. This step of the proof relies on the theory of relative central extensions developed by J.-L.~Loday in~\cite{Lo78}. According to this theory, the relative Steinberg group $\St(\Phi, R, I)$ is a central extension of $\Ker(\St(\Phi, R) \to \St(\Phi, R/I))$ by a certain subgroup, which we denote $C(\Phi, R, I)$. It turns out that a simple diagram chasing argument combined with a certain lifting property of relative Steinberg groups (see~\cref{lem:lemma32}) reduces the sought injectivity of $j_R$ to the surjectivity of the map \[C(\rD_\ell, B, M[X, X\inv]) \to C(\rD_\ell, R, M[X, X\inv]).\] This surjectivity is then obtained as a corollary of Panin's stability theorem for orthogonal K-theory and the Bass Fundamental Theorem for higher Grothendieck--Witt groups (these groups include the stable orthogonal K-groups as a special case, see~\eqref{GW-concrete}). This is the only step of our proof which invokes the assumption that $2$ is invertible. 

Finally, the injectivity of the homomorphism $j_B^-$ is obtained in~\cref{thm:P1glueing}, which is a direct generalization of~\cite[Proposition~4.3]{Tu83}. This part of the proof is obtained in somewhat greater generality and is applicable to all simply-laced root systems $\Phi$ containing a subsystem of type $\rA_4$. The main idea of the proof of~\cref{thm:P1glueing} is to express the underlying set of $\St(\Phi, B)$ as an explicit quotient of the Cartesian product of three other groups, one of which is precisely $\St(\Phi, A[X\inv])$, see~\eqref{VT-def}. This can be considered as some sort of a decomposition for $\St(\Phi, B)$. The technique of such decompositions probably dates back to~\cite{ST76}.
The proof of~\cref{thm:P1glueing} is based on a number of preparatory statements. One of these is a certain presentation theorem for Steinberg groups over graded rings similar to the results of U.~Rehmann, C.~Soul{\'e} and M.~Tulenbaev, see~\cref{lemma33} (cf.~\cite[Satz~2]{Re75},\cite[Theorem~2]{RS76},\cite[Lemma~3.3]{Tu83}).
\subsection{Acknowledgements} 
The work of the first-named author (\S~3) was supported by the Russian Science Foundation grant No 17-11-01261. 
The work of the second-named author (\S~4--5) was supported by RFBR grant No 18-31-20044.

The authors would like to express their gratitude to A.~Stavrova, A.~Stepanov and N.~Vavilov for their useful comments and interest in this work, and to the anonymous referee for his careful reading of an earlier version of the paper and for suggesting a number of improvements.

\section{Preliminaries}
If $x$ and $y$ are elements of a group $G$ then $[x, y]$ denotes the left-normed commutator $xyx^{-1}y^{-1}$.
We denote by $x^y$ (resp. ${}^y\!x$) the element $y^{-1}xy$ (resp. $yxy^{-1}$). 
Throughout the paper we make use of the following commutator identities:
\begin{align}
 [x, yz]^y               & =   [y^{-1}, x] \cdot [x, z], \label{rel43} \\ 
 \label{eq:H1ii} [xy, z] & =   {}^x[y, z] \cdot [x,z], \\
 \label{eq:H1ii-2}[x, yz]& =   [x, y] \cdot {}^{y}\![x, z], \\
 [[x, y], z] & =   {}^{x}\!\left({}^{y}[x^{-1}, [ y^{-1}, z]] \cdot {}^z[y, [ z^{-1}, x^{-1}]] \right). \label{HW-variant} \end{align}
Recall also from~\cite[Lemma~3.1.1]{RS76} that
\begin{equation} \label{eq:H1iii} [x,z] = 1 \text{ implies } [x, [y,z]] = [[x,y],{}^yz]. \end{equation} 

\subsection{Steinberg groups} \label{sec:Steinberg-intro}
Let $\Phi$ be a reduced and irreducible root system of rank $\ell \geq 2$ and $R$ be a commutative ring with $1$. Recall that in this case the \emph{Steinberg group} $\St(\Phi, R)$ can be defined by means of generators $x_{\alpha}(s)$ and relations:
\begin{align}
& \phantom{[}
x_\alpha(s) \cdot x_\alpha(t) = x_\alpha(s+t),\ \alpha\in\Phi,\ s,t\in R; \label{rel:add}\\
& [x_\alpha(s), x_\beta(t)] = \prod
 x_{i\alpha + j\beta}\left(N_{\alpha,\beta ij}\, s^i t^j\right),\quad \alpha,\beta\in\Phi,\ \alpha\neq-\beta,\ s,t\in R. \label{rel:CCF}
\end{align}
The indices $i$, $j$ appearing in the right-hand side of the above relation range over
all positive natural numbers such that $i\alpha + j\beta\in\Phi$.
The constants $N_{\alpha \beta i j}$ appearing in the right-hand side of \eqref{rel:CCF} are integers which can only take values $\pm 1,\pm 2,\pm 3$, they are called the {\it structure constants} of the Chevalley group $\GG(\Phi, R)$. Several different methods of computing signs of these constants have been proposed in the literature, see e.\,g.~\cite{V00}, \cite[\S~9]{VP}. 

For an additive subgroup $A\subseteq R$ and $\alpha \in \Phi$ we denote by $X_\alpha(A)$ the corresponding {\it root subgroup} of $\St(\Phi, R)$, i.\,e. the subgroup generated by all $x_\alpha(a)$, $a \in A$.

Whenever we speak of the Steinberg group $\St(\Psi, R)$ parameterized by a root subsystem $\Psi \subset \Phi$ we imply that the choice of structure constants 
 for $\St(\Psi, R)$ is compatible with that for $\St(\Phi, R)$ (i.\,e. the mapping $x_\alpha(\xi) \mapsto x_\alpha(\xi)$ yields a group homomorphism $\St(\Psi, R) \to \St(\Phi, R)$).

In this paper we will be mostly interested in the case when the Dynkin diagram of $\Phi$ is {\it simply-laced}, i.\,e. does not contain double bonds. In this case the defining relations of $\St(\Phi, R)$ have the following simpler form:
\begin{align}
x_{\alpha}(a)\cdot x_{\alpha}(b)&=x_{\alpha}(a+b), \tag{R1} \label{Steinberg-additivity}\\
[x_{\alpha}(a),\,x_{\beta}(b)]  &=x_{\alpha+\beta}(N_{\alpha\beta} \cdot ab),\text{ for }\alpha+\beta\in\Phi, \tag{R2} \label{Chevalley-CCF1} \\
[x_{\alpha}(a),\,x_{\beta}(b)]  &=1,\text{ for }\alpha+\beta\not\in\Phi\cup0. \tag{R3} \label{Chevalley-CCF2}
\end{align}
In the above formulae $a, b \in R$ and the integers $N_{\alpha, \beta} = N_{\alpha, \beta, 1, 1} = \pm 1$ are the structure constants of the Lie algebra of type $\Phi$. Although there is still some degree of freedom in their choice, they always must satisfy the relations, indicated in the following lemma (cf. \cite[\S~14]{VP}).
\begin{lemma} Suppose $\Phi$ is simply laced and $\alpha, \beta$ are roots of $\Phi$ such that $\alpha+\beta\in \Phi$, then the following identities hold for the structure constants:
\begin{equation} \label{eq:simplest} N_{\alpha, \beta} = -N_{\beta,\alpha} = - N_{-\alpha, -\beta} = N_{\beta, -\alpha-\beta} = N_{-\alpha-\beta, \alpha}. \end{equation}
If, moreover, $\gamma \in \Phi$ is such that $\alpha,\beta,\gamma$ form a basis of a root subsystem of type $\rA_3$ then one has
\begin{equation} \label{eq:cocycle} N_{\beta,\gamma} \cdot N_{\alpha, \beta+\gamma} = N_{\alpha+\beta, \gamma} \cdot N_{\alpha, \beta}. \end{equation} \end{lemma}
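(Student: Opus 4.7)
The plan is to identify the integers $N_{\alpha,\beta}$ with the structure constants of a Chevalley basis $\{h_\delta, e_\delta\}_{\delta \in \Phi}$ of the complex simple Lie algebra $\mathfrak{g}$ of type $\Phi$, and to deduce both \eqref{eq:simplest} and \eqref{eq:cocycle} from the Jacobi identity, compare \cite[\S~14]{VP}. Recall that $[e_\alpha, e_\beta] = N_{\alpha,\beta}\, e_{\alpha+\beta}$ whenever $\alpha+\beta \in \Phi$; I would fix the signs so as to be compatible with the Chevalley involution $\omega\colon e_\delta \mapsto -e_{-\delta}$, $h_\delta \mapsto -h_\delta$, which may always be arranged.

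For the chain \eqref{eq:simplest}, the equality $N_{\alpha,\beta} = -N_{\beta,\alpha}$ is just the antisymmetry of the Lie bracket, while $N_{\alpha,\beta} = -N_{-\alpha,-\beta}$ follows by applying $\omega$ to $[e_\alpha, e_\beta] = N_{\alpha,\beta}\, e_{\alpha+\beta}$. For the remaining equalities $N_{\alpha,\beta} = N_{\beta,-\alpha-\beta} = N_{-\alpha-\beta,\alpha}$, I would set $\gamma = -\alpha - \beta$ and apply the Jacobi identity to the triple $(e_\alpha, e_\beta, e_\gamma)$. Expanding each of the three nested brackets using $[e_\delta, e_{-\delta}] = h_\delta$ produces a relation of the shape
\[ N_{\alpha,\beta}\, h_\gamma + N_{\beta,\gamma}\, h_\alpha + N_{\gamma,\alpha}\, h_\beta = 0. \]
In the simply-laced case all roots have equal length, so $\delta \mapsto h_\delta$ is $\ZZ$-linear; this gives $h_\alpha + h_\beta + h_\gamma = 0$, and combining with the displayed identity and $\ZZ$-linear independence of $h_\alpha, h_\beta$ yields the desired cyclic invariance $N_{\alpha,\beta} = N_{\beta,\gamma} = N_{\gamma,\alpha}$.

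For the cocycle identity \eqref{eq:cocycle}, I would apply the Jacobi identity to $(e_\alpha, e_\beta, e_\gamma)$, where now $\alpha,\beta,\gamma$ form a base of an $\rA_3$-subsystem. In such a base $\alpha$ and $\gamma$ are non-adjacent, so $\alpha + \gamma \notin \Phi$ and hence $[e_\alpha, e_\gamma] = 0$. The Jacobi identity then collapses to
\[ N_{\alpha,\beta}\, N_{\alpha+\beta,\gamma}\, e_{\alpha+\beta+\gamma} + N_{\beta,\gamma}\, N_{\beta+\gamma,\alpha}\, e_{\alpha+\beta+\gamma} = 0, \]
and substituting $N_{\beta+\gamma,\alpha} = -N_{\alpha,\beta+\gamma}$ from the first part immediately gives \eqref{eq:cocycle}.

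The only conceptual step is the existence of a Chevalley basis of $\mathfrak{g}$ whose signs are compatible with the involution $\omega$, which is classical (see e.g.~\cite[\S~9]{VP}); once such a basis has been fixed, both \eqref{eq:simplest} and \eqref{eq:cocycle} reduce to formal manipulations with the Lie algebra axioms, and I do not anticipate any substantive obstacle.
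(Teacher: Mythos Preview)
Your argument is correct. The paper does not supply its own proof of this lemma; it merely records the identities and refers the reader to \cite[\S~14]{VP}. Your derivation via the Jacobi identity in a Chevalley basis compatible with the involution $\omega$ is exactly the standard argument one finds in such references, so there is nothing to compare: you have written out in full what the paper leaves as a citation. One cosmetic remark: the displayed relation ``$N_{\alpha,\beta}\, h_\gamma + N_{\beta,\gamma}\, h_\alpha + N_{\gamma,\alpha}\, h_\beta = 0$'' is only correct up to an overall sign convention (a direct computation gives $N_{\alpha,\beta} h_{\alpha+\beta} - N_{\beta,\gamma} h_\alpha - N_{\gamma,\alpha} h_\beta = 0$, which after substituting $h_{\alpha+\beta}=h_\alpha+h_\beta$ yields the same conclusion), so you may want to tighten the wording there, but the logic and the final conclusions are unaffected.
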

In our calculations below we repeatedly use identities~\eqref{eq:simplest} without explicit reference.

For $\alpha\in\Phi$ and $s \in R^\times$ we define certain elements $w_\alpha(s), h_\alpha(s)$ of $\St(\Phi, R)$ (the latter ones are sometimes called {\it semisimple root elements}):
\begin{align*} w_\alpha(s) & =  x_\alpha(s) \cdot x_{-\alpha}(-s^{-1}) \cdot x_\alpha(s), \\ h_\alpha(s) & =  w_\alpha(s) \cdot w_\alpha(-1).  \end{align*}

Recall from~\cite[Lemma~5.2]{Ma69} that the following relations hold for semisimple root elements:
\begin{align} \label{eq:conj-h-x} {}^{h_\alpha(t)}\!x_\beta(u) & = x_\beta(t^{\langle \beta,  \alpha \rangle}u), \\
              \label{eq:conj-h-h} {}^{h_\alpha(t)}\!h_\beta(u) & = h_\beta(t^{\langle \beta, \alpha \rangle} \cdot u) \cdot h_\beta(t^{\langle \beta,  \alpha \rangle})^{-1}, \\
              \label{eq:h-inv}    h_\alpha(t)^{-1}             & = h_{-\alpha}(t). \end{align}
In the above formulae $\langle \beta, \alpha \rangle$ denotes the integer $\tfrac{2(\beta, \alpha)}{(\alpha, \alpha)}$ (we denote by $(\text{-},\text{-})$ the inner product on the Euclidean space $\mathbb{R}^\ell$ containing $\Phi$).

\subsection{\texorpdfstring{$\K_2$}{K2}-groups and symbols} In our calculations we use two families of explicit elements of $\K_2(\Phi, R)$ called {\it Steinberg and Dennis--Stein symbols}. Notice that our notational conventions for symbols follow~\cite{DS73} and {\it not} more modern textbooks such as~\cite{Kbook} (cf. with~Definition~III.5.11 ibid.). Notice also that the operation on $\K_2(\Phi, R)$ is always written multiplicatively.

Recall that Steinberg symbols are defined for arbitrary $s, t \in R^\times$ as follows:
\begin{equation} \label{eq:steinberg} \{ s, t \}_\alpha = h_\alpha(st) \cdot h_\alpha^{-1}(s) \cdot h_\alpha^{-1}(t). \end{equation}
In turn, Dennis--Stein symbols are defined for arbitrary $a, b\in R$ satisfying $1 + ab \in R^\times$:
\begin{equation} \label{eq:dennis-stein}  \langle a,b \rangle _ \alpha = x_{-\alpha}\left(\tfrac{- b}{1 + ab}\right) \cdot x_{\alpha}(a) \cdot x_{-\alpha}(b) \cdot x_{\alpha}\left(\tfrac{- a}{1+ab}\right) \cdot h_{\alpha}^{-1}(1 + ab). \end{equation} 
Dennis--Stein symbol $\langle a, b \rangle_\alpha$ can be expressed through Steinberg symbols in the special case when either $a$ or $b$ is an invertible element of $R$. More specifically, the following formulae hold (cf.~\cite[p.~250]{DS73}).
\begin{equation} \label{DS-S-relationship} \langle a, b \rangle_\alpha = \{-a, 1+ab\}_\alpha\text{ for } a, 1+ab\in R^\times,\ \
 \{ s, t \}_\alpha = \left\langle -s, \tfrac{1 - t}{s} \right\rangle_\alpha\text{ for } s, t\in R^\times. \end{equation}
Steinberg and Dennis--Stein symbols depend only on the length of $\alpha$, in particular they do not depend on $\alpha$ if $\Phi$ is simply-laced. If $\Phi$ happens to be {\it nonsymplectic}, i.\,e. $\Phi \neq \rA_1, \rB_2, \rC_{\geq 3}$, Steinberg symbols are antisymmetric and bimultiplicative, i.\,e. they satisfy the following identities: \begin{equation} \label{eq:symbol-properties} \{ u, st \} = \{ u, s\} \{ u, t \}, \ \{ u, v \} = \{ v, u\}^{-1}. \end{equation}
For these and other properties of symbols we refer the reader to~\cite{DS73}.
From~\eqref{eq:symbol-properties} it follows that
\begin{equation} \label{eq:symbol-inverse} \{u, v\}^{-1} = \{u^{-1}, v\} = \{u, v^{-1}\}. \end{equation}

Recall that the classical Matsumoto theorem (see~\cite[Theorem~5.10]{Ma69}) allows one to compute the group $\K_2(\Phi, R)$ in the special case when $R=k$ is a field.
Using the modern language of Milnor--Witt K-theory (see~\cite{Mo04}) it can be formulated as follows:
\begin{equation*} \K_2(\Phi, k) = \left\{\begin{array}{ll} \K_2^\mathrm{MW}(k)& \text{if $\Phi$ is symplectic,}\\ \K_2^\mathrm{M}(k) & \text{otherwise.}\end{array}\right. \end{equation*}
In the following lemma we recall the computation of the group $\K_2(\Phi, R)$ in the case $R=k[X, X\inv]$.
\begin{lemma}[Hurrelbrink--Morita--Rehmann]\label{K2-laurent-field} Let $\Phi$ be a reduced irreducible root system and $k$ be an arbitrary field. Then there is a split exact sequence of abelian groups
\[\begin{tikzcd} 0 \arrow{r} & \K_2(\Phi, k) \arrow{r} & \K_2(\Phi, k[X, X^{-1}]) \arrow{r} & H(\Phi, k) \arrow{r} & 0, \end{tikzcd}\text{ in which}\]
The group $H(\Phi, k)$ admits the following description. One has $H(\Phi, k) \cong \K_1^\mathrm{M}(k) \cong k^\times$ if $\Phi$ is nonsymplectic. If $\Phi$ is symplectic and $\mathrm{char}(k)\neq 2$ then $H(\Phi, k) \cong \K_1^\mathrm{MW}(k)$. \end{lemma}
\begin{proof} Let us first consider the case of nonsymplectic $\Phi$, in which one can find a long root $\alpha\in \Phi$ in such a way that there is a commutative diagram of abelian groups
\[\begin{tikzcd}[column sep=5pc] k^\times \arrow{drr}[swap]{\{-, X\}} \arrow{r}{h}  & \K_2(\Phi, k[X, X^{-1}]) \arrow{r} & \K_2(\Phi, k(X)) \arrow{d}{\cong} \\
 & & \K_2^\mathrm{M}(k(X)), \end{tikzcd}\]
in which $h = \{ -, X \}_{\alpha}$ and the vertical map is an isomorphism by Matsumoto's theorem.
Notice that the diagonal map is split by the obvious residue homomorphism and therefore is injective. This, in turn, implies that $h$ is also injective.
For $\Phi \neq \rG_2$ the assertion of the lemma follows from~\cite[Satz~3]{Hur77} which asserts that for a nonsymplectic $\Phi\neq\rG_2$ one has $\K_2(\Phi, k[X, X\inv]) = \mathrm{Im}(h) \oplus \K_2(\Phi, k)$.
To obtain the assertion in the case $\Phi = \rG_2$ repeat the proof of~\cite[Satz~3]{Hur77} using \cite[Corollary~6]{AM88} instead of \cite[Korollar~4]{Hur77}.

Consider now the case when $\Phi$ is symplectic. In this case the assertion of the lemma is just a reformulation of~\cite[Theorem~B]{MR91},
which asserts that for $\ell \geq 1$ one has $\K_2(\rC_\ell, k[X, X\inv]) \cong \K_2(\rC_\ell, k) \oplus P(k)$, where
$P(k)$ is the set $k^\times \times I^2(k)$ with the group structure given by
\[ (u, y) \cdot (v, z) = (uv, y + z - \langle\langle u, v\rangle\rangle).\]
Here $I^2(k)$ stands for the second power of the fundamental ideal $I(k)$ in the Witt ring $W(k)$ of $k$.
Recall from~\cite{Mo04} that $\K_1^\mathrm{MW}(k)$ is isomorphic to the pullback of the diagram:
\[ \begin{tikzcd} \K_1^\mathrm{MW}(k) \arrow{r} \arrow{d} & I(k) \arrow{d} \\ \K_1(k) \arrow{r} & I(k)/I^2(k), \end{tikzcd} \]
in other words, it consists of pairs $[u, x]$ such that $x - \langle \langle u \rangle \rangle \in I^2(k)$.
It is easy to verify that the map $[u, x] \mapsto (u, \langle\langle u \rangle\rangle - x)$ defines an isomorphism of $\K_1^\mathrm{MW}(k)$ and $P(k)$. \end{proof}

\begin{rem} It is possible to give a different proof of the above lemma in the case $\Phi = \rC_\ell$ for $\ell \geq 5$ under the additional assumption that the characteristic of $k$ is not $2$. Indeed, we have the following chain of isomorphisms:
\begin{equation} \K_2(C_\infty, k[X, X^{-1}]) \cong \GW_2^{[2]}(k[X,X^{-1}]) \cong \GW_2^{[2]}(k) \oplus \GW_1^{[1]}(k) \cong \K_2^\mathrm{MW}(k) \oplus \K_1^\mathrm{MW}(k),\end{equation}
in which the first two isomorphisms are obtained from~\eqref{GW-concrete} and~\cref{bass-ft} and the last one follows from~\cite[Lemma~4.1.1]{AF17} and Matsumoto's theorem.
It remains to invoke the stability theorem~\cite[Theorem~9.4]{Pa89} to obtain the assertion in the unstable case. \end{rem}

\begin{lemma} \label{field-injectivity} The homomorphism $\St(\Phi, k[X]) \to \St(\Phi, k[X, X^{-1}])$ is injective. Moreover, the intersection of the images of $\St(\Phi, k[X])$ and $\St(\Phi, k[X\inv])$ inside $\St(\Phi, k[X, X\inv])$ coincides with the image of $\St(\Phi, k)$. \end{lemma}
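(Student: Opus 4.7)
The plan is to prove both assertions by a five-lemma argument applied to the standard short exact sequences $1 \to \K_2(\Phi, R) \to \St(\Phi, R) \to \E(\Phi, R) \to 1$ for $R \in \{k, k[X], k[X\inv], k[X, X\inv]\}$. For the first assertion I compare the sequences for $R = k[X]$ and $R = k[X, X\inv]$. The right vertical arrow $\E(\Phi, k[X]) \to \E(\Phi, k[X, X\inv])$ is injective since $\E(\Phi, -)$ sits inside the representable affine group scheme $\GG(\Phi, -)$, which sends the inclusion $k[X] \hookrightarrow k[X, X\inv]$ to an injection. The first assertion thus reduces to the injectivity of the left vertical arrow $\K_2(\Phi, k[X]) \to \K_2(\Phi, k[X, X\inv])$.

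For the latter, the key observation is that the direct sum decomposition $\K_2(\Phi, k[X, X\inv]) \cong \K_2(\Phi, k) \oplus H(\Phi, k)$ given by \cref{K2-laurent-field} places the image of $\K_2(\Phi, k[X])$ entirely in the first summand. Indeed, the projection to $H(\Phi, k)$ is a residue-type map (at $X = 0$) that vanishes on anything coming from $k[X]$: since $k[X]^\times = k^\times$, every Steinberg/Dennis--Stein symbol built over $k[X]$ is already a symbol over $k$. Thus the arrow in question factors as $\K_2(\Phi, k[X]) \to \K_2(\Phi, k) \hookrightarrow \K_2(\Phi, k[X, X\inv])$, where the first map is the specialization at $X = 1$ and the second is the split injection from \cref{K2-laurent-field}. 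Injectivity therefore amounts to establishing $\K_2(\Phi, k[X]) \cong \K_2(\Phi, k)$, an unstable $\K_2$-homotopy-invariance statement.

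For the intersection claim, take $g \in \St(\Phi, k[X, X\inv])$ lying in both the images of $\St(\Phi, k[X])$ and $\St(\Phi, k[X\inv])$. Its projection to $\E(\Phi, k[X, X\inv])$ lies in $\E(\Phi, k[X]) \cap \E(\Phi, k[X\inv]) \subseteq \GG(\Phi, k[X]) \cap \GG(\Phi, k[X\inv]) = \GG(\Phi, k) = \E(\Phi, k)$, the last equality because simply-connected Chevalley groups over a field are generated by their root subgroups. Lift this projection to some $\tilde{g} \in \St(\Phi, k)$; then $g \cdot \tilde{g}^{-1} \in \K_2(\Phi, k[X, X\inv])$ lies in the intersection of the images of $\K_2(\Phi, k[X])$ and $\K_2(\Phi, k[X\inv])$. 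By the residue argument above, both of these images equal the image of $\K_2(\Phi, k)$, so $g \cdot \tilde{g}^{-1} \in \K_2(\Phi, k)$ and $g$ lies in the image of $\St(\Phi, k)$.

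The main obstacle is proving the identity $\K_2(\Phi, k[X]) \cong \K_2(\Phi, k)$ for the unstable Steinberg $\K_2$. While this is classical for stable $K$-theory (Quillen's homotopy invariance) and for the $\rA_\ell$-case (Tulenbaev), the general case for $\Phi \neq \rG_2$ requires combining Matsumoto's theorem, the appropriate stability result, and arguments parallel to the Milnor--Witt computation used in the proof of \cref{K2-laurent-field}.
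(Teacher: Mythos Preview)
Your approach is essentially the same as the paper's: reduce to injectivity on $\K_2$ via the exact sequence $1 \to \K_2 \to \St \to \GG$, and for the second assertion lift an element of $\GG(\Phi,k)$ back to $\St(\Phi,k)$ and observe that the discrepancy lies in $\K_2(\Phi,k)$. The only substantive difference is that you present the homotopy invariance $\K_2(\Phi,k[X]) \cong \K_2(\Phi,k)$ as an open ``main obstacle'' to be assembled from Matsumoto, stability, etc., whereas the paper simply cites it as a known result: it is the Korollar to Satz~1 in Rehmann~\cite{Re75}. Once you invoke that reference, your residue/factorisation paragraph becomes unnecessary --- the commutative triangle
\[
\begin{tikzcd}
& \K_2(\Phi,k) \arrow[dl,"\cong"'] \arrow[dr, hookrightarrow] & \\
\K_2(\Phi,k[X]) \arrow[rr] & & \K_2(\Phi,k[X,X^{-1}])
\end{tikzcd}
\]
immediately gives the injectivity of the bottom arrow, since the left diagonal is Rehmann's isomorphism and the right diagonal is split injective by \cref{K2-laurent-field}. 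So your proof is correct in outline; it just needs the Rehmann citation in place of the final paragraph.
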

\begin{proof} The first assertion follows from the commutative diagram
\[\begin{tikzcd}  & \K_2(\Phi, k) \arrow{dl}[swap]{\cong} \arrow[dr, hookrightarrow] & \\ \K_2(\Phi, k[X]) \arrow{rr} & & \K_2(\Phi, k[X, X^{-1}]) \end{tikzcd} \]
and the injectivity of its diagonal arrows. Indeed, the left arrow is an isomorphism by the Korollar of~\cite[Satz~1]{Re75} and the right arrow is split injective by~\cref{K2-laurent-field}.

Let us verify the second assertion. Let $g$ be an element of the intersection of $\St(\Phi, k[X])$ and $\St(\Phi, k[X\inv])$ inside $\St(\Phi, k[X, X\inv])$.
Clearly, the image of $g$ in $\GG(\Phi, k[X, X\inv])$ lies in $\GG(\Phi, k)$, therefore there exists $g_0 \in \St(\Phi, k)$ such that $gg_0^{-1} \in \K_2(\Phi, k[X]) = \K_2(\Phi, k)$. Thus, we conclude that $g \in \St(\Phi, k)$.
\end{proof}

\subsection{Relative Steinberg groups and unstable K-groups} \label{sec:quillen}
In this subsection we recall the definitions and basic facts pertaining to the theory of relative central extensions developed by J.-L.~Loday in~\cite{Lo78}. The main goal of this subsection is to show that Loday's theory can be applied to unstable Steinberg groups, and that the resulting relative unstable Steinberg groups have many of the properties of their stable counterparts. Some of the results of this subsection have been briefly mentioned in~\cite{S15} (cf. e.\,g. Corollaries 3--4).

Recall that the category of (commutative) pairs $\catname{Pairs}$ is defined as follows. Its objects are pairs $(R, I)$, in which $R$ is a commutative ring with $1$ and $I$ is a (not necessarily proper) ideal of $R$. A morphism of pairs $f \colon (R, I) \to (R', I')$ is, by definition, a (unit-preserving) ring homomorphism $f \colon R \to R'$ such that $f(I) \subseteq I'$. Notice that the mapping $(R, I) \mapsto (R \to R/I)$ defines a functor from $\catname{Pairs}$ to the morphism category $\catname{CRings}^\rightarrow$.
If $(R, I)$ is such that $R$ is a local ring with maximal ideal $I$, we call such pair a {\it local pair}.

There is an obvious fully faithful embedding $\catname{CRings} \to \catname{Pairs}$ sending $R$ to $(R, R)$. For a given functor $S \colon \catname{CRings} \to \catname{Groups}$ a {\it relativization} of $S$ is any functor $\widetilde{S} \colon \catname{Pairs} \to \catname{Groups}$ extending $S$ in the obvious sense. Relativization of a functor is not unique.

Recall that {\it the double ring} $D_{R, I}$ of a pair $(R, I)$ is, by definition, the pullback ring $R \times_{R/I} R$. In other words, it is the ring consisting of pairs of elements of~$R$ congruent modulo $I$. Denote by $p_0, p_1, \Delta$ the two obvious projections and the diagonal map 
\begin{tikzcd} D_{R, I} \arrow[r, shift right=2] \arrow[r] & R. \arrow[l, shift right=2] \end{tikzcd} It is clear that $p_0 \Delta = p_1 \Delta = id_{R}$.

Let $S \colon \catname{CRings} \to \catname{Groups}$ be a functor. Set $G_i = \Ker(S(p_i))$ and define {\it Loday's relativization} $S(R, I)$ as $ G_0 / [G_0, G_1]$. The homomorphism $S(p_1)$ induces a natural transformation $S(R, I) \to S(R)$. We denote this map by $\mu = \mu_{R,I}$ and its kernel by $C_S(R, I)$: 
\begin{equation} \label{LodayRelativization} \begin{tikzcd} 1 \arrow{r} & C_S(R, I) \arrow{r} & S(R, I) \arrow{r}{\mu} & S(R) \arrow{r} & S(R/I) \arrow{r} & 1. \end{tikzcd} \end{equation}

\begin{df} By definition, the {\it relative Steinberg group} $\St(\Phi, R, I)$ is the result of application of Loday's relativization to the functor $\St(\Phi, -)$. Notice that $\St(\Phi, R, I)$ is a central extension of $\Ker(\St(\Phi, R) \to \St(\Phi, R/I))$ by the abelian group $C_{\St(\Phi, -)}(R, I)$. For shortness we rename the latter group to $C(\Phi, R, I)$. \end{df}

Our next goal is to obtain a homological interpretation of the group $C(\Phi, R, I)$.
In order to do this, we need to recall some additional notation and terminology.

First of all, recall that a {\it central extension} of a group $G$ is a surjective homomorphism $\widetilde{G} \to G$, whose kernel is contained in the center of $\widetilde{G}$. 
A morphism of central extensions is a homomorphism $\widetilde{G} \to \widetilde{G}'$ over $G$.
A central extension is said to be {\it universal} if it is an initial object of the category of central extensions of $G$.

Recall that a {\it crossed module} is a triple $(M, N, \mu)$ consisting of the following data:
\begin{enumerate} [label=\normalfont(\arabic*)]
 \item a group $N$ acting on itself by left conjugation (i.\,e. ${}^{n}\!n' = n n' n^{-1}$).
 \item a group $M$ with a left action of $N$ (we call such a group an $N$-group and use the notation ${}^n m$ to denote the image of an element $m\in M$ under the action of $n \in N$);
 \item a homomorphism $\mu \colon M\to N$ preserving the action of $N$ and satisfying {\it Peiffer identity} ${}^{\mu(m)}\!m' = m m' m^{-1}$.
\end{enumerate}
It can be shown that the image of $\mu$ is always a normal subgroup of $N$ and that the kernel of $\mu$, which we denote by $L$, is always contained in the center of $M$.
 
Let $\nu \colon N \twoheadrightarrow Q$ be a surjective homomorphism.
A {\it relative central extension of $\nu$} is, by definition,
a crossed module $(M, N, \mu)$ such that the cokernel of $\mu$ is $\nu$:
\begin{equation} \label{RelativeCentralExtension} \begin{tikzcd} 1 \arrow{r} & L \arrow{r} & M \arrow{r}{\mu} & N \arrow{r}{\nu} & Q \arrow{r} & 1 \end{tikzcd} \end{equation}

A morphism $(M, \mu) \to (M', \mu')$ of two relative central extensions of $\nu$ is, by definition, an $N$-group homomorphism $f\colon M \to M'$ such that $\mu' f = \mu$. 
A relative central extension is said to be {\it universal} if it is an initial object of the category of relative central extensions of $\nu$. 

It turns out that the set $Ext(Q, N; L)$ of isomorphism classes of relative central extensions of $\nu$ by an abelian group $L$ can be classified by means of a certain cohomological invariant called {\it characteristic class}. More precisely, \cite[Th{\'e}or{\`e}me~1]{Lo78} asserts that there is a well-defined bijection $\xi \colon Ext(Q, N; L) \to \HH^3(Q, N; L)$.
 
For the rest of this subsection $S \xrightarrow{\pi} P \subseteq G$ is a triple of group-valued functors on the category of commutative rings satisfying the following assumptions:
\begin{enumerate} [label=\normalfont(A\arabic*)]
 \item \label{req:left-exact} $G(D_{R, I}) \cong G(R) \times_{G(R/I)} G(R)$.
 \item \label{req:coeq} For every pair $(R, I)$ the coequalizer of $S(p_0), S(p_1)$ is precisely $S(R) \to S(R/I)$.
 \item \label{req:subfunc} $P(R)$ is a normal subgroup of $G(R)$.
 \item \label{req:uce} The homomorphism $ \pi_R \colon S(R) \to P(R)$ is a universal central extension for all $R$. In particular, $S(R)$ and $P(R)$ are perfect and $\HH_2(S(R), \ZZ) = 0$.
\end{enumerate}

\begin{lemma}\label{lem:relativeH3}
 For every pair $(R, I)$ the homomorphism $\mu \colon S(R, I) \to S(R)$ is a universal relative central extension of $\nu \colon S(R) \to S(R/I)$. The group $C_S(R, I)$ is naturally isomorphic to the relative homology group $\HH_3(S(R/I), S(R); \ZZ)$.
\end{lemma}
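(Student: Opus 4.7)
My plan is to mirror Loday's construction of relative algebraic $\K_2$ from~\cite{Lo78}, using assumptions (A1)--(A4) as the axiomatic substitute for the properties of the triple $(\K_2,\St,\E)$ that drive the original argument.

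First, I would check that $\mu \colon S(R, I) \to S(R)$, equipped with the $S(R)$-action ${}^s[m] = [\Delta(s)\, m\, \Delta(s)^{-1}]$ on $S(R,I) = G_0/[G_0, G_1]$, defines a crossed module. The action is well-defined since $G_0$ is normal in $S(D_{R,I})$, and equivariance of $\mu$ is immediate from $S(p_1)\circ \Delta = \mathrm{id}$. For the Peiffer identity, the key observation is that for $m \in G_0$ the element $m \cdot \Delta(\mu(m))^{-1}$ lies in $G_1$ (both sides have the same $S(p_1)$-image), so conjugation by $\Delta(\mu(m))$ and by $m$ agree modulo $[G_0, G_1]$. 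To identify the cokernel of $\mu$ with $\nu$, I would use the decomposition $x = \bigl(x\, \Delta(S(p_0)(x))^{-1}\bigr) \cdot \Delta(S(p_0)(x))$ for $x \in S(D_{R,I})$ to show that the image $S(p_1)(G_0) \subseteq S(R)$ equals the normal subgroup generated by $\{S(p_0)(x)^{-1} S(p_1)(x) : x \in S(D_{R,I})\}$, which by (A2) is precisely $\Ker(\nu)$.

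The heart of the proof is the universality statement. Given a competing relative central extension $(M', \mu')$ of $\nu$, I would construct the comparison map $\phi \colon S(R, I) \to M'$ by combining the semidirect product $M' \rtimes S(R)$ with a suitable pullback to $P(D_{R,I})$. Concretely, the map $M' \rtimes S(R) \to P(R)$, $(m, s) \mapsto \pi(\mu'(m) s)$, is a central extension by (A3); pulling it back twice (along $P(p_0)$ and $P(p_1)$) and gluing the results via the isomorphism of (A1) produces a central extension of $P(D_{R,I})$, which by (A4) is uniquely dominated by $S(D_{R,I})$. Restricting the resulting lift to $G_0 \subseteq S(D_{R,I})$ and passing to the quotient modulo $[G_0, G_1]$ produces $\phi$; uniqueness of $\phi$ follows from the uniqueness in the universal central extension property. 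The main technical obstacle will be confirming that the lift kills $[G_0, G_1]$ and is $S(R)$-equivariant. Both properties follow by diagram chasing once one tracks the two independent pullbacks carefully and exploits the centrality of $M' \to \Img(\mu')$ together with the fact that the two lifted copies of $M'$ in the pulled-back extension commute.

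Finally, for the identification with $\HH_3$, I would appeal to Loday's classification of relative central extensions~\cite[Th\'eor\`eme~1]{Lo78}: isomorphism classes of such extensions of a surjection $N \twoheadrightarrow Q$ with $N$ perfect are classified by cohomology classes in $\HH^3(Q, N; -)$, and the kernel of a universal one is canonically identified with $\HH_3(Q, N; \ZZ)$. Since $S(R)$ is perfect by (A4), applying this to the universal $\mu$ established above yields the natural isomorphism $C_S(R, I) \cong \HH_3(S(R/I), S(R); \ZZ)$, with functoriality in $(R, I)$ following from the universal property.
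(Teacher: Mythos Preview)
Your overall strategy---reproduce Loday's argument from scratch rather than cite it---is legitimate, but the universality step as written does not go through. The map $M'\rtimes S(R)\to P(R)$, $(m,s)\mapsto \pi(\mu'(m)s)$, is \emph{not} a central extension, and assumption~(A3) (normality of $P$ in $G$) is irrelevant to this. The kernel contains all pairs $(1,s)$ with $s\in\Ker(\pi_R)$; for such a pair to be central in $M'\rtimes S(R)$ one needs $s$ to act trivially on $M'$, and nothing in the crossed-module axioms forces a central element of $S(R)$ to act trivially. Once this map fails to be central, the subsequent pullback-and-glue construction over $P(D_{R,I})$ does not produce a central extension either, so you cannot invoke~(A4) to obtain the lift from $S(D_{R,I})$.

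The missing ingredient is exactly what the paper isolates: one first checks, using~(A1) together with $\pi(G_i)\subseteq\Ker(G(p_i))$, that $G_0\cap G_1\subseteq\Ker(\pi_{D_{R,I}})$, and hence by~(A4) that $G_0\cap G_1$ is central in $S(D_{R,I})$. This is precisely the hypothesis of Loday's Proposition~6, which then delivers universality of $\mu$ over the coequalizer (identified with $\nu$ via~(A2)) in one stroke. If you want to keep your hands-on construction, you will still need this centrality fact to make the comparison map well-defined on $G_0/[G_0,G_1]$ and $S(R)$-equivariant; but at that point you are re-proving Loday's proposition, and it is cleaner to cite it. Your treatment of the crossed-module axioms and of the $\HH_3$ identification via Loday's classification is fine and matches the paper's use of Th\'eor\`emes~1--2 and Proposition~3 of~\cite{Lo78}.
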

\begin{proof}
The action of $S(R)$ on $S(D_{R, I})$ given by ${}^g h = S(\Delta)(g) \cdot h \cdot S(\Delta)(g)^{-1}$ induces an action of $S(R)$ on $S(R, I)$.
The homomorphism $\mu \colon S(R, I) \to S(R)$ from~\eqref{LodayRelativization} is an $S(R)$-homomorphism with respect to this action.
From~\ref{req:left-exact} and $\pi(G_i) \subseteq \Ker(G(p_i))$ we obtain that 
$G_0 \cap G_1 \subseteq \Ker(\pi_{D_{R, I}})$ hence it is a central subgroup of $S(D_{R,I})$ by~\ref{req:uce}. Thus, we have verified the assumptions of~\cite[Proposition~6]{Lo78} which asserts that the homomorphism $\mu$ is a universal relative central extension of the coequalizer $\nu = \mathrm{coeq}(d_0, d_1)$. Since $\nu$ coincides with $S(R) \to S(R/I)$ by~\ref{req:coeq}, we have completed the proof of the first assertion of the lemma.

Set $N = S(R)$, $Q = S(R/I)$, $C = \HH_3(Q, N; \ZZ)$. Recall from the proof of~\cite[Th{\'e}or{\`e}me~2]{Lo78} that to every relative central extension $(M, \mu)$ of $\nu$ with kernel $L$ one can associate a homomorphism of abelian groups $C \to L$. This homomorphism is obtained from the characteristic class $\xi(M, \mu)$ via the isomorphism $\HH^3(Q, N; L) \cong \mathrm{Hom}(C, L)$ of the universal coefficients theorem for cohomology (for this isomorphism we need the vanishing of $\HH_2(Q, N; \ZZ)$, which is a consequence of~\ref{req:uce}).

In the special case $M = S(R, I)$ this construction produces a homomorphism $C \to C_{S}(R, I)$ whose naturality in $(R, I)$ follows from~\cite[Proposition~3]{Lo78}. 
This homomorphism is an isomorphism by~\cite[Th{\'e}or{\`e}me~2]{Lo78}. \end{proof}

We retain our notation for the functors $S, P$ and $G$.
For $i\geq 1$ we define the unstable Quillen K-functors $\K_{i}^{G, P}$ via
\begin{equation} \label{plus-constr} \K_i^{G,P}(R) = \pi_i(BG(R)^+_{P(R)}). \end{equation}
It is not hard to obtain the following concrete description of these functors in the cases $i=1,2,3$.
\begin{lemma}\label{lem:lowerKgroups} There are natural isomorphisms \begin{enumerate} [label=\normalfont(\arabic*)]
 \item $\K_1^{G,P}(R) \cong G(R) / P(R)$;
 \item $\K_2^{G,P}(R) \cong \Ker(S(R) \to G(R))$;
 \item $\K_3^{G,P}(R) \cong \HH_3(S(R), \ZZ).$ \end{enumerate} \end{lemma}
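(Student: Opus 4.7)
The plan is to invoke three standard properties of Quillen's plus construction: it preserves integral homology; it produces $G/P$ as fundamental group when performed on $BG$ with respect to a perfect normal subgroup $P$; and it converts the group extension $1 \to P \to G \to G/P \to 1$ into a homotopy fibration $BP^+ \to BG^+_P \to B(G/P)$. Throughout I will abbreviate $S = S(R)$, $P = P(R)$, $G = G(R)$; naturality in $R$ is automatic because every construction involved is natural.

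Part (1) will be immediate from the defining property of the plus construction applied to the perfect normal subgroup $P \subseteq G$, which is perfect and normal by~\ref{req:subfunc} and~\ref{req:uce}. For parts (2) and (3), I would first use the fibration above together with the asphericity of $B(G/P)$ to obtain natural identifications $\pi_n(BG^+_P) \cong \pi_n(BP^+)$ for every $n \geq 2$, thereby reducing both assertions to computations on $BP^+$.

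For (2), the space $BP^+$ is simply-connected because $P$ is perfect. By Hurewicz and homology-preservation of the plus construction,
\[ \pi_2(BP^+) \cong H_2(BP^+, \ZZ) \cong H_2(P, \ZZ). \]
By~\ref{req:uce} the map $\pi \colon S \to P$ is the universal central extension, so $H_2(P, \ZZ)$ identifies naturally with $\Ker(\pi)$; and since the inclusion $P \hookrightarrow G$ is injective, $\Ker(S \to P) = \Ker(S \to G)$, giving~(2).

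For (3), I would consider the classifying map $BP \to K(\Ker(\pi), 2)$ of the universal central extension, whose homotopy fiber is $BS$. Applying the plus construction to $BS$ and to $BP$ yields a homotopy fibration $BS^+ \to BP^+ \to K(\Ker(\pi), 2)$. Since $S$ is perfect with $\HH_2(S, \ZZ) = 0$ by~\ref{req:uce}, the space $BS^+$ is $2$-connected, so the long exact sequence forces $\pi_3(BP^+) \cong \pi_3(BS^+)$. Combining this with Hurewicz and homology-preservation,
\[ \pi_3(BG^+_P) \cong \pi_3(BS^+) \cong H_3(BS^+, \ZZ) \cong H_3(S, \ZZ). \]
The main technical obstacle will be justifying that the plus construction really turns the two relevant group extensions into honest homotopy fibrations; these statements go back to Quillen and Wagoner and are standard for plus constructions with respect to perfect normal subgroups, but they are the essential foundation on which the whole argument rests.
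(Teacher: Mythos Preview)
Your proposal is correct and follows essentially the same route as the paper. The paper's own proof is a one-line reference to the standard plus-construction facts in \S~IV.1 of Weibel's K-book (specifically Exercises~1.8--1.9 there), and your argument is precisely the unpacking of those exercises.
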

\begin{proof} The first claim is obvious, the other claims follow from~\ref{req:subfunc} and~\ref{req:uce} using the standard properties of the plus-construction, see \cite[\S~IV.1]{Kbook}   (cf. Exercises~1.8--1.9 ibid.) \end{proof}

We denote by $\mathrm{O}_{2n}(R)$ and $\mathrm{EO}_{2n}(R)$ the orthogonal group of rank $n$ over a ring $R$ and its elementary subgroup, respectively (see e.\,g.~\cite{Su82} for the definition of these groups).
Now set $G_n = \mathrm{O}_{2n}(-)$, $P_n = \mathrm{EO}_{2n}(-)$, $S_n = \St(\rD_\ell, -)$.
The triple $(G_n, P_n, S_n)$ plays a key role in the present paper.

We need to introduce more notation. Denote by $\widetilde{G}_n$ the Chevalley group $\GG(\rD_n, -) = \mathrm{Spin}(2n, -)$
 and by $\widetilde{P}_n$ its elementary subfunctor $\mathrm{E}(\rD_n, -) = \mathrm{Epin}(2n, -)$.

It is clear that both $G_n$ and $\widetilde{G}_n$ satisfy~\ref{req:left-exact} and that $S_n$ satisfies \ref{req:coeq}.
It is well known that $P_n$ (resp. $\widetilde{P}_n$) is a normal subgroup of $G_n$ (resp. $\widetilde{G}_n$) for $n\geq 3$ (see~\cite{Su82},\cite{Ta86}). Thus,~\ref{req:subfunc} is satisfied for $P_n$ and $\widetilde{P}_n$.

By~\cite[Corollary~5.4]{St71} the group $S_n$ is {\it centrally closed} for $n \geq 5$, in other words every central extension of $S_n$ splits.
On the other hand, \cite[Theorem~1]{LS17} shows that $\K_2(\rD_\ell, R) = \Ker(S_n \to \widetilde{G}_n)$ is a central subgroup of $S_n$ for $n\geq 4$. 

Recall from p.~189 of~\cite{Ba74} that for $n\geq 3$ there is an exact sequence
\begin{equation}\label{eq:Bass-ES} \begin{tikzcd} 1 \arrow{r} & \mu_2(R) \arrow{r} & \widetilde{P}_n(R) \arrow{r} & P_n(R) \arrow{r} & 1, \end{tikzcd} \end{equation}
in which the group $\mu_2(R) = \{ a \in R^\times \mid a^2 = 1 \}$ is a central subgroup of $\widetilde{P}_n(R)$.
Now by~\cite[\S~7(v)]{St67} the group $\KO_2(2n, R) = \Ker(S_n \to G_n)$ is also central in $S_n$.
Thus, we see that $S_n$ is a universal central extension of both $P_n$ and $\widetilde{P}_n$ for $n\geq 5$.

We have checked that both triples $(G_n, P_n, S_n)$ and $(\widetilde{G}_n, \widetilde{P}_n, S_n)$ satisfy the requirements~\ref{req:left-exact}--\ref{req:uce} for $n \geq 5$. In the sequel we use the notation $\KO_i(2n, R)$ as a shorthand for $\K_i^{G_n, P_n}(R)$. 

Notice that the groups $\KO_2(2n, R)$ and $\K_2(\rD_n, R)$ are related via the exact sequence of~\cite[Theorem~7.2.12]{HOM}:
\[ \begin{tikzcd} 1 \arrow{r} & \K_2(\rD_n, R) \arrow{r} & \KO_2(2n, R) \arrow{r} & \mu_2(R) \arrow{r} & 1. \end{tikzcd} \]

We conclude this subsection with the following stability result (see~\cite[Theorem~9.4]{Pa89}).
\begin{externaltheorem}[Panin] \label{Panin-stability}
 Let $R$ be either a field, principal ideal domain or a Dedekind domain. Set $a = 1,2$ or $3$ in each of these three cases, respectively.
 Then the stability map $\KO_i(2n, R) \to \KO_i(2(n+1), R)$ is an epimorphism for $n \geq b$ 
 and an isomorphism for $n \geq b + 1$, where $b = \mathrm{max}(2i, a+i-1)$. \end{externaltheorem}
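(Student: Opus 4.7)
The statement at the end of the excerpt is Panin's stability theorem for unstable orthogonal K-theory, quoted as an external result. The natural strategy to prove it would follow the template established by Suslin--Tulenbaev and van der Kallen for linear and symplectic K-theory, adapted to the orthogonal case via the plus-construction description of $\KO_i(2n, R)$ given in~\eqref{plus-constr}.

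The plan is to translate the stability question into a homological assertion. Since $\KO_i(2n, R) = \pi_i\bigl(B\mathrm{O}_{2n}(R)^+_{\mathrm{EO}_{2n}(R)}\bigr)$, a standard application of the Hurewicz-type argument (the comparison theorem for plus-constructions, cf.~\cite[\S IV.1]{Kbook}) reduces the surjectivity/injectivity of the stabilization map $\KO_i(2n, R) \to \KO_i(2(n+1), R)$ to showing that the map of integral homology groups $\HH_j(\mathrm{O}_{2n}(R), \ZZ) \to \HH_j(\mathrm{O}_{2(n+1)}(R), \ZZ)$ is respectively surjective for $j \leq i$ and bijective for $j < i$, in the appropriate range of $n$. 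First I would set up a Serre-type spectral sequence converging to the relative homology $\HH_\ast(\mathrm{O}_{2(n+1)}(R), \mathrm{O}_{2n}(R); \ZZ)$, whose $E^2$-term involves the homology of stabilizers of isotropic vectors acting on a simplicial set of \emph{unimodular isotropic vectors} $U_n(R)$ inside the standard hyperbolic module of rank $n$.

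The decisive step is then to prove a sharp connectivity bound for the simplicial complex (or poset) whose $k$-simplices are ordered $(k+1)$-tuples of unimodular isotropic vectors spanning a totally isotropic direct summand of rank $k+1$ with an appropriate non-degeneracy condition. One shows that this complex is $(n - b)$-connected, where $b = \mathrm{max}(2i, a + i - 1)$ depends on the arithmetic invariant $a$ (the absolute stable rank) of the ring $R$. The argument for the connectivity is by induction on $n$: given a spherical cycle, one uses the stable rank hypothesis to find an additional isotropic vector extending any prescribed subconfiguration, which allows coning off. The case of a field ($a = 1$) is essentially Witt's extension theorem; for PIDs ($a = 2$) and Dedekind domains ($a = 3$) one further invokes Bass' absolute stable range conditions to adjust partial isotropic vectors to extendable ones.

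The main obstacle is precisely this connectivity estimate: keeping track of the non-degeneracy/transversality requirements for isotropic configurations makes the combinatorial analysis of $U_n(R)$ significantly subtler than in the linear or symplectic cases, and the degenerate values $b = \max(2i, a + i - 1)$ reflect the two separate constraints (geometric non-degeneracy versus arithmetic extension of unimodular rows) which must each be satisfied simultaneously. Once this connectivity bound is established, Quillen's standard machinery for homology stability yields the stated range for the plus-constructed homotopy groups $\KO_i(2n, R)$ with no further difficulty.
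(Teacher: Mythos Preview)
The paper does not give its own proof of this statement: it is stated as an \emph{external} theorem and simply cited as \cite[Theorem~9.4]{Pa89}. There is therefore nothing in the paper to compare your proposal against.

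That said, your sketch is a reasonable outline of the homology-stability machinery that underlies Panin's result (acyclicity of posets of isotropic unimodular sequences combined with a spectral-sequence comparison argument), so as a description of the strategy behind the cited reference it is broadly accurate. But for the purposes of this paper no argument is required beyond invoking the citation.
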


\section{An injectivity theorem for Steinberg groups} \label{firstPart}
We start this section by recalling basic notation and facts pertaining to the theory of higher Grothendieck--Witt groups. Recall that this theory, developed by M.~Schlichting, is a modern broad generalization of the classical hermitian K-theory of rings. We refer the reader to~\cite[\S~2]{FRS12} and~\cite[\S~2]{AF17} for an introduction to Grothendieck--Witt groups.

For our purposes it suffices to restrict attention to the affine case, in which  the Grothendieck--Witt groups $\GW_i^{[k]}(R)$ for $i \geq 1,\ [k] \in \ZZ/4\ZZ$ can be considered simply as a shorthand for the following 4 groups:
\begin{equation} \label{GW-concrete} \GW_i^{[k]}(R) = \left\{\begin{array}{ll} \KO_i(R), & k = 0 \\ {}_{-1}\!U_i(R), & k = 1 \\ \mathrm{KSp}_i(R), & k = 2 \\ U_i(R), & k = 3. \end{array}\right. \end{equation}
Here $\mathrm{KO}_i(R)$ denotes the usual orthogonal K-group defined via~\eqref{plus-constr} with $G(R) = O_\infty(R)$ and $P(R) = [G(R), G(R)]$.
Replacing the stable orthogonal group with the stable symplectic group one can also define the symplectic K-groups $\mathrm{KSp}_i(R)$.
We refer the reader to~\cite{Ka80} for the definition and properties of the groups ${}_{\pm 1}\!U_i(R)$. We will not use these definitions directly.

The following result, which is a special case of~\cite[Theorem~9.13]{Sch16} of M.~Schlichting, plays a key role in the proof of~\cref{thm41}.
\begin{externaltheorem}[Bass Fundamental Theorem]\label{bass-ft} Suppose that $R$ is a regular ring such that $2 \in R^\times$, 
then for any $i\geq 1$, $k\in \ZZ/4\ZZ$ there is a natural split exact sequence of abelian groups 
\[ \begin{tikzcd} 0 \arrow{r} & \GW_i^{[k]}(R) \arrow{r} & \GW_i^{[k]}(R[X, X^{-1}]) \arrow{r} & \GW_{i-1}^{[k-1]}(R) \arrow{r} & 0. \end{tikzcd} \] \end{externaltheorem}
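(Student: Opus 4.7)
The plan is to derive this split exact sequence from the general Bass-type decomposition for Grothendieck--Witt spectra due to Schlichting, mirroring the classical proof of the Bass Fundamental Theorem for ordinary algebraic $K$-theory. The argument decomposes into three structural inputs: a localization sequence, homotopy invariance for regular rings, and explicit splittings produced by maps of rings.

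First, I would apply the localization theorem for GW-spectra to the inclusion of exact categories with duality from finitely generated projective $R[X]$-modules that become zero after inverting $X$ into the full category of finitely generated projective $R[X]$-modules, with finitely generated projective $R[X, X^{-1}]$-modules realized as the Verdier quotient. This yields a homotopy fibre sequence of GW-spectra. A d\'{e}vissage/resolution step then identifies the torsion category with finitely generated projective $R$-modules; however, because the relevant duality on the torsion category is given by $\mathrm{Ext}^1_{R[X]}(-, R[X])$ rather than $\mathrm{Hom}_{R[X]}(-, R[X])$, this identification shifts the duality by $[-1]$. This is precisely the source of the shift $[k]\mapsto[k-1]$ appearing in the boundary term.

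Second, I would invoke homotopy invariance of $\GW$ for regular rings: under the hypothesis that $R$ is regular with $2\in R^\times$, the natural map $\GW_i^{[k]}(R)\to\GW_i^{[k]}(R[X])$ is an isomorphism, again by Schlichting's results. Combined with the long exact sequence in homotopy groups induced by the fibre sequence above, this collapses the six-term piece into the asserted short exact sequence. The splitting of the left-hand map comes from the ring retraction $R[X, X^{-1}]\to R$ sending $X\mapsto 1$, while the splitting of the residue map is induced by cup product with the class of the unit $X\in R[X, X^{-1}]^\times$ viewed as a one-dimensional symmetric form.

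The principal obstacle here is the technical depth of Schlichting's GW-spectrum machinery --- dg enhancements with duality, careful bookkeeping of signs and suspension shifts during d\'{e}vissage, and the verification that $2\in R^\times$ suffices to sidestep the characteristic $2$ pathologies of hermitian $K$-theory. All of this, however, is already absorbed into~\cite[Theorem~9.13]{Sch16}, which we will cite as a black box rather than re-derive; the point of the present paper is rather to apply the decomposition in the specific guise~\eqref{GW-concrete} in order to extract surjectivity of the comparison map on the relative $C(\rD_\ell, -, -)$-terms entering the proof of~\cref{thm41}.
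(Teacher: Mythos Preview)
Your proposal is correct, and it ultimately takes the same route as the paper: the paper does not prove this statement at all but simply records it as an external result, citing~\cite[Theorem~9.13]{Sch16} (and~\cite[Corollary~5.3]{Ho05} for the special case $k=0$ actually used). Your sketch of the ingredients behind Schlichting's argument --- localization, d\'evissage with the duality shift, homotopy invariance for regular rings, and the explicit splittings --- is accurate and informative, but goes beyond what the paper does; since you yourself conclude by citing~\cite[Theorem~9.13]{Sch16} as a black box, your treatment and the paper's coincide in substance.
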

We will need only the special case $k=0$ of the above theorem, in which it turns into an earlier result of J.~Hornbostel, see~\cite[Corollary~5.3]{Ho05}.

For the rest of this section let us fix the following notation.
Let $A$ be an arbitrary commutative local ring with maximal ideal $M$ and residue field $k$.
Denote by $B$ the subring $A[X^{-1}] + M[X]$ of the ring $R = A[X, X^{-1}]$ and
by $I$ the ideal $M[X, X^{-1}]$ of $R$ (it is clear that $I$ is also an ideal of $B$).

\begin{lemma} \label{lem:prop41}
Assume additionally that the residue field $k$ is of characteristic $\neq 2$.
Then the canonical homomorphism $f\colon C(\rD_\ell, B, I) \to C(\rD_\ell, R, I)$ is surjective for $\ell \geq 7$. \end{lemma}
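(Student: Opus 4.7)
The plan is to translate the question about $C$-groups into unstable Grothendieck-Witt theory via Loday's relativization (\cref{lem:relativeH3}), and then to handle the resulting K-theoretic problem via Panin's stability theorem and the Bass Fundamental Theorem. First I would use the identification $C(\rD_\ell, -, I) \cong \HH_3(S(-/I), S(-); \ZZ)$ for $S = \St(\rD_\ell, -)$, together with the vanishing $\HH_2(S(R), \ZZ) = 0$ (a consequence of $\pi \colon S(R) \to P_\ell(R)$ being a universal central extension for $\ell \geq 5$). The relative homology long exact sequence then collapses to
\[ \HH_3(S(-), \ZZ) \to \HH_3(S(-/I), \ZZ) \twoheadrightarrow C(\rD_\ell, -, I) \to 0, \]
which, combined with \cref{lem:lowerKgroups}(3), reformulates the sought surjectivity as the equality
\[ \Img\bigl(\KO_3(2\ell, k[X^{-1}]) \to \KO_3(2\ell, k[X, X^{-1}])\bigr) + \Img\bigl(\KO_3(2\ell, R) \to \KO_3(2\ell, k[X, X^{-1}])\bigr) = \KO_3(2\ell, k[X, X^{-1}]), \]
using $B/I \cong k[X^{-1}]$ and $R/I \cong k[X, X^{-1}]$.

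Next I would apply Panin's stability (\cref{Panin-stability}) to the PIDs $k[X^{-1}]$ and $k[X, X^{-1}]$: with $i = 3$ and $a = 2$ the bound $b+1$ equals $7$, so for $\ell \geq 7$ the unstable $\KO_3$-groups on the right-hand side coincide with their stable counterparts. The Bass Fundamental Theorem (\cref{bass-ft}), together with homotopy invariance of Grothendieck-Witt theory for the regular ring $k$ with $2 \in k^\times$, then yields a split decomposition $\KO_3(k[X, X^{-1}]) \cong \KO_3(k) \oplus \GW_2^{[3]}(k)$ in which the inclusion $k[X^{-1}] \hookrightarrow k[X, X^{-1}]$ realises $\KO_3(k[X^{-1}]) \cong \KO_3(k) \hookrightarrow \KO_3(k[X, X^{-1}])$ as the first summand. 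Thus the image coming from $B$ already covers $\KO_3(k)$, and one is reduced to showing that the image of $\KO_3(2\ell, R) \to \KO_3(2\ell, R/I)$ surjects onto the second summand $\GW_2^{[3]}(k)$.

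To hit $\GW_2^{[3]}(k)$ I would use the natural ``cup product with $X$'' map $\GW_2^{[3]}(A) \to \KO_3(A[X, X^{-1}])$, which is defined for any ring with $2$ invertible and, by naturality, is compatible with the BFT splitting for $k$. Since $A$ is local with residue field $k$, the induced map $\GW_2^{[3]}(A) \to \GW_2^{[3]}(k)$ is surjective by standard lifting of symbols in hermitian K-theory of local rings, so these cup-product classes exhaust $\GW_2^{[3]}(k)$. The principal obstacle is that $R = A[X, X^{-1}]$ need not be regular, so Panin's theorem does not directly guarantee that such stable classes arise from $\KO_3(2\ell, R)$; bridging this gap should require producing explicit unstable representatives in $\St(\rD_\ell, R)$ whose $\HH_3$-classes realise the cup products with $X$. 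The hypothesis $2 \in A^\times$ enters decisively through the BFT splitting which is available precisely in this situation.
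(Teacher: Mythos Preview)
Your setup via the relative homology long exact sequence, \cref{lem:relativeH3}, \cref{lem:lowerKgroups}, Panin's stability theorem, and the Bass Fundamental Theorem is exactly the paper's approach, and your reduction to the decomposition $\KO_3(k[X,X^{-1}]) \cong \KO_3(k) \oplus \GW_2^{[3]}(k)$ is correct.

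The gap is that you overlook the key fact that $\GW_2^{[3]}(k) = 0$ for any field $k$ (this is \cite[Lemma~2.2]{FRS12}, and in the concrete terms of~\eqref{GW-concrete} amounts to the vanishing of ${}_{-1}U_2(k)$). With this vanishing in hand, the map $f' \colon \KO_3(2\ell, k[X^{-1}]) \to \KO_3(2\ell, k[X,X^{-1}])$ is already surjective, and hence so is $f$ by the commutative square. You never need the contribution from $\KO_3(2\ell, R)$ at all.

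Your attempted workaround---lifting classes in $\GW_2^{[3]}(k)$ to $\GW_2^{[3]}(A)$ and then realising their cup products with $X$ by unstable classes over the possibly singular ring $R$---is not only unnecessary but, as you yourself note, runs into a genuine obstacle: Panin's theorem gives no control over $\KO_3(2\ell, R)$ when $A$ (and hence $R$) is not regular. The paper's proof sidesteps this entirely by observing that the target summand is zero.
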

\begin{proof}
Writing down the starting portions of the homology long exact sequences for the homomorphisms $\St(\rD_\ell, R) \to \St(\rD_\ell, R/I)$, $\St(\rD_\ell, B) \to \St(\rD_\ell, B/I)$ and using the isomorphisms of~\cref{lem:relativeH3} and~\cref{lem:lowerKgroups} we obtain the following commutative diagram:
\[\begin{tikzcd} \KO_3(2\ell, B) \arrow{r} \arrow{d} & \KO_3(2\ell, k[X^{-1}]) \arrow{d}[swap]{f'} \ar[r, twoheadrightarrow] & \ar{d}[swap]{f} C(\rD_\ell, B, I) \\
 \KO_3(2\ell, R) \arrow{r} & \KO_3(2\ell, k[X, X^{-1}]) \ar[r, twoheadrightarrow] & C(\rD_\ell, R, I). \end{tikzcd} \]
By~\cref{Panin-stability} the homomorphism $f'$ can be identified with the canonical map $\GW_3^{[0]}(k[X^{-1}]) \to \GW_3^{[0]}(k[X, X^{-1}])$.
By~\cref{bass-ft} $\GW_3^{[0]}(k[X, X^{-1}]) \cong \GW_3^{[0]}(k) \oplus \GW_2^{[3]}(k),$
but since the group $\GW_2^{[3]}(k)$ is trivial by~\cite[Lemma~2.2]{FRS12}, the homomorphism $f'$ (and hence $f$) is surjective.
\end{proof} 
 
We will also need the following property of relative Steinberg groups which is a special case of a more general property discussed in~\cite[\S~2]{LS17}.
\begin{lemma}\label{lem:lemma32} Let $\Phi$ be a simply-laced root system of rank $\geq 3$,
Consider the following commutative square of canonical homomorphisms.
\[\begin{tikzcd} \St(\Phi, B, I) \arrow{r}{\mu_B} \arrow{d} & \St(\Phi, B) \arrow{d} \\ \St(\Phi, R, I) \arrow{r}{\mu_R} \ar[ur, "t", dashrightarrow] & \St(\Phi, R) \end{tikzcd} \]
Then there exists a diagonal homomorphism $t$ which makes the diagram commute.   
\end{lemma}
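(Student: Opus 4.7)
My approach is to construct $t$ directly using Loday's double-ring description of the relative Steinberg group; as the statement mentions, this is a special case of a general lifting property established in~\cite[\S~2]{LS17}. Recall that $\St(\Phi,R,I)=G_0/[G_0,G_1]$ with $G_j=\ker(\St(\Phi,p_j))$ and $p_0,p_1\colon D_{R,I}\to R$ the two projections of the double ring. The starting observation is that $G_0$ is generated, as a subgroup of $\St(\Phi,D_{R,I})$, by the root elements $x_\alpha((0,i))$ for $\alpha\in\Phi$, $i\in I$: indeed, any conjugate $x_\beta((r_1,r_2))\cdot x_\alpha((0,i))\cdot x_\beta((r_1,r_2))^{-1}$ expands via the commutator relations~\eqref{rel:add}--\eqref{rel:CCF} into a product of further generators of the same form, because $(r_1,r_2)\cdot(0,i)=(0,r_2i)$ stays in the sub-ideal $\{0\}\times I\subseteq D_{R,I}$.

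I would then define $t$ on this generating set by $t(x_\alpha((0,i)))=x_\alpha(i)\in\St(\Phi,B)$, which is well formed since $i\in I\subseteq B$. The main obstacle is verifying that this prescription extends to a well-defined group homomorphism $G_0\to\St(\Phi,B)$ and factors through the quotient by $[G_0,G_1]$. For this, one must show that every relation satisfied by the chosen generators inside $G_0$ is still satisfied under the substitution $(0,i)\mapsto i$ in $\St(\Phi,B)$. The relations arising from the Steinberg presentation of $\St(\Phi,D_{R,I})$ translate verbatim, because the product $(0,i)\cdot(0,j)=(0,ij)$ is governed solely by the multiplication on $I$, which coincides with that inherited from $B$. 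The only subtle case is the opposite-root interaction of $x_\alpha((0,i))$ and $x_{-\alpha}((0,j))$, where one uses that whenever the relevant Dennis--Stein-type identity applies over $D_{R,I}$, i.e.\ $(1,1+ij)$ is a unit, the element $1+ij$ is automatically a unit in $B$ and the corresponding identity holds in $\St(\Phi,B)$ as well; descent modulo $[G_0,G_1]$ then follows since each mixed commutator $[x_\alpha((0,i)),x_\beta((j,0))]$ has trivial image in $\St(\Phi,B)$ by construction.

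Once $t$ is constructed, commutativity of the right-hand triangle $\iota\circ t=\mu_R$ (where $\iota\colon\St(\Phi,B)\to\St(\Phi,R)$ is the inclusion-induced map) is immediate on generators: $\iota(t(x_\alpha((0,i))))=x_\alpha(i)=\St(\Phi,p_1)(x_\alpha((0,i)))=\mu_R(x_\alpha((0,i)))$. For the left-hand triangle, the left vertical map $f\colon\St(\Phi,B,I)\to\St(\Phi,R,I)$ is induced by the ring inclusion $D_{B,I}\hookrightarrow D_{R,I}$ and sends the generator $x_\alpha((0,i))$ on the $B$-side to the same-looking generator on the $R$-side; applying either $t\circ f$ or $\mu_B$ produces $x_\alpha(i)\in\St(\Phi,B)$, so the two homomorphisms agree on generators and therefore coincide on the whole of $\St(\Phi,B,I)$.
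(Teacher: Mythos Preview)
Your argument has a genuine gap at its very first step. You claim that $G_0=\ker(\St(\Phi,p_0))$ is generated as a group by the root elements $x_\alpha((0,i))$, justifying this by observing that conjugates $x_\beta((r_1,r_2))\,x_\alpha((0,i))\,x_\beta((r_1,r_2))^{-1}$ expand via the commutator relations into products of the same form. But the commutator relations \eqref{rel:add}--\eqref{rel:CCF} say nothing when $\beta=-\alpha$: there is no Steinberg relation computing $[x_\alpha(a),x_{-\alpha}(b)]$. Consequently the subgroup $\St(\Phi,\{0\}\times I)$ generated by your elements need not be normal, and by the discussion preceding~\cref{thm:Tits} the kernel $G_0=\overline{\St}(\Phi,D_{R,I},\{0\}\times I)$ is only the \emph{normal closure} of that subgroup; the missing generators are precisely the conjugates $z_\alpha((0,i),(r,r))$ with $r\in R$.

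This omission is not merely cosmetic: it is where the whole difficulty of the lemma lies. For $r\in R\setminus B$ there is no obvious candidate in $\St(\Phi,B)$ for $t(z_\alpha((0,i),(r,r)))$, since one cannot simply conjugate by $x_{-\alpha}(r)$ inside $\St(\Phi,B)$. The paper's proof sidesteps this by invoking the specific structure of the pair $(B,R)$: $R$ is the principal localization $B_X$ and the ideal $I$ is uniquely $X$-divisible, so any $r\in R$ can be written $r=bX^{-n}$ with $b\in B$, and the factor $X^{-n}$ can be absorbed into the $I$-argument. This is exactly the content of~\cite[Theorem~3]{LS17} for $\Phi=\rA_3$, which is then bootstrapped to general simply-laced $\Phi$ via the amalgamation theorem~\cite[Theorem~9]{S15}. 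Your naive generator-by-generator definition would work if $B\hookrightarrow R$ were a retract, but it is not, and you have not used the localization/divisibility hypothesis anywhere.
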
 
\begin{proof}
 Notice that $R$ is isomorphic to the principal localisation of $B$ at $X$
  and that $I$ is uniquely $X$-divisible in the sense of~\cite[\S~4]{LS17}.
 Thus, in the special case $\Phi = \rA_3$ the assertion of the lemma follows from~\cite[Theorem~3]{LS17}.
 In the general case the assertion of the lemma is a corollary of amalgamation theorem~\cite[Theorem~9]{S15}.
\end{proof}

\begin{theorem} \label{thm41} Suppose that $2 \in A^\times$. Then for $\ell \geq 7$ the canonical homomorphism $\St(\rD_\ell, B) \to \St(\rD_\ell, R)$ is injective. \end{theorem}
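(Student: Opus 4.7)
The plan is to perform the diagram chase sketched in the introduction, combining \cref{lem:prop41} (surjectivity of $C(\rD_\ell,B,I)\to C(\rD_\ell,R,I)$), \cref{lem:lemma32} (the lifting $t$), and \cref{field-injectivity} (injectivity on the residue level). Let $j_R$ denote the canonical map $\St(\rD_\ell,B)\to\St(\rD_\ell,R)$ and let $\varphi\colon \St(\rD_\ell,B,I)\to \St(\rD_\ell,R,I)$ be the induced map on relative groups. Pick $g\in\St(\rD_\ell,B)$ with $j_R(g)=1$; the goal is to show $g=1$.

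The first step is to show that $g$ comes from $\St(\rD_\ell,B,I)$ via $\mu_B$. Under the identifications $B/I\cong k[X^{-1}]$ and $R/I\cong k[X,X^{-1}]$, the image of $g$ in $\St(\rD_\ell,B/I)$ maps to $1$ in $\St(\rD_\ell,R/I)$, hence is itself trivial by \cref{field-injectivity}. Therefore $g$ lies in $\Ker(\St(\rD_\ell,B)\to \St(\rD_\ell,B/I))$, which by definition equals $\Img(\mu_B)$; choose $\widetilde g\in\St(\rD_\ell,B,I)$ with $\mu_B(\widetilde g)=g$.

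Next, I would pass to the relative Steinberg group over $R$ and set $\widetilde g_R=\varphi(\widetilde g)$. Commutativity of the square in \cref{lem:lemma32} gives $\mu_R(\widetilde g_R)=j_R(\mu_B(\widetilde g))=j_R(g)=1$, so $\widetilde g_R\in C(\rD_\ell,R,I)$. By \cref{lem:prop41} (this is precisely where the hypothesis $2\in A^\times$ and $\ell\ge 7$ enter), there exists $c\in C(\rD_\ell,B,I)$ with $\varphi(c)=\widetilde g_R$. Using the diagonal arrow $t\colon \St(\rD_\ell,R,I)\to\St(\rD_\ell,B)$ from \cref{lem:lemma32}, whose defining property is $t\circ\varphi=\mu_B$, we compute
\[
g \;=\; \mu_B(\widetilde g) \;=\; t(\varphi(\widetilde g)) \;=\; t(\widetilde g_R) \;=\; t(\varphi(c)) \;=\; \mu_B(c).
\]
Since $c\in C(\rD_\ell,B,I)=\Ker(\mu_B)$, this yields $g=1$, completing the proof.

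Conceptually, the main work has already been shouldered by \cref{lem:prop41,lem:lemma32}, so the remaining argument is a bookkeeping exercise. The only subtle point I would double-check is the commutativity $t\circ\varphi=\mu_B$, which is the ``left triangle'' of \cref{lem:lemma32} and is exactly what allows the lift $c$ of $\widetilde g_R$ to cancel $\widetilde g$ upstairs. No additional computations with generators and relations of $\St(\rD_\ell,-)$ are needed at this stage.
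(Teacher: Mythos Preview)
Your argument is correct and is essentially identical to the paper's own proof: both set up the same diagram chase using the exact sequence~\eqref{LodayRelativization}, invoke \cref{field-injectivity} to lift $g$ to the relative group, push to $C(\rD_\ell,R,I)$, lift back via \cref{lem:prop41}, and conclude using the identity $t\circ\varphi=\mu_B$ from \cref{lem:lemma32}. Your presentation merely streamlines the paper's two-step lift (first to $C(\rD_\ell,R,I)$, then to $C(\rD_\ell,B,I)$) into one, and correctly isolates where the hypotheses $2\in A^\times$ and $\ell\ge 7$ are used.
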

\begin{proof}
 Consider the following commutative diagram with exact rows obtained from~\eqref{LodayRelativization}:
\[\begin{tikzcd} C(\rD_\ell, B, I) \arrow{r}{\lambda_B} \arrow[d, swap, "f", twoheadrightarrow] & \St(\rD_\ell, B, I) \arrow{r}{\mu_B} \arrow{d}[swap]{g} & \St(\rD_\ell, B) \arrow{r}{\nu_B} \arrow{d}[swap]{h} & \St(\rD_\ell, k[X^{-1}]) \arrow{d}[swap]{i} \\ C(\rD_\ell, R, I) \arrow{r}{\lambda_R} & \St(\rD_\ell, R, I) \arrow{r}{\mu_R} \arrow[ur, "t", dashrightarrow] & \St(\rD_\ell, R) \arrow{r}{\nu_R} & \St(\rD_\ell, k[X, X^{-1}]).\end{tikzcd}\]
The lifting $t$ in the above diagram is obtained from~\cref{lem:lemma32}.
Let $a$ be an element of $\Ker(h)$. Since $i$ is injective by~\cref{field-injectivity}, the element $a$
 also lies in $\Ker(\nu_B)$ and hence comes from some $b \in \St(\rD_\ell, B, I)$ via $\mu_B$.
Since $g(b) \in \Ker(\mu_R)$ there exists some $c \in C(\rD_\ell, R, I)$ such that $\lambda_R(c) = g(b)$. 
By~\cref{lem:prop41}, $f$ is surjective, therefore $c = f(d)$ for some $d \in C(\rD_\ell, B, I)$.
The required assertion now follows from the following calculation:
 \[ 1 = \mu_B\lambda_B(d) = tg\lambda_B(d) = t\lambda_Rf(d) =t(g(b)) = \mu_B(b) = a. \qedhere \]
\end{proof}

\section{Elementary calculations in relative Steinberg groups}\label{sec:elementary}
Throughout this section $\Phi$ denotes an irreducible root system of rank $\geq 2$, $R$ a commutative ring, and $I, J$ denote a pair of ideals of $R$.
Unless stated otherwise we assume $\Phi$ to be simply laced.

We denote by $\overline{\St}(\Phi, R, I)$ the kernel of the homomorphism $\St(\Phi, R) \to \St(\Phi, R/I)$.
This group coincides with the image in $\St(\Phi, R)$ of the relative group $\St(\Phi, R, I)$ defined in~\cref{sec:quillen}.

\subsection{Generators of relative Steinberg groups}
Denote by $\St(\Phi, I)$ the subgroup of $\St(\Phi, R)$ generated as a group by all root subgroups $X_\alpha(I)$, $\alpha\in\Phi$.
It is clear that $\overline{\St}(\Phi, R, I)$ contains $\St(\Phi, I)$ and, in fact, is its normal closure.
We also denote by $\myol{H}(\Phi, R, I)$ the subgroup of $\overline{\St}(\Phi, R, I)$ generated by the semisimple root elements $h_\alpha(u)$ and symbols $\{u, v\}$, $u \in (1+I)^\times$, $v \in R^\times$, $\alpha\in \Phi$.

For $s\in I$, $\xi \in R$, $t \in J$ we define the following two families of elements of $\overline{\St}(\Phi, R, I)$ 
\begin{align}
 z_\alpha(s, \xi) &= x_\alpha(s)^{x_{-\alpha}(\xi)}, \label{Zdef}\\
 c_\alpha(s, t)   &= [x_\alpha(s), x_{-\alpha}(t)]. \label{Cdef}
\end{align}

\begin{lemma}\label{Zrels} Let $\Phi$ be a simply laced root system.
The elements $z_\alpha(s, \xi)$ satisfy the following relations for all $\xi, \eta\in R$, $s\in I$:
\begin{enumerate} [label=\normalfont(\arabic*)]
\item\label{Z1} $z_{\alpha}(s, \xi) ^ {x_{-\alpha}(\eta)} = z_{\alpha}(s, \xi + \eta)$;
\item\label{Z2} $z_{\beta}(s, \xi) ^ {x_{\alpha}(\eta)} = x_{\alpha} (- s\xi \eta) \cdot x_{\alpha+\beta} (N_{\beta, \alpha}\cdot s\eta)     \cdot z_{\beta}(s, \xi)\ \text{if}\ \alpha + \beta \in \Phi$;
\item\label{Z3} $z_{\beta}(s, \xi) ^ {x_{\alpha}(\eta)} = x_{\alpha} (s\xi \eta) \cdot x_{\alpha-\beta} (N_{\beta,-\alpha}\cdot s\xi^2\eta) \cdot z_{\beta}(s, \xi)\ \text{if}\ \alpha - \beta \in \Phi$;

\item\label{Z4} $z_{\beta}(s, \xi) ^ {x_{\alpha}(\eta)} = z_{\beta}(s, \xi)\ \text{if}\ \alpha\perp\beta$;
\item\label{Z5} If $\alpha+\beta\in\Phi$ then
\begin{multline*} z_{\alpha+\beta}(s\eta, \xi) = x_\alpha(\epsilon s)\cdot x_{-\beta}(-s\xi) \cdot x_{\beta}(s\xi\eta^2) \cdot x_{\alpha+\beta}(s \eta) \cdot \\ \cdot z_\alpha(-\epsilon s, -\epsilon \xi\eta) \cdot
  x_{-\alpha}(-\epsilon s\xi^2\eta^2) \cdot x_{-\alpha-\beta}(- s \xi^2 \eta) \cdot z_{-\beta}(s\xi, -\eta),\end{multline*}
 where $\epsilon = N_{\alpha,\beta}$.
\end{enumerate} \end{lemma}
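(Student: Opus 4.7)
The proof strategy is to unfold $z_\alpha(s,\xi)=x_{-\alpha}(-\xi)\,x_\alpha(s)\,x_{-\alpha}(\xi)$ and carry out explicit manipulations with the Chevalley commutator formulas \eqref{Chevalley-CCF1} and \eqref{Chevalley-CCF2}, exploiting the fact that in a simply-laced root system the roots $\alpha$ and $\beta$ have the same length, so at most one of $\alpha\pm\beta$ lies in $\Phi$; the other pair of root subgroups is then forced to commute outright. Throughout I will freely use the structure-constant identities \eqref{eq:simplest}, which control the various signs.

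For \ref{Z1} the assertion is an immediate consequence of additivity \eqref{Steinberg-additivity} together with the obvious identity $(g^a)^b=g^{ab}$ applied to $g=x_\alpha(s)$ and $a=x_{-\alpha}(\xi)$, $b=x_{-\alpha}(\eta)$. For \ref{Z4}, since $\alpha\perp\beta$ implies both $\alpha+\beta\notin\Phi$ and $\alpha-\beta\notin\Phi$, the element $x_\alpha(\eta)$ commutes with both $x_\beta(s)$ and $x_{-\beta}(\xi)$, hence with $z_\beta(s,\xi)$. The items \ref{Z2} and \ref{Z3} are proved by a direct expansion: write out $x_\alpha(-\eta)\cdot x_{-\beta}(-\xi)\,x_\beta(s)\,x_{-\beta}(\xi)\cdot x_\alpha(\eta)$, and push $x_\alpha(\pm\eta)$ through the middle using the Chevalley commutator formula. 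Exactly one of the two commutators $[x_\alpha,x_\beta]$ or $[x_\alpha,x_{-\beta}]$ is nontrivial (the one corresponding to the root in $\Phi$), and tracking the $N$-constant with \eqref{eq:simplest} reproduces the stated right-hand side. The signs that appear in \ref{Z3} follow from the identity $N_{\alpha,-\beta}=N_{\beta,-\alpha}$, which is an easy consequence of \eqref{eq:simplest}.

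The identity \ref{Z5} is the main obstacle and requires a more structural approach. The idea is to replace $x_{\alpha+\beta}(s\eta)$ inside $z_{\alpha+\beta}(s\eta,\xi)=x_{-\alpha-\beta}(-\xi)\,x_{\alpha+\beta}(s\eta)\,x_{-\alpha-\beta}(\xi)$ by its expression as a commutator
\[
x_{\alpha+\beta}(s\eta)=\bigl[x_\alpha(\epsilon s),\,x_\beta(\eta)\bigr],\qquad\epsilon=N_{\alpha,\beta},
\]
and use that conjugation distributes over commutators to get
\[
z_{\alpha+\beta}(s\eta,\xi)=\bigl[\,x_\alpha(\epsilon s)^{x_{-\alpha-\beta}(\xi)},\ x_\beta(\eta)^{x_{-\alpha-\beta}(\xi)}\,\bigr].
\]
Each conjugate simplifies by a single application of \eqref{Chevalley-CCF1}: since $\alpha+(-\alpha-\beta)=-\beta\in\Phi$ and $\beta+(-\alpha-\beta)=-\alpha\in\Phi$, and $x_{-\beta}$, $x_{-\alpha}$ both commute with $x_{-\alpha-\beta}$, one obtains
\[
x_\alpha(\epsilon s)^{x_{-\alpha-\beta}(\xi)}=x_{-\beta}(-s\xi)\,x_\alpha(\epsilon s),\qquad x_\beta(\eta)^{x_{-\alpha-\beta}(\xi)}=x_{-\alpha}(\epsilon\xi\eta)\,x_\beta(\eta).
\]

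It remains to expand the commutator of these two two-term products, bringing the result into the prescribed ordering on the right-hand side of \ref{Z5}. This is the laborious step: one repeatedly applies \eqref{eq:H1ii}--\eqref{eq:H1ii-2} to split the commutator into commutators of single $x_\gamma$-factors, uses \eqref{Chevalley-CCF2} to kill those for which the sum of roots is not in $\Phi$ (here simply-lacedness and the fact that $\alpha+\beta\in\Phi$ give several vanishings, e.g.\ $\alpha-\beta,\alpha+2\beta,2\alpha+\beta\notin\Phi$), and applies \eqref{Chevalley-CCF1} with careful sign bookkeeping via \eqref{eq:simplest} to reduce the surviving commutators to single root elements. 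Collecting the resulting factors and recognising two of them as $z_\alpha(-\epsilon s,-\epsilon\xi\eta)$ and $z_{-\beta}(s\xi,-\eta)$ yields the stated formula. The principal obstacle is purely combinatorial: identifying a rewriting order that keeps the intermediate expressions manageable and matches the ordering written in \ref{Z5}.
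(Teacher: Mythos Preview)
Your proposal is correct and shares the paper's key idea for \ref{Z5}: write $x_{\alpha+\beta}(s\eta)=[x_\alpha(\epsilon s),x_\beta(\eta)]$ and distribute the conjugation by $x_{-\alpha-\beta}(\xi)$ over the commutator, obtaining $[x_\alpha(\epsilon s)\,x_{-\beta}(-s\xi),\,x_\beta(\eta)\,x_{-\alpha}(\epsilon\xi\eta)]$ (the paper cites \cite{S15} for \ref{Z1}--\ref{Z4}, which you sketch directly).

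The finishing step differs. Your plan to ``reduce the surviving commutators to single root elements'' cannot quite succeed as stated: when you split $[AB,CD]$ into single-factor commutators, two of them --- $[x_\alpha(\epsilon s),x_{-\alpha}(\epsilon\xi\eta)]$ and $[x_{-\beta}(-s\xi),x_\beta(\eta)]$ --- involve opposite roots and do not simplify via \eqref{Chevalley-CCF1}; these are precisely what become the $z$-factors. The paper avoids the brute-force expansion entirely: since $x_\alpha(\epsilon s)$ commutes with $x_{-\beta}(-s\xi)$ and $x_\beta(\eta)$ commutes with $x_{-\alpha}(\epsilon\xi\eta)$, one can directly factor
\[
[x_\alpha(\epsilon s)\,x_{-\beta}(-s\xi),\,x_\beta(\eta)\,x_{-\alpha}(\epsilon\xi\eta)]
= x_\alpha(\epsilon s)\cdot x_{-\beta}(-s\xi)\cdot z_\alpha(-\epsilon s,-\epsilon\xi\eta)^{x_\beta(-\eta)}\cdot z_{-\beta}(s\xi,-\eta)^{x_{-\alpha}(-\epsilon\xi\eta)},
\]
and then apply the already-proven \ref{Z2} to each conjugated $z$. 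This yields the stated formula in one further line, with no combinatorial bookkeeping.
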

\begin{proof}
The first four assertions are contained in~\cite[Lemma~9]{S15}, so it remains to verify the last assertion.
Notice that by~\eqref{Chevalley-CCF2} the elements $x_\beta(\eta)$ and $x_{-\alpha}(\epsilon s \eta)$ commute with each other, therefore from~\eqref{Chevalley-CCF1}--\eqref{Chevalley-CCF2} we obtain that
\begin{multline} \nonumber
  z_{\alpha+\beta}(s\eta, \xi) = [x_\alpha(\epsilon s)^{x_{-\alpha-\beta}(\xi)}, x_\beta(\eta)^{x_{-\alpha-\beta}(\xi)}] = \\
  = [x_\alpha(\epsilon s) x_{-\beta}(-s\xi), x_{\beta}(\eta) x_{-\alpha}(\epsilon \xi\eta)] = \\ 
  = x_\alpha(\epsilon s) \cdot x_{-\beta}(-s\xi) \cdot z_\alpha(-\epsilon s, -\epsilon \xi\eta)^{x_{\beta}(-\eta)} \cdot z_{-\beta}(s\xi, -\eta)^{x_{-\alpha}(-\epsilon \xi\eta)},
\end{multline} 
and the required assertion follows from~\ref{Z2}.
\end{proof}

Let us mention an immediate application of~\cref{Zrels}.
First of all, recall the following two results which give two different generating sets for the group $\overline{\St}(\Phi, R, I)$ (notice that both results apply to general irreducible $\Phi$ of rank $\geq 2$, i.\,e. not necessarily simply-laced ones).
\begin{externaltheorem}[Stein--Tits--Vaserstein] \label{thm:Tits} The group $\overline{\St}(\Phi, R, I)$ is generated (as an abstract group) by elements $z_\alpha(s, \xi)$, $\alpha \in \Phi$, $s \in I$, $\xi \in R$. \end{externaltheorem}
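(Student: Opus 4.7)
The plan is to show that the subgroup $N \leq \St(\Phi, R)$ generated by all $z_\alpha(s,\xi)$, $\alpha\in\Phi$, $s\in I$, $\xi\in R$, coincides with $\overline{\St}(\Phi, R, I)$. The containment $N \subseteq \overline{\St}(\Phi, R, I)$ is immediate, since each $z_\alpha(s, \xi) = x_\alpha(s)^{x_{-\alpha}(\xi)}$ is a conjugate of the level-$I$ element $x_\alpha(s)$. For the reverse containment, taking $\xi = 0$ in~\eqref{Zdef} gives $z_\alpha(s, 0) = x_\alpha(s)$, so $N \supseteq \St(\Phi, I)$. Since the remarks preceding the statement identify $\overline{\St}(\Phi, R, I)$ with the normal closure of $\St(\Phi, I)$ in $\St(\Phi, R)$, the entire theorem reduces to verifying that $N$ is a normal subgroup of $\St(\Phi, R)$.

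To establish normality it suffices to check that $z_\alpha(s,\xi)^{x_\beta(\eta)} \in N$ for every pair $\alpha, \beta \in \Phi$ and every $\eta \in R$. The argument splits into three cases. When $\beta = -\alpha$, item~(1) of~\cref{Zrels} gives the conjugate as $z_\alpha(s, \xi + \eta) \in N$ on the nose. When $\beta \neq \pm\alpha$, a direct expansion using the Chevalley commutator formula~\eqref{rel:CCF} rewrites $z_\alpha(s,\xi)^{x_\beta(\eta)}$ as a product whose factors are either of the form $x_\mu(c)$ with $c \in I$ (hence $z_\mu(c, 0) \in N$) or another $z$-element; for simply-laced $\Phi$ this is precisely the content of items~(2)--(4) of~\cref{Zrels}, and for doubly-laced $\Phi$ the same scheme works with additional terms $x_{i\alpha + j\beta}(\cdot)$, $i + j \geq 2$, whose coefficients remain divisible by $s \in I$.

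The main obstacle is the remaining case $\beta = \alpha$. A naive expansion
\[ z_\alpha(s, \xi)^{x_\alpha(\eta)} = x_\alpha(-\eta) \cdot x_{-\alpha}(-\xi) \cdot x_\alpha(s) \cdot x_{-\alpha}(\xi) \cdot x_\alpha(\eta) \]
introduces conjugates of $x_{-\alpha}(\pm\xi)$ by $x_\alpha(\eta)$ whose coefficients need not lie in $I$, and these are not visibly in $N$. I would bypass this obstruction by exploiting the hypothesis $\mathrm{rk}(\Phi)\geq 2$: one can pick roots $\gamma, \delta \in \Phi$ with $\gamma + \delta = \alpha$ and $\{\gamma, \delta\} \cap \{\pm\alpha\} = \emptyset$ (such a pair always exists in rank $\geq 2$). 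Then~\eqref{Chevalley-CCF1} yields $x_\alpha(s) = [x_\gamma(N_{\gamma,\delta}^{-1} s), x_\delta(1)]$, so $z_\alpha(s, \xi)$ rewrites as a commutator of conjugates by $x_{-\alpha}(\xi)$ of elements of the rank-$2$ subsystem spanned by $\gamma$ and $\delta$. Conjugating this commutator by $x_\alpha(\eta)$ involves only conjugations of root elements $x_\mu(\cdot)$ with $\mu \neq \pm\alpha$, which reduces matters to the already-settled Case~2. The bookkeeping for this reduction is precisely where the Hall--Witt-type identity~\eqref{HW-variant} and the commutator identity~\eqref{eq:H1iii} come in: they allow one to commute the various factors past each other without reintroducing $\pm\alpha$-terms. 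This rank-$2$ decomposition trick is the key device in the original arguments of Stein, Tits, and Vaserstein.
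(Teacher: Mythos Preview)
The paper does not actually prove this statement: its entire proof reads ``See e.\,g.~[Va86, Theorem~2].'' Your sketch is a faithful outline of the standard argument found in that reference (and in Stein, Tits), so you are essentially reproducing what the paper defers to the literature.

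One point where your write-up is loose: in the case $\beta=\alpha$ you say that writing $x_\alpha(s)=[x_\gamma(\cdot),x_\delta(1)]$ and conjugating by $x_\alpha(\eta)$ ``reduces matters to the already-settled Case~2.'' This is not a single-step reduction. After rewriting $x_\alpha(\eta)$ as a word in $x_\gamma,x_\delta$ (or, equivalently, expanding the conjugated commutator), one must conjugate iteratively and track how each intermediate factor transforms. Along the way one meets terms like $x_\gamma(I)$ about to be conjugated by $x_\gamma(R)$ --- covered neither by your Case~1 nor Case~2, but harmless because root subgroups are abelian --- and new $z$-generators such as $z_{-\delta}(\cdot,\cdot)$ arising from Case~1 applied mid-stream, for which one must verify that the \emph{subsequent} conjugations again avoid the forbidden same-root configuration. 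In the simply-laced setting this bookkeeping closes because the conjugating roots lie in $\{\gamma,\delta\}$ and $\gamma\neq\pm\delta$, but it needs a short induction rather than a bare appeal to Case~2. Your non-simply-laced remark is likewise correct in spirit but would need the doubly-laced analogues of \cref{Zrels}(2)--(4), which you do not supply.
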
 \begin{proof} See e.\,g.~\cite[Theorem 2]{Va86}. \end{proof}

\begin{df} \label{df:root-subsets}
Recall from~\cite[Ch.~VI, \S~1.7]{Bou81} that a root subset $S \subseteq \Phi$ is called {\it closed} if $\alpha, \beta \in S$ and $\alpha+\beta\in \Phi$ imply $\alpha + \beta \in S$. 
Recall that a closed root subset $S$ is called {\it parabolic} (resp. {\it symmetric}, resp. {\it special}) if $S \cup -S = \Phi$ (resp. $S = -S$, resp. $S \cap (-S) = \varnothing$).
The {\it special part} $\Sigma_S$ of a parabolic subset $S$, by definition, consists of all $\alpha \in S$ such that $-\alpha \not\in S$.
\end{df}

For a subset of roots $U \subseteq \Phi$ we denote by $\mathcal{Z}(U, R, I)$ the set consisting of elements $x_\alpha(s)$, $s \in I$, $\alpha \in \Phi$ and $z_\alpha(s, \xi)$, $\alpha \in U$, $s\in I$, $\xi \in R$.
\begin{externaltheorem}[Stepanov] \label{thm:Stepanov} 
Let $S \subseteq \Phi$ be a parabolic subset of $\Phi$. Then the group $\overline{\St}(\Phi, R, I)$ is generated by the set $\mathcal{Z}(\Sigma_S, R, I)$.
 \end{externaltheorem}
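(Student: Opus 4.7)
The plan is to combine~\cref{thm:Tits} with the decomposition formula~(5) of~\cref{Zrels} to trim the Stein--Tits--Vaserstein generating set. Let $H$ denote the subgroup of $\overline{\St}(\Phi, R, I)$ generated by $\mathcal{Z}(\Sigma_S, R, I)$; by~\cref{thm:Tits} it suffices to check that $z_\alpha(s, \xi)\in H$ for every $\alpha \in \Phi\setminus\Sigma_S$, $s\in I$, $\xi\in R$.

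The parabolic $S$ is cut out by an $\mathbb{R}$-linear functional $\lambda$ on the ambient Euclidean space, so that $\Sigma_S = \{\lambda > 0\}$ and $\Phi\setminus\Sigma_S$ splits into the Levi part $\Phi_L := \{\lambda = 0\} = S\cap (-S)$ and the negative part $-\Sigma_S = \{\lambda < 0\}$. I would handle these two pieces in succession, first establishing $z_\alpha(s, \xi)\in H$ for all $\alpha \in \Phi_L$ and then using this to treat $\alpha\in-\Sigma_S$. The engine in both stages is formula~(5) of~\cref{Zrels}: specialising $\eta = 1$, its right-hand side expresses $z_{\gamma+\delta}(s, \xi)$ as a product of several $x_\bullet(\cdot)$-terms (which always belong to $\mathcal{Z}(\Sigma_S, R, I)$ since their $R$-parameters lie in $I$) together with exactly two $z$-factors, $z_\gamma(-\epsilon s, -\epsilon \xi)$ and $z_{-\delta}(s\xi, -1)$. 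Consequently, $z_\alpha(s, \xi) \in H$ follows from any decomposition $\alpha = \gamma + \delta$, $\gamma+\delta\in\Phi$, for which both $z_\gamma$ and $z_{-\delta}$ are already known to lie in $H$.

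For the first stage I would produce, for each $\alpha \in \Phi_L$, a decomposition $\alpha = \gamma + \delta$ with $\gamma, -\delta \in \Sigma_S$; both resulting $z$-factors are then direct generators of $H$. For the second stage, $\alpha\in-\Sigma_S$, one may have to allow $\gamma$ or $-\delta$ to sit in the Levi part, in which case the corresponding $z$-factor is already available by the conclusion of the first stage. The main technical point is therefore the purely combinatorial root-system claim that such decompositions always exist as soon as the rank of $\Phi$ is at least two; I would deduce it from the Bourbaki description~\cite[Ch.~VI, \S~1.7]{Bou81} of parabolic subsets together with a short root-string inspection (the case of $\alpha$ orthogonal to $\Sigma_S$ being easy to dispose of separately). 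For non-simply-laced $\Phi$ the analogue of formula~(5) of~\cref{Zrels} picks up extra terms coming from the full Chevalley commutator formula~\eqref{rel:CCF}, but the overall scheme is unchanged; this general case is why the theorem is attributed to Stepanov.
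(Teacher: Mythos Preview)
Your proposal is correct and matches essentially what the paper does. The paper itself merely cites~\cite[Lemma~4]{S15} for the proof, but the remark immediately following the theorem sketches exactly your argument in the simply-laced case: one introduces the operator $d(U) = U \cup \bigl((U-U)\cap\Phi\bigr)$, observes via~\cref{Zrels}\ref{Z5} that any group containing $\mathcal{Z}(U,R,I)$ also contains $\mathcal{Z}(dU,R,I)$, and then notes the combinatorial fact that $d^2(\Sigma_S)=\Phi$. Your two-stage treatment (first the Levi part $\Phi_L$, then $-\Sigma_S$) is precisely the unwinding of the two applications of $d$, and your specialisation $\eta=1$ in~\cref{Zrels}\ref{Z5} is the same mechanism the remark invokes. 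Your acknowledgement that the non-simply-laced case requires the full Chevalley commutator formula and is attributed to Stepanov is also in line with the paper's presentation.
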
 \begin{proof} See~\cite[Lemma~4]{S15}. \end{proof}

\begin{rem} We claim that in the simply-laced case the stronger \cref{thm:Stepanov} can be deduced from~\cref{thm:Tits} by means of~\cref{Zrels}. Indeed, consider the operator $d \colon 2^\Phi \to 2^\Phi$ of root subsets, whose value on each $U \subseteq \Phi$ is given by \[d(U) = U \cup \left((U - U)\cap \Phi\right).\]
Here $(U-U) \cap \Phi$ denotes the set of all differences of roots from $U$ which are themselves roots. It is not hard to show that for any parabolic subset $S \subseteq \Phi$ the subset $\Sigma_S$ has the property that $d^n(\Sigma_S) = \Phi$ for some $n>1$ (in fact, $n=2$). It remains to see that \cref{Zrels}\ref{Z5} immediately implies that every group $G$ containing $\mathcal{Z}(U, R, I)$ also contains $\mathcal{Z}(dU, R, I)$. \end{rem}

\begin{lemma} \label{Crels} Let $\Phi$ be a simply-laced root system.
The elements $c_\alpha(s, t)$ satisfy the following relations for all $s\in I,\ t\in J,\ \xi\in R$:
 \begin{enumerate} [label=\normalfont(\arabic*)]
 \item \label{C1} $[c_\beta(s, t),\ x_{\alpha}(\xi)] = x_{\alpha}(- st\xi) \cdot x_{\alpha+\beta}(N_{\alpha,\beta}\cdot s^2t\xi)$ if $\alpha+\beta \in \Phi$;
 \item \label{C2} $[c_\beta(s, t),\ x_{\alpha}(\xi)] = x_{\alpha}(st\xi + s^2t^2\xi) \cdot x_{\alpha-\beta}(N_{-\alpha, \beta}\cdot st^2\xi)$ if $\alpha-\beta \in \Phi$;  
 \item \label{C3} $[c_\beta(s, t),\ x_{\alpha}(\xi)] = 1$ if $\alpha \perp \beta$;  
 \item \label{C4} If $\alpha+\beta\in\Phi$ then
  \begin{equation*} c_{\alpha+\beta}(s, t\xi) = [x_{\beta}(st),\ x_{-\beta}(\xi)] ^ {x_{\alpha+\beta}(-s) x_{-\alpha}(\epsilon t)} \cdot c_{\alpha}(\epsilon s\xi, -\epsilon t)^{-1} \cdot x_{-\beta}(-st\xi^2),\end{equation*}
  where $\epsilon = N_{\alpha,\beta}$.
 \end{enumerate}
\end{lemma}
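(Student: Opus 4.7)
All four identities can be verified by direct commutator calculations relying only on the Steinberg relations~\eqref{Steinberg-additivity}--\eqref{Chevalley-CCF2}, the general commutator identities~\eqref{rel43}--\eqref{eq:H1iii}, and the sign identities~\eqref{eq:simplest}.

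For parts~(1)--(3) the plan is to apply the Hall--Witt formula~\eqref{HW-variant} to the triple commutator $[[x_\beta(s),x_{-\beta}(t)],x_\alpha(\xi)]$. Its expansion features two inner brackets, $[x_{-\beta}(-t),x_\alpha(\xi)]$ and $[x_\alpha(-\xi),x_\beta(-s)]$. The decisive geometric observation is that in a simply-laced root system at most one of $\alpha+\beta$, $\alpha-\beta$ can lie in~$\Phi$, as the $\beta$-string through~$\alpha$ has length at most two. Consequently, in case~(1) only the second inner bracket survives, in case~(2) only the first does, and in case~(3) both vanish by~\eqref{Chevalley-CCF2}, immediately yielding the trivial right-hand side. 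In the two remaining cases the surviving inner bracket collapses to a single root element $x_{\alpha\pm\beta}(\ast)$ via~\eqref{Chevalley-CCF1}; one then propagates the outer conjugations past the resulting element using~\eqref{eq:H1ii}--\eqref{eq:H1ii-2} together with~\eqref{Chevalley-CCF1}--\eqref{Chevalley-CCF2}, and reassembles the product via~\eqref{Steinberg-additivity} into the stated form.

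For~(4) the plan is to apply~\eqref{Chevalley-CCF1} in reverse and rewrite $x_{-\alpha-\beta}(t\xi)=[x_{-\alpha}(-\epsilon t),x_{-\beta}(\xi)]$, which is justified since $N_{-\alpha,-\beta}=-\epsilon$ by~\eqref{eq:simplest}. Then $c_{\alpha+\beta}(s,t\xi)$ becomes a triple commutator whose outer bracket one expands iteratively by means of~\eqref{rel43} and~\eqref{eq:H1ii-2}. The two basic mixed commutators $[x_{\alpha+\beta}(s),x_{-\alpha}(-\epsilon t)]$ and $[x_{\alpha+\beta}(s),x_{-\beta}(\xi)]$ evaluate directly by~\eqref{Chevalley-CCF1} to root elements along~$\beta$ and~$\alpha$ respectively, and after repeated rearrangement using the commutator identities these factors regroup into the three summands on the right-hand side. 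The conjugator $x_{\alpha+\beta}(-s)\cdot x_{-\alpha}(\epsilon t)$ arises naturally as the accumulated product of ``residual'' root elements that gets carried past $[x_\beta(st),x_{-\beta}(\xi)]$; the leftover tail $x_{-\beta}(-st\xi^2)$ records the contribution of the double commutator nested inside that factor, and $c_\alpha(\epsilon s\xi,-\epsilon t)^{-1}$ is what remains once all other terms have been isolated.

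The principal obstacle throughout is sign bookkeeping. The expansions generate several structure constants, such as $N_{-\beta,\alpha+\beta}$, $N_{\alpha+\beta,-\beta}$, $N_{-\alpha,-\beta}$ and $N_{\beta,-\alpha-\beta}$, each of which must be rewritten in terms of the single constant $\epsilon=N_{\alpha,\beta}$ by repeated application of~\eqref{eq:simplest}; in part~(4) one additionally needs the $\rA_3$ cocycle relation~\eqref{eq:cocycle} whenever a rank-three sub-system intervenes. Beyond the careful tracking of these constants, no new conceptual input is required and the verification is purely mechanical.
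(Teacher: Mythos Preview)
Your outline matches the paper's approach: direct commutator manipulation using the Steinberg relations and Hall--Witt. Two small corrections are worth noting. First, for part~(1) the paper does not invoke~\eqref{HW-variant} but uses a shorter direct argument (rewriting $[c_\beta(s,t),x_\alpha(\xi)]$ as $c_\beta(s,t)\cdot{}^{x_\alpha(\xi)}c_\beta(s,t)^{-1}$ and using that $x_\alpha$ commutes with $x_{-\beta}$ when $\alpha-\beta\notin\Phi$); Hall--Witt is reserved for part~(2). Second, your claim that the cocycle relation~\eqref{eq:cocycle} is needed in part~(4) is incorrect: only the rank-two subsystem spanned by $\alpha,\beta$ is involved, and every structure constant that appears ($N_{-\alpha,-\beta}$, $N_{\alpha+\beta,-\alpha}$, $N_{\alpha+\beta,-\beta}$, etc.) reduces to $\pm\epsilon$ via~\eqref{eq:simplest} alone. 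The paper also opens the computation in~(4) with~\eqref{HW-variant} rather than an iterative expansion via~\eqref{rel43}--\eqref{eq:H1ii-2}, which organises the terms more cleanly.
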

\begin{proof}
Notice that~\ref{C3} is an obvious consequence of~\eqref{Chevalley-CCF2}. Let us verify the first two assertions. Suppose that $\alpha + \beta \in \Phi$.  We have that
\begin{align*} \nonumber [c_{\beta}(s, t),\ x_{\alpha}(\xi)] = [x_{\beta}(s),\ x_{-\beta}(t)] \cdot [x_{-\beta}(t),\ x_{\beta}(s)\cdot x_{\alpha + \beta}(N_{\alpha, \beta} \cdot s\xi)] &\text{ by~\eqref{Cdef}, \eqref{Chevalley-CCF1}, \eqref{Chevalley-CCF2}} \\
 = {}^{x_\beta(s)}\![x_{-\beta}(t),\ x_{\alpha+\beta}(N_{\alpha, \beta} \cdot s\xi)] &\text{ by~\eqref{eq:H1ii-2}} \\ = x_{\alpha}(- st\xi) \cdot x_{\alpha+\beta}(N_{\alpha,\beta} \cdot s^2t\xi) &\text{ by~\eqref{Chevalley-CCF1}.} \end{align*}
 Now suppose that $\alpha - \beta \in \Phi$. We have that
\begin{align*} \nonumber
\begin{split}[[x_\beta(s),\ x_{-\beta}(t)],\ x_{\alpha}(\xi)] = {}^{x_\beta(s)}\!\bigl( {}^{x_{-\beta}(t)}[x_\beta(-s),\ [x_{-\beta}(-t),\ x_{\alpha}(\xi)]] \cdot \hspace{50pt} \\ \cdot {}^{x_{\alpha}(\xi)}[x_{-\beta}(t),\ [ x_{\alpha}(-\xi),\ x_\beta(-s)]] \bigr)^{} & \text{ by~\eqref{HW-variant}}\end{split} \\
= {}^{x_\beta(s) \cdot x_{-\beta}(t)}x_{\alpha}\left(-N_{\beta, \alpha-\beta} \cdot N_{\alpha,-\beta} \cdot s t \xi\right) = {}^{ x_\beta(s) \cdot x_{-\beta}(t)}x_{\alpha}(s t \xi) & \text{ by~\eqref{Chevalley-CCF1},\eqref{Chevalley-CCF2}}\\
= x_{\alpha}(st\xi) \cdot x_{\alpha-\beta}(-N_{\alpha,-\beta}\cdot st^2\xi)^{x_\beta(-s)} = x_{\alpha}(st\xi + s^2t^2 \xi) \cdot x_{\alpha-\beta}(N_{-\alpha,\beta}\cdot st^2\xi) & \text{ by~\eqref{Chevalley-CCF1}.} \end{align*}
Finally, let us prove~\ref{C4}. Suppose that $\alpha + \beta \in \Phi$. The required identity can be obtained as follows:
\begin{align*}
[x_{\alpha+\beta}(s),\ x_{-\alpha-\beta}(t\xi)] = [x_{\alpha+\beta}(s),\ [x_{-\alpha}(-\epsilon t),\ x_{-\beta}(\xi)]]&\ \text{by~\eqref{Chevalley-CCF1}}\\
\begin{split}
    ={}^{x_{-\alpha}(-\epsilon t)}\!\bigl( {}^{x_{\alpha+\beta}(s)}[[x_{\alpha+\beta}(-s),\ x_{-\alpha}(\epsilon t)],\ x_{-\beta}(\xi)]  \cdot \hspace{70pt} \\
        \cdot {}^{x_{-\beta}(\xi)}[[ x_{-\beta}(-\xi),\ x_{\alpha+\beta}(s)],\ x_{-\alpha}(\epsilon t)]\bigr) &\ \text{by~\eqref{HW-variant}}
\end{split}\\
= \left({}^{x_{\alpha+\beta}(s)}[x_{\beta}(st),\ x_{-\beta}(\xi)] \cdot {}^{x_{-\beta}(\xi)}[x_{\alpha}(\epsilon s\xi),\ x_{-\alpha}(\epsilon t)]\right)^{x_{-\alpha}(\epsilon t)} &\ \text{by~\eqref{Chevalley-CCF1}}\\
= \left({}^ {x_{\alpha+\beta}(s)}[x_{\beta}(st),\ x_{-\beta}(\xi)] \cdot [x_{\alpha}(\epsilon s\xi),\ x_{-\alpha}(\epsilon t) \cdot x_{-\alpha-\beta}(t\xi)]\right)^{x_{-\alpha}(\epsilon t)} &\ \text{by~\eqref{Chevalley-CCF1},\eqref{Chevalley-CCF2}}\\
= [x_{\beta}(st),\ x_{-\beta}(\xi)] ^ {x_{\alpha+\beta}(-s) x_{-\alpha}(\epsilon t)} \cdot [x_{-\alpha}(-\epsilon t),\ x_{\alpha}(\epsilon s\xi)] \cdot [x_{\alpha}(\epsilon s\xi),\ x_{-\alpha-\beta}(t\xi)] &\ \text{by~\eqref{rel43}}\\
= [x_{\beta}(st),\ x_{-\beta}(\xi)] ^ {x_{\alpha+\beta}(-s) x_{-\alpha}(\epsilon t)} \cdot c_{\alpha}(\epsilon s\xi, -\epsilon t)^{-1} \cdot x_{-\beta}(-st\xi^2)  &\ \text{by~\eqref{Chevalley-CCF1}}. \qedhere \end{align*}
\end{proof}

\subsection{Computation of the kernel of the map of evaluation at 0}\label{sec:computationOfK}
Let $A$ be a local ring with maximal ideal $M$.
The aim of this subsection is to describe a generating set for the kernel of the map $ev_{X=0}^*\colon\overline{\St}(\Phi, A[X], M[X]) \to \overline{\St}(\Phi, A, M)$
induced by the ring homomorphism of evaluation at $0$. We denote this kernel by $K(A[X], M[X])$.

It is obvious that $K(A[X], M[X])$ contains the subgroup $\overline{\St}(\Phi, A[X], XM[X])$.
It turns out that, although $K(A[X], M[X])$ is generally strictly larger than $\overline{\St}(\Phi, A[X], XM[X])$,
 it contains very few extra generators, which all can be explicitly described (see~\cref{Kgen} and the corollary that follows it). 

It follows from~\cref{Kdecomp1} below that $K(A[X], M[X])$ coincides with the commutator subgroup $[\overline{\St}(\Phi, A[X], M[X]), \overline{\St}(\Phi, A[X], XA[X])].$ Thus, if we replace relative Steinberg groups in the statement of~\cref{Kgen} with relative elementary groups, the resulting assertion turns into a special case of a much more general recent result of N.~Vavilov and Z.~Zhang (cf.~\cite[Theorem~1]{VZ18}).

Since $ev_{X=0}^*$ admits a section, we can consider $\overline{\St}(\Phi, A, M)$ and $\St(\Phi, M)$ as subgroups of $\overline{\St}(\Phi, A[X], M[X])$,
 moreover, one has 
\begin{equation} \label{eq:sd-decomp} \overline{\St}(\Phi, A[X], M[X]) = \overline{\St}(\Phi, A, M) \ltimes K(A[X], M[X]).\end{equation}
\begin{lemma} \label{Kdecomp1} The following decomposition holds:
 \[ K(A[X], M[X]) = \overline{\St}(\Phi, A[X], XM[X]) \cdot \left[\St(\Phi, XA[X]),\ \overline{\St}(\Phi, A, M)\right].\] \end{lemma}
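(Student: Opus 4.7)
The containment $L \cdot N \subseteq K := K(A[X], M[X])$, with $L := \overline{\St}(\Phi, A[X], XM[X])$ and $N := [\St(\Phi, XA[X]), \overline{\St}(\Phi, A, M)]$, is routine. Elements of $L$ are killed by evaluation at $X=0$ since $XM[X] \subseteq \Ker(ev_{X=0})$. Each commutator $[a, b]$ with $a \in \St(\Phi, XA[X])$ and $b \in \overline{\St}(\Phi, A, M)$ sits inside $\overline{\St}(\Phi, A[X], M[X])$ (reducing modulo $M[X]$ trivialises $b$) and inside $\Ker(ev_{X=0}^*)$ (since $ev_{X=0}^*$ trivialises $a$). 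Because $L$ is normal in $\St(\Phi, A[X])$, the product $H := L \cdot N$ is in fact a subgroup.

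For the reverse inclusion the strategy is to establish the global factorisation $\overline{\St}(\Phi, A[X], M[X]) = \overline{\St}(\Phi, A, M) \cdot H$. Given this, any $u \in K$ has a decomposition $u = y \cdot h$ with $y \in \overline{\St}(\Phi, A, M)$, $h \in H$, and applying $ev_{X=0}^*$ (which is trivial on $H \subseteq K$) forces $y = 1$, whence $u = h \in H$. By~\cref{thm:Tits} it suffices to put each generator $z_\alpha(s, \xi)$ in this form. Splitting $s = s(0) + X s'$ with $s(0) \in M$, $s' \in M[X]$, and $\xi = \xi(0) + X \xi'$ with $\xi(0) \in A$, $\xi' \in A[X]$, the multiplicativity $z_\alpha(s_1 + s_2, \xi) = z_\alpha(s_1, \xi) \cdot z_\alpha(s_2, \xi)$, the first identity of~\cref{Zrels}, and the group-theoretic relation $a^b = a \cdot [a^{-1}, b^{-1}]$ together yield
\[ z_\alpha(s, \xi) = z_\alpha(s(0), \xi(0)) \cdot [z_\alpha(-s(0), \xi(0)), x_{-\alpha}(-X\xi')] \cdot z_\alpha(X s', \xi), \]
a product of elements of $\overline{\St}(\Phi, A, M)$, $N$, and $L$ respectively.

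To upgrade this pointwise factorisation to the global one I need to verify that $H$ is normalised by $\overline{\St}(\Phi, A, M)$; this makes $\overline{\St}(\Phi, A, M) \cdot H$ into a subgroup, and by the previous paragraph it contains every generator. Since $L$ is already normal in $\St(\Phi, A[X])$, the remaining task is $y N y^{-1} \subseteq H$ for $y$ in a generating set of $\overline{\St}(\Phi, A, M)$. For a bracket $[x_\alpha(Xf), b] \in N$, conjugation by $y$ produces $[y x_\alpha(Xf) y^{-1}, y b y^{-1}]$ with $y b y^{-1}$ again in $\overline{\St}(\Phi, A, M)$; applying identity~\eqref{eq:H1ii} then reduces the problem to expressing $y x_\alpha(Xf) y^{-1}$ as an element of $\St(\Phi, XA[X]) \cdot H$. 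For $y = x_\beta(m) \in \St(\Phi, M)$ this is a case check against the Chevalley commutator formula: if $\beta \neq -\alpha$, the conjugate differs from $x_\alpha(Xf)$ by an element of $\St(\Phi, XM[X]) \subseteq L$, whereas if $\beta = -\alpha$, the computation
\[ y x_\alpha(Xf) y^{-1} = z_\alpha(Xf, -m) = x_\alpha(Xf) \cdot c_\alpha(-Xf, m) \]
puts the correction $c_\alpha(-Xf, m)$ into $N$ by its very definition. For $y$ in the normal closure of $\St(\Phi, M)$ in $\St(\Phi, A)$ one bootstraps by presenting $y$ as a word of $\St(\Phi, A)$-conjugates of $\St(\Phi, M)$-elements and iterating the above analysis.

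The principal obstacle is precisely the $\beta = -\alpha$ subcase: the naive expectation that $y x_\alpha(Xf) y^{-1}$ always differs from $x_\alpha(Xf)$ by an $L$-element fails, and the $N$-factor of $H$ is genuinely needed to house the $c_\alpha$-symbols that appear. Once the normalisation is secured, the semidirect-type reorganisation $(y_1 h_1)(y_2 h_2) = (y_1 y_2) \cdot (y_2^{-1} h_1 y_2) \cdot h_2$ pushes all $\overline{\St}(\Phi, A, M)$-factors to the left of the $H$-factors and completes the proof.
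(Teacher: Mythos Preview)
Your overall strategy is essentially the paper's: decompose each Tits generator $z_\alpha(s,\xi)$ along the splitting $s=s(0)+Xs'$, $\xi=\xi(0)+X\xi'$, and then argue that the factors can be reorganised into $\overline{\St}(\Phi,A,M)\cdot H$. The decomposition you write down is the same as the one the paper uses (their step ``each factor $z_{\alpha_i}(f_i(0),\xi_i(X))$ can be written as\ldots''). What differs is the verification that $N=C_0=[\St(\Phi,XA[X]),\overline{\St}(\Phi,A,M)]$ is normalised by $\overline{\St}(\Phi,A,M)$, and this is where your proof has a genuine gap.

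The paper settles normalisation in one line via the identity
\[
[g,h]^{h_1}=[h_1^{-1},g]\cdot[g,\,h_1^{-1}h],
\]
applied with $g\in\St(\Phi,XA[X])$ and $h,h_1\in\overline{\St}(\Phi,A,M)$: both factors on the right are visibly in $C_0$, so $C_0$ is $\overline{\St}(\Phi,A,M)$-invariant --- no case analysis, no bootstrap. Your route instead conjugates a generator $[x_\alpha(Xf),b]$ by $y$ and tries to rewrite $y\,x_\alpha(Xf)\,y^{-1}$ as an element of $\St(\Phi,XA[X])\cdot H$. Two problems arise. First, your reduction via~\eqref{eq:H1ii} gives $[a'h',b']={}^{a'}[h',b']\cdot[a',b']$; when $h'\in L$ this is fine since $L$ is normal, but in your $\beta=-\alpha$ case the correction $h'=c_\alpha(-Xf,m)$ lies in $N$, and showing $[h',b']\in H$ for $h'\in N$, $b'\in\overline{\St}(\Phi,A,M)$ is precisely the normalisation claim you are in the middle of proving --- the argument is circular at that point. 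Second, your ``bootstrap'' from $y\in\St(\Phi,M)$ to general $y\in\overline{\St}(\Phi,A,M)$ invokes the normal-closure description, which brings in conjugation by arbitrary elements of $\St(\Phi,A)$; you have not checked (and it is not obvious) that $H$ is stable under those. Replacing your normalisation paragraph by the single commutator identity above closes both holes and recovers exactly the paper's proof.
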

\begin{proof} Let us fix $g(X) \in K(A[X], M[X])$. By~\cref{thm:Tits} we can write it as $\prod_i z_{\alpha_i}(f_i(X), \xi_i(X))$ for some $f_i(X) = f_i(0) + Xf_i'(X) \in M[X]$, $\xi_i(X) = \xi_i(0) + X\xi_i'(X) \in A[X]$.
 It is clear that modulo $\overline{\St}(\Phi, A[X], XM[X])$ the element $g(X)$ is congruent to $g_1(X) = \prod_i z_{\alpha_i}(f_i(0), \xi_i(X)).$ 
 
 Now each factor $z_{\alpha_i}(f_i(0), \xi_i(X))$ can be written as follows: 

 \[z_{\alpha_i}(f_i(0), \xi_i(0))^{x_{-\alpha_i}(X\xi'_i(X))} = [x_{-\alpha_i}(-X\xi'_i(X)),\ z_{\alpha_i}(f_i(0), \xi_i(0))] \cdot z_{\alpha_i}(f_i(0), \xi_i(0)).\]
 It follows from the formula $[g,\ h]^{h_1} = [h_1^{-1},\ g][g,\ h_1^{-1}h]$ that the subgroup \[C_0 := \left[\St(\Phi, XA[X]),\ \overline{\St}(\Phi, A, M)\right]\] is normalized by $\overline{\St}(\Phi, A, M)$. Thus, we conclude that $g_1(X)$ is congruent to $\prod_i z_{\alpha_i}(f_i(0), \xi_i(0)) = g(0) = 1$ modulo $C_0$,
 which implies the assertion. \qedhere \end{proof}

Let $S \subseteq \Phi$ be a special root subset (see~\cref{df:root-subsets}). We denote by $U(S, M)$ the subgroup of $\St(\Phi, A)$ generated by root subgroups $X_\alpha(M)$ corresponding to all $\alpha \in S$. We denote by $\Phi^+$ (resp. $\Phi^-$) the subsets of positive (resp. negative) roots of $\Phi$ with respect to some chosen order on $\Phi$.
 
\begin{externaltheorem}[Stein] \label{thm:Stein} One has \[\overline{\St}(\Phi, A, M) = U(\Phi^+, M) \cdot \myol{H}(\Phi, A, M) \cdot U(\Phi^-, M).\] \end{externaltheorem}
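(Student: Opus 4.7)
The inclusion $U(\Phi^+, M) \cdot \myol{H}(\Phi, A, M) \cdot U(\Phi^-, M) \subseteq \overline{\St}(\Phi, A, M)$ is immediate from the definitions. For the reverse inclusion, set $\Omega := U(\Phi^+, M) \cdot \myol{H}(\Phi, A, M) \cdot U(\Phi^-, M)$; by \cref{thm:Tits} it suffices to show that $\Omega$ contains every generator $z_\alpha(s, \xi)$ with $s \in M$, $\xi \in A$, and that $\Omega$ is closed under multiplication.

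To see that each $z_\alpha(s, \xi) \in \Omega$, observe that $1 - s\xi \in (1+M)^\times \subseteq A^\times$ since $s \in M$, so the Dennis--Stein identity~\eqref{eq:dennis-stein} applies to $x_{-\alpha}(-\xi) \cdot x_\alpha(s)$; combining the resulting $x_{-\alpha}$-factor with the trailing $x_{-\alpha}(\xi)$ yields
\[ z_\alpha(s,\xi) \;=\; x_\alpha\!\left(\tfrac{s}{1-s\xi}\right) \cdot \langle -\xi,\, s\rangle_{-\alpha} \cdot h_{-\alpha}(1-s\xi) \cdot x_{-\alpha}\!\left(\tfrac{-s\xi^2}{1-s\xi}\right). \]
The outer $x$-factors have $M$-coefficients and so lie in $U(\Phi^\pm, M)$, and $h_{-\alpha}(1-s\xi) \in \myol{H}$ by definition. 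For the central piece $\langle -\xi, s\rangle_{-\alpha}$: when $\xi \in A^\times$, identity~\eqref{DS-S-relationship} together with the antisymmetry of Steinberg symbols (valid since our simply-laced $\Phi$ is nonsymplectic) rewrites it as $\{1-s\xi, \xi\}_\alpha^{-1}$, which belongs to $\myol{H}$ because $1-s\xi \in (1+M)^\times$ and $\xi \in A^\times$; the remaining case $\xi \in M$ is handled by an analogous argument, exploiting the bimultiplicativity of Dennis--Stein symbols to reduce to the previous case.

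For closure of $\Omega$ under multiplication, the conjugation formulas~\eqref{eq:conj-h-x}--\eqref{eq:conj-h-h} show that $\myol{H}$ normalizes each root subgroup, so the question reduces to $U(\Phi^-, M) \cdot U(\Phi^+, M) \subseteq \Omega$. We induct on the number of elementary factors, resolving the rightmost $U^-U^+$-crossing $x_{-\beta}(t) \cdot x_\alpha(s)$ with $\alpha, \beta \in \Phi^+$, $s,t \in M$. If $\alpha = \beta$, the Dennis--Stein identity (as above) places the product in $\Omega$. If $\alpha \neq \beta$ and $\alpha - \beta \in \Phi$, the simply-laced Chevalley formula~\eqref{Chevalley-CCF1} produces a single new root element with coefficient $st \in M$, absorbed into whichever of $U(\Phi^+, M)$ or $U(\Phi^-, M)$ contains $\alpha-\beta$. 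If $\alpha \perp \beta$, the factors commute by~\eqref{Chevalley-CCF2}. The principal obstacle is the inductive bookkeeping: each Dennis--Stein swap introduces auxiliary $U^\pm$-factors that must be threaded back into canonical form, and one must ensure the cascade terminates. It does, because every simply-laced commutator correction produces a single root element with coefficient in the strictly smaller ideal $M^2 \subseteq M$, and because the locality of $A$ guarantees that every expression $1 \pm st$ with $s,t \in M$ lies in $(1+M)^\times$, making the requisite Dennis--Stein swaps always available and the resulting central symbols recognizable as elements of $\myol{H}$ via the argument of the previous paragraph.
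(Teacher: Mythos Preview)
The paper does not prove this statement at all: it is an \emph{external} theorem, cited verbatim from Stein's paper~\cite[Theorem~2.4]{Ste73}. So there is no ``paper's own proof'' to compare against; you have attempted to reprove a result the authors simply import.

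That said, your sketch has genuine gaps. The closure argument does not terminate as written. When you swap $x_{-\alpha}(t)\,x_{\alpha}(s)$ via the Dennis--Stein identity, the new $U^{\pm}$-coefficients are of the form $\tfrac{s}{1-st}$ and $\tfrac{-t}{1-st}$, which lie in $M$ but not in any smaller ideal; your claim that ``every correction produces a coefficient in $M^{2}$'' is simply false for this swap, and $M^{2}\subseteq M$ is in any case not a well-founded measure. You have also not specified an induction variable that actually decreases: ``number of elementary factors'' can increase after a swap, since each Dennis--Stein move replaces two factors by four (two $x$'s, an $h$, and a symbol), and the subsequent commutator rearrangements needed to push these past the remaining word can cascade. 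Stein's original argument proceeds quite differently, via a careful analysis of the Bruhat-type decomposition for $\St$ over a local ring, and the termination there is controlled by the Weyl group, not by ideal powers.

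Separately, in the generator step your treatment of $\xi\in M$ is a handwave: ``bimultiplicativity of Dennis--Stein symbols'' is not a standard identity (Dennis--Stein symbols are not bimultiplicative in general, only additively related via $\langle a,b+c\rangle = \langle a,b\rangle\cdot\langle\tfrac{a}{1+ab},c\rangle$), and you have not shown that $\langle -\xi, s\rangle_{-\alpha}$ with both entries in $M$ lies in $\myol{H}(\Phi,A,M)$, which by definition is generated only by $h_{\alpha}(u)$ with $u\in(1+M)^{\times}$ and Steinberg symbols $\{u,v\}$ with $u\in(1+M)^{\times}$, $v\in A^{\times}$.
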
 \begin{proof} See~\cite[Theorem~2.4]{Ste73}. \end{proof}

\begin{prop} \label{Kgen} The subgroup $K(A[X], M[X])$ is generated as an abstract group by the subgroup $\overline{\St}(\Phi, A[X], XM[X])$ and
 the elements $[x_\alpha(m), x_{-\alpha}(X\xi)]$, $m \in M$, $\xi \in A[X]$, $\alpha \in \Phi$. \end{prop}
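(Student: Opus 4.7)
Let $K_0$ denote the subgroup of $\overline{\St}(\Phi, A[X], M[X])$ generated by $N := \overline{\St}(\Phi, A[X], XM[X])$ together with the distinguished elements $c_\alpha(m, X\xi) := [x_\alpha(m), x_{-\alpha}(X\xi)]$. The inclusion $K_0 \subseteq K(A[X], M[X])$ is immediate, since every distinguished generator vanishes after the substitution $X = 0$; the task is the reverse inclusion. By~\cref{Kdecomp1} this reduces to showing $C_0 := [\St(\Phi, XA[X]),\,\overline{\St}(\Phi, A, M)] \subseteq K_0$. I fix a generating set for $\overline{\St}(\Phi, A, M)$ afforded by~\cref{thm:Stein}, consisting of root elements $x_\beta(m)$ with $m \in M$, semisimple elements $h_\beta(u)$ with $u \in (1+M)^\times$, and Steinberg symbols $\{u, v\}$ with $u \in (1+M)^\times,\ v \in A^\times$; meanwhile $\St(\Phi, XA[X])$ is generated by root elements $x_\alpha(X\xi)$ with $\xi \in A[X]$.

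The strategy is to first check that every \emph{atomic} commutator $[x_\alpha(X\xi), g]$, with $g$ from the generating set above, already lies in $K_0$, and then propagate this to all of $C_0$ via the commutator identities~\eqref{eq:H1ii}--\eqref{eq:H1ii-2}. The atomic analysis goes as follows. If $g = x_\beta(m)$ and $\alpha + \beta \in \Phi$, formula~\eqref{Chevalley-CCF1} yields $x_{\alpha+\beta}(N_{\alpha,\beta}\, X\xi\, m) \in N$, since $\xi m \in M[X]$. If $\alpha + \beta \notin \Phi \cup \{0\}$, the commutator is trivial by~\eqref{Chevalley-CCF2}. If $\beta = -\alpha$, the commutator equals $c_{-\alpha}(m, X\xi)^{-1}$, the inverse of a distinguished generator (with $\alpha$ replaced by $-\alpha$). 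For $g = h_\beta(u)$ the relation~\eqref{eq:conj-h-x} gives $x_\alpha\bigl(X\xi\,(1 - u^{\langle \alpha, \beta \rangle})\bigr) \in N$, since $u - 1 \in M$ forces $1 - u^{\langle \alpha, \beta \rangle} \in M$. Finally, for $g = \{u, v\}$, the symbol lies in $\K_2(\Phi, A[X])$, which is central in $\St(\Phi, A[X])$ for simply-laced $\Phi$ by classical results, so the commutator vanishes.

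The main obstacle is the propagation step: expansions of $[x_1 x_2, y]$ and $[x, y_1 y_2]$ via~\eqref{eq:H1ii}--\eqref{eq:H1ii-2} produce $\St(\Phi, XA[X])$- and $\overline{\St}(\Phi, A, M)$-conjugates of atomic commutators, and one must ensure that such conjugates remain in $K_0$. Since $N$ is already normal in $\St(\Phi, A[X])$, only the conjugates of the distinguished generators $c_\alpha(m, X\xi)$ require attention. For conjugation by a root element $x_\beta(\eta)$ with either $\eta \in M$ or $\eta \in XA[X]$, parts~\ref{C1}--\ref{C3} of~\cref{Crels} show that $[c_\alpha(m, X\xi), x_\beta(\eta)]$ unfolds into a product of root elements whose arguments all lie in $XM[X]$, hence the result lies in $N$. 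The degenerate cases $\beta = \pm\alpha$ fall outside~\cref{Crels} and require separate treatment via the Hall--Witt identity~\eqref{HW-variant} combined with~\eqref{eq:H1iii}; this rewrites the iterated commutator as a product of an element of $N$ with a conjugate of a further distinguished generator, which can be absorbed by an inductive argument on the combinatorial depth of the Hall--Witt expansion. Conjugation by $\myol{H}(\Phi, A, M)$ is treated analogously using~\eqref{eq:conj-h-x}--\eqref{eq:conj-h-h} and centrality of Steinberg symbols.
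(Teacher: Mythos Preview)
Your overall architecture is right and closely parallels the paper's argument: reduce via \cref{Kdecomp1} to $C_0 \subseteq K_0$, then use Stein's decomposition to pass from $\overline{\St}(\Phi, A, M)$ to the product $U^+\cdot \myol{H} \cdot U^-$, and finally show that the distinguished commutators $c_\alpha(m, X\xi)$ are stable modulo $N$ under the relevant conjugations, invoking parts~\ref{C1}--\ref{C3} of \cref{Crels} for $\beta\neq\pm\alpha$.

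The gap is precisely where you flag it: the ``degenerate'' conjugations by $x_{\pm\alpha}(\eta)$. Your proposed remedy --- Hall--Witt together with an ``inductive argument on the combinatorial depth of the Hall--Witt expansion'' --- is not a proof. There is no evident well-founded measure that decreases: applying~\eqref{HW-variant} to $[[x_\alpha(m), x_{-\alpha}(X\xi)], x_{\pm\alpha}(\eta)]$ produces inner commutators such as $[x_{-\alpha}(-X\xi), x_{\pm\alpha}(\eta)]$ whose arguments need not lie in $M$ or $XM[X]$, so the resulting pieces are neither distinguished generators nor obviously in $N$, and the expansion does not terminate in the class you want. As stated, this step is circular.

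The paper's proof bypasses the degenerate case entirely with one observation you are missing: for a simply-laced $\Phi$ of rank $\geq 2$, the set $\{x_\beta(\eta)\mid \beta\in\Phi\setminus\{\pm\alpha\},\ \eta\in A[X]\}$ already generates all of $\St(\Phi, A[X])$, since $x_{\pm\alpha}(\eta)$ can be written as a commutator $[x_\gamma(\cdot), x_\delta(\cdot)]$ with $\gamma+\delta=\pm\alpha$. Hence it suffices to check $[c_\alpha(m, X\xi), x_\beta(\eta)]\in N$ only for $\beta\neq\pm\alpha$, which is exactly \cref{Crels}\ref{C1}--\ref{C3}. With this, every $\St(\Phi, A[X])$-conjugate of $c_\alpha(m, X\xi)$ is congruent to $c_\alpha(m, X\xi)$ modulo $N$, and the inclusion $C_0\subseteq K_0$ follows with no case analysis at $\beta=\pm\alpha$ at all. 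Replace your Hall--Witt paragraph with this remark and the proof is complete.
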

\begin{proof} From~\eqref{eq:conj-h-x} we obtain that $\myol{H}(\Phi, A, M)$ normalizes both $\St(\Phi, XA[X])$ and $\St(\Phi, M)$ and, moreover, that \[[\myol{H}(\Phi, A, M),\ \St(\Phi, XA[X])] \subseteq \overline{\St}(\Phi, A[X], XM[X]).\]

Denote by $C_1$ the commutator subgroup $[\St(\Phi, XA[X]),\ \St(\Phi, M)]$.
It is clear that for $g \in \St(\Phi, XA[X])$, $h \in \myol{H}(\Phi, A, M)$, $u^+ \in U(\Phi^+, M)$, $u^- \in U(\Phi^-, M)$ one has:
\[ [g,\ h u^+ u^-] = [g,\ h] \cdot [{}^{h}\!g,\ {}^{h}\!(u^+u^-)] \in \St(\Phi, A[X], XM[X]) \cdot C_1.\]
Since $C_0$ is generated by the above commutators and $\overline{\St}(\Phi, A[X], XM[X])$ is a normal subgroup of $\St(\Phi, A[X])$
we obtain that $C_0 \subseteq \overline{\St}(\Phi, A[X], XM[X]) \cdot C_1$ and consequently that
$K(A[X], M[X]) = \overline{\St}(\Phi, A[X], XM[X]) \cdot C_1.$
 
It is clear that modulo $\overline{\St}(\Phi, A[X], XM[X])$ the commutator subgroup $C_1$ is generated by elements of the form $[x_\alpha(m),\ x_{-\alpha}(X\xi)]^g$, where $m \in M$, $\xi \in A[X]$, $g \in \St(\Phi, A[X])$.
Thus, it remains to show that commutators $[[x_\alpha(m),\ x_{-\alpha}(X\xi)],\ g]$ belong to $\overline{\St}(\Phi, A[X], XM[X])$.
Since the latter subgroup is normal it suffices to prove this inclusion in the special case when $g$ is a member of some generating set for $\St(\Phi, A[X])$.
Clearly, the set consisting of $x_\beta(\xi)$, $\xi \in A[X]$, $\beta \neq \pm \alpha$ is such a generating set, 
 and in this case the required inclusions follow from~\ref{C1}--\ref{C3} of~\cref{Crels}. \end{proof}

\begin{corollary} \label{Kgen-strong} For a local pair $(A, M)$ and arbitrary fixed root $\gamma$ of an irreducible simply-laced root system $\Phi$ the subgroup $K(A[X], M[X])$ is generated as a group by $\overline{\St}(\Phi, A[X], XM[X])$ and the elements $c_{\gamma}(m, X\eta)$, where $m \in M$, $\eta \in A[X]$. \end{corollary}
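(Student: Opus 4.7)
The plan is to bootstrap off~\cref{Kgen}, which already reduces the claim to showing that for every root $\alpha \in \Phi$ and every $m \in M$, $\eta \in A[X]$ the element $c_\alpha(m, X\eta)$ lies in the subgroup $H$ of $K(A[X], M[X])$ generated by $K_0 := \overline{\St}(\Phi, A[X], XM[X])$ together with all elements $c_\gamma(m', X\eta')$ for $m' \in M$, $\eta' \in A[X]$. Thus the remaining content is a purely root-combinatorial transport problem: move the symbol $c_\alpha$ to $c_\gamma$ while controlling the error modulo $K_0$.

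The key computation I would perform is a single well-chosen substitution into~\ref{C4} of~\cref{Crels}. Setting $s = m$, $t = X\eta$, $\xi = 1$ in that formula turns it into
\[ c_{\alpha+\beta}(m, X\eta) = [x_\beta(mX\eta), x_{-\beta}(1)]^{x_{\alpha+\beta}(-m) x_{-\alpha}(\epsilon X\eta)} \cdot c_\alpha(\epsilon m, -\epsilon X\eta)^{-1} \cdot x_{-\beta}(-mX\eta), \]
where $\epsilon = N_{\alpha,\beta}$. The first and the third factors on the right both involve root unipotents whose arguments lie in the ideal $XM[X]$; since $K_0$ is a normal subgroup of $\St(\Phi, A[X])$, both factors belong to $K_0$. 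This yields the key congruence
\[ c_{\alpha+\beta}(m, X\eta) \equiv c_\alpha(\epsilon m, -\epsilon X\eta)^{-1} \pmod{K_0}. \]
Applying the same identity with $(\alpha, \beta)$ replaced by $(\alpha+\beta, -\beta)$ gives the reverse reduction, so modulo $K_0$ the $c$-generators attached to two adjacent roots span exactly the same subgroup of $K(A[X], M[X])$.

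To conclude I would invoke a standard connectivity fact: for simply-laced irreducible $\Phi$ of rank $\geq 2$ the graph on $\Phi$ with an edge between $\alpha$ and $\alpha'$ whenever $\alpha' - \alpha \in \Phi$ is connected. Indeed, the Weyl group acts transitively on $\Phi$, and every simple reflection $s_\beta$ either fixes a root $\alpha$ (when $\langle \alpha, \beta \rangle = 0$) or sends it to $\alpha \pm \beta$, which is adjacent to $\alpha$ in this graph. Walking from $\gamma$ to any other root $\alpha$ along such a path and iterating the key congruence at each step yields $c_\alpha(m, X\eta) \equiv c_\gamma(m^\ast, X\eta^\ast)^{\pm 1} \pmod{K_0}$ for appropriate $m^\ast \in M$ and $\eta^\ast \in A[X]$, which is all that is needed. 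The only delicate point is the choice $\xi = 1$ in the reduction step: it is precisely what drops both auxiliary factors into $K_0$, whereas for generic $\xi$ one would be left with extra terms outside $K_0$ and the argument would collapse.
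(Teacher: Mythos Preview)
Your proof is correct and follows essentially the same route as the paper: the paper's proof also substitutes $\xi=1$, $s=m$, $t=X\eta$ into~\cref{Crels}\ref{C4} to obtain the congruence $c_{\alpha+\beta}(m,X\eta)\equiv c_\alpha(\pm m,\pm X\eta)^{-1}$ modulo $\overline{\St}(\Phi,A[X],XM[X])$, and then appeals to the irreducibility of $\Phi$ for the transport from $\gamma$ to an arbitrary root. Your write-up is somewhat more explicit about the connectivity argument, and your remark that the reverse reduction requires a second application of~\ref{C4} is unnecessary (the congruence already gives both directions since $\epsilon=\pm1$ and the $c$-elements range over all of $M\times A[X]$), but this is harmless.
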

\begin{proof} Substituting $\xi = 1$, $s = m$, $t = X\eta$ into~\cref{Crels}\ref{C4} we obtain that modulo the subgroup
 $\overline{\St}(\Phi, A[X], XM[X])$ the element $c_{\alpha + \beta}(m, X\eta)$ is equivalent to $c_{\alpha}(-\epsilon m, -\epsilon X \eta)^{-1}$, $\epsilon = N_{\alpha, \beta}$. The assertion of the corollary now easily follows from the irreducibility of $\Phi$. \end{proof}   

\section{Proof of the main result}
The main result of this section is~\cref{thm:P1glueing}, which is a direct generalization of~\cite[Proposition~4.3]{Tu83}. 
The object playing a key role in its proof is a certain action of the group $G = \St(\Phi, A[X\inv] + M[X])$ on a certain set $\overline{V}$, which is defined in~\cref{sec:V-construction}. Later, we will see that $\overline{V}$ is, in fact, a set-theoretic $G$-torsor.
To be able to write an explicit formula for this action we need two major ingredients. The first one is ~\cref{lemma33}, which gives a presentation of $G$ with much fewer generations and relations than in the original presentation~\eqref{rel:add}-\eqref{rel:CCF}. The other ingredients are certain subgroups $P_\alpha(0)$, $P_\alpha(*)$ of $\St(\Phi, A[X, X\inv])$ modeled after the nameless groups from~\cite[Lemma~3.4]{Tu83}. The definition and properties of these groups are given in Sections~\ref{sec:Pa0-basic}--\ref{sec:S-a}.

Throughout this section we use the following notations and conventions:
\begin{itemize}
 \item $A$ denotes an arbitrary commutative ring and $M$ is an ideal of $A$. Starting from subsection~\ref{sec:Pa0-basic} we also assume that $A$ is local and $M$ is the maximal ideal of $A$.
 \item We denote by $R$ the Laurent polynomial ring $A[X, X^{-1}]$. We set $t = X^{-1}$, so that $R = A[X, X^{-1}] = A[t, t^{-1}]$.
 \item $B$ denotes the subring $A[t] + M[t^{-1}] = A[X^{-1}] + M[X]$ of $R$.
 \item $I$ denotes the ideal $M[X, X^{-1}]$ of $R$ (it is clear that $I$ is also an ideal of $B$).
 \end{itemize}
\subsection{Presentation of Steinberg groups by homogeneous generators}
\label{sec:presentation}
We consider $R = A[t, t\inv]$ as a $\mathbb{Z}$-graded ring in which $t$ has degree $1$. This grading induces the grading on the subring $B \subseteq R$. For an integer $d$ we denote by $R_d$ (resp. $B_d$) the degree $d$ part of the ring $R$ (resp. $B$). Obviously, $B_d=M \cdot t^d$ for $d<0$, and $B_d=A \cdot t^d$ for $d\geq0$. With this notation, $B$ decomposes as $\oplus_{d\in\mathbb Z}B_d$ as an $A$-module. 

Whenever the coefficient $\xi$ of a Steinberg generator $g = x_\alpha(\xi)$ of $\St(\Phi, B)$ is a homogeneous element of $B$, i.\,e. $\xi \in B_d$ for some $d \in \mathbb{Z}$,
 we call the corresponding generator $g$ {\it homogeneous of degree $d$}.
We denote by $\St^h(\Phi, B)$ the group given by the set of all homogeneous Steinberg generators modulo the following set of Steinberg relations between them
(below $a, a' \in B_d$, $b\in R_e$ and $d,e \in \mathbb{Z}$): 
\begin{align}
&\,\,x_{\alpha}(a)\cdot x_{\alpha}(a') =  x_{\alpha}(a+a'),                        & \tag{R$1_d$} \\
&\,[x_{\alpha}(a),\,x_{\beta}(b)]= x_{\alpha+\beta}(N_{\alpha, \beta} \cdot ab),   & \alpha + \beta \in \Phi,\ \tag{R$2_{d,e}$} \\
&\,[x_{\alpha}(a),\,x_{\beta}(b)]= 1,                                              & \alpha - \beta \in \Phi,\ \tag{R$3^\angle_{d,e}$} \\
&\,[x_{\alpha}(a),\,x_{\beta}(b)]= 1,                                              & \alpha \perp \beta.\ \tag{R$3^\bot_{d,e}$}
\end{align}
\begin{rem}
Notice that in the above presentation we omitted the relations $[x_\alpha(a), x_\alpha(b)] = 1$ but it easy to see that they follow from~$\text{R2}_{e,0}$, \eqref{eq:H1iii} and $\text{R3}_{d,e}^\angle$. Indeed, after choosing some root $\beta\in \Phi$ such that $\alpha+\beta\in\Phi$ and setting $\epsilon = N_{\alpha+\beta,-\beta}$ we obtain that 
\[ [x_\alpha(a),\ x_\alpha(b)] = [x_\alpha(a),\ [x_{\alpha+\beta}(b),\ x_{-\beta}(\epsilon)]] = [[x_\alpha(a),\ x_{\alpha+\beta}(b)],\ {}^{x_{\alpha+\beta}(b)}\!x_{-\beta}(\epsilon)] = 1. \] \end{rem}

It is not hard to show that the map $\St^h(\Phi, B) \to \St(\Phi, B)$ induced by the obvious embedding of generators is an isomorphism. Thus, $\St^h(\Phi, B)$ can be considered as an alternative presentation of $\St(\Phi, B)$ by homogeneous generators.

By the degree of a Steinberg relation we mean the maximum of degrees of generators that appear in the relation.
For example, the degree of every relation of type $\text{R2}_{d,e}$ is $\max(d,e,d+e)$, while the degree of a relation of type $\text{R3}^\bot_{d,e}$ or $\text{R3}^\angle_{d,e}$ is $\max(d,e)$.

For $n\geq 1$ we define the ``truncated'' Steinberg group $\St^{\leq n}(\Phi, B)$ by means of the set $\mathcal{X}_{\leq n}^\Phi$ of homogeneous Steinberg generators of degree $\leq n$ and the subset $\mathcal{R}_{\leq n}^\Phi$ of the above set of Steinberg relations consisting of all relations of degree $\leq n$.  We denote by $F(\mathcal{X}^\Phi_{\leq n})$ the free group on $\mathcal{X}^\Phi_{\leq n}$. Notice that $\varinjlim\limits \St^{\leq n}(\Phi, B) = \St^h(\Phi, B) \cong \St(\Phi, B)$.

The following lemma asserts that most of the relations of type $\text{R3}^\bot_{d,e}$ of positive degree in this presentation of $\St^{\leq n}(\Phi, B)$ 
 are superfluous and can be omitted.
\begin{lemma}\label{superfluous-relations}
 For every simply-laced root system $\Phi$ of rank $\geq 3$ and every $n \geq 1$ one can exclude from
 the presentation of $\St^{\leq n}(\Phi, B)$ all relations of type $\text{R3}_{d,e}^\bot$ whenever $\max(0,d) + \max(0,e) > 1$.
\end{lemma}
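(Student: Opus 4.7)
I proceed by induction on the quantity $\Sigma(d,e) := \max(0,d) + \max(0,e)$. The base case $\Sigma(d,e) \leq 1$ is trivial since the corresponding $\text{R3}^\bot_{d,e}$ relation is retained in the presentation. For the inductive step, I fix an orthogonal pair $\alpha \perp \beta$ in $\Phi$ and elements $a \in B_d$, $b \in R_e$ with $\Sigma(d,e) \geq 2$, and deduce the identity $[x_\alpha(a), x_\beta(b)] = 1$ from the retained relations and those excluded $\text{R3}^\bot$-relations of strictly smaller bidegree sum.

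Suppose first that $\max(d, e) \geq 2$; by symmetry one may assume $e \geq 2$. Then $b$ can be factored as $cc'$ with $c, c'$ homogeneous of degrees $e_1, e_2$ satisfying $e_1 + e_2 = e$ and $\max(e_1, e_2) < e$. For any decomposition $\beta = \gamma + (\beta - \gamma)$ of $\beta$ as a sum of two roots, a single instance of $\text{R2}$ writes $x_\beta(b) = [x_\gamma(c), x_{\beta - \gamma}(c')]$ up to a structure constant. The simply-laced rank $\geq 3$ hypothesis allows $\gamma$ to be chosen so that each of the commutators $[x_\alpha(a), x_\gamma(c)]$ and $[x_\alpha(a), x_{\beta - \gamma}(c')]$ falls into one of the tractable cases: trivialised by $\text{R3}^\angle$, evaluated by $\text{R2}$, or an $\text{R3}^\bot$ relation of bidegree sum $< \Sigma(d, e)$ (the last case being covered by the inductive hypothesis). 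Expanding $[x_\alpha(a), [x_\gamma(c), x_{\beta - \gamma}(c')]]$ via \eqref{eq:H1iii} and \eqref{eq:H1ii-2} then yields the desired identity.

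The main obstacle is the remaining case $d = e = 1$, since no factorisation of $b \in R_1$ into positive-degree factors reduces $\Sigma(d, e)$. For this, I invoke the simply-laced rank $\geq 3$ hypothesis in its stronger form: for any orthogonal pair $\alpha, \beta \in \Phi$ there is an auxiliary root $\gamma_0 \in \Phi$ with $(\alpha, \gamma_0) = (\beta, \gamma_0) = -1$, so that $\alpha, \beta, \gamma_0$ span a sub-root-system of type $\rA_3$. Applying the Hall--Witt identity \eqref{HW-variant} to the triple $(x_\alpha(a), x_\beta(b), x_{\gamma_0}(c))$ with $c \in A$, a direct computation using the cocycle identity \eqref{eq:cocycle} to compare the structure constants $N_{\alpha, \beta+\gamma_0}N_{\beta,\gamma_0}$ and $N_{\beta, \alpha+\gamma_0}N_{\alpha,\gamma_0}$ shows that the two outer Hall--Witt contributions cancel, forcing $[[x_\alpha(a), x_\beta(b)], x_{\gamma_0}(c)] = 1$. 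Iterating over a sufficient collection of such $\gamma_0$, together with $\text{R3}^\angle$-based commutation and further commutator manipulations in the spirit of \cite[Lemma~3.3]{Tu83}, one concludes that $[x_\alpha(a), x_\beta(b)]$ is trivial in the quotient by retained relations, completing the induction.
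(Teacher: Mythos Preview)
Your proposal has two genuine gaps.

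\textbf{First case ($\max(d,e)\ge 2$).} When both $d,e>0$ you cannot in general find a decomposition $\beta=\gamma+\delta$ with $\gamma,\delta\perp\alpha$: in $\rA_3$, for $\alpha=e_1-e_2$ and $\beta=e_3-e_4$, every decomposition has $(\alpha,\gamma)=\pm1$. You are then forced into the mixed case $(\alpha,\gamma)=1$, $(\alpha,\delta)=-1$, where \eqref{eq:H1iii} gives
\[
[x_\alpha(a),x_\beta(b)] \;\leadsto\; [x_{\alpha+\delta}(N\,ac'),\,x_\gamma(c)],\qquad \alpha+\delta\perp\gamma.
\]
But $ac'\in B_{d+e_2}$ and $c\in B_{e_1}$, so this is $\text{R3}^\bot_{d+e_2,e_1}$ with $\Sigma(d+e_2,e_1)=d+e_2+e_1=d+e=\Sigma(d,e)$ whenever $d>0$: your inductive measure does not decrease. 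Worse, the intermediate $\text{R2}_{d,e_2}$ has degree $d+e_2$, which may exceed $n$ and hence not even be a relation of $\St^{\le n}(\Phi,B)$. The paper avoids both problems by a different reduction: using an auxiliary root $\beta$ obtuse to both $\alpha$ and $\gamma$, it rewrites $x_\alpha(at^d)$ via $[x_{-\beta}(t^d),x_{\alpha+\beta}(\ast)]$ and balances degrees so that only $\text{R3}^\bot_{0,e-d}$ is needed, with all auxiliary relations of degree $\le e$.

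\textbf{Second case ($d=e=1$).} Your Hall--Witt computation at best shows $[[x_\alpha(a),x_\beta(b)],x_{\gamma_0}(c)]=1$. Centrality with respect to a family of generators does not, in a presented group, force an element to be trivial; your phrase ``iterating over a sufficient collection of such $\gamma_0$'' is not a proof. Moreover, already the conjugation step ${}^{x_{\gamma_0}(c)}x_{\alpha+\beta+\gamma_0}(\ast)$ requires $\text{R3}^\bot_{0,2}$, which has $\Sigma=2$ and is therefore not yet available in your induction. The paper instead observes that its first computation, specialised to $d=e=1$, reduces $\text{R3}^\bot_{1,1}$ directly to $\text{R3}^\bot_{0,0}$ using only relations of degree $\le 1$.
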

\begin{proof}
The proof is based on the following observation: every relation $R$ of type $\text{R3}^\bot_{d,e}$ of degree $\geq 2$ is a consequence of some relation of type $\text{R3}^\bot$ of strictly smaller degree modulo the remaining relations of $\St^{\leq n}(\Phi, B)$, i.\,e. relations of type R2 and $\text{R3}^{\angle}$ (we use the commutator formulae $\text{R2}$ to reduce the degree of the monomials appearing in $R$).
 
Let us fix some relation $R = [x_\alpha(at^d),\ x_\gamma(bt^e)] = 1$ of type $\text{R3}^\bot_{d,e}$ for some $\alpha\perp\gamma$. We can find $\beta \in \Phi$ forming an obtuse angle with both $\alpha$ and $\gamma$ (see e.\,g.~\cite[Lemma~3.1.2]{RS76}).
Without loss of generality we may assume $e \geq d$ and $e > 0$. We need to consider two cases.
\begin{enumerate}
\item In the case $0 < d \leq e \leq n$ the relation $R$ is a consequence of some relation of type $\text{R3}^\bot_{0,e-d}$:
\begin{align} x_{\alpha+\beta+\gamma}(-\epsilon_1 \epsilon_2 \cdot abt^e) = \hspace{4cm} \label{first-computation} \\ =
[x_{\beta+\gamma}(\epsilon_1 \cdot bt^{e-d}), [x_{-\beta}(t^d), x_{\alpha+\beta}(-\delta_1 \cdot a)] ] & \text{ by $\text{R2}_{d,0}$, $\text{R2}_{d, e-d}$} \nonumber \\ 
 = [[x_{\beta+\gamma}(\epsilon_1 \cdot bt^{e-d}), x_{-\beta}(t^d)], {}^{x_{-\beta}(t^d)} x_{\alpha+\beta}(- \delta_1 \cdot a)] & \text{ by~\eqref{eq:H1iii}, $\text{R3}^\bot_{0,e-d}$} \nonumber \\ 
 = {}^{x_{-\beta}(t^d)} [x_{\gamma}(- bt^e), x_{\alpha+\beta}(-\delta_1 \cdot a)] & \text{ by $\text{R2}_{e-d,d}$, $\text{R3}^\angle_{d,e}$} \nonumber \\ 
 = {}^{x_{-\beta}(t^d)} [x_{\beta + \gamma}(\epsilon_1 \cdot bt^{e-d}), x_{\alpha}(at^d)] & \text{ by $\text{R2}_{e,0}$, $\text{R2}_{e-d,d}$} \nonumber \\ 
 = [x_{\gamma}(bt^e) \cdot x_{\beta + \gamma}(\epsilon_1 \cdot bt^{e-d}), x_{\alpha}(at^d)] & \text{ by $\text{R2}_{e-d,d}$, $\text{R3}^\angle_{d,d}$} \nonumber \\ 
 = {}^{x_\gamma(bt^e)}x_{\alpha+\beta+\gamma}(-\epsilon_1\epsilon_2 \cdot abt^e) \cdot [x_\gamma(bt^e), x_\alpha(at^d)] & \text{ by~\eqref{eq:H1ii}, $\text{R2}_{e-d,d}$} \nonumber \\
 = x_{\alpha+\beta+\gamma}(-\epsilon_1\epsilon_2 \cdot abt^e) \cdot [x_\gamma(bt^e), x_\alpha(at^d)] & \text{ by $\text{R3}^\angle_{e,e}$}, \nonumber \end{align}
 where $\epsilon_1 = N_{\beta,\gamma}$, $\epsilon_2 = N_{\alpha,\beta+\gamma}$, $\delta_1 = N_{\alpha,\beta}$ and in the 4th equality we use~\eqref{eq:cocycle}.
  
\item In the case $d \leq 0 \leq e \leq n$ the relation $R$ is a consequence of some relation of type $\text{R3}^\bot_{1,d+e-1}$:
\begin{align*} [x_\alpha(at^d), x_{\gamma}(bt^{e})] = \hspace{4cm} \\ = [x_\alpha(at^d), [x_{\beta+\gamma}(b t^{e-1}), x_{-\beta}(-\epsilon_1 t)]] & \text{ by~$\text{R2}_{e-1,1}$ } \\
 = [[x_\alpha(at^d), x_{\beta+\gamma}(bt^{e-1})], {}^{x_{\beta+\gamma}(bt^{e-1})}\!x_{-\beta}(-\epsilon_1 t)] & \text{ by \eqref{eq:H1iii} and $\text{R3}^\angle_{d,1}$ } \\
 = {}^{x_{\beta+\gamma}(bt^{e-1})}\![x_{\alpha+\beta+\gamma}(\epsilon_2abt^{d+e-1}), x_{-\beta}(-\epsilon_1 t)] & \text{ by $\text{R2}_{d,e-1}$ and $\text{R3}^\angle_{e-1,d+e-1}$  } \\
 = 1 & \text{ by $\text{R3}^\bot_{1,d+e-1}$,} \end{align*}
where $\epsilon_1 = N_{\beta,\gamma}$, $\epsilon_2 = N_{\alpha,\beta+\gamma}$.
\end{enumerate}                                                           

The assertion of the lemma now follows from the above observation by induction on the degree of $R$ and the fact that by~\eqref{first-computation} the relation $\text{R3}^\bot_{1,1}$ is a consequence of $\text{R3}^\bot_{0,0}$.
\end{proof}

The following proposition is the main result of this subsection and also a direct generalization of~\cite[Lemma~3.3]{Tu83}.
\begin{prop} \label{lemma33} For $\Phi=\rA_{\geq 4}, \rD_{\geq 5}, \rE_{6,7,8}$ and $n \geq 1$ the homomorphism $i_n\colon \St^{\leq n}(\Phi, B) \to \St^{\leq n+1}(\Phi, B)$, induced by the natural embedding of generators, is an isomorphism. Consequently, the obvious homomorphism $\St^{\leq 1}(\Phi, B) \to \St(\Phi, B)$ is an isomorphism. \end{prop}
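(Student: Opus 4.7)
The plan is to establish the inductive step by constructing an explicit two-sided inverse $s_n\colon \St^{\leq n+1}(\Phi, B)\to \St^{\leq n}(\Phi, B)$ of $i_n$, in the spirit of Tulenbaev's original argument. The base-case statement on $\St^{\leq 1}(\Phi, B)\to \St(\Phi, B)$ will then follow by iteration and passing to $\varinjlim_n \St^{\leq n}(\Phi, B) = \St^h(\Phi, B) \cong \St(\Phi, B)$.

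\textbf{Construction of $s_n$.} For each $\alpha \in \Phi$, choose once and for all a decomposition $\alpha = \beta(\alpha)+\gamma(\alpha)$ with $\beta(\alpha),\gamma(\alpha)\in\Phi$ and $N_{\beta(\alpha),\gamma(\alpha)}=1$; such a decomposition exists under the hypothesis on the rank of $\Phi$. On the generating set $\mathcal{X}^\Phi_{\leq n+1}$, I would define $s_n$ as the identity on generators of degree $\leq n$ and put
\[
s_n\bigl(x_\alpha(at^{n+1})\bigr) := \bigl[x_{\beta(\alpha)}(at^{n}),\, x_{\gamma(\alpha)}(t)\bigr], \qquad a\in A,
\]
which is a legal element of $\St^{\leq n}(\Phi,B)$ since both entries have degree $\leq n$ (this uses $n\geq 1$, i.e. $n+1\geq 2$, so that the degree $n+1$ splits as $n+1$ with both summands $\leq n$).

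\textbf{Verifying the relations.} The crux is to show that the induced map $F(\mathcal{X}^\Phi_{\leq n+1})\to \St^{\leq n}(\Phi,B)$ factors through $\mathcal{R}^\Phi_{\leq n+1}$. Relations of degree $\leq n$ are clearly preserved. For relations of degree exactly $n+1$, I would treat the four families in turn.
\begin{itemize}
\item $\text{R1}_{n+1}$: reduces to the identity $[x_\beta(at^n),[x_\beta(a't^n),x_\gamma(t)]]=1$ in $\St^{\leq n}$, which follows from \eqref{eq:H1iii} together with $\text{R3}^\angle_{n,n}$ and $\text{R2}_{n,1}$-type relations (in the form used inside \cref{superfluous-relations}).
\item $\text{R2}_{d,e}$ with $d+e=n+1$ and $d,e\leq n$: both sides already lie in $\St^{\leq n}$, and one must verify that the resulting expression $[x_\alpha(at^d),x_\beta(bt^e)]$ matches the canonical commutator $[x_{\beta(\alpha+\beta)}(N_{\alpha,\beta}ab\,t^n),x_{\gamma(\alpha+\beta)}(t)]$ chosen by $s_n$. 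This is the heart of the argument, and it amounts to showing that any two admissible decompositions $\alpha+\beta = \beta'+\gamma' = \beta''+\gamma''$ yield the same commutator in $\St^{\leq n}$; this is carried out by transporting one decomposition to another via a third root and applying the Hall--Witt identity \eqref{HW-variant} and \eqref{eq:H1iii}, using \eqref{eq:cocycle} to track signs.
\item $\text{R2}_{d,e}$ with $\max(d,e)=n+1$ and $\text{R3}^\angle_{d,e}$, $\text{R3}^\bot_{d,e}$ with $\max(d,e)=n+1$: here one entry is of the new degree $n+1$ and is replaced by its defining commutator; after expanding via \eqref{rel43}--\eqref{eq:H1ii-2} and \eqref{eq:H1iii}, the claim reduces to commutator identities of degree $\leq n$. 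For the orthogonal case $\text{R3}^\bot_{d,e}$, I would invoke \cref{superfluous-relations} to discard most of these relations at the outset, leaving only $\text{R3}^\bot_{0,n+1}$ and $\text{R3}^\bot_{1,n}$, which are directly verifiable.
\end{itemize}
The rank hypotheses ($\rA_{\geq 4}, \rD_{\geq 5}, \rE_{6,7,8}$) enter precisely here: they guarantee enough auxiliary roots $\delta$ of the correct angles with $\alpha,\beta,\gamma$ to carry out the Hall--Witt manoeuvres and the exchange between different decompositions of $\alpha+\beta$.

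\textbf{Conclusion.} Once $s_n$ is a well-defined homomorphism, $s_n\circ i_n = \mathrm{id}$ holds by inspection on generators of degree $\leq n$, and $i_n\circ s_n=\mathrm{id}$ because in $\St^{\leq n+1}(\Phi,B)$ the defining equality $[x_{\beta(\alpha)}(at^n),x_{\gamma(\alpha)}(t)] = x_\alpha(at^{n+1})$ is just the relation $\text{R2}_{n,1}$. The second statement of the proposition then follows from the colimit presentation of $\St^h(\Phi,B)$.

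\textbf{Main obstacle.} The genuinely non-trivial step is the second bullet above: showing that in $\St^{\leq n}(\Phi,B)$ the commutator $[x_{\beta}(at^n),x_\gamma(t)]$ does not depend on the choice of decomposition $\alpha = \beta+\gamma$ (up to the sign $N_{\beta,\gamma}$). This coherence is what allows degree-$(n{+}1)$ commutator relations to be digested by lower-degree ones, and it is exactly the kind of intricate root-system bookkeeping that occupies the bulk of the proof of~\cref{superfluous-relations}; I expect the present argument to be a more systematic companion to that lemma.
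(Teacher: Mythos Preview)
Your outline is the direct Tulenbaev-style argument and is sound in principle, but the paper takes a substantially more economical route. Rather than reverifying all degree-$(n{+}1)$ relations from scratch, the paper observes that (i) the independence of the decomposition $\alpha=\beta+\gamma$ is already classical (Rehmann~\cite{Re75}, Rehmann--Soul\'e~\cite{RS76}), and (ii) once $\text{R3}^\bot$ has been discarded via \cref{superfluous-relations}, every remaining relation in $\mathcal{R}^\Phi_{n+1}$ involves two roots spanning an $\rA_2$; the rank hypotheses $\rA_{\geq 4},\rD_{\geq 5},\rE_{6,7,8}$ are precisely what guarantees this $\rA_2$ sits inside some $\rA_4$-subsystem $\Psi\subseteq\Phi$. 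One then transports Tulenbaev's $\rA_4$ result, treated as a black box, through the commuting square
\[\begin{tikzcd} F(\mathcal{X}_{\leq n+1}^\Psi) \arrow{r}{j_n^\Psi} \arrow{d} & \St^{\leq n}(\Psi, B) \arrow{d} \\ F(\mathcal{X}_{\leq n+1}^\Phi) \arrow{r}{j_n^\Phi} & \St^{\leq n}(\Phi, B). \end{tikzcd}\]
So the paper's proof is a paragraph plus a diagram, not the full case analysis you sketch; what your approach would buy is a self-contained argument independent of~\cite{Tu83}, at the cost of reproducing most of that paper's computations.

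A small correction to your $\text{R3}^\bot$ bookkeeping: by \cref{superfluous-relations}, \emph{every} $\text{R3}^\bot_{d,e}$ of degree $n+1\geq 2$ is superfluous (since necessarily $\max(0,d)+\max(0,e)\geq n+1>1$), so there are no orthogonal relations left to verify at all --- not even $\text{R3}^\bot_{0,n+1}$. This is exactly what permits the paper to work entirely inside $\rA_4$-subsystems, where orthogonal pairs need not embed (cf.\ the remark after the proof about $\{\alpha_{\ell-1},\alpha_\ell\}$ in $\rD_\ell$).
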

\begin{proof}
 We need to construct a homomorphism $j_n$ which would be the inverse of $i_n$. 
 We start with a homomorphism $\widetilde{j}_n^\Phi \colon F\langle \mathcal{X}^\Phi_{\leq n+1} \rangle \to \St^{\leq n}(\Phi, B)$ defined via
 \[ \widetilde{j}^{\Phi}_n(x_\alpha(at^k)) = \begin{cases} x_\alpha(at^k), & k\leq n; \\
      [x_{\alpha - \beta} (N_{\alpha-\beta, \beta} \cdot at^{k-1}), x_{\beta}(t)], & k = n+1, \end{cases} \]
 where $\beta$ is any root of $\Phi$ forming an acute angle with $\alpha$.
 A standard argument, cf. \cite[Proposition~1.1]{Re75} or~\cite[Proposition~3.2.2]{RS76}, shows that $\widetilde{j}^\Phi_n$ does not depend on the choice of $\beta$.
  
 Set $\mathcal{R}^\Phi_{n+1} = \mathcal{R}^\Phi_{\leq n+1} \setminus \mathcal{R}^\Phi_{\leq n}$. It suffices to verify that the image of every relation $R \in \mathcal{R}^\Phi_{n+1}$ under $\widetilde{j}^\Phi_n$ is a trivial element of $\St^{\leq n}(\Phi, B)$. In the special case $\Phi=\rA_{\geq 4}$ this has already been demonstrated by Tulenbaev in~\cite[Lemma~3.3]{Tu83}, so in this case the proof of the proposition is complete. We will deduce the assertion in the remaining cases $\Phi=\rD_\ell,\rE_\ell$ from the special case $\Phi=\rA_4$ of Tulenbaev's result.
 
 Let $R$ be a relation from $\mathcal{R}^\Phi_{n+1}$. By~\cref{superfluous-relations} we may assume that $R$ is not of type $\text{R3}^\bot$, therefore the roots $\alpha, \beta$ appearing in $R$ are contained in a root subsystem of $\Phi$ of type $\rA_2$. Our assumptions on $\Phi$ guarantee that there exists some root subsystem $\Psi$ of type $\rA_4$ containing $\alpha$ and $\beta$. Consider the following commutative diagram in which the vertical arrows are induced by the embedding $\Psi\subseteq\Phi$. 
 \[\begin{tikzcd} F(\mathcal{X}_{\leq n+1}^\Psi) \arrow{r}{j_n^\Psi} \arrow{d} & \St^{\leq n}(\Psi, B) \arrow{d} \\ F(\mathcal{X}_{\leq n+1}^\Phi) \arrow{r}{j_n^\Phi} & \St^{\leq n}(\Phi, B) \end{tikzcd} \]  
The relation $R$ lies in the image of the left arrow, therefore it comes from some relation $R' \in \mathcal{R}^\Psi_{n+1}$. The image of $R'$ in $\St^{\leq n}(\Psi, B)$ under $j_n^\Psi$ is trivial by Tulenbaev's result. But this implies that the image of $R$ under $\widetilde{j}_n^\Phi$ is also trivial and hence that $\widetilde{j}_n^\Phi$ gives rise to the desired map $j_n$.
\end{proof}

\begin{rem}
 Notice that in the case $\Phi=\rD_\ell$ the pair $\{\alpha_{\ell-1}, \alpha_{\ell}\}$ of orthogonal simple roots cannot be embedded into a root subsystem of type $\rA_4$. This explains why we needed to exclude relations $\text{R3}^\bot$ from the presentation of $\St^{\leq n}(\Phi, B)$ in the proof of the above proposition.
\end{rem}
\begin{rem}
 In the special case $M=A$, $B = A[t, t\inv]$ the assertion of the above proposition also holds in the cases $\Phi=\rA_2, \rA_3, \rD_4$. This is a consequence of the presentation of D.~Allcock applied to the affine untwisted Steinberg group $\St(\Phi, A[t, t\inv]) \cong \St(\widetilde{\Phi}^{(1)}, A)$. Allcock's presentation implies that $\St(\Phi, A[t, t\inv])$ can be presented using only generators and relations of degree $\leq 1$ with respect to both $t$ and $t^{-1}$, see~\cite[Corollary~1.3]{A13}.
 
 In the cases $\Phi = \rA_3, \rD_4$, $M \neq A$ it is still possible to prove the injectivity of $i_n$ starting from $n\geq 2$ using a variation of the argument of Rehmann--Soul{\'e} (cf. the lower bound for $m$ in 3.2.1 of~\cite{RS76}). However, apparently, it is not possible to establish the injectivity of $i_1$ in the specified cases using arguments similar to~\cite{RS76}.
\end{rem}

\subsection{The subgroups \texorpdfstring{$P_\alpha(0)$}{Pa(0)}, \texorpdfstring{$P_\alpha(*)$}{Pa(*)}, \texorpdfstring{$K(\alpha, \beta)$}{K(a,b)} and their properties} \label{sec:Pa0-basic}
Let $\Phi$ be a root system and $\alpha$ be an element of $\Phi$.
Consider the following subsets of $\Phi$:
\begin{align} Z_+(\alpha) & = \{ \beta \in \Phi \mid \langle \alpha, \beta \rangle > 0 \}, \\
   Z_0(\alpha) & = \{ \beta \in \Phi \mid \alpha + \beta \not\in \Phi,\ \langle \alpha, \beta \rangle = 0 \}, \\
   Z(\alpha)   & = Z_0(\alpha) \sqcup Z_+(\alpha). \end{align}
Clearly, $Z_0(\alpha)$ (resp. $Z_+(\alpha)$) is a symmetric (resp. special) subset of $\Phi$ (see~\cref{df:root-subsets} for the terminology).
   
We denote by $Z_\alpha(A, M)$ the subgroup of $\overline{\St}(\Phi, A, M)$ generated by elements
 $x_{\beta}(m),\ \beta \in Z_+(\alpha)$ and $z_{\gamma}(m, \zeta),\ \gamma \in Z_0(\alpha),$ where $m \in M$, $\zeta \in A$ (see the beginning of~\cref{sec:elementary} for the definition of the elements $z_\gamma(m, \zeta)$ and the group $\overline{\St}(\Phi, A, M)$).
It is not hard to see that \[Z_\alpha(A, M) = \Img\left(\overline{\St}(Z_0(\alpha), A, M) \to \overline{\St}(\Phi, A, M)\right) \ltimes U(Z_+(\alpha), M). \]
In the above formula $U(Z_+(\alpha), M)$ denotes the subgroup defined before~\cref{thm:Stein}.
It is clear, that $Z_\alpha(A, M)$ centralizes the root subgroup $X_\alpha(A)$ (cf.~\cite[984]{St71}).

For the rest of this subsection $\Phi$ is a simply-laced root system of rank $\geq 3$.
Let $\alpha$ be a fixed root of $\Phi$. 
Notice that in the simply-laced case the assumption $\alpha+\beta\not\in \Phi$ in the definition of $Z_0(\alpha)$ is superfluous, i.\,e. $Z_0(\alpha) = \{ \beta\in\Phi \mid \alpha \perp \beta \}$
(cf.~\cite[Proposition~5.7]{St71}).

\begin{rem}\label{Z-DS} Notice that our assumptions on the rank of $\Phi$ guarantee that $Z_0(\alpha)$ is nonempty. In particular, if $A$ is a local ring with maximal ideal $M$ the group $Z_\alpha(A, M)$ contains relative Dennis--Stein symbols $\langle a, m \rangle$ for $a\in A$, $m\in M$.
By~\eqref{DS-S-relationship} relative Steinberg symbols $\{a, 1+m\}$ are also contained in $Z_\alpha(A, M)$ for all $a\in A^\times$, $m\in M$. \end{rem}

\begin{df}\label{defP0}
Let $M$ be an ideal of a local ring $A$. Denote by $\widetilde{P}_{\alpha, M}(0)$ the subgroup of $\overline{\St}(\Phi, A[X], M[X])$ generated by the following 5 families of elements parameterized by $f \in M[X]$, $\xi \in A[X]$:
\begin{enumerate}[label=(P\arabic*)]
 \item $z_{\beta}(Xf, \xi),\ \beta \in \Phi \text{ such that }\alpha + \beta \in \Phi\text{ or } \alpha - \beta \in \Phi;$
 \item $z_{\beta}(f, X\xi),\ \alpha - \beta \in \Phi;$
 \item $z_{\beta}(f, \xi),\ \beta \perp \alpha;$
 \item $x_{-\alpha}(X^2f)$;
 \item $x_{\alpha}(f)$. \end{enumerate}

We denote by $\widetilde{P}_{\alpha, M}(*)$ the subgroup of $\overline{\St}(\Phi, A[X], M[X])$ generated by $\widetilde{P}_{\alpha, M}(0)$ and the elements $x_{-\alpha}(mX)$, $m \in M$.
\end{df}
Almost always we will be using the above definition in the situation when $M$ is precisely the maximal ideal of $A$. The only exception to this is~\cref{P0-conj-invariant} where the above subgroups are also used for $M=A$.

In the sequel we continue using letters $f$ and $\xi$ to denote elements of $M[X]$ and $A[X]$, respectively.

\begin{rem} \label{rem:recognition} From the above definition it follows that an element $z_\gamma(Xf, \xi)$ is a generator of type P1 provided $\gamma\neq\pm\alpha$ and $\gamma \not \perp \alpha$. Since $x_\gamma(Xf) = z_\gamma(Xf, 0)$, it follows that $x_{\gamma}(Xf)$ is also a generator of type P1. In particular, if $\beta$ is such that $\alpha + \beta \in \Phi$ then $x_{\pm(\alpha+\beta)}(Xf)$ is a generator of type P1 and $x_{\alpha+\beta}(f)$ is a generator of type P2. \end{rem}

\begin{lemma}\label{P0_normal} The subgroup $\widetilde{P}_{\alpha, M}(0)$ is normal in $\widetilde{P}_{\alpha, M}(*)$. Moreover, there is a short exact sequence of groups, which is split by the map $m \mapsto x_{-\alpha}(mX)$ (we denote by $(M, +)$ the additive group of the ideal $M$):
\[\begin{tikzcd} 1 \arrow{r} & \widetilde{P}_{\alpha,M}(0) \arrow{r} & \widetilde{P}_{\alpha,M}(*) \ar[r, "p_\alpha", twoheadrightarrow] & (M, +) \arrow[l, dashrightarrow, shift left=2] \arrow{r} & 1. \end{tikzcd} \] \end{lemma}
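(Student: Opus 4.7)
The plan is to first establish that $\widetilde{P}_{\alpha,M}(0)$ is normal in $\widetilde{P}_{\alpha,M}(*)$ and then to derive the short exact sequence and its splitting from the resulting quotient structure.

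Since $\widetilde{P}_{\alpha,M}(*)$ is generated by $\widetilde{P}_{\alpha,M}(0)$ together with the family $\{x_{-\alpha}(mX):m\in M\}$, normality reduces to checking that for every $m\in M$ and every generator $g$ of types P1--P5, the conjugate $g^{x_{-\alpha}(mX)}$ again lies in $\widetilde{P}_{\alpha,M}(0)$. For types P1--P3 (all of the form $z_\beta(\cdot,\cdot)$), this follows by a direct application of the conjugation formulas (2)--(4) of \cref{Zrels} with $\alpha_{\mathrm{Zrels}}=-\alpha$: in every case the extra factor produced by the conjugation acquires an additional power of $X$ from the $mX$ in the conjugating element, placing it back into one of the families P1, P2, or P4. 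Case P4 is immediate since the root subgroup $X_{-\alpha}$ is abelian.

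The main obstacle will be case P5, namely verifying that $c_\alpha(f,mX)=[x_\alpha(f),x_{-\alpha}(mX)]$ lies in $\widetilde{P}_{\alpha,M}(0)$ for $f\in M[X]$ and $m\in M$. Because $1+fmX$ is not invertible in $A[X]$, the rank-one Dennis--Stein manipulation is unavailable. I would instead pick a root $\beta\in\Phi$ with $\alpha-\beta\in\Phi$ (which exists since $\mathrm{rk}\,\Phi\geq 2$) and apply identity (C4) of \cref{Crels} with $\alpha'=\alpha-\beta$, $\beta'=\beta$, and $(s,t,\xi)=(f,mX,1)$; this decomposes $c_\alpha(f,mX)$ into three factors: a conjugate of $c_\beta(fmX,1)$, the inverse of $c_{\alpha-\beta}(\epsilon f,-\epsilon mX)$, and the P1-generator $x_{-\beta}(-fmX)$. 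The middle factor is automatically in $\widetilde{P}_{\alpha,M}(0)$ because it is a commutator of two P-generators: $x_{\alpha-\beta}(\epsilon f)$ (of type P2, with $\xi=0$) and $x_{\beta-\alpha}(-\epsilon mX)$ (of type P1). For the first factor I would rewrite $x_{-\beta}(1)$ via the Weyl-element identity $x_{-\beta}(1)=x_\beta(1)w_\beta(-1)x_\beta(1)$, then use the reflection action of $w_\beta$ on root subgroups to convert $[x_\beta(fmX),w_\beta(-1)]$ into an explicit product of P1-generators, and finally absorb the remaining conjugation by $x_\beta(1)$ by further applications of \cref{Zrels} and (C1)--(C3) of \cref{Crels}, checking at each step that the intermediate terms retain the $X$-power required by the P-family definitions.

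With normality established, the quotient $Q=\widetilde{P}_{\alpha,M}(*)/\widetilde{P}_{\alpha,M}(0)$ is abelian and is generated by the pairwise-commuting images of $x_{-\alpha}(mX)$ subject only to the additive relation inherited from $X_{-\alpha}$. Consequently the assignment $\sigma(m):=x_{-\alpha}(mX)$ is a well-defined group homomorphism $(M,+)\to\widetilde{P}_{\alpha,M}(*)$ furnishing a splitting, and the induced surjection $(M,+)\twoheadrightarrow Q$ needs only to be shown injective to yield $p_\alpha$ as its inverse. To verify $\sigma((M,+))\cap\widetilde{P}_{\alpha,M}(0)=\{1\}$ I would compose the inclusion $\widetilde{P}_{\alpha,M}(*)\hookrightarrow\St(\Phi,A[X])$ with the reduction modulo $X^2$ and the $(-\alpha)$-projection of the abelian augmentation kernel $\Ker(\St(\Phi,A[X]/(X^2))\to\St(\Phi,A))$; every generator P1--P5 is killed by this composition (in the P5 case after correcting via the section of the augmentation $A[X]/(X^2)\to A$), while $\sigma(m)$ is sent to $m\in M\subseteq A$, which produces the desired $p_\alpha$ and closes the exact sequence.
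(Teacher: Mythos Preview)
Your treatment of generators P1--P4 matches the paper's, but your handling of P5 takes a different and unnecessarily circuitous route. The paper simply substitutes into \cref{Zrels}\ref{Z5} (with $s=-\epsilon f$, $\xi=-\epsilon m$, $\eta=X$ and a root $\beta$ with $\alpha+\beta\in\Phi$) to express $z_\alpha(f,mX)$ directly as an eight-fold product of P1, P2, P4, P5 generators. Your route via \cref{Crels}\ref{C4} is valid in principle, but you overcomplicate the analysis of the first factor. Once you have written $c_\alpha(f,mX)$ via (C4), the conjugating elements $x_\alpha(-f)$ and $x_{\beta-\alpha}(\epsilon mX)$ are already P5 and P1 generators respectively, so conjugation by them preserves $\widetilde{P}_{\alpha,M}(0)$; you do not mention this. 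More importantly, the inner piece $c_\beta(fmX,1)=x_\beta(fmX)\cdot z_\beta(-fmX,-1)$ is already a product of two P1 generators (since $\alpha-\beta\in\Phi$ and $fmX\in XM[X]$), so your Weyl-element detour through $w_\beta(-1)$ is entirely unnecessary.

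Your argument for the injectivity of $p_\alpha$ has a genuine gap. The composite you propose, passing to $\St(\Phi,A[X]/(X^2))$ and then projecting from the augmentation kernel, is not defined: the image of $\widetilde{P}_{\alpha,M}(0)$ does \emph{not} land in $\Ker\bigl(\St(\Phi,A[X]/(X^2))\to\St(\Phi,A)\bigr)$, since P2, P3, and P5 generators all have nontrivial constant term. Your suggested fix, ``correcting via the section of the augmentation'', amounts to the map $g\mapsto g\cdot r(g)^{-1}$ for the retraction $r$, but this is not a group homomorphism, so it cannot be used to manufacture $p_\alpha$. (The paper's own proof addresses only normality and does not spell out this injectivity either; one clean way to recover it is to pass to the Chevalley group $\GG(\Phi,A[X])$ in a faithful representation and read off the coefficient of $X$ in a matrix entry lying in the $(-\alpha)$--weight space, checking that this vanishes for each of P1--P5 but equals $m$ for $t_{-\alpha}(mX)$.)
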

\begin{proof} We need to verify that the conjugate by $x_{-\alpha}(mX)$ of every generator $g$ of $\widetilde{P}_\alpha(0)$ listed in~\cref{defP0} belongs to $\widetilde{P}_\alpha(0)$.
The assertion is obvious for the generators of type P3 and P4.

Let $\beta\in\Phi$ be such that $\alpha - \beta \in \Phi$ and $g = z_\beta(Xf, \xi)$ be a generator of type P1. By~\cref{Zrels}\ref{Z2} we get that \begin{equation} z_{\beta}(Xf, \xi) ^ {x_{-\alpha}(mX)} =  x_{-\alpha} (- mX^2f\xi) \cdot x_{\beta-\alpha} (N_{\beta, -\alpha}\cdot mX^2f) \cdot z_{\beta}(Xf, \xi). \label{eq3-1} \end{equation}
The expression in the right-hand side is a product of generators of type P4, P1, P1 (see~\cref{rem:recognition}).
Now consider the case when $g = z_\beta(f, X\xi)$ is a generator of type P2 (so that $\alpha-\beta\in\Phi$). Again by~\cref{Zrels}\ref{Z2}
\begin{equation}
z_{\beta}(f, X\xi) ^ {x_{-\alpha}(mX)} =  x_{-\alpha} (- mX^2f\xi ) \cdot x_{\beta-\alpha} (N_{\beta, -\alpha}\cdot mXf) \cdot z_{\beta}(f, X\xi). \label{eq3-2} \end{equation}
It is easy to see that the expression in the right-hand side of~\eqref{eq3-2} is a product of generators of type P4, P1, P2.

Now let $g = z_{\beta}(Xf, \xi)$ be a generator of type P1 in the case $\alpha + \beta \in \Phi$. By~\cref{Zrels}\ref{Z3}
\begin{equation} \label{eq3-3} z_{\beta}(Xf, \xi) ^ {x_{-\alpha}(mX)} = x_{-\alpha} (mX^2f\xi ) \cdot x_{-\alpha-\beta} (N_{\beta,\alpha}\cdot mX^2f\xi^2) \cdot z_{\beta}(Xf, \xi), \end{equation}
and the latter expression is a product of generators of type P4, P1, P1.

Finally, let $g = x_\alpha(f)$ be a generator of type P5. Substituting $s = -\epsilon f$, $\xi = -\epsilon m$, $\eta=X$ into the identity of~\cref{Zrels}\ref{Z5} and expressing the fifth factor in the right-hand side of it through other terms we obtain that
\begin{multline} \label{eq:zalpha} g^{x_{-\alpha}(mX)} = z_\alpha(f, mX) = x_{\alpha+\beta}(\epsilon Xf) \cdot x_{\beta}(-mX^2 f) \cdot x_{-\beta}(mf) \cdot x_\alpha(f) \cdot \\ 
 \cdot z_{\alpha+\beta}(-\epsilon X f, -\epsilon m) \cdot z_{-\beta}(-mf, -X) \cdot x_{-\alpha-\beta}(-\epsilon m^2X f) \cdot x_{-\alpha}(-m^2X^2 f), \end{multline}
where $\epsilon = N_{\alpha, \beta}$. It is clear that the latter expression is a product of generators of type P1, P1, P2, P5, P1, P2, P1, P4. \end{proof}

\begin{rem}\label{rem:c} By~\eqref{Cdef} one has $c_\beta(f, X\xi) = x_{\beta} (f) \cdot z_{\beta}(-f, -X\xi)$. This implies that for $\beta \in Z_+(\alpha)$ (resp. $\beta \in Z_0(\alpha)$) the element $c_\beta(f, X\xi)$ is a product of two generators of type P2 (resp. P3).
Thus, the elements $c_{\beta}(f, X\xi)$ lie in $\widetilde{P}_{\alpha, M}(0)$ for all $\beta \in Z(\alpha)$. \end{rem}
\begin{rem}\label{rem:DS}
It is easy to check that $\widetilde{P}_{\alpha, M}(0)$ contains the image of $Z_\alpha(A, M)$ under the natural embedding of $\overline{\St}(\Phi, A, M) \hookrightarrow \overline{\St}(\Phi, A[X], M[X])$.
In particular, if $M$ is the maximal ideal of $A$, the subgroup $\widetilde{P}_{\alpha, M}(0)$ contains relative Dennis--Stein and Steinberg symbols. It is also easy to see that the image of $\widetilde{P}_{\alpha, M}(0)$ under the homomorphism $ev_{X=0}^*$ coincides with $Z_\alpha(A, M)$. \end{rem}

The following lemma shows that $\widetilde{P}_{\alpha, M}(*)$ is sufficiently large.
\begin{lemma} \label{Pstar-large} Suppose that $(A, M)$ is a local pair. 
Then the subgroup $\widetilde{P}_{\alpha, M}(*)$ contains the subgroup $K(A[X], M[X]) \leq \overline{\St}(\Phi, A[X], M[X])$ defined at the beginning of~\cref{sec:computationOfK}. \end{lemma}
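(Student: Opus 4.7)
The plan is to apply \cref{Kgen-strong} to reduce the claim to two subgoals, and then verify each using the elementary identities of \cref{Zrels}.

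Since the rank of $\Phi$ is at least $3$, the set $Z_0(\alpha)$ is nonempty; fix any $\gamma\in Z_0(\alpha)$. Applying \cref{Kgen-strong} with this $\gamma$ reduces the lemma to the two containments (a)~$\overline{\St}(\Phi, A[X], XM[X])\subseteq \widetilde{P}_{\alpha, M}(*)$ and (b)~$c_\gamma(m, X\eta)\in \widetilde{P}_{\alpha, M}(*)$ for all $m\in M$, $\eta\in A[X]$. Part~(b) will be immediate from \cref{rem:c}: since $\gamma\perp\alpha$, the element $c_\gamma(m, X\eta)$ lies inside $\widetilde{P}_{\alpha, M}(0)$.

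For (a), \cref{thm:Tits} applied to the ideal $XM[X]$ of $A[X]$ shows that $\overline{\St}(\Phi, A[X], XM[X])$ is generated by elements of the form $z_\beta(Xf, \xi)$ with $\beta\in\Phi$, $f\in M[X]$, $\xi\in A[X]$. When $\beta\perp\alpha$ the element $z_\beta(Xf,\xi)$ is a generator of type (P3) because $Xf\in M[X]$. When $\beta\neq\pm\alpha$ and $\beta\not\perp\alpha$, simple-lacedness forces one of $\alpha\pm\beta$ to be a root, so $z_\beta(Xf,\xi)$ is a generator of type (P1).

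The main obstacle is the remaining case $\beta=\pm\alpha$, which is not directly covered by (P1)--(P5). To treat it I pick $\beta_1, \beta_2\in\Phi$ with $\beta_1+\beta_2=\beta$ (possible since the rank is at least $2$) and substitute $s=Xf$, $\eta=1$ into the identity of \cref{Zrels}\ref{Z5}. This expresses $z_\beta(Xf, \xi)$ as a product of eight root-element factors supported on the roots $\pm\beta_1, \pm\beta_2, \pm\alpha$. Six of these sit at $\pm\beta_1, \pm\beta_2$ with coefficients in $XM[X]$, and since each such root $\beta'$ satisfies $\alpha-\beta'\in\Phi$ or $\alpha+\beta'\in\Phi$, all six are of type (P1). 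One factor is $x_\alpha(h)$ with $h\in M[X]$, of type (P5). The remaining factor has the form $x_{-\alpha}(Xg)$ with $g\in M[X]$; writing $g=m_0+Xf'$ for $m_0\in M$, $f'\in M[X]$, and using the additivity relation \eqref{Steinberg-additivity}, this splits as $x_{-\alpha}(m_0X)\cdot x_{-\alpha}(X^2f')$, a product of the extra generator of $\widetilde{P}_{\alpha, M}(*)$ over $\widetilde{P}_{\alpha, M}(0)$ and a generator of type (P4). This places $z_\beta(Xf, \xi)$ into $\widetilde{P}_{\alpha, M}(*)$ and completes the verification of (a).
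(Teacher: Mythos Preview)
Your proof is correct and shares the same overall skeleton as the paper's: reduce via \cref{Kgen-strong} to showing that $\overline{\St}(\Phi, A[X], XM[X])\subseteq \widetilde{P}_{\alpha,M}(*)$ and that the commutators $c_\gamma(m,X\eta)$ lie in $\widetilde{P}_{\alpha,M}(0)$, the latter by \cref{rem:c}. The difference lies in how the first containment is established. The paper observes that $\widetilde{P}_{\alpha,M}(*)$ already contains all $x_\beta(Xf)$, $\beta\in\Phi$, and all $z_\beta(Xf,\xi)$ for $\beta\neq\pm\alpha$, and then invokes \cref{thm:Stepanov} (choosing a parabolic subset whose special part avoids $\pm\alpha$) to conclude that these generate $\overline{\St}(\Phi, A[X], XM[X])$; in particular, the case $\beta=\pm\alpha$ is never confronted directly. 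You instead invoke \cref{thm:Tits}, which forces you to deal with $z_{\pm\alpha}(Xf,\xi)$ explicitly, and you handle this cleanly via the identity \cref{Zrels}\ref{Z5}. Your route is slightly longer but more self-contained, relying only on the classical Tits generators and an elementary identity rather than on Stepanov's refinement; the paper's route is shorter but imports a stronger generation theorem.
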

\begin{proof} Clearly, $\widetilde{P}_{\alpha, M}(*)$ contains the elements $x_\beta(Xf)$ for all $\beta \in \Phi$ and $z_\beta(Xf, \xi)$ for $\beta \in \Phi \setminus \{\pm \alpha\}$, $f \in M[X]$, $\xi\in A[X]$, therefore by~\cref{thm:Stepanov} $\widetilde{P}_{\alpha, M}(*)$ contains all of $\overline{\St}(\Phi, A[X], XM[X])$. The required assertion now follows from~\cref{Kgen-strong} and~\cref{rem:c}. \end{proof}

\begin{rem} \label{Pstar-char} From the above lemma,~\eqref{eq:sd-decomp} and~\cref{rem:DS} it follows that the subgroup $\widetilde{P}_{\alpha, M}(*)$ admits decomposition $\widetilde{P}_{\alpha, M}(*) = Z_\alpha(A, M) \ltimes K(A[X], M[X]).$ \end{rem} 

\begin{rem} \label{rem:palpha} It follows from~\cref{P0_normal} and~\cref{Pstar-large} that the value of the function $p_\alpha$ from the statement of~\cref{P0_normal} on an element $g\in K(A[X], M[X])$ can be computed via the following procedure. 
Start with any presentation of~$g$ as a product of elements $z_{\beta}(Xf, \xi)$ for $\beta \in \Phi$ and $c_\delta(f, X\xi)$ for some fixed $\delta\in Z_0(\alpha)$. 
Now pick among these factors those that correspond to the root $\beta = -\alpha$ (i.\,e. pick all factors $z_{-\alpha}(Xf_i, \xi_i)$). 
Now $p_\alpha(g)$ is precisely the sum of constant terms of the polynomials $f_i$. \end{rem}

\begin{lemma} \label{P0-conj-invariant} Suppose $(A, M)$ is a local pair. Then for any $\beta \in Z(\alpha)$ and $b \in A$ the subgroups $\widetilde{P}_{\alpha, M}(0)$ and $\widetilde{P}_{\alpha, M}(*)$ are stable under conjugation by $x_\beta(b)$. \end{lemma}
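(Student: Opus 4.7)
The plan is to verify stability generator by generator. Since $\widetilde{P}_{\alpha,M}(*)$ is generated by $\widetilde{P}_{\alpha,M}(0)$ together with the family $\{x_{-\alpha}(mX) : m\in M\}$ (\cref{P0_normal}), it suffices to check that for every $\beta\in Z(\alpha)$ and $b\in A$, the $x_\beta(b)$-conjugate of each generator of types P1--P5 again lies in $\widetilde{P}_{\alpha,M}(0)$, and in addition that $x_{-\alpha}(mX)^{x_\beta(b)}\in\widetilde{P}_{\alpha,M}(*)$. Most of the work reuses the commutator identities of \cref{Zrels} together with the simply-laced Chevalley commutator formulae~\eqref{Chevalley-CCF1}--\eqref{Chevalley-CCF2}; the upshot we need is that every new factor produced by these expansions carries a root which still belongs to $Z(\alpha)$ and a coefficient which still has the correct divisibility by $X$.

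The argument splits into three cases according to the relative position of $\alpha$ and $\beta$. \emph{Orthogonal case} $\beta\perp\alpha$: by~\eqref{Chevalley-CCF2} the element $x_\beta(b)$ commutes with $x_{\pm\alpha}(\cdot)$, so generators of types P4, P5 and the extra generators $x_{-\alpha}(mX)$ are fixed; for P1--P3, whose carrying root $\gamma$ differs from $\pm\alpha$, one applies \cref{Zrels}\ref{Z2}--\ref{Z4} depending on the angle between $\beta$ and $\gamma$. \emph{Positive case} $\beta\in Z_+(\alpha)$ with $\beta\neq\alpha$: now $\alpha-\beta\in\Phi$ and $\alpha+\beta\notin\Phi$, so $x_\beta(b)$ still commutes with $x_\alpha(f)$ (fixing P5), while for type P4 and for $x_{-\alpha}(mX)$ we get
\[ [x_\beta(b),\,x_{-\alpha}(X^2 f)] = x_{\beta-\alpha}(N_{\beta,-\alpha}\cdot bX^2 f), \qquad [x_\beta(b),\,x_{-\alpha}(mX)] = x_{\beta-\alpha}(\pm bmX); \]
since $\alpha+(\beta-\alpha)=\beta\in\Phi$, the first is a P1 generator (coefficient $X\cdot(bXf)$ with $bXf\in M[X]$) and the second is a P1 generator as well (coefficient $X\cdot bm$, $bm\in M$), so both $\widetilde{P}_{\alpha,M}(0)$ and $\widetilde{P}_{\alpha,M}(*)$ are preserved under this conjugation; P1--P3 are handled via \cref{Zrels} exactly as above. \emph{Diagonal case} $\beta=\alpha$: pick any $\delta\in\Phi$ with $\langle\alpha,\delta\rangle=1$ (available since $\mathrm{rk}\,\Phi\geq 3$) and write $x_\alpha(b)=[x_{\alpha-\delta}(1),\,x_\delta(\epsilon b)]$ via~\eqref{Chevalley-CCF1}, where $\epsilon=N_{\alpha-\delta,\delta}$; since both $\alpha-\delta$ and $\delta$ belong to $Z_+(\alpha)\setminus\{\alpha\}$, conjugation by $x_\alpha(b)$ is an iterated conjugation by elements whose action has already been shown to preserve the subgroups.

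The main obstacle is not any single algebraic identity but the combinatorial bookkeeping for generators of types P1--P3 in the orthogonal and positive cases. For every admissible triple $(\alpha,\beta,\gamma)$ one must check that each root arising from the Chevalley commutator formula (necessarily of the form $\beta$, $\beta+\gamma$ or $\beta-\gamma$) again lies in $Z(\alpha)$, and that the coefficient produced retains the divisibility by $X$ required to be a P1, P2 or P3 generator. This is finite and mechanical because in a simply-laced root system the configurations of $\{\alpha,\beta,\gamma\}$ are constrained to a rank-3 subsystem where only the angles $0,\pm\pi/3,\pm\pi/2$ appear, but enumerating the subcases is the most tedious part of the proof.
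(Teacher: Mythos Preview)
Your approach is correct but takes a different route from the paper's. You verify stability generator-by-generator using \cref{Zrels} and the Chevalley commutator formulae, with a reduction of the case $\beta=\alpha$ to the case $\beta\in Z_+(\alpha)\setminus\{\alpha\}$ via a commutator expression. This works, but the bookkeeping you defer (checking, for every P1--P3 generator and every position of $\beta$ relative to its carrying root $\gamma$, that the new roots $\beta\pm\gamma$ land in the right part of $Z(\alpha)$ with the right $X$-divisibility) is genuinely lengthy, even if each individual case is mechanical.

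The paper avoids this entirely. For $\widetilde{P}_{\alpha,M}(*)$ it invokes the decomposition $\widetilde{P}_{\alpha,M}(*) = Z_\alpha(A,M)\ltimes K(A[X],M[X])$ from \cref{Pstar-char}: the second factor is normal in $\St(\Phi,A[X])$, hence automatically stable, and stability of $Z_\alpha(A,M)$ under $x_\beta(b)$ for $\beta\in Z(\alpha)$ is a short check inside $\St(\Phi,A)$. For $\widetilde{P}_{\alpha,M}(0)$ the paper uses a trick: enlarge to $M=A$, observe that $x_\beta(b)$ is itself a generator of $\widetilde{P}_{\alpha,A}(0)$ (of type P2, P3 or P5), so conjugation by it fixes $p_{\alpha,A}$; since $p_{\alpha,M}$ is the restriction of $p_{\alpha,A}$, the kernel $\widetilde{P}_{\alpha,M}(0)\subset\widetilde{P}_{\alpha,M}(*)$ is preserved. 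This structural argument is a few lines and requires no case analysis beyond identifying the generator type of $x_\beta(b)$. Your direct method is more elementary and self-contained (it does not use \cref{Pstar-large} or \cref{Pstar-char}), but the paper's argument is considerably shorter and illustrates why the definition was set up the way it was.
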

\begin{proof}
Notice that both $Z_\alpha(A, M)$ and $K(A[X], M[X])$ are stable under the specified conjugation, which implies the assertion for $\widetilde{P}_{\alpha, M}(*)$. 
To obtain the assertion for $\widetilde{P}_{\alpha, M}(0)$ consider the following commutative diagram:
\[\begin{tikzcd} \widetilde{P}_{\alpha, M}(0) \arrow[r, hookrightarrow] \arrow[d, hookrightarrow] & \widetilde{P}_{\alpha, M}(*) \arrow[r, "p_{\alpha, M}", twoheadrightarrow] \ar[d, hookrightarrow] & (M, +) \ar[d, hookrightarrow] \\ \widetilde{P}_{\alpha, A}(0) \ar[r, hookrightarrow] & \widetilde{P}_{\alpha, A}(*) \ar[r, "p_{\alpha, A}", twoheadrightarrow] & (A, +). \end{tikzcd} \]
Notice that $x_\beta(b) \in \widetilde{P}_{\alpha, A}(0)$, therefore for $g \in \widetilde{P}_{\alpha, M}(0)$ one has \[p_{\alpha, M}(x_\beta(b) \cdot g \cdot x_\beta(-b)) = p_{\alpha, A}(x_\beta(b) \cdot g \cdot x_\beta(-b)) = p_{\alpha, A}(g) = 0,\] which implies the assertion.\end{proof}

For the rest of this subsection $M$ is the maximal ideal $A$.
We denote by $\beta$ another fixed root of $\Phi$ forming an acute angle with $\alpha$ (i.\,e. $\langle \alpha, \beta \rangle = 1$). We denote by $\Psi$ the subsystem of type $\rA_2$ generated by $\alpha$ and $\beta$.
\begin{df}\label{def:Kab} 
Fix a root $\delta \in \Phi \setminus \Psi$. Define $\widetilde{K}(\alpha, \beta)$ as the subgroup of $\overline{\St}(\Phi, A[X], M[X])$ generated by the following 5 families of elements parameterized by $\xi \in A[X]$, $f\in M[X]$:
 \begin{enumerate}[label=(K\arabic*)]
  \item $z_\gamma(Xf, \xi)$, for all $\gamma \in \Phi \setminus \Psi$;
  \item $x_{-\alpha}(X^2f)$, $x_{-\beta}(X^2f)$;
  \item $x_{\alpha}(Xf)$, $x_\beta(Xf)$;
  \item $x_{\alpha-\beta}(Xf)$, $x_{\beta-\alpha}(Xf)$;
  \item $c_{\delta}(f, X\xi)$. \end{enumerate} \end{df}
\begin{prop} \label{K-a-b} One has $K(A[X], M[X]) = \widetilde{K}(\alpha, \beta) \cdot X_{-\alpha}(M \cdot X) \cdot X_{-\beta}(M \cdot X).$
\end{prop}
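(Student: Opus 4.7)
The plan is to show that the set product $H := \widetilde{K}(\alpha, \beta) \cdot X_{-\alpha}(MX) \cdot X_{-\beta}(MX)$ is a subgroup of $\St(\Phi, A[X])$ containing a generating set of $K(A[X], M[X])$, so that the asserted equality $K(A[X], M[X]) = H$ follows. The inclusion $H \subseteq K(A[X], M[X])$ is immediate, as every generator involved in $H$ vanishes at $X = 0$.

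First I would exhibit a generating set of $K(A[X], M[X])$ lying in $H$. By \cref{Kgen-strong} applied with a fixed root chosen in $\Phi \setminus \Psi$ (possible since $\Phi$ has rank $\geq 3$), $K(A[X], M[X])$ is generated by $\overline{\St}(\Phi, A[X], XM[X])$ together with commutators $c_\delta(m, X\eta)$ which all belong to $\widetilde{K}(\alpha, \beta)$ via (K5). For the former group, invoke \cref{thm:Stepanov} with a parabolic subset $S \subseteq \Phi$ whose Levi part contains $\Psi$ (available since $\Phi$ has rank $\geq 3$): the generators are $x_\gamma(Xf)$, $\gamma \in \Phi$, and $z_\gamma(Xf, \xi)$, $\gamma \in \Sigma_S \subseteq \Phi \setminus \Psi$. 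Each $z$-generator is of type (K1); for $x_\gamma(Xf)$, one of (K1), (K3), or (K4) applies unless $\gamma \in \{-\alpha, -\beta\}$, in which case splitting $f = f_0 + Xf_*$ with $f_0 \in M$ yields $x_\gamma(Xf) = x_\gamma(Xf_0) \cdot x_\gamma(X^2 f_*)$ with the first factor in $X_\gamma(MX)$ and the second in (K2).

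To show that $H$ is a subgroup I would verify (a) $X_{-\alpha}(MX)$ and $X_{-\beta}(MX)$ commute element-wise -- immediate from \eqref{Chevalley-CCF2} as $\alpha + \beta \notin \Phi$ for simply-laced $\Phi$ with $\langle \alpha, \beta\rangle = 1$ -- and (b) $\widetilde{K}(\alpha, \beta)$ is normalized by both $X_{-\alpha}(MX)$ and $X_{-\beta}(MX)$. By symmetry (b) reduces to conjugation by $x_{-\alpha}(mX)$ for $m \in M$, checked generator-by-generator. The routine cases -- (K1), (K2), (K4), (K5), and the $x_\beta(Xf)$ subcase of (K3) -- follow directly from \cref{Zrels} and the Chevalley commutator formulas \eqref{Chevalley-CCF1}--\eqref{Chevalley-CCF2}: for example, $[x_{-\alpha}(mX), x_\beta(Xf)] = x_{\beta - \alpha}(\pm mX^2 f)$ is in (K4), while $[x_{-\alpha}(mX), x_{\alpha - \beta}(Xf)] = x_{-\beta}(\pm mX^2 f)$ is in (K2).

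The main obstacle is conjugating the (K3)-generator $x_\alpha(Xf)$ by $x_{-\alpha}(mX)$: the result $z_\alpha(Xf, mX)$ admits no direct reduction via Steinberg relations since $1 + mX^2 f$ is not a unit in $A[X]$. The plan is to apply \cref{Zrels}\ref{Z5} with the decomposition $\alpha = (\alpha - \beta) + \beta$ (parameters $s = f$, $\eta = X$, $\xi = mX$, $\epsilon = N_{\alpha - \beta, \beta}$) to obtain an explicit $8$-term factorization of $z_\alpha(Xf, mX)$. After splitting polynomial coefficients into constant and $X$-divisible parts and rearranging via the identities \eqref{eq:H1ii}, \eqref{eq:H1ii-2}, and \eqref{HW-variant}, most terms fall into $\widetilde{K}(\alpha, \beta)$; the two problematic ones combine into a Dennis--Stein-like symbol $c_{\alpha - \beta}(\epsilon f, \epsilon m X^2)$. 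Since $\alpha - \beta \in \Psi$ this symbol is not directly a (K5)-generator, but \cref{Crels}\ref{C4} applied to a decomposition $\alpha - \beta = \delta_1 + \delta_2$ with $\delta_1 \in \Phi \setminus \Psi$ expresses it as a product of a (K5)-generator attached to $\delta_1$ and additional (K1)--(K4)-factors, completing the argument.
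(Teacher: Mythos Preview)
Your strategy---show that $H = \widetilde{K}(\alpha,\beta)\cdot X_{-\alpha}(MX)\cdot X_{-\beta}(MX)$ is a subgroup by proving $\widetilde{K}(\alpha,\beta)$ is normalized by $X_{-\alpha}(MX)$ and $X_{-\beta}(MX)$---is different from the paper's, and the normalization claim is in fact true (it follows \emph{a posteriori} from \cref{K-a-b-cor} together with \cref{P0_normal}). However, your direct verification of it has real gaps.

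In your Z5 expansion of $z_\alpha(Xf,mX)$ with $s=f$, $\eta=X$, $\xi=mX$, the two factors you flag (the first and the fifth) are not the only ones outside $\widetilde{K}(\alpha,\beta)$. The second factor $x_{-\beta}(-mfX)$ and the eighth factor $z_{-\beta}(mfX,-X)$ both involve the root $-\beta\in\Psi$ with coefficients that are \emph{not} in $X^2M[X]$; neither is a K1--K5 generator, and your sketch does not explain how they are absorbed. (They do ultimately cancel into a $c_{-\beta}$-term, but making this precise requires commuting $x_{-\beta}(-mfX)$ past the opposite-root factor $x_\beta(mfX^3)$, which produces yet another $c_{-\beta}$; the bookkeeping is substantially longer than ``rearranging via \eqref{eq:H1ii}--\eqref{HW-variant}'' suggests.) Separately, your reduction of $c_{\alpha-\beta}(\epsilon f,\epsilon mX^2)$ via \cref{Crels}\ref{C4} to a K5 generator needs $\alpha-\beta-\delta\in\Phi$ for the \emph{fixed} root $\delta$ of \cref{def:Kab}; since $\delta\in\Phi\setminus\Psi$ is arbitrary you cannot assume this, and you cannot yet invoke independence of $\delta$ since that is \cref{K-a-b-cor}, a corollary of the proposition you are proving.

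The paper sidesteps all of this: rather than proving normalization, it observes that $G_0=\overline{\St}(\Phi,A[X],X^2M[X])$ is a normal subgroup contained in $\widetilde{K}(\alpha,\beta)$, and then works modulo $G_0$. Modulo $G_0$ the $\Psi$-root generators $x_\gamma(Xf)$ commute with the K1 and K5 generators (by \cref{Zrels} and \cref{Crels}), so any $g\in K(A[X],M[X])$ factors as $g_1g_2$ with $g_1\in\widetilde{K}(\alpha,\beta)$ and $g_2$ a word in $\{x_\gamma(Xf):\gamma\in\Psi\}$; one then pushes the degree-one pieces $x_{-\alpha}(m'X)$, $x_{-\beta}(m''X)$ to the right within $g_2$. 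This avoids ever needing $\widetilde{K}(\alpha,\beta)$ to be normalized by anything.
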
 
\begin{proof}
It is clear that $K(A[X], M[X]) \supseteq \widetilde{K}(\alpha, \beta) \cdot X_{-\alpha}(M \cdot X) \cdot X_{-\beta}(M \cdot X)$. Let us prove the reverse inclusion. Fix an element $g\in K(A[X], M[X])$. We need to show that $g$ lies in the specified product of subgroups.

For $i=1,\ldots,5$ denote by $\mathcal{G}_i$ the set of all generators of type K$i$ from~\cref{def:Kab}.
Notice that $\Phi \setminus \Psi$ contains the parabolic set of roots $(\Phi \setminus \Psi)\cap \Phi^+$.
Thus, by~\cref{thm:Stepanov} every element of $\overline{\St}(\Phi, A[X], XM[X])$ can be presented as a product of generators $\mathcal{G}_1$ and generators $\mathcal{G}' = \{ x_{\gamma}(Xf) \mid \gamma \in \Psi\}$. In turn, by~\cref{Kgen-strong} one can express $g$ as a product of these generators and generators $\mathcal{G}_5$. 

Set $G_0 = \overline{\St}(\Phi, A[X], X^2M[X])$. By~\cref{thm:Stepanov} $\widetilde{K}(\alpha, \beta)$ contains the normal subgroup $G_0$ therefore it suffices to obtain the required presentation of $g$ modulo $G_0$. By~\cref{Zrels} and~\cref{Crels} the elements of $\mathcal{G}'$ commute with the elements of $\mathcal{G}_1$ and $\mathcal{G}_5$ modulo $G_0$, therefore we can rewrite $g$ as $g_1 \cdot g_2$, for some $g_1 \in \langle \mathcal{G}_1, \mathcal{G}_5, G_0 \rangle \subseteq \widetilde{K}(\alpha, \beta)$ and $g_2 \in \langle \mathcal{G}' \rangle$.

It is clear that  $x_{\gamma_1}(mX)$ and $x_{\gamma_2}(m'X)$ commute modulo $G_0$ whenever $\gamma_1 \neq - \gamma_2$ for $\gamma_1, \gamma_2 \in \Psi$. On the other hand, for $\gamma \in \Psi$ any commutator $c_{\gamma}(mX, m'X)$ is congruent to some generator of $\mathcal{G}_5$  modulo $G_0$ (cf. the proof of~\cref{Kgen-strong}) and hence commutes with the elements of $\mathcal{G}'$ modulo $G_0$. 

Now pick each factor $x_{\gamma}(Xf)$, $\gamma \in \{ -\alpha, -\beta \}$ appearing in the presentation for~$g_2$, decompose it as $x_{\gamma}(X^2f') \cdot x_{\gamma}(m'X)$ for some $f' \in M[X]$, $m' \in M$ and then move the second factor to the rightmost position within $g_2$ conjugating all factors along the way. By the previous paragraph only elements of  $\mathcal{G}_5$ may appear modulo $G_0$ after simplification of these conjugates.
Thus, we can rewrite $g_2$ as $g_{21} \cdot g_{22}$, where $g_{21} \in \langle \mathcal{G}_2, \mathcal{G}_3, \mathcal{G}_4, \mathcal{G}_5, G_0 \rangle \subseteq \widetilde{K}(\alpha, \beta)$ and $g_{22} \in \langle X_{-\alpha}(MX), X_{-\beta}(MX) \rangle = X_{-\alpha}(MX) \cdot X_{-\beta}(MX)$.
Thus, we have obtained the desired decomposition for $g$. \end{proof}

\begin{corollary} \label{K-a-b-cor}
One has $\widetilde{K}(\alpha, \beta) = K(A[X], M[X]) \cap \widetilde{P}_{\alpha, M}(0) \cap \widetilde{P}_{\beta, M}(0)$. In particular, $\widetilde{K}(\alpha, \beta)$ does not depend on the choice of the root $\delta$ in~\cref{def:Kab}.
\end{corollary}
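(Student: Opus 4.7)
The plan is to prove both inclusions in $\widetilde{K}(\alpha, \beta) = L$, where $L := K(A[X], M[X]) \cap \widetilde{P}_{\alpha, M}(0) \cap \widetilde{P}_{\beta, M}(0)$. The ``in particular'' clause will then follow immediately, since $L$ is defined without reference to $\delta$.

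For the forward inclusion $\widetilde{K}(\alpha, \beta) \subseteq L$, I would verify membership generator by generator. Every K1--K5 generator evaluates to the identity at $X = 0$, so lies in $K(A[X], M[X])$. For membership in $\widetilde{P}_{\alpha, M}(0)$ (with $\widetilde{P}_{\beta, M}(0)$ treated symmetrically), the generators of types K1--K4 are directly recognized as elements of types P1--P5 after a short case analysis on the angle between the underlying root and $\alpha$, using the simply-laced hypothesis together with the specific positions $\pm\alpha, \pm\beta, \pm(\alpha-\beta)$ of the roots of $\Psi$. For K5 generators $c_\delta(f, X\xi)$, the factorization $c_\delta(f, X\xi) = x_\delta(f) \cdot z_\delta(-f, -X\xi)$ from~\cref{rem:c} immediately handles the subcases $\delta \perp \alpha$ (as a product of two P3 generators) and $\alpha - \delta \in \Phi$ (as a product of two P2 generators).

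The main obstacle is the remaining K5 subcase $\alpha + \delta \in \Phi$, in which neither $x_\delta(f)$ nor $z_\delta(-f, -X\xi)$ is directly a P-generator. My plan is to apply~\cref{Crels}\ref{C4} with $\alpha_0 = -\alpha$ and $\beta_0 = \gamma$, where $\gamma = \alpha + \delta \in \Phi$ (so that $\alpha_0 + \beta_0 = \delta$), and substitutions $s = f$, $t = X$, $\xi_0 = \xi$. This rewrites $c_\delta(f, X\xi)$ as a product of a $x_\delta(-f) \cdot x_\alpha(\epsilon X)$-conjugate of $c_\gamma(Xf, \xi)$, a correction $c_{-\alpha}(\epsilon f\xi, -\epsilon X)^{-1}$, and the P1-type term $x_{-\gamma}(-Xf\xi^2)$. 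The factor $c_\gamma(Xf, \xi)$ lies in $\widetilde{P}_{\alpha, M}(0)$ by the same application of~\cref{rem:c} as above, since $\alpha - \gamma = -\delta \in \Phi$. Stability of $\widetilde{P}_{\alpha, M}(0)$ under conjugation by $x_\alpha(\epsilon X)$ follows from a mild extension of~\cref{P0-conj-invariant} permitting $b \in A[X]$; conjugation by $x_\delta(-f)$ is then unwound directly through the Chevalley relations using that $\delta + \gamma \notin \Phi$ while $\delta - \gamma = -\alpha \in \Phi$, so that only further P1-type correction terms are produced, each controlled by~\cref{Zrels} and~\cref{Crels}.

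For the reverse inclusion $L \subseteq \widetilde{K}(\alpha, \beta)$, I would take $g \in L$ and invoke~\cref{K-a-b} to decompose $g = k \cdot x_{-\alpha}(mX) \cdot x_{-\beta}(m'X)$ with $k \in \widetilde{K}(\alpha, \beta)$ and $m, m' \in M$. By the forward inclusion just established, $k \in \widetilde{P}_{\alpha, M}(0)$, so $p_\alpha(k) = 0$ for the homomorphism $p_\alpha$ of~\cref{P0_normal}. By~\cref{rem:palpha} one computes $p_\alpha(x_{-\beta}(m'X)) = 0$ directly, since the only $z_\beta(X\cdot, \cdot)$-factor in the presentation of $x_{-\beta}(m'X)$ carries the root $-\beta \neq -\alpha$. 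Applying $p_\alpha$ to the decomposition therefore yields $m = p_\alpha(x_{-\alpha}(mX)) = p_\alpha(g) = 0$, and the symmetric argument with $p_\beta$ gives $m' = 0$. Hence $g = k \in \widetilde{K}(\alpha, \beta)$, completing the proof.
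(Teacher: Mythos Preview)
Your reverse inclusion is exactly the paper's argument: decompose $g \in L$ via \cref{K-a-b} as $g_0 \cdot x_{-\alpha}(mX)\cdot x_{-\beta}(m'X)$ with $g_0 \in \widetilde K(\alpha,\beta)$, then read off $m = p_\alpha(g) = 0$ and $m' = p_\beta(g) = 0$. That part is fine.

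For the forward inclusion the paper simply says ``it is clear'', whereas you attempt a generator-by-generator verification. Your treatment of K1--K4 and of the K5 subcases $\delta\perp\alpha$ and $\alpha-\delta\in\Phi$ is correct. The remaining K5 subcase $\alpha+\delta\in\Phi$, however, is not handled: two of the three pieces produced by your application of \cref{Crels}\ref{C4} are not shown to lie in $\widetilde P_{\alpha,M}(0)$. First, you list the ``correction'' $c_{-\alpha}(\epsilon f\xi,-\epsilon X)^{-1}$ but never argue why it belongs to $\widetilde P_{\alpha,M}(0)$; the root $-\alpha$ is not in $Z(\alpha)$, so \cref{rem:c} does not apply, and the naive decomposition $c_{-\alpha}(s,t)=x_{-\alpha}(s)\cdot z_{-\alpha}(-s,-t)$ yields a factor $x_{-\alpha}(s)$ with $s\in M[X]$ of possibly nonzero constant term, hence nonzero $p_\alpha$. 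Second, your claim that unwinding the conjugation by $x_\delta(-f)$ produces ``only further P1-type correction terms'' is not correct: since $\delta-\gamma=-\alpha$, \cref{Crels}\ref{C2} applied to $[c_\gamma(Xf,\xi),x_\delta(-f)]$ produces, besides an $x_\delta(\cdot)$ term, an $x_{-\alpha}(\pm Xf^2\xi^2)$ term, and $-\alpha$ does not satisfy the P1 condition (neither $\alpha+(-\alpha)$ nor $\alpha-(-\alpha)$ is a root). So the argument as written does not establish $c_\delta(f,X\xi)\in\widetilde P_{\alpha,M}(0)$ in this subcase.

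In short: the paper's proof and yours coincide on the substantive (reverse) direction; on the forward direction you go further than the paper's ``clear'' but your sketch for the obtuse-angle K5 case has genuine gaps that would need a more careful bookkeeping of the $x_{-\alpha}$ contributions (or a different decomposition avoiding the root $-\alpha$) to close.
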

\begin{proof} Denote $\widetilde{P}_{\alpha, M}(0) \cap \widetilde{P}_{\beta, M}(0) \cap K(A[X], M[X])$ by $K$. It is clear that $\widetilde{K}(\alpha, \beta) \subseteq K$. 
Let us prove the reverse inclusion. Let $g$ be an element of $K$.
By~\cref{K-a-b} $g$ can be presented as $g_0 \cdot x_{-\alpha}(mX) \cdot x_{-\beta}(m'X)$ for some $g_0 \in \widetilde{K}(\alpha, \beta)$ and $m, m' \in M$. From~\cref{rem:palpha} we obtain that $0 = p_\alpha(g) = m$, $0 = p_\beta(g) = m'$, therefore $g = g_0$ lies in $\widetilde{K}(\alpha, \beta)$, as required.\end{proof}

\begin{corollary} \label{K-a-b-cor2}
 One has $\widetilde{P}_{\alpha, M}(0) \cap K(A[X], M[X]) = \widetilde{K}(\alpha, \beta) \cdot X_{-\beta}(M \cdot X).$ 
\end{corollary}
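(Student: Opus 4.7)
The plan is to derive this corollary as a direct refinement of \cref{K-a-b-cor}, using the decomposition from \cref{K-a-b} and the splitting homomorphism $p_\alpha$ from \cref{P0_normal} to control the $X_{-\alpha}(MX)$-component.

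For the inclusion $\supseteq$, \cref{K-a-b-cor} already gives $\widetilde{K}(\alpha, \beta) \subseteq \widetilde{P}_{\alpha, M}(0) \cap K(A[X], M[X])$, so it remains to verify that $X_{-\beta}(MX)$ lies in the same intersection. Membership in $K(A[X], M[X])$ is obvious since $x_{-\beta}(mX)$ evaluates to $1$ at $X=0$. For membership in $\widetilde{P}_{\alpha, M}(0)$, note that since $\alpha$ and $\beta$ form an $\rA_2$-subsystem one has $\alpha + (-\beta) = \alpha - \beta \in \Phi$, and therefore $x_{-\beta}(mX) = z_{-\beta}(mX, 0)$ is a generator of type P1 in the sense of \cref{defP0}.

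For the inclusion $\subseteq$, fix $g \in \widetilde{P}_{\alpha, M}(0) \cap K(A[X], M[X])$. By \cref{K-a-b} we can write
\[ g = g_0 \cdot x_{-\alpha}(mX) \cdot x_{-\beta}(m'X) \]
for some $g_0 \in \widetilde{K}(\alpha, \beta)$ and $m, m' \in M$. All three factors belong to $\widetilde{P}_{\alpha, M}(*)$: the factor $g_0$ by \cref{K-a-b-cor}, the middle factor by the very definition of $\widetilde{P}_{\alpha, M}(*)$, and the last factor by the argument of the previous paragraph. Applying the homomorphism $p_\alpha$ of \cref{P0_normal}, we find $p_\alpha(g) = 0$ since $g \in \widetilde{P}_{\alpha, M}(0)$, while on the right-hand side $p_\alpha(g_0) = 0$ (again by \cref{K-a-b-cor}), $p_\alpha(x_{-\beta}(m'X)) = 0$, and $p_\alpha(x_{-\alpha}(mX)) = m$ by the splitting property. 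Therefore $m = 0$ and $g = g_0 \cdot x_{-\beta}(m'X)$ has the required form.

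There is no real obstacle in this argument; it is essentially the same bookkeeping as in the proof of \cref{K-a-b-cor}, the only new point being that one only invokes the homomorphism $p_\alpha$ (and not $p_\beta$), which suppresses the $X_{-\alpha}(MX)$-component of the decomposition but leaves $X_{-\beta}(MX)$ unconstrained.
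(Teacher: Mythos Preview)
Your proof is correct and follows essentially the same approach as the paper's: both establish the inclusion $\supseteq$ by recognizing $x_{-\beta}(mX)$ as a P1-generator, and both establish $\subseteq$ by decomposing $g$ via \cref{K-a-b} and then applying $p_\alpha$ (the paper cites \cref{rem:palpha}, which is just the concrete description of $p_\alpha$) to force $m=0$.
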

\begin{proof} Set $L = \widetilde{P}_{\alpha, M}(0) \cap K(A[X], M[X])$.
The elements of $X_{-\beta}(M\cdot X)$ are generators of type P1 for $\widetilde{P}_{\alpha, M}(0)$ (cf.~\cref{rem:recognition}),
 therefore $X_{-\beta}(M \cdot X) \subseteq L$. On the other hand, by~\cref{K-a-b-cor} $\widetilde{K}(\alpha, \beta) \subseteq L$. Thus, we have shown that $\widetilde{K}(\alpha, \beta) \cdot X_{-\beta}(M \cdot X) \subseteq L$. The reverse inclusion immediately follows from~\cref{rem:palpha} and~\cref{K-a-b}.
 \end{proof}
 
\begin{lemma} \label{conj-K-a-b} For $b \in A$ one has $\widetilde{K}(\alpha, \beta)^{x_{\beta - \alpha}(b)} \subseteq \widetilde{K}(\alpha, \beta)$. \end{lemma}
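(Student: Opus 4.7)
The plan is to exploit the intersection characterization provided by~\cref{K-a-b-cor}, namely $\widetilde{K}(\alpha, \beta) = K(A[X], M[X]) \cap \widetilde{P}_{\alpha, M}(0) \cap \widetilde{P}_{\beta, M}(0)$. For $g \in \widetilde{K}(\alpha, \beta)$, the conjugate $g^{x_{\beta-\alpha}(b)}$ automatically remains in $K(A[X], M[X])$, since the latter is normal in $\St(\Phi, A[X])$: it is the kernel of $ev_{X=0}^*$ on the normal subgroup $\overline{\St}(\Phi, A[X], M[X])$, and $ev_{X=0}^*$ extends to the whole ambient Steinberg group. It also remains in $\widetilde{P}_{\beta, M}(0)$ by~\cref{P0-conj-invariant}, since $\langle \beta, \beta-\alpha \rangle = 1$ places $\beta-\alpha$ in $Z_+(\beta) \subseteq Z(\beta)$. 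Thus the only nontrivial containment is $g^{x_{\beta-\alpha}(b)} \in \widetilde{P}_{\alpha, M}(0)$, for which~\cref{P0-conj-invariant} cannot be invoked because $\langle \alpha, \beta-\alpha \rangle = -1$.

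The key idea is to exploit the freedom granted by~\cref{K-a-b-cor}: $\widetilde{K}(\alpha, \beta)$ does not depend on the choice of $\delta$ in~\cref{def:Kab}. I would fix $\delta \in \Phi \setminus \Psi$ with $\delta \perp (\beta-\alpha)$. Such a $\delta$ exists for every simply-laced $\Phi$ of rank $\geq 3$: the subsystem $\Phi \cap (\beta-\alpha)^\perp$ is non-empty (in $\rA_\ell$, $\rD_\ell$, and $\rE_\ell$ with $\ell \geq 3$), and a direct inner-product check rules out all of $\pm\alpha, \pm\beta, \pm(\beta-\alpha)$ from this orthogonal complement, so $\Phi \cap (\beta-\alpha)^\perp \subseteq \Phi\setminus\Psi$. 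With this $\delta$ installed, one verifies $g^{x_{\beta-\alpha}(b)} \in \widetilde{P}_{\alpha, M}(0)$ on the five families (K1)--(K5) of generators.

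Generators (K1)--(K4) are dispatched by short direct computations using~\cref{Zrels} and the Chevalley commutator formulas~\eqref{Chevalley-CCF1}--\eqref{Chevalley-CCF2}: each conjugate unfolds into a product of two or three terms that are visibly (P1), (P3), (P4), or (P5) generators of $\widetilde{P}_{\alpha, M}(0)$. The central combinatorial observation throughout is that any root of the form $\gamma \pm (\beta-\alpha)$ that appears is never $\pm\alpha$, for instance because $\beta-\alpha+\gamma = -\alpha$ would force $\gamma = -\beta \in \Psi$ (excluded), and $\beta-\alpha+\gamma = \alpha$ would force $\gamma = 2\alpha-\beta$, which has squared length $6$ and hence is not a root. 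The prototypical case is (K4)'s $x_{\alpha-\beta}(Xf)$, whose conjugate is $z_{\alpha-\beta}(Xf, b)$: this is a (P1) generator for $\widetilde{P}_{\alpha, M}(0)$ since $\alpha - (\alpha-\beta) = \beta \in \Phi$, and symmetrically a (P1) generator for $\widetilde{P}_{\beta, M}(0)$.

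Generator (K5) becomes trivial with the chosen $\delta$: in a simply-laced system, $\delta \perp (\beta-\alpha)$ with $\delta \neq \pm(\beta-\alpha)$ forces $\delta \pm (\beta-\alpha) \notin \Phi$ by the length argument $\|\delta \pm (\beta-\alpha)\|^2 = 4 \neq 2$, so $x_{\pm\delta}(\cdot)$ commute with $x_{\beta-\alpha}(b)$. Hence $c_\delta(f, X\xi)^{x_{\beta-\alpha}(b)} = c_\delta(f, X\xi)$, which already lies in $\widetilde{P}_{\alpha, M}(0)$ since $c_\delta(f, X\xi) \in \widetilde{K}(\alpha, \beta) \subseteq \widetilde{P}_{\alpha, M}(0)$ by~\cref{K-a-b-cor}. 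I expect the (K5) generator to be the main potential obstacle: for a generic $\delta$ not orthogonal to $\beta-\alpha$, the conjugate of $[x_\delta(f), x_{-\delta}(X\xi)]$ unfolds via~\eqref{HW-variant} into a tangle of rank-$2$ commutators whose direct membership in $\widetilde{P}_{\alpha, M}(0)$ is far from transparent. The freedom to swap $\delta$ afforded by~\cref{K-a-b-cor} is precisely what makes the argument go through cleanly.
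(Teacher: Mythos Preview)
Your argument is correct but differs in structure from the paper's. The paper verifies directly that the conjugate of each generator K1--K5 is again a product of generators of type K1--K5 (so lands back in $\widetilde{K}(\alpha,\beta)$ itself, not merely in the triple intersection). In particular, for K5 the paper does \emph{not} specialize $\delta$: it applies \cref{Crels} to $c_\delta(f, X\xi)$ for arbitrary $\delta \in \Phi\setminus\Psi$ and reads off that the conjugate is a product of generators of type K5, K4, K1 (the extra root $\beta-\alpha\pm\delta$ automatically lies in $\Phi\setminus\Psi$ since $\delta \notin \operatorname{span}(\Psi)$). The only place the paper appeals to the intersection description \cref{K-a-b-cor} is for the single K4 generator $x_{\alpha-\beta}(Xf)$, whose conjugate $z_{\alpha-\beta}(Xf,b)$ is not visibly a K-type generator and is instead placed in $\widetilde{K}(\alpha,\beta)$ via \cref{rem:palpha}.

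Your route inverts the emphasis: you invoke \cref{K-a-b-cor} globally, dispatch $K(A[X],M[X])$ and $\widetilde{P}_{\beta,M}(0)$ for free, and reduce to checking membership only in $\widetilde{P}_{\alpha,M}(0)$ on generators. To avoid the rank-two tangle for K5 you then exploit the $\delta$-independence in \cref{K-a-b-cor} to pick $\delta\perp(\beta-\alpha)$. This is a genuinely different organization; it trades a short Crels computation for a root-existence check, and it makes the use of the characterization \cref{K-a-b-cor} the backbone of the proof rather than a one-off trick. Both work equally well under the paper's standing hypotheses.
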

\begin{proof} By~\cref{Zrels} the conjugate of a generator $g = z_\gamma(Xf, \xi)$ of type K1 by $x_{\beta-\alpha}(b)$ is either $g$ itself or a product of generators of type K4, K1, K1. By~\cref{Crels} the conjugate of a generator $g = c_\delta(f, X\xi)$ of type K5 is either $g$ or a product of generators of type K5, K4, K1.

Now if $g = x_\gamma(Xf)$, $\gamma \in \{\pm \alpha, \pm \beta\}$ is a generator of type K2 or K3, the conjugation by $x_{\beta-\alpha}(a)$ either fixes $g$ or transforms it into a product of two generators of type K2 or K3.  Finally, from~\cref{rem:palpha} we obtain that $z_{\alpha-\beta}(Xf, b) \in K(A[X], M[X]) \cap \widetilde{P}_\alpha(0) \cap \widetilde{P}_{\beta}(0) = \widetilde{K}(\alpha, \beta)$. \end{proof}  

\begin{df} Define the subgroups $P_\alpha(0)$, $P_{\alpha}(*)$, $K(\alpha, \beta) \leq \overline{\St}(\Phi, R, I)$ as the images of the subgroups $\widetilde{P}_{\alpha, M}(0)$, $\widetilde{P}_{\alpha, M}(*)$, $\widetilde{K}(\alpha, \beta)$ under the natural homomorphism $j_+ \colon \St(\Phi, A[X]) \to \St(\Phi, R)$.\end{df}

We need one more technical definition. Denote by $Z_{\alpha, \beta}$ the subgroup $U(Z_+(\alpha) \setminus \{\alpha - \beta \}, M[X])$ of $\St(\Phi, A[X, X^{-1}])$.  Recall that this means that $Z_{\alpha, \beta}$ is the group generated by all root subgroups $X_\gamma(M[X])$, where $\gamma \in Z_+(\alpha) \setminus \{ \alpha - \beta \}$. 
  
\begin{lemma} \label{image-K-a-b} The image of $K(\alpha, \beta)$ under the automorphism of conjugation by $x_\alpha(aX^{-1})$ is contained in the subgroup of $\St(\Phi, A[X, X^{-1}])$ generated by $K(\alpha, \beta)$ and $Z_{\alpha, \beta}$. \end{lemma}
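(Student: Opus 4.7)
Since $K(\alpha,\beta)$ is generated by the images in $\St(\Phi,R)$ of the five families~\textup{(K1)}--\textup{(K5)} from~\cref{def:Kab}, it suffices to verify that for each such generator $g$ the commutator $[x_\alpha(aX^{-1}),g]$ lies in $H:=\langle K(\alpha,\beta),Z_{\alpha,\beta}\rangle$. The plan is to expand each commutator using the Chevalley formulas~\eqref{Chevalley-CCF1}--\eqref{Chevalley-CCF2} together with the identities of~\cref{Zrels,Crels}, and to exploit the following uniform observation: whenever $\gamma\in\Phi\setminus\Psi$ and $\alpha\pm\gamma\in\Phi$, the root $\alpha\pm\gamma$ lies in $Z_+(\alpha)\setminus\{\alpha-\beta\}$ (because $\Psi=\{\pm\alpha,\pm\beta,\pm(\alpha-\beta)\}$ and $\gamma\notin\Psi$), so any $x_{\alpha\pm\gamma}(p)$ with $p\in M[X]$ is automatically in $Z_{\alpha,\beta}$.

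The types \textup{(K3)}, \textup{(K4)} and the $g=x_{-\beta}(X^2f)$ half of \textup{(K2)} are immediate: the commutator is either trivial or a single root element $x_\mu(p)$ with $p\in M[X]$ which, depending on whether $\mu=\alpha-\beta$, lies in $Z_{\alpha,\beta}$ or is itself a K4 generator of $K(\alpha,\beta)$. For~\textup{(K1)} I would write $z_\gamma(Xf,\xi)=x_{-\gamma}(-\xi)\,x_\gamma(Xf)\,x_{-\gamma}(\xi)$ and expand $[x_\alpha(aX^{-1}),\cdot]$ by~\eqref{eq:H1ii-2}; the potentially problematic case is $\alpha-\gamma\in\Phi$, where the two outer partial commutators $x_{\alpha-\gamma}(\mp Na\xi X^{-1})$ carry $X^{-1}$-terms. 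The key is that after conjugating one of them through $x_\gamma(Xf)$ (which produces an auxiliary factor $x_\alpha(\pm NN''af\xi)\in Z_{\alpha,\beta}$, its coefficient lying in $M[X]$ because $f\in M[X]$) and through $x_{-\gamma}(-\xi)$ (with which $x_{\alpha-\gamma}$ commutes, since $\alpha-2\gamma\notin\Phi$ in the simply-laced setting), the two $X^{-1}$-contributions telescope and cancel, leaving a product in $X_{\alpha-\gamma}(M[X])\cdot X_\alpha(M[X])\subseteq Z_{\alpha,\beta}$. The case $\alpha+\gamma\in\Phi$ is analogous and even simpler. For~\textup{(K5)}, writing $c_\delta(f,X\xi)=[x_\delta(f),x_{-\delta}(X\xi)]$ with $\delta\in\Phi\setminus\Psi$, exactly one of $[x_\alpha(aX^{-1}),x_{\pm\delta}(\cdot)]$ vanishes (since in simply-laced $\Phi$ we cannot have both $\alpha+\delta\in\Phi$ and $\alpha-\delta\in\Phi$), so identity~\eqref{eq:H1iii} reduces the calculation to root-element commutators of the kind already analysed.

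The delicate case is \textup{(K2)} with $g=x_{-\alpha}(X^2f)$, since the pair $\{x_\alpha(aX^{-1}),x_{-\alpha}(X^2f)\}$ admits no Chevalley commutator formula. My proposal is to pick an auxiliary root $\beta'\in\Phi\setminus\Psi$ with $\alpha-\beta'\in\Phi\setminus\Psi$ (such $\beta'$ exists thanks to the rank hypothesis on $\Phi$ and the explicit shape of $\Psi$) and to apply the recursion~\cref{Crels}\ref{C4} with $s=aX^{-1}$, $t=Xf$, $\xi=X$ to rewrite
\[
c_\alpha(aX^{-1},X^2f)=[x_{\beta'}(af),\,x_{-\beta'}(X)]^{x_\alpha(-aX^{-1})\,x_{-(\alpha-\beta')}(\epsilon Xf)}\cdot c_{\alpha-\beta'}(\epsilon a,-\epsilon Xf)^{-1}\cdot x_{-\beta'}(-afX^2).
\]
The first factor is the K5 generator $c_{\beta'}(af,X\cdot 1)$ (its conjugator being controlled by the K1 analysis), and the third factor is the K1 generator $x_{-\beta'}(X\cdot(-aXf))$ since $-aXf\in M[X]$ and $-\beta'\in\Phi\setminus\Psi$. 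The genuinely problematic piece is the middle factor $c_{\alpha-\beta'}(\epsilon a,-\epsilon Xf)^{-1}$, whose constant coefficient $\epsilon a$ prevents it from being directly a K5 generator; the plan is to absorb this constant into $Z_{\alpha,\beta}$ and a further K5 contribution by iterating~\cref{Crels}\ref{C4} and using~\cref{Crels}\ref{C1}--\ref{C2} to transfer the constant through an additional auxiliary root. The main technical obstacles are verifying the existence of the auxiliary $\beta'$ (and, at the next level, of the iterated auxiliary root) and tracking the signs and $X$-powers carefully enough to confirm that no rogue $X^{-1}$-contribution survives the final reassembly.
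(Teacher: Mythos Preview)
Your overall strategy matches the paper's: check each generator family K1--K5. For K1 and K5 you redo by hand what \cref{Zrels} and \cref{Crels} already package; the paper simply cites \cref{Zrels}\ref{Z2}--\ref{Z4} for K1 and \cref{Crels}\ref{C1}--\ref{C3} for K5, obtaining in each case a product of two $Z_{\alpha,\beta}$-factors and the original generator. Your expanded commutator approach would reach the same conclusion but with more bookkeeping.

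The real issue is the hard K2 case $g=x_{-\alpha}(X^2f)$. Your plan via \cref{Crels}\ref{C4} stalls at the factor $c_{\alpha-\beta'}(\epsilon a,-\epsilon Xf)^{-1}$, which you flag as ``genuinely problematic'' and propose to attack by a further, unspecified iteration. In fact no iteration is needed: since $\beta'-\alpha\in\Phi\setminus\Psi$ and $-\epsilon Xf\in XM[X]$, one has
\[
c_{\alpha-\beta'}(\epsilon a,-\epsilon Xf)=z_{\beta'-\alpha}(-\epsilon Xf,-\epsilon a)\cdot x_{\beta'-\alpha}(\epsilon Xf),
\]
a product of two K1 generators, so it lies in $K(\alpha,\beta)$ outright. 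With this observation your \cref{Crels}\ref{C4} route closes, though you must also control the conjugator $x_\alpha(-aX^{-1})\cdot x_{\beta'-\alpha}(\epsilon Xf)$ on the first factor, which forces you to feed the K5 analysis back in.

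The paper bypasses all of this by computing the conjugate $z_{-\alpha}(X^2f,aX^{-1})$ directly from \cref{Zrels}\ref{Z5} (with the substitution $s=Xf$, $\eta=X$, $\xi=aX^{-1}$ and the pair of roots $\delta-\alpha,-\delta$). The point is that, thanks to \cref{K-a-b-cor}, one may freely take the K5 root $\delta$ to satisfy $\alpha-\delta\in\Phi$; then \cref{Zrels}\ref{Z5} yields eight factors, the first seven of which are immediately of type K1, K2 or $Z_{\alpha,\beta}$, and the last factor $z_\delta(af,-X)$ rewrites as $x_\delta(af)\cdot c_\delta(-af,X)$, i.e.\ a $Z_{\alpha,\beta}$-generator times a K5 generator. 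This is cleaner than your route because it never produces a factor whose membership in $H$ is in doubt.
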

\begin{proof} We need to verify that the conjugate by $x_\alpha(aX^{-1})$ of every generator from~\cref{def:Kab} lies in the specified subgroup. To simplify notation we call the generators of  $Z_{\alpha,\beta}$ ``generators of type Z``. 

Let $z_\gamma(Xf, \xi)$ be a generator of type K1 for some $\gamma \in \Phi \setminus \Psi$.
In the case $\gamma \perp \alpha$ this generator commutes with $x_\alpha(aX^{-1})$ by~\cref{Zrels}\ref{Z4}.
If $\gamma$ is such that $\alpha + \gamma \in \Phi$ we obtain from~\cref{Zrels}\ref{Z2} that
\begin{equation} z_{\gamma}(Xf, \xi) ^ {x_{\alpha}(aX^{-1})} = x_{\alpha} (- af\xi) \cdot x_{\alpha+\gamma} (N_{\gamma, \alpha}\cdot af) \cdot z_{\gamma}(Xf, \xi). \end{equation}
On the other hand, if $\gamma$ is such that $\alpha - \gamma \in \Phi$, we obtain from~\cref{Zrels}\ref{Z3} that
\begin{equation} z_{\gamma}(Xf, \xi) ^ {x_{\alpha}(aX^{-1})} = x_{\alpha} (af\xi) \cdot x_{\alpha-\gamma} (N_{\gamma,-\alpha}\cdot af\xi^2) \cdot z_{\gamma}(Xf, \xi). \end{equation}
By the choice of $\gamma$ both $\alpha + \gamma$ and $\alpha - \gamma$ lie in $Z_+(\alpha) \setminus \{\alpha - \beta\}$, therefore in both cases the expressions in the right-hand side are products of generators of type Z, Z, K1.

A similar computation using \ref{C1}--\ref{C3} of \cref{Crels} shows that the conjugate of a generator of type K5 is either the generator itself or a product of generators of type Z, Z, K5.

Let us verify the assertion for the generators of types K2, K3, K4.
By~\eqref{Chevalley-CCF2} the conjugation by $x_\alpha(aX^{-1})$ fixes root subgroups $X_{\alpha}(X\cdot M[X])$, $X_{\beta}(X\cdot M[X])$, $X_{\alpha-\beta}(X\cdot M[X])$. 
Further, from~\eqref{Chevalley-CCF1} we obtain that
\begin{align*}
 x_{-\beta}(X^2f)^{x_\alpha(aX^{-1})}     &= x_{-\beta}(X^2f) \cdot x_{\alpha-\beta}(N_{-\beta, \alpha} \cdot aXf);\\
 x_{\beta-\alpha}(Xf)^{x_\alpha(aX^{-1})} &= x_{\beta-\alpha}(Xf) \cdot x_{\beta}(N_{\alpha,-\beta}\cdot af),
\end{align*}
and the expressions in the right-hand side are products of generators of type K2, K4 and K4, Z, respectively.

It remains to verify the assertion for the generators $x_{-\alpha}(X^2f) $ of type K2.
By~\cref{K-a-b-cor} we may assume that the root $\delta$ in~\cref{def:Kab} forms an acute angle with $\alpha$, i.\,e. $\alpha-\delta \in \Phi$. Substituting $s = Xf$, $\eta = X$, $\xi = aX^{-1}$ and $\alpha = \delta-\alpha$, $\beta = -\delta$ into~\cref{Zrels}\ref{Z5} we obtain that
\begin{multline*} z_{-\alpha}(X^2f, aX^{-1}) = x_{\delta-\alpha}(\epsilon Xf) \cdot x_{\delta}(-af) \cdot x_{-\delta}(aX^2 f) \cdot x_{-\alpha}(X^2f) \cdot \\
 \cdot z_{\delta-\alpha}(-\epsilon Xf, -\epsilon a) \cdot x_{\alpha-\delta}(-\epsilon a^2 Xf) \cdot x_{\alpha}(- a^2 f) \cdot z_{\delta}(a f, -X), \end{multline*}
where $\epsilon = N_{-\delta,\alpha}$. 
The first 7 factors in the right-hand side are generators of type K1, Z, K1, K2, K1, K1, Z.
The remaining last factor $z_\delta(af, -X)$ can be rewritten as $x_{\delta}(af) \cdot c_{\delta}(-af, X)$ (cf.~\cref{rem:c}) and hence is a product of generators of type Z and K5. \end{proof}

\begin{corollary} \label{K-a-b-P-b} The image of $K(\alpha, \beta)$ under the automorphism of conjugation by $x_\alpha(aX^{-1})$ is contained in $P_\beta(0)$. \end{corollary}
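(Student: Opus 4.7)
The plan is to deduce the corollary directly from \cref{image-K-a-b} by verifying that both subgroups appearing in the conclusion of that lemma, namely $K(\alpha,\beta)$ and $Z_{\alpha,\beta}$, are already contained in $P_\beta(0)$. Once these two inclusions are in hand, the subgroup of $\St(\Phi, R)$ generated by them lies in $P_\beta(0)$, and hence so does the conjugate $K(\alpha,\beta)^{x_\alpha(aX^{-1})}$.

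The first inclusion $K(\alpha,\beta) \subseteq P_\beta(0)$ is essentially tautological: \cref{K-a-b-cor} characterises $\widetilde{K}(\alpha,\beta)$ as $K(A[X], M[X]) \cap \widetilde{P}_{\alpha,M}(0) \cap \widetilde{P}_{\beta,M}(0)$, so in particular $\widetilde{K}(\alpha,\beta) \subseteq \widetilde{P}_{\beta,M}(0)$, and applying $j_+$ transports the inclusion to $\St(\Phi, R)$.

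The second inclusion $Z_{\alpha,\beta} \subseteq P_\beta(0)$ reduces to showing that for every $\gamma \in Z_+(\alpha) \setminus \{\alpha-\beta\}$ and every $f \in M[X]$, the generator $x_\gamma(f)$ lies in $\widetilde{P}_{\beta, M}(0)$. Inspecting the generating families listed in \cref{defP0}, this will follow from the combinatorial inequality $\langle \beta, \gamma \rangle \geq 0$. Indeed, $\langle \beta, \gamma \rangle = 2$ forces $\gamma = \beta$ and gives a generator of type P5; $\langle \beta, \gamma \rangle = 0$ means $\gamma \perp \beta$ and $x_\gamma(f) = z_\gamma(f, 0)$ is of type P3; and $\langle \beta, \gamma \rangle = 1$ means $\beta - \gamma \in \Phi$ and $x_\gamma(f) = z_\gamma(f, 0)$ is of type P2 with $\xi = 0$.

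The main obstacle is thus to establish this root-system inequality, which I would argue as follows. Since $\gamma \in Z_+(\alpha)$ we have $\langle \alpha, \gamma \rangle \geq 1$, and $\gamma = -\beta$ is ruled out because $\langle \alpha, -\beta \rangle = -1$. Suppose towards contradiction that $\langle \beta, \gamma \rangle = -1$. Then
\[ \langle \alpha - \beta, \gamma \rangle = \langle \alpha, \gamma \rangle - \langle \beta, \gamma \rangle \geq 1 - (-1) = 2. \]
Since $\langle \alpha, \beta \rangle = 1$ ensures that $\alpha - \beta$ is itself a root, and in a simply-laced root system the inner product of two roots equals $2$ only when they coincide, we conclude $\gamma = \alpha - \beta$, contradicting our hypothesis. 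This finishes the sketch.
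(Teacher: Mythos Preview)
Your proof is correct and follows essentially the same approach as the paper: both reduce via \cref{image-K-a-b} and \cref{K-a-b-cor} to the inclusion $Z_{\alpha,\beta} \subseteq P_\beta(0)$, then rule out $\langle \beta,\gamma\rangle < 0$ by the same contradiction argument (computing $\langle \alpha-\beta,\gamma\rangle \geq 2$), and finally classify the generators as type P3, P2, or P5 according to whether $\langle \beta,\gamma\rangle$ equals $0$, $1$, or $2$. Your explicit exclusion of $\gamma = -\beta$ is a harmless clarification the paper leaves implicit.
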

\begin{proof} 
By~\cref{K-a-b-cor} and~\cref{image-K-a-b} it suffices to show that $Z_{\alpha, \beta} \subseteq P_\beta(0)$. Let $g = x_{\gamma}(f)$ be a generator of $Z_{\alpha, \beta}$ for some
$\gamma \in Z_+(\alpha) \setminus \{\alpha - \beta \}$. Notice that $\gamma$ cannot form an obtuse angle with $\beta$, otherwise from $\langle \beta, \gamma \rangle = -1$ it follows that $ \langle \alpha - \beta, \gamma \rangle \geq 2$ and hence that $\gamma = \alpha - \beta$, a contradiction.
Now if $\langle \beta, \gamma \rangle$ is $0$, $1$ or $2$, then $g$ is a generator of type P3, P2 or P5 for $P_\beta(0)$, respectively.\end{proof}

\begin{lemma} \label{P0_conj} The subgroup $P_\alpha(0)$ is stable under conjugation by $x_\alpha(aX^{-1})$ for arbitrary $a \in A$. \end{lemma}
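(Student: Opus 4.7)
The plan is to verify the stability of $P_\alpha(0)$ generator by generator: it suffices to show that for each of the five families P1--P5 listed in~\cref{defP0}, the conjugate of a generator by $x_\alpha(aX^{-1})$ can be written as a product of elements of $P_\alpha(0)$. The computations will rely almost entirely on~\cref{Zrels}, together with the ``bookkeeping'' observation that the $X^{-1}$ factor in $aX^{-1}$ will repeatedly cancel or combine with the $X$ factors present in the coefficients of the generators.

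The easy cases are P3 and P5: a P3 generator $z_\beta(f,\xi)$ with $\beta\perp\alpha$ commutes with $x_\alpha(aX^{-1})$ by~\cref{Zrels}\ref{Z4}, and a P5 generator $x_\alpha(f)$ commutes with $x_\alpha(aX^{-1})$ by~\eqref{Steinberg-additivity}. For a P1 generator $z_\beta(Xf,\xi)$ I would split into the subcases $\alpha+\beta\in\Phi$ and $\alpha-\beta\in\Phi$ and apply~\cref{Zrels}\ref{Z2} or \ref{Z3} respectively; in each case the resulting three factors are of type P5 (a term in $X_\alpha(M[X])$), of type P1 or P2 (a term in $X_{\alpha\pm\beta}$ whose coefficient lies in $M[X]$ or $X\cdot M[X]$), and the original P1 generator. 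The P2 case is handled similarly via~\cref{Zrels}\ref{Z3}.

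The main obstacle is the P4 case $g=x_{-\alpha}(X^2 f)$, because $\alpha+(-\alpha)\notin\Phi$ and the Chevalley commutator formula does not apply. The key trick is to rewrite $g=z_{-\alpha}(X^2 f,0)$, so that by~\cref{Zrels}\ref{Z1} one has
\[
g^{x_\alpha(aX^{-1})}=z_{-\alpha}(X^2 f,\,aX^{-1}).
\]
Now fix any $\beta\in\Phi$ with $\alpha+\beta\in\Phi$ and apply~\cref{Zrels}\ref{Z5} after the substitution $\alpha\mapsto -\alpha-\beta$, $\beta\mapsto\beta$, so that the target root $\alpha+\beta$ becomes $-\alpha$; take the parameters $s=Xf$, $\eta=X$, $\xi=aX^{-1}$, for which $s\eta=X^2 f$ and $\xi$ matches the second argument. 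This expresses $z_{-\alpha}(X^2 f,aX^{-1})$ as an explicit product of eight factors.

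Finally, I will inspect each of these eight factors and verify that it is one of the listed P1--P5 generators of $\widetilde{P}_{\alpha,M}(0)$. Writing $\epsilon=N_{-\alpha-\beta,\beta}$, the eight factors have the form
\[
x_{-\alpha-\beta}(\epsilon Xf),\ x_{-\beta}(-af),\ x_{\beta}(aX^2 f),\ x_{-\alpha}(X^2 f),\ z_{-\alpha-\beta}(-\epsilon Xf,-\epsilon a),\ x_{\alpha+\beta}(-\epsilon a^2 Xf),\ x_{\alpha}(-a^2 f),\ z_{-\beta}(af,-X),
\]
and a direct check of the root combinations $\alpha\pm(-\alpha-\beta)$, $\alpha\pm(\pm\beta)$, $\alpha\pm(\alpha+\beta)$ shows they fall in types P1, P2, P1, P4, P1, P1, P5, P2 respectively. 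Hence $g^{x_\alpha(aX^{-1})}\in P_\alpha(0)$, completing the proof.
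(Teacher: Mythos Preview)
Your argument is correct, and in fact it mirrors the style of the proof of \cref{P0_normal}: a direct case check over the five generator families using \cref{Zrels}. The P4 case is exactly the delicate point, and your use of \cref{Zrels}\ref{Z5} with the substitution $\alpha\mapsto-\alpha-\beta$, $\beta\mapsto\beta$, $s=Xf$, $\eta=X$, $\xi=aX^{-1}$ is the right move; the eight resulting factors indeed fall in $P_\alpha(0)$ as you list.

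The paper, however, takes a different and less hands-on route. It first establishes the decomposition $P_\alpha(0)=j_+(Z_\alpha(A,M))\ltimes\bigl(K(\alpha,\beta)\cdot X_{-\beta}(M\cdot X)\bigr)$ via \cref{rem:DS} and \cref{K-a-b-cor2}, and then checks that conjugation by $x_\alpha(aX^{-1})$ sends each of these three pieces back into $P_\alpha(0)$: the first is fixed, the third is handled by a single Chevalley commutator, and the key piece $K(\alpha,\beta)$ is dealt with by \cref{image-K-a-b}. Your P4 computation is essentially the same calculation that the paper carries out inside the proof of \cref{image-K-a-b}, so the real difference is organizational rather than mathematical. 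The paper's approach exploits machinery that it needs anyway for \cref{K-a-b-P-b} and \cref{conj-S-a-x-bma}, so it gets \cref{P0_conj} almost for free; your approach is more self-contained and does not require the local-pair hypothesis or the auxiliary subgroup $K(\alpha,\beta)$ at all, at the cost of repeating a computation.
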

\begin{proof} 
Intersecting the factors of~\eqref{eq:sd-decomp} with $\widetilde{P}_{\alpha, M}(0)$ and invoking~\cref{rem:DS} and~\cref{K-a-b-cor2} we obtain that $P_{\alpha}(0) =  j_+(Z_\alpha(A, M)) \ltimes \left(K(\alpha, \beta) \cdot X_{-\beta}(M \cdot X)\right).$
It suffices to show that the image of each of these three subgroups under the automorphism of conjugation by $x_\alpha(aX^{-1})$ is contained in $P_\alpha(0)$.

It is clear that the specified conjugation fixes $j_+(Z_\alpha(A, M))$. By~\eqref{Chevalley-CCF1} and~\cref{rem:recognition} 
$X_{-\beta}(M\cdot X)^{x_\alpha(aX^{-1})} \subseteq X_{-\beta}(M\cdot X) \cdot X_{\alpha-\beta}(M) \subseteq P_\alpha(0).$ 
Notice that the generators of $Z_{\alpha,\beta}$ are generators of type P2 for $P_\alpha(0)$.
Thus, from~\cref{image-K-a-b} we get that
$K(\alpha, \beta)^{x_\alpha(aX^{-1})} \subseteq \langle K(\alpha, \beta), Z_{\alpha, \beta} \rangle \subseteq P_\alpha(0)$.
\end{proof}

\begin{lemma} \label{conj-S-a-x-bma} For $a, b\in A$ the image of $K(\alpha,\beta)^{x_\alpha(aX^{-1})}$ under the automorphism of conjugation by $x_{\beta-\alpha}(b)$ is contained in $P_\alpha(0)$. \end{lemma}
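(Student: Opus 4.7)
The plan is to reduce the claim to two sub-inclusions controlled by preceding results. By Lemma~\ref{image-K-a-b} one has $K(\alpha,\beta)^{x_\alpha(aX^{-1})} \subseteq \langle K(\alpha,\beta),\,Z_{\alpha,\beta}\rangle$, so it suffices to show that both $K(\alpha,\beta)^{x_{\beta-\alpha}(b)}$ and $Z_{\alpha,\beta}^{x_{\beta-\alpha}(b)}$ are contained in $P_\alpha(0)$. The first of these is immediate: Lemma~\ref{conj-K-a-b} yields $\widetilde{K}(\alpha,\beta)^{x_{\beta-\alpha}(b)} \subseteq \widetilde{K}(\alpha,\beta)$, and applying $j_+$ together with the inclusion $K(\alpha,\beta) \subseteq P_\alpha(0)$ supplied by Corollary~\ref{K-a-b-cor} finishes this case.

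For the second inclusion I plan to proceed generator by generator. A typical generator of $Z_{\alpha,\beta}$ has the form $x_\gamma(f)$ with $\gamma \in Z_+(\alpha) \setminus \{\alpha-\beta\}$ and $f \in M[X]$, and the identity $x_\gamma(f)^{x_{\beta-\alpha}(b)} = [x_{\beta-\alpha}(-b),\,x_\gamma(f)]\cdot x_\gamma(f)$ reduces the problem to showing that both $x_\gamma(f)$ and the commutator $[x_{\beta-\alpha}(-b),\,x_\gamma(f)]$ lie in $P_\alpha(0)$. The factor $x_\gamma(f)$ is a type~P5 generator of $\widetilde{P}_{\alpha,M}(0)$ when $\gamma = \alpha$ and a type~P2 generator otherwise, since $\alpha-\gamma \in \Phi$ by definition of $Z_+(\alpha)$ in the simply-laced case. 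The commutator will then be evaluated by the Chevalley formulae~\eqref{Chevalley-CCF1}--\eqref{Chevalley-CCF2}, and its nature is dictated by the value of $\langle \beta,\gamma\rangle$: the case $\gamma = \alpha$ produces $x_\beta(\pm bf)$, a type~P2 generator since $\alpha-\beta\in\Phi$; the case $\gamma\neq\alpha$ with $\gamma \perp \beta$ produces $x_{\beta-\alpha+\gamma}(\pm bf)$, and a short computation gives $\langle\beta-\alpha+\gamma,\alpha\rangle = 0$, so this is a type~P3 generator; the cases $\gamma = \beta$ and $\langle\beta,\gamma\rangle = 1$ both yield a trivial commutator because $\beta-\alpha+\gamma$ fails to be a root in the simply-laced setting; finally the case $\langle\beta,\gamma\rangle = -1$ is ruled out, as it would force $\gamma = \alpha-\beta$.

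I do not anticipate a serious obstacle here. The heavy lifting has been absorbed into the preparatory Lemmas~\ref{image-K-a-b} and~\ref{conj-K-a-b}, and the remaining work is a short, bounded case analysis in the simply-laced root system $\Phi$. The only subtlety is ensuring exhaustiveness of the case split on $\langle\beta,\gamma\rangle$, which follows from the standard fact that $\langle\beta,\gamma\rangle\in\{0,\pm 1\}$ for $\gamma \neq \pm\beta$ together with the isolated case $\gamma = \beta$.
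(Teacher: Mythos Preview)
Your proposal is correct and follows essentially the same route as the paper: reduce via Lemmas~\ref{image-K-a-b} and~\ref{conj-K-a-b} to checking that each generator $x_\gamma(f)$ of $Z_{\alpha,\beta}$ conjugated by $x_{\beta-\alpha}(b)$ lands in $P_\alpha(0)$, then do a short root-system case analysis. The only cosmetic difference is that you organize the case split by $\langle\beta,\gamma\rangle$ whereas the paper splits on $\langle\beta-\alpha,\gamma\rangle$; since $\langle\gamma,\alpha\rangle$ is determined for $\gamma\in Z_+(\alpha)$, these are equivalent parameterizations and your cases match up exactly with the paper's.
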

\begin{proof} By Lemmas~\ref{conj-K-a-b}, \ref{image-K-a-b} it suffices to show that the conjugate $g_1$ of each generator $g = x_\gamma(f)$, $\gamma \in Z_+(\alpha) \setminus \{ \alpha - \beta \}$ of $Z_{\alpha, \beta}$ by $x_{\beta-\alpha}(b)$ belongs to $P_\alpha(0)$. It is clear that $g$ itself is a generator of type P2 for $P_\alpha(0)$, therefore it remains to consider the case $\gamma\not \perp \beta - \alpha$. 
 
In the case $\alpha - \beta - \gamma \in \Phi$ we obtain from~\eqref{Chevalley-CCF1} that
$g_1 = x_{\beta - \alpha + \gamma} (N_{\gamma, \beta -\alpha}\cdot bf)     \cdot x_{\gamma}(f).$ Notice that $\langle \alpha - \beta - \gamma, \alpha \rangle = \langle \alpha, \alpha \rangle - \langle \beta, \alpha \rangle - \langle \gamma, \alpha \rangle \leq 2 - 1 - 1 = 0$, 
 therefore $\langle \alpha - \beta - \gamma, \alpha \rangle = -1,0$. Thus, $g_1$ is a product of generators of type P2, P2 or P3, P2.

Now suppose that the other alternative holds, namely that $\alpha - \beta + \gamma \in \Phi$.
Since $\langle \alpha - \beta + \gamma, \alpha \rangle = \langle \alpha , \alpha \rangle - \langle \beta, \alpha \rangle + \langle \gamma, \alpha \rangle \geq 2 - 1 + 1 = 2$
we obtain that $\alpha - \beta + \gamma = \alpha$, i.\,e. $\beta=\gamma$. Thus, by~\eqref{Chevalley-CCF2} $g_1 = x_\beta(f)^{x_{\beta-\alpha}(b)} = x_\beta(f)$. \end{proof}

\subsection{The map \texorpdfstring{$S_\alpha(a, -)$}{S(a,-)} and its properties} \label{sec:S-a}
Throughout this section $(A, M)$ is a local pair and, as before, $\alpha, \beta$ are some fixed roots of $\Phi$ forming an acute angle.

For $m \in M$ denote by $P_\alpha(m)$ the coset $P_\alpha(0) \cdot x_{-\alpha}(mX)$.
From~\cref{P0_normal} it follows that $P_\alpha(*)$ coincides with the union of all $P_\alpha(m)$, $m\in M$. 
\begin{df}\label{S-def}
Define the map $S_\alpha(a, -) \colon P_\alpha(*) \to \St(\Phi, A[X, X^{-1}])$ on each coset $P_\alpha(m)$ via the following formula:
\begin{equation}\label{eq:S-def} S_\alpha(a, g) = x_\alpha(aX^{-1})\cdot g \cdot x_\alpha\left(-\tfrac{aX^{-1}}{1 + am}\right) \cdot \{X, 1+ am\}.\end{equation} \end{df}
Notice that the restriction of the map $S_\alpha(a, -)$ to the subgroup $P_\alpha(0)$ coincides with the automorphism of left conjugation by $x_\alpha(aX^{-1})$.

In the sequel we often use the following property of $S_\alpha(a, -)$.
\begin{lemma} \label{lem:Smult}
For $g_1 \in P_\alpha(m)$, $g_2 \in P_\alpha(*)$ one has
\[S_\alpha(a, g_1\cdot g_2) = S_\alpha(a, g_1) \cdot S_\alpha\left(\tfrac{a}{1+am}, g_2\right).\]
\end{lemma}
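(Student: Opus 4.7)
The plan is to prove the identity by a direct expansion of both sides using the defining formula~\eqref{eq:S-def}, with only mild appeals to Steinberg relations and to basic properties of symbols. First I will pick $m' \in M$ with $g_2 \in P_\alpha(m')$; such $m'$ exists since $P_\alpha(*) = \bigcup_{m'\in M} P_\alpha(m')$ by~\cref{P0_normal}. That same lemma also asserts that $p_\alpha$ is a group homomorphism, so $g_1 g_2 \in P_\alpha(m+m')$, and consequently the formula~\eqref{eq:S-def} produces $\{X, 1+a(m+m')\}$ as the trailing symbol of $S_\alpha(a, g_1 g_2)$, with trailing $x_\alpha$-factor $x_\alpha(-aX^{-1}/(1+a(m+m')))$.

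Next I expand $S_\alpha(a, g_1)$ and $S_\alpha(a/(1+am), g_2)$ via~\eqref{eq:S-def}. Using the elementary identities
\[ 1 + \tfrac{am'}{1+am} = \tfrac{1+a(m+m')}{1+am} \quad\text{and}\quad \frac{a/(1+am)}{\,1 + (a/(1+am))\, m'\,} = \tfrac{a}{1+a(m+m')}, \]
I see that the Steinberg symbol appearing in $S_\alpha(a/(1+am), g_2)$ is $\{X, (1+a(m+m'))/(1+am)\}$ and that its trailing $x_\alpha$-factor agrees with the one in $S_\alpha(a, g_1 g_2)$. Moreover, its leading factor $x_\alpha(aX^{-1}/(1+am))$ is the inverse of the trailing factor $x_\alpha(-aX^{-1}/(1+am))$ of $S_\alpha(a, g_1)$, so once brought next to each other these two generators cancel by~\eqref{Steinberg-additivity}.

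The only nonformal ingredient is needed to merge the two Steinberg symbols: I will invoke the centrality of $\{X, 1+am\}$ in $\St(\Phi, R)$ --- it belongs to $\K_2(\Phi, R)$, which is central under our running rank assumptions on $\Phi$ (cf.~\cref{sec:quillen}) --- in order to commute it past the intervening $x_\alpha$-factor and thereby execute the cancellation just mentioned. The resulting product of the two symbols then collapses to a single symbol $\{X, 1+a(m+m')\}$ by the bimultiplicativity relation~\eqref{eq:symbol-properties}, valid in the nonsymplectic (in particular, simply-laced) case, and this matches $S_\alpha(a, g_1 g_2)$ exactly.

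I do not anticipate a genuine obstacle here: the proof is essentially a bookkeeping exercise. The only subtle point is not a step of the argument but rather the shape of~\eqref{eq:S-def} itself: the asymmetric rescaling $a \mapsto a/(1+am)$ in the second argument on the right-hand side is precisely what forces the $x_\alpha$-factors to telescope and the Steinberg symbols to compose correctly in tandem.
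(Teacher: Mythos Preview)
Your argument is correct and is essentially identical to the paper's own proof: both pick $m'$ with $g_2\in P_\alpha(m')$, expand via~\eqref{eq:S-def}, telescope the adjacent $x_\alpha$-factors, and merge the two Steinberg symbols using bimultiplicativity~\eqref{eq:symbol-properties}. The only cosmetic difference is that you make explicit the centrality of $\{X,1+am\}$ needed to slide it into position, whereas the paper leaves this move implicit in its displayed chain of equalities.
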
 \begin{proof} Suppose $g_2\in P_\alpha(m')$ for some $m' \in M$, so that $g_1 \cdot g_2 \in P_\alpha(m + m')$. The assertion now follows from the definition of $S_\alpha(a, -)$ and~\eqref{eq:symbol-properties}:

\begin{multline*} S_\alpha(a, g_1 \cdot g_2) = x_\alpha(aX^{-1}) \cdot g_1 \cdot g_2 \cdot x_\alpha\left(-\tfrac{aX^{-1}}{1+am+am'}\right) \cdot \{X, 1 + am + am'\} = \\ = x_\alpha(aX^{-1}) g_1 x_\alpha\left(-\tfrac{aX^{-1}}{1+am}\right) \cdot x_\alpha\left(\tfrac{aX^{-1}}{1+am}\right) g_2 x_\alpha\left(-\frac{\tfrac{a}{1+am}\cdot X^{-1}}{1 + \tfrac{a}{1+am}\cdot m'}\right) \cdot \{X, (1+am)\left(1 + \tfrac{am'}{1+am}\right)\} = \\ = S_\alpha(a, g_1) \cdot S_\alpha\left(\tfrac{a}{1+am}, g_2\right). \qedhere \end{multline*}
\end{proof}

\begin{lemma} \label{lem:Tulenbaev-formula} 
One has $S_\alpha(a, x_{-\alpha}(mX)) = x_{-\alpha}\left(\tfrac{mX}{1+am}\right) \cdot \langle a, m\rangle_\alpha \cdot h_\alpha(1+am).$
\end{lemma}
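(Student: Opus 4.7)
The plan is to unfold the Dennis--Stein symbol implicit in the definition of $S_\alpha$ and reduce the proof to a compatibility identity among symbols in $\K_2(\Phi, R)$. Concretely, I would first apply the defining relation~\eqref{eq:dennis-stein} to the pair $(aX^{-1}, mX) \in R \times R$, which is admissible because $1 + (aX^{-1})(mX) = 1 + am$ lies in $A^\times$ (since $m$ is in the maximal ideal of the local ring $A$). Rearranging the defining formula yields
\[ x_\alpha(aX^{-1}) \cdot x_{-\alpha}(mX) \cdot x_\alpha\bigl(-\tfrac{aX^{-1}}{1+am}\bigr) = x_{-\alpha}\bigl(\tfrac{mX}{1+am}\bigr) \cdot \langle aX^{-1}, mX\rangle_\alpha \cdot h_\alpha(1+am). \]

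Substituting this equality into~\eqref{eq:S-def} and using that the Steinberg symbol $\{X, 1+am\}_\alpha$ lies in $\K_2(\Phi, R)$ and is therefore central in $\St(\Phi, R)$ (cf.~\cite[\S~7(v)]{St67}), I may commute it past $h_\alpha(1+am)$ to obtain
\[ S_\alpha(a, x_{-\alpha}(mX)) = x_{-\alpha}\bigl(\tfrac{mX}{1+am}\bigr) \cdot \bigl(\langle aX^{-1}, mX\rangle_\alpha \cdot \{X, 1+am\}_\alpha\bigr) \cdot h_\alpha(1+am). \]
Comparing with the right-hand side stated in the lemma, the proof will be complete once the identity
\[ \langle aX^{-1}, mX\rangle_\alpha \cdot \{X, 1+am\}_\alpha = \langle a, m\rangle_\alpha \]
is established in $\K_2(\Phi, R)$.

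To establish this last identity I would invoke the standard Dennis--Stein change-of-variables rule
\[ \langle ua, b\rangle_\alpha = \langle a, ub\rangle_\alpha \cdot \{u, 1+abu\}_\alpha, \qquad u \in R^\times,\ 1 + abu \in R^\times, \]
which is among the basic relations developed in~\cite{DS73}. Specializing $u = X^{-1}$, $a \to a$, $b \to mX$ gives $\langle aX^{-1}, mX\rangle_\alpha = \langle a, m\rangle_\alpha \cdot \{X^{-1}, 1+am\}_\alpha$, and by~\eqref{eq:symbol-inverse} we have $\{X^{-1}, 1+am\}_\alpha = \{X, 1+am\}_\alpha^{-1}$, closing the argument. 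The only delicate step is verifying the Dennis--Stein rule when $a$ is not a unit of $R$: for units it follows immediately from~\eqref{DS-S-relationship} together with bimultiplicativity~\eqref{eq:symbol-properties}, while the general case is a purely formal identity in the symbol calculus of~\cite{DS73} and so presents no substantive obstacle.
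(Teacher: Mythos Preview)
Your reduction is correct: unfolding~\eqref{eq:dennis-stein} for the pair $(aX^{-1},mX)$ and inserting into~\eqref{eq:S-def} does reduce the lemma to the symbol identity $\langle aX^{-1},mX\rangle_\alpha\cdot\{X,1+am\}_\alpha=\langle a,m\rangle_\alpha$, and this identity is true. The difference from the paper is in how the identity itself is established. The paper does not pass through $\langle aX^{-1},mX\rangle_\alpha$ at all: it chooses $\gamma\in\Phi$ with $\langle\alpha,\gamma\rangle=-1$ and writes $x_\alpha(aX^{-1})\cdot x_{-\alpha}(mX)={}^{h_\gamma(X)}\bigl(x_\alpha(a)\cdot x_{-\alpha}(m)\bigr)$, so that the Dennis--Stein symbol $\langle a,m\rangle_\alpha$ appears directly; the remaining manipulation uses only~\eqref{eq:conj-h-x}--\eqref{eq:conj-h-h} and~\eqref{eq:steinberg}. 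In effect the paper's computation \emph{is} a self-contained proof, inside $\St(\Phi,R)$, of exactly the change-of-variables identity you quote.

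The soft spot in your argument is the final sentence. The rule $\langle ua,b\rangle_\alpha=\langle a,ub\rangle_\alpha\cdot\{u,1+abu\}_\alpha$ for non-unit $a$ is not something one can just wave at~\cite{DS73}: that paper treats linear $\K_2$, and the relation you need follows there from the multiplicativity relation $\langle a,bc\rangle=\langle ab,c\rangle\langle ac,b\rangle$, which is not among the identities the present paper records or cites for general simply-laced $\Phi$. Calling it ``purely formal'' and ``no substantive obstacle'' is precisely where the content lies. The cleanest way to supply it in this generality is the $h_\gamma(u)$-conjugation trick the paper uses---which of course collapses your argument back into theirs. So your route is correct but not genuinely independent: the step you defer to a citation is the step the paper actually carries out.
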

\begin{proof} Since $\Phi$ is nonsymplectic, we can choose $\gamma \in \Phi$ such that $\langle \alpha, \gamma \rangle = -1$. Direct computation shows that
 \begin{align} 
 x_\alpha(aX^{-1}) \cdot x_{-\alpha}(mX) = {}^{h_\gamma(X)}(x_\alpha(a) \cdot x_{-\alpha}(m)) &\text{ by~\eqref{eq:conj-h-x}} \label{eq:Tf-comp} \\
 = {}^{h_\gamma(X)}\left( x_{-\alpha}\left(\tfrac{m}{1+am}\right) \cdot \langle a, m\rangle_\alpha \cdot h_\alpha(1+am) \cdot x_\alpha\left(\tfrac{a}{1+am}\right) \right) &\text{ by~\eqref{eq:dennis-stein}} \nonumber \\
 = x_{-\alpha}\left(\tfrac{mX}{1+am}\right) \cdot \langle a, m\rangle_\alpha \cdot h_\alpha(X^{-1}(1+am))\cdot h_\alpha^{-1}\left(X^{-1}\right) \cdot x_{\alpha}\left(\tfrac{aX^{-1}}{1+am}\right) &\text{ by~\eqref{eq:conj-h-x}, \eqref{eq:conj-h-h}} \nonumber \\
 = x_{-\alpha}\left(\tfrac{mX}{1+am}\right) \cdot \langle a, m\rangle_\alpha \cdot \{X^{-1}, 1+am\} \cdot h_\alpha(1+am)\cdot x_{\alpha}\left(\tfrac{aX^{-1}}{1+am}\right)&\text{ by~\eqref{eq:steinberg}} \nonumber \\
 = x_{-\alpha}\left(\tfrac{mX}{1+am}\right) \cdot \langle a, m\rangle_\alpha \cdot h_\alpha(1+am) \cdot x_{\alpha}\left(\tfrac{aX^{-1}}{1+am}\right) \cdot \{1+am, X\}&\text{ by~\eqref{eq:symbol-properties}, \eqref{eq:symbol-inverse}}. \nonumber \end{align}
The assertion of the lemma now follows from the following computation:
\begin{align*}
 S_\alpha(a, x_{-\alpha}(mX)) = x_\alpha(aX^{-1})\cdot x_{-\alpha}(mX) \cdot x_\alpha\left(-\tfrac{aX^{-1}}{1 + am}\right) \cdot \{X, 1+ am\} &\text{ by~\eqref{eq:S-def}} \\
 = x_{-\alpha}\left(\tfrac{mX}{1+am}\right) \cdot \langle a, m\rangle_\alpha \cdot h_\alpha(1+am) \cdot \{1+am, X\} \cdot \{X, 1+ am\}&\text{ by~\eqref{eq:Tf-comp}} \\
 = x_{-\alpha}\left(\tfrac{mX}{1+am}\right) \cdot \langle a, m\rangle_\alpha \cdot h_\alpha(1+am) &\text{ by~\eqref{eq:symbol-properties}}. \qedhere\end{align*} \end{proof}

\begin{corollary}\label{SR:additivity} For $g \in P_\alpha(m)$ one has $S_\alpha(a, g) \cdot h_\alpha^{-1}(1 + am) \in P(\alpha, \tfrac{m}{1 + am}).$
\end{corollary}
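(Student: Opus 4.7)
The idea is to reduce the statement to the explicit computation already carried out in Lemma~\ref{lem:Tulenbaev-formula} by factoring $g$ through the distinguished coset representative. Since $g \in P_\alpha(m) = P_\alpha(0) \cdot x_{-\alpha}(mX)$, write $g = g_0 \cdot x_{-\alpha}(mX)$ with $g_0 \in P_\alpha(0)$. Applying Lemma~\ref{lem:Smult} with $g_1 = g_0$ (so $m_1 = 0$ and $\tfrac{a}{1 + a \cdot 0} = a$) and $g_2 = x_{-\alpha}(mX)$ yields
\[ S_\alpha(a, g) = S_\alpha(a, g_0) \cdot S_\alpha(a, x_{-\alpha}(mX)). \]
On the first factor, formula~\eqref{eq:S-def} together with $g_0 \in P_\alpha(0)$ shows that $S_\alpha(a, g_0)$ is just the conjugate ${}^{x_\alpha(aX^{-1})}\!g_0$, which lies in $P_\alpha(0)$ by Lemma~\ref{P0_conj}. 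For the second factor, Lemma~\ref{lem:Tulenbaev-formula} gives
\[ S_\alpha(a, x_{-\alpha}(mX)) = x_{-\alpha}\!\left(\tfrac{m}{1+am}X\right) \cdot \langle a, m \rangle_\alpha \cdot h_\alpha(1+am). \]

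Multiplying by $h_\alpha^{-1}(1+am)$ on the right and substituting, we obtain
\[ S_\alpha(a, g) \cdot h_\alpha^{-1}(1+am) = S_\alpha(a, g_0) \cdot x_{-\alpha}\!\left(\tfrac{m}{1+am}X\right) \cdot \langle a, m \rangle_\alpha. \]
By Remark~\ref{Z-DS} the relative Dennis--Stein symbol $\langle a, m \rangle_\alpha$ lies in $Z_\alpha(A, M)$, and by Remark~\ref{rem:DS} its image under $j_+$ is contained in $P_\alpha(0)$. Since $P_\alpha(0)$ is a normal subgroup of $P_\alpha(*)$ by Lemma~\ref{P0_normal}, we may move $\langle a, m \rangle_\alpha$ past $x_{-\alpha}(\tfrac{m}{1+am}X)$ to the left, replacing it with a conjugate which still belongs to $P_\alpha(0)$. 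The resulting expression has the form $p \cdot x_{-\alpha}(\tfrac{m}{1+am}X)$ with $p \in P_\alpha(0)$, which by definition means it lies in $P_\alpha(\tfrac{m}{1+am})$, as required.

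There is no real obstacle here: the corollary is essentially a bookkeeping exercise assembling three ingredients (multiplicativity from Lemma~\ref{lem:Smult}, the explicit evaluation from Lemma~\ref{lem:Tulenbaev-formula}, and the structural facts from Lemma~\ref{P0_normal} and Remark~\ref{rem:DS}). The only point requiring a small care is verifying that $\tfrac{m}{1+am} \in M$ (immediate since $(A, M)$ is local so $1 + am \in A^\times$) so that the target coset $P_\alpha(\tfrac{m}{1+am})$ is defined.
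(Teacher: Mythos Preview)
Your proof is correct and follows essentially the same approach as the paper: factor $g = g_0 \cdot x_{-\alpha}(mX)$, apply Lemma~\ref{lem:Smult} and Lemma~\ref{lem:Tulenbaev-formula}, and then observe that both $S_\alpha(a,g_0)$ and $\langle a,m\rangle$ lie in $P_\alpha(0)$ by Lemma~\ref{P0_conj} and Remark~\ref{rem:DS}. The only cosmetic difference is that the paper leaves the final coset membership implicit (using that $P_\alpha(0)$ is normal so the quotient $P_\alpha(*)/P_\alpha(0)$ is well-defined), while you spell out the commutation step explicitly.
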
 \begin{proof}
Fix an element $g \in P_\alpha(m)$ and write it $g = g_0 \cdot x_{-\alpha}(mX)$ for some $g_0 \in P_\alpha(0)$.
Direct computation shows that
\begin{align*}
 S_\alpha(a, g) \cdot h_{\alpha}^{-1}(1+am) = S_\alpha(a, g_0) \cdot S_\alpha(a, x_{-\alpha}(mX)) \cdot h_{\alpha}^{-1}(1+am)&\text{ by~\cref{lem:Smult}} \\\
 = S_\alpha(a, g_0) \cdot x_{-\alpha}\left(\tfrac{mX}{1+am}\right) \cdot\langle a, m \rangle &\text{ by~\cref{lem:Tulenbaev-formula}.}
\end{align*}
By \cref{P0_conj} and \cref{rem:DS} the subgroup $P_\alpha(0)$ contains $S_\alpha(a, g_0)$ and the symbol $\langle a, m \rangle$. Thus, the expression in the right-hand side of the above formula lies in $P_\alpha\left(\tfrac{m}{1+am}\right)$. 
\end{proof}
\begin{lemma} \label{SR:sharp} For $g \in j_+(K(A[X], M[X])) \cap P_\alpha(m) \cap P_\beta(m')$ and $a \in A$ one has \[S_\alpha(a, g)\cdot  h^{-1}_\alpha(1 + am) \cdot x_{\alpha-\beta}(-N_{\alpha,-\beta}\cdot am') \in P_\beta\left(\tfrac{m'}{1 + am}\right). \] \end{lemma}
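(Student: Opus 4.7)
The plan is to combine the structural decomposition from~\cref{K-a-b-cor2} with the multiplicativity property~\cref{lem:Smult} of $S_\alpha(a,-)$, and then verify each surviving factor explicitly. Since $x_{-\alpha}(mX)$ lies in $j_+(K(A[X],M[X]))$, the product $g\cdot x_{-\alpha}(-mX)$ belongs to $j_+(K(A[X],M[X]))\cap P_\alpha(0)$, so by~\cref{K-a-b-cor2} we can write $g = k\cdot x_{-\beta}(m'X)\cdot x_{-\alpha}(mX)$ for some $k\in K(\alpha,\beta)$; the identification of the coefficient with $m'$ (rather than some a priori different $\widetilde{m}$) uses the projection $p_\beta$ from~\cref{rem:palpha} together with the fact that both $k$ and $x_{-\alpha}(mX)$ lie in $\widetilde{P}_{\beta,M}(0)$, while the commutativity of $x_{-\alpha}(mX)$ and $x_{-\beta}(m'X)$ follows from $\alpha+\beta\notin\Phi$.

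Then~\cref{lem:Smult} factors $S_\alpha(a,g)$ as the product of $S_\alpha(a,k)$, $S_\alpha(a,x_{-\alpha}(mX))$, and $S_\alpha(\tfrac{a}{1+am},x_{-\beta}(m'X))$. The first equals $k^{x_\alpha(aX^{-1})}\in P_\beta(0)$ by~\cref{K-a-b-P-b}; the middle is computed by~\cref{lem:Tulenbaev-formula}; the last is a Chevalley commutator computation yielding $x_{\alpha-\beta}(\tfrac{N_{\alpha,-\beta}\cdot am'}{1+am})\cdot x_{-\beta}(m'X)$. The critical calculation is to move the semisimple element $h_\alpha(1+am)$ produced by~\cref{lem:Tulenbaev-formula} rightward past the $x_{\alpha-\beta}$ and $x_{-\beta}$ terms via~\eqref{eq:conj-h-x}; the weights $\langle\alpha-\beta,\alpha\rangle=1$ and $\langle-\beta,\alpha\rangle=-1$ scale the coefficients to $N_{\alpha,-\beta}\cdot am'$ and $\tfrac{m'}{1+am}$ respectively. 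After multiplying by $h^{-1}_\alpha(1+am)\cdot x_{\alpha-\beta}(-N_{\alpha,-\beta}\cdot am')$ on the right, and using that $x_{\alpha-\beta}$ commutes with $x_{-\beta}$ (their sum $\alpha-2\beta$ is not a root), the two $x_{\alpha-\beta}$ factors cancel exactly.

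The surviving expression is $k^{x_\alpha(aX^{-1})}\cdot x_{-\alpha}(\tfrac{mX}{1+am})\cdot\langle a,m\rangle\cdot x_{-\beta}(\tfrac{m'X}{1+am})$. The first three factors all lie in $P_\beta(0)$: $k^{x_\alpha(aX^{-1})}$ by~\cref{K-a-b-P-b}, the factor $x_{-\alpha}(\tfrac{mX}{1+am})$ as a type (P1) generator of $\widetilde{P}_{\beta,M}(0)$ (because $\beta-\alpha\in\Phi$), and the Dennis--Stein symbol by~\cref{Z-DS} applied to $\beta$ (using that for simply-laced $\Phi$ the symbol is root-independent and hence may be realized via a root in $Z_0(\beta)$). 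The last factor lies in $P_\beta(\tfrac{m'}{1+am})$ by definition, so the whole product lies in this coset. The main obstacle is recognizing that the correction term $x_{\alpha-\beta}(-N_{\alpha,-\beta}\cdot am')$ in the statement is precisely engineered to cancel the $x_{\alpha-\beta}$ contribution produced by the weight shifts of $h_\alpha(1+am)$; once this is seen, the remaining work amounts to careful bookkeeping of commutator identities.
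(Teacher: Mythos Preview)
Your argument is correct and follows essentially the same route as the paper: decompose $g$ via \cref{K-a-b} (your use of \cref{K-a-b-cor2} plus $p_\beta$ amounts to the same thing), apply \cref{lem:Smult}, \cref{lem:Tulenbaev-formula}, and \cref{K-a-b-P-b} to the three factors, and push $h_\alpha(1+am)$ to the right via~\eqref{eq:conj-h-x}. One small notational slip: $S_\alpha(a,k)$ is \emph{left} conjugation ${}^{x_\alpha(aX^{-1})}k$, not $k^{x_\alpha(aX^{-1})}$, though this does not affect the conclusion since \cref{K-a-b-P-b} applies for any $a\in A$.
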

\begin{proof} By~\cref{K-a-b} $g$ can be presented as $g_0 \cdot x_{-\alpha}(mX) \cdot x_{-\beta}(m'X)$ for some $g_0\in K(\alpha, \beta)$. By~\cref{K-a-b-P-b} the element $g_1 = S_\alpha(a, g_0) $ lies in $P_\beta(0)$. Since $x_{-\beta}(m'X) \in P_\alpha(0)$ we get that
\newline\scalebox{.98}{\begin{minipage}{1.02\linewidth}\begin{align*}
  S_\alpha(a, g) = g_1 \cdot S_\alpha(a, x_{-\alpha}(mX)) \cdot S_\alpha\left(\tfrac{a}{1+am}, x_{-\beta}(m'X)\right) & \text{ by~\cref{lem:Smult}} \\ 
  = g_1 \cdot x_{-\alpha}\left(\tfrac{mX}{1+am}\right) \cdot \langle a, m \rangle \cdot h_\alpha(1+am) \cdot x_{-\beta}(m'X) \cdot x_{\alpha-\beta}\left(\tfrac{N_{\alpha, -\beta} \cdot am'}{1+am}\right) & \text{ by~\cref{lem:Tulenbaev-formula},\eqref{Chevalley-CCF1}} \\
  = g_1 \cdot x_{-\alpha}\left(\tfrac{mX}{1+am}\right) \cdot \langle a, m \rangle \cdot x_{-\beta}\left(\tfrac{m'X}{1+am}\right) \cdot x_{\alpha-\beta}\left(N_{\alpha, -\beta} \cdot am'\right) \cdot h_\alpha(1+am) & \text{ by~\eqref{eq:conj-h-x}}.
\end{align*}\end{minipage}} \vskip \belowdisplayshortskip
The required assertion now follows from~\cref{rem:palpha}.
\end{proof}

\begin{lemma} \label{SR:obtuse} For $g \in j_+(K(A[X], M[X])) \cap P_\alpha(m) \cap P_\beta(m')$ and $a, b\in A$ the element \[ x_{\beta - \alpha}(b) \cdot S_\alpha(a, g) \cdot h_{\alpha}^{-1}(1+am)\cdot h_{\beta - \alpha}((1 + \epsilon abm')^{-1})\cdot x_{\beta - \alpha}(-b(1 + \epsilon abm')) \]
 belongs to $P_\alpha\left(\tfrac{m - \epsilon bm'}{1+am}\right)$, where $\epsilon = N_{\alpha, -\beta}$.
\end{lemma}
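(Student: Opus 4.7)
The plan is to start from the explicit decomposition of $S_\alpha(a, g) \cdot h_\alpha^{-1}(1+am)$ derived in the proof of~\cref{SR:sharp}. Writing $g = g_0 \cdot x_{-\alpha}(mX) \cdot x_{-\beta}(m'X)$ with $g_0 \in K(\alpha, \beta)$, that argument gives the equality
\[
S_\alpha(a, g) \cdot h_\alpha^{-1}(1+am) = g_1 \cdot x_{-\alpha}(mX/(1+am)) \cdot \langle a, m\rangle \cdot x_{-\beta}(m'X/(1+am)) \cdot x_{\alpha-\beta}(\epsilon am'),
\]
where $g_1 = S_\alpha(a, g_0) \in P_\beta(0)$. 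I substitute this into the target element $T$ of the lemma and then transport $x_{\beta-\alpha}(b)$ rightward through the five factors one at a time. By~\cref{conj-S-a-x-bma} the first conjugate $g_1' := x_{\beta-\alpha}(b)\, g_1\, x_{\beta-\alpha}(-b)$ lies in $P_\alpha(0)$. The generator $x_{\beta-\alpha}(b)$ then commutes with $x_{-\alpha}$ (since $\beta - 2\alpha \notin \Phi$) and with the central Dennis--Stein symbol $\langle a, m\rangle$, while the commutator $[x_{\beta-\alpha}(b), x_{-\beta}(m'X/(1+am))]$ computed via~\eqref{Chevalley-CCF1} produces exactly $x_{-\alpha}(-\epsilon b m'X/(1+am))$ (using $N_{\beta-\alpha, -\beta} = -\epsilon$). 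Combining this with the pre-existing $x_{-\alpha}(mX/(1+am))$ yields $x_{-\alpha}(nX)$ with $n = (m-\epsilon bm')/(1+am)$, precisely the valuation required.

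After the reshuffling, $T$ takes the form $g_1' \cdot x_{-\alpha}(nX) \cdot \langle a, m\rangle \cdot x_{-\beta}(m'X/(1+am)) \cdot \Xi$, where the residual tail is $\Xi = x_{\beta-\alpha}(b) \cdot x_{\alpha-\beta}(\epsilon am') \cdot h_{\beta-\alpha}(d^{-1}) \cdot x_{\beta-\alpha}(-bd)$ with $d = 1+\epsilon abm' \in 1+M$. It remains to show $\Xi \in P_\alpha(0)$. Setting $p = d^2 b$ and $q = d^{-2} \epsilon am'$ (so that $1+pq = d$), repeated use of~\eqref{eq:conj-h-x} allows one to slide $h_{\beta-\alpha}(d^{-1})$ to the left, and then the Dennis--Stein formula~\eqref{eq:dennis-stein} applied in the root $\beta - \alpha$ collapses the three $X_{\pm(\beta-\alpha)}$-factors into the Dennis--Stein symbol $\langle p, q\rangle_{\beta-\alpha}$ plus a single corrective $x_{\alpha-\beta}$-term (the residual $x_{\beta-\alpha}$-coefficient $p/d-bd$ is identically zero). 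Consolidating and using that $h_{\beta-\alpha}(d^{-1})\, h_{\beta-\alpha}(d)$ equals a Steinberg symbol, one obtains the closed form
\[
\Xi = \{d, d^{-1}\}_{\beta-\alpha}^{-1} \cdot x_{\alpha-\beta}(\epsilon am'/d) \cdot \langle p, q\rangle_{\beta-\alpha}.
\]
Since $d \in 1+M$, $p \in A$, $q \in M$, and $\epsilon am'/d \in M$, \cref{rem:DS} places the first and third factors in $Z_\alpha(A, M) \subseteq P_\alpha(0)$, while the middle factor is a generator of type~P2 for $P_\alpha(0)$. Hence $\Xi \in P_\alpha(0)$.

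Assembling the pieces, $g_1'$, $\langle a, m\rangle$, $x_{-\beta}(m'X/(1+am))$, and $\Xi$ all lie in $P_\alpha(0)$, so using the normality of $P_\alpha(0)$ in $P_\alpha(*)$ from~\cref{P0_normal} I conclude that $T \in P_\alpha(0) \cdot x_{-\alpha}(nX) = P_\alpha(n)$, as required. The main obstacle is the closed-form computation of $\Xi$: the argument itself uses only Steinberg relations and identities~\eqref{eq:conj-h-x}, \eqref{eq:dennis-stein}, but the bookkeeping of Cartan-action signs and the ordering of $h_{\beta-\alpha}$ with respect to the root subgroups $X_{\pm(\beta-\alpha)}$ must be carried out carefully to ensure that the symbols $\{d, d^{-1}\}$ and $\langle p, q\rangle$ appear with the correct coefficients.
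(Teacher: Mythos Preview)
Your argument is correct and follows essentially the same strategy as the paper: decompose $g = g_0 \cdot x_{-\alpha}(mX) \cdot x_{-\beta}(m'X)$ with $g_0 \in K(\alpha,\beta)$, invoke \cref{conj-S-a-x-bma} for the $g_0$-part, and use the Dennis--Stein identity in the root $\pm(\alpha-\beta)$ to control the remaining $x_{\pm(\alpha-\beta)}$ and $h_{\beta-\alpha}$ factors. The only organizational difference is that you recycle the formula from the proof of \cref{SR:sharp} and then push $x_{\beta-\alpha}(b)$ through from the left, isolating a residual tail $\Xi$ whose membership in $P_\alpha(0)$ you verify separately; the paper instead folds the factors $h_\alpha^{-1}(1+am)$, $h_{\beta-\alpha}(c^{-1})$, $x_{\beta-\alpha}(-bc)$ into the computation of $S_\alpha(a, x_{-\beta}(m'X)\,x_{-\alpha}(mX))$ from the outset, so that the Dennis--Stein rewriting~\eqref{eq:computation-SR-obtuse} produces an $x_{\beta-\alpha}(-b)$ on the left that cancels directly with the outer $x_{\beta-\alpha}(b)$.
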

\begin{proof} Set $c = 1 + \epsilon a b m'$, $s_0 = \langle a, m \rangle$, $s_1 = \langle \epsilon am', -bc^{-1} \rangle$.
Notice that $1 - \epsilon abc^{-1}m' = c^{-1}$, therefore by~\eqref{eq:dennis-stein}
\begin{equation} \label{eq:computation-SR-obtuse} x_{\alpha-\beta}(\epsilon am') \cdot x_{\beta-\alpha}(-bc^{-1}) = x_{\beta-\alpha}(-b) \cdot s_1 \cdot h_{\alpha-\beta}(c^{-1}) \cdot x_{\alpha-\beta}(\epsilon acm').\end{equation}
Notice that $x_{-\beta}(m'X) \in P_\alpha(0)$, therefore from the definition of $S_\alpha(a, -)$ we obtain that
\newline \scalebox{0.92}{\begin{minipage}{1.08\linewidth}
\begin{align*}
 S_\alpha\left(a, x_{-\beta}(m'X) \cdot x_{-\alpha}(mX)\right) \cdot h_{\alpha}^{-1}(1+am)\cdot h_{\beta-\alpha}(c^{-1})\cdot x_{\beta - \alpha}(-bc) \\ 
 = S_\alpha(a, x_{-\beta}(m'X)) \cdot S_\alpha(a, x_{-\alpha}(mX)) \cdot h_\alpha^{-1} (1 + am) \cdot h_{\beta - \alpha}(c^{-1}) \cdot x_{\beta - \alpha}(-bc) &\text{ by~\cref{lem:Smult}}\\
 = S_\alpha(a, x_{-\beta}(m'X)) \cdot x_{-\alpha}\left(\tfrac{mX}{1+am}\right) \cdot s_0 \cdot h_{\beta - \alpha}(c^{-1}) \cdot x_{\beta - \alpha}(-bc) &\text{ by~\cref{lem:Tulenbaev-formula}}\\
 = x_{-\beta}(m'X) \cdot x_{\alpha - \beta}(\epsilon am') \cdot x_{\beta - \alpha}(-bc^{-1}) \cdot x_{-\alpha}\left(\tfrac{mX}{1+am}\right) \cdot s_0 \cdot h_{\beta - \alpha}(c^{-1}) &\text{ by~\eqref{Chevalley-CCF1},~\eqref{Chevalley-CCF2}, \eqref{eq:conj-h-x}}\\
 \begin{split}= x_{-\beta}(m'X) \cdot x_{\beta - \alpha}(-b) \cdot s_1 \cdot h_{\alpha-\beta}(c^{-1}) \cdot \hspace{100pt} \\ \cdot x_{\alpha-\beta}\left(\epsilon acm'\right) \cdot x_{-\alpha}\left(\tfrac{mX}{1+am}\right) \cdot s_0 \cdot h_{\beta - \alpha}(c^{-1}) &\text{ by~\eqref{eq:computation-SR-obtuse}} \end{split}\\
 =  x_{\beta - \alpha}(-b) \cdot x_{-\beta}(m'X) \cdot x_{-\alpha}(-\epsilon bm'X) \cdot s_1 \cdot x_{\alpha-\beta}(\epsilon ac^{-1}m') \cdot x_{-\alpha}\left(\tfrac{cmX}{1+am}\right) \cdot s_0 &\text{ by~\eqref{Chevalley-CCF1},~\eqref{eq:conj-h-x},~\eqref{eq:h-inv}}\\
 = x_{\beta-\alpha}(-b) \cdot x_{-\beta}(m'c^{-1}X) \cdot x_{\alpha-\beta}(\epsilon ac^{-1}m') \cdot s_1 \cdot s_0 \cdot x_{-\alpha}\left(-\epsilon bm' X + \tfrac{cmX}{1+am}\right) &\text{ by~\eqref{Chevalley-CCF1}.}
\end{align*}\end{minipage}} \vskip \belowdisplayshortskip
Denote by $h_0$ the product of all factors in the above formula except the first and the last one. It is clear that $h_0$ lies in $P_\alpha(0)$. By~\cref{K-a-b} we can decompose $g$ as $g_0 \cdot x_{-\beta}(m'X) \cdot x_{-\alpha}(mX)$ for some $g_0 \in K(\alpha, \beta)$, $m, m' \in M$. The required assertion now follows from Lemmas~\ref{conj-S-a-x-bma}, \ref{lem:Smult} and the above computation:
\begin{multline} \nonumber
 x_{\beta - \alpha}(b) \cdot S_\alpha(a, g) \cdot h_{\alpha}^{-1}(1+am)\cdot h_{\beta - \alpha}(c^{-1})\cdot x_{\beta - \alpha}(-bc) = \\
  = x_{\beta - \alpha}(b) \cdot S_\alpha(a, g_0) \cdot x_{\beta-\alpha}(-b) \cdot h_0 \cdot x_{-\alpha}\left(\tfrac{(m -\epsilon bm')X}{1+am}\right) \in P_\alpha(\tfrac{m -\epsilon bm'}{1+am}). \qedhere
\end{multline}
\end{proof}

\subsection{Construction of a \texorpdfstring{$\St(\Phi, B)$}{St(B)}-torsor} \label{sec:V-construction}
Throughout this subsection $\Phi$ denotes an arbitrary irreducible simply-laced root system of rank $\geq 3$, unless stated otherwise. 
\begin{df}\label{df:GM0_geq0}
Denote the subgroup $\overline{\St}(\Phi, A, M)$ by $G_M^0$. 
Consider the following commutative diagram, in which the homomorphisms $i_\pm$ and $j_\pm$ are induced by the natural ring homomorphisms $A \to A[X^{\pm 1}]$ and $A[X^{\pm 1}]\to R$:
\[\begin{tikzcd}[scale cd=0.8]
   G_M^0 \arrow[rr, "i_+^M", hookrightarrow] \arrow[dd, "i_-^M", hookrightarrow] \arrow[rd, hookrightarrow] & & \overline{\St}(\Phi, A[X], M[X]) \arrow[dd, "j_+^M", swap, near start] \arrow[rd, hookrightarrow] & \\ & \St(\Phi, A) \arrow[rr, "i_+", near start, hookrightarrow] \arrow[dd, "i_-", near start, hookrightarrow] & & \St(\Phi, A[X]) \arrow{dd}{j_+} \\ \overline{\St}(\Phi, A[X\inv], M[X\inv]) \arrow[rd, hookrightarrow] \arrow[rr, "j_-^M", near start] & & \overline{\St}(\Phi, R, I) \arrow[rd, hookrightarrow] & \\ & \St(\Phi, A[X\inv]) \arrow{rr}[swap]{j_-} & & \St(\Phi, R).
  \end{tikzcd}\]
Denote by $\overline{G}^{\geq 0}_M$ the image of the homomorphism $j_+^M$. \end{df}
Notice that the homomorphism $j_+^Mi_+^M$ is split by the homomorphism $ev^*_{X=1}$ of evaluation at $1$, therefore $G_M^0$ can be considered as a subgroup of $\overline{G}^{\geq 0}_M$.

Denote by $\overline{V}_T$ the quotient of the set of triples 
\begin{equation}\label{VT-def} V_T = \overline{G}_M^{\geq 0} \times \St(\Phi, A[X\inv]) \times (1+M)^\times \end{equation} by the equivalence relation given by $(p \cdot j_+i_+(\gamma), h, u)_T \sim (p, i_-(\gamma) \cdot h, u)_T$, $\gamma \in G^0_M$. We denote the image of $(p, h, u)_T$ in $\overline{V}_T$ by $[p, h, u]$.
$\overline{V}_T$ is precisely the set upon which M.~Tulenbaev in~\cite[Proposition~4.3]{Tu83} constructs an action of $\St(\Phi, B)$.

Sometimes it will be more convenient for us to work with another set $\overline{V}$ isomorphic to $\overline{V}_T$ (this isomorphism will be established below). Denote by $V$ the subset of \[\St(\Phi, R) \times \St(\Phi, A[X\inv]) \times (1 + M)^\times\] consisting of those triples $(g, h, u)$ for which $p(g, h, u) := g \cdot j_-(h) \cdot \{ X, u \}$ belongs to $\overline{G}_M^{\geq 0}$. 

We let $h_0 \in G_M^0$ act on $V$ on the right by $(g, h, u) \cdot h_0 = (g, h \cdot i_-(h_0), u)$. Since $G^0_M \subset \overline{G}^{\geq 0}_M$, we see that $V \cdot G_M^0 \subseteq V$.

We denote by $\overline{V}$ the set of orbits of this action and use the notation $(g, [h], u)$ for the elements of $\overline{V}$.
Whenever $v_1, v_2 \in V$ lie in the same $G_M^0$-orbit we use the notation $v_1 \sim v_2$.
We denote by $\overline{p}$ the function $\overline{V} \to \overline{G}^{\geq 0}_M/G_M^0$ sending each $(g, [h], u) \in V$ to the left coset $p(g, h, u)G_M^0$.

The isomorphism between the sets $\overline{V}$ and $\overline{V}_T$ is given by the following two maps, which are easily seen to be mutually inverse to each other:
\begin{equation} \label{eq:VVT} \begin{tikzcd}[row sep=tiny] \overline{V} \arrow{r}{\cong} & \arrow{l} \overline{V}_T \\ (g, [h], u) \arrow[r, mapsto] & \left[p(g, h, u), h^{-1}, u\right] \\ (p \cdot j_-(h) \cdot \{u, X\}, [h^{-1}], u) & \arrow[l, mapsto] [p, h, u]. \end{tikzcd} \end{equation}
The above isomorphism allows us to regard $\overline{V}$ and $\overline{V}_T$ as the same object, for elements of which we can interchangeably use either of the two notations,
 depending on which of them is more convenient in a given situation.
For example, specifying the action of $\St(\Phi, B)$ in terms of $\overline{V}$ leads to much shorter calculations in Lemmas~\ref{R3_leq0_1}--\ref{R2_0_1},
 while the statements of~\cref{prop43} and~\cref{lem:action} look more natural when formulated in terms of~$\overline{V}_T$.

Now we are ready to proceed with the construction of the action of $\St(\Phi, B)$ on $\overline{V}$. We start by defining for $\alpha \in \Phi$, $a \in A$ a partial function $t_\alpha(aX^{-1}) \colon V \not\to V$.
This function is defined for the triples $(g, h, u)$ satisfying $p(g, h, u) \in P_\alpha(*) \subseteq \overline{G}_M^{\geq 0}$.
If $p(g, h, u)$ belongs to $P_\alpha(m)$ for some $m \in M$, then $t_\alpha(aX^{-1})$ is defined via the following identity:
\begin{equation} \label{T_1} t_\alpha(aX^{-1}) (g, h, u) = \left( x_\alpha(aX^{-1})\cdot g ,\ h \cdot x_\alpha\left(-\tfrac{aX^{-1}}{1 + am}\right),\ u \cdot (1 + am)\right).\end{equation}

Notice that $t_\alpha(aX^{-1}) (g, h, u) \in V$. Indeed, from~\cref{SR:additivity} we obtain that
\[p\left(t_\alpha(aX^{-1}) (g, h, u)\right) = S_\alpha(a, p(g, h, u)) \in P_\alpha(*) \cdot h_{\alpha}^{-1}(1+am) \subseteq \overline{G}_M^{\geq 0}.\]
\begin{lemma}\label{lem:orbit-action} Let $(A, M)$ be a local pair. 
Then for any $\alpha \in \Phi$ and $a \in A$ the partial function $t_\alpha(aX^{-1}) \colon V \not \to V$ gives rise to a well-defined total function $T_\alpha(aX^{-1}) \colon \overline{V} \to \overline{V}$. \end{lemma}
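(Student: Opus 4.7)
The plan is to establish (a) \emph{totality}: every orbit in $\overline{V}$ admits a representative $(g, h, u)$ for which $t_\alpha(aX^{-1})$ is defined (i.e.\ $p(g, h, u) \in P_\alpha(*)$), and (b) \emph{well-definedness}: the formula~\eqref{T_1} gives $G_M^0$-equivalent outputs for any two such choices.

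For (a), I would combine the semidirect decomposition $\overline{\St}(\Phi, A[X], M[X]) = i_+^M(G_M^0) \ltimes K(A[X], M[X])$ from~\eqref{eq:sd-decomp} with~\cref{Pstar-large}, which places $K(A[X], M[X])$ inside $\widetilde{P}_{\alpha, M}(*)$. Applying $j_+^M$ yields the set-theoretic decomposition $\overline{G}^{\geq 0}_M = P_\alpha(*) \cdot j_+i_+(G_M^0)$, so that any $(g, h, u) \in V$ admits some $h_0 \in G_M^0$ with $p(g, h \cdot i_-(h_0), u) \in P_\alpha(*)$.

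For (b), let $(g, h_1, u)$ and $(g, h_2, u) = (g, h_1 \cdot i_-(h_0), u)$ be two representatives in the domain and set $p_i = p(g, h_i, u) \in P_\alpha(m_i)$. Then $j_+i_+(h_0) = p_1^{-1} p_2 \in P_\alpha(*)$. A direct inspection of~\eqref{T_1} reveals that the third components of the two outputs agree precisely when $m_1 = m_2$, so the proof reduces to the inclusion $j_+i_+(G_M^0) \cap P_\alpha(*) \subseteq P_\alpha(0)$. I would obtain this by combining two ingredients: the intersection identity $\widetilde{P}_{\alpha, M}(*) \cap i_+^M(G_M^0) = i_+^M(Z_\alpha(A, M))$ (which follows from~\cref{Pstar-char} together with $K(A[X], M[X]) \cap i_+^M(G_M^0) = 1$, the latter because $K$ is the kernel of $ev^*_{X=0}$); and a kernel-control step showing $\ker(j_+^M) \cap \widetilde{P}_{\alpha, M}(*) \subseteq \widetilde{P}_{\alpha, M}(0)$, exploiting the fact that $\ker(j_+^M)$ is contained in the kernel of the retraction $ev^*_{X=1} \colon \overline{G}^{\geq 0}_M \to G_M^0$. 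Together, these force $h_0 \in Z_\alpha(A, M)$.

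With $h_0 \in Z_\alpha(A, M)$ in hand, the two output triples have identical $g$- and $u$-components, while the $h$-components differ precisely by the conjugate of $i_-(h_0)$ by $x_\alpha(-aX^{-1}/(1+am))$ in $\St(\Phi, A[X^{-1}])$. Since $Z_\alpha(A, M)$ centralizes every root subgroup $X_\alpha$ in any ring extension $A \subseteq A[X^{-1}]$ via the Chevalley commutator formulae~\eqref{Chevalley-CCF1}--\eqref{Chevalley-CCF2}, this conjugate equals $i_-(h_0)$ itself, exhibiting the outputs as $G_M^0$-equivalent. The hard part will be the kernel-control step in (b), which is the principal technical content of the proof: it ensures that the coset index $m$ descends to a well-defined function of $p \in P_\alpha(*)$ and compensates for the fact that $j_+^M$ is not known to be injective.
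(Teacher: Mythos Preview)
Your totality argument (a) matches the paper's proof essentially verbatim.

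For well-definedness (b), however, there is a genuine gap. You correctly reduce to showing that $h_0 \in Z_\alpha(A,M)$ whenever $j_+i_+(h_0) \in P_\alpha(*)$. But your two ingredients do not combine to give this. From $j_+i_+(h_0)\in P_\alpha(*)=j_+(\widetilde{P}_{\alpha,M}(*))$ you only get a lift $\tilde g\in\widetilde{P}_{\alpha,M}(*)$ with $j_+(\tilde g)=j_+i_+(h_0)$; then $k:=\tilde g^{-1}\,i_+(h_0)\in\ker(j_+^M)$. To invoke your intersection identity $\widetilde{P}_{\alpha,M}(*)\cap i_+^M(G_M^0)=i_+^M(Z_\alpha(A,M))$ you would need $i_+(h_0)\in\widetilde{P}_{\alpha,M}(*)$, i.e.\ $k\in\widetilde{P}_{\alpha,M}(*)$. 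Your kernel-control step only asserts $\ker(j_+^M)\cap\widetilde{P}_{\alpha,M}(*)\subseteq\widetilde{P}_{\alpha,M}(0)$, which presupposes this containment rather than proving it. What would actually be needed is $\ker(j_+^M)\subseteq\widetilde{P}_{\alpha,M}(*)$, and the observation that $\ker(j_+^M)\subseteq\ker(ev^*_{X=1})$ does not yield this.

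The paper closes this gap by descending to the Chevalley group. Since $\GG(\Phi,A[X])\to\GG(\Phi,R)$ is injective (as $A[X]\hookrightarrow R$), the images of $g_1$ and $i_+(h_0)$ in $\GG(\Phi,A[X])$ coincide, hence so do those of $g_0:=ev^*_{X=0}(g_1)\in Z_\alpha(A,M)$ and $h_0$ in $\GG(\Phi,A)$. Thus $g_0h_0^{-1}\in\overline{\K_2}(\Phi,A,M)$. The paper then invokes Stein's Gauss decomposition (\cref{thm:Stein}) to conclude that $\overline{\K_2}(\Phi,A,M)$ is generated by relative Steinberg symbols $\{a,1+m\}$, which lie in $Z_\alpha(A,M)$ by~\cref{Z-DS}. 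Hence $h_0\in Z_\alpha(A,M)$. This use of the Chevalley group level and of Stein's theorem is the essential missing idea in your outline.
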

\begin{proof}
 First of all, let us show that the resulting function is total. 
 Fix $v_0 = (g, h, u) \in V$. Since $p(g, h, u) \in \overline{G}^{\geq 0}_M$ there exists $g_1 \in \overline{\St}(\Phi, A[X], M[X])$ such that $j_+(g_1) = p(g, h, u).$ Notice that the homomorphism $i_+$ is split by $ev^*_{X=0}$.
 Set $h_0 = ev^*_{X=0}(g_1)^{-1}$, then, clearly, $g_1 \cdot i_+(h_0) \in K(A[X], M[X])$ and 
 $p(g, h \cdot i_-(h_0), u) = j_+(g_1) \cdot j_-(i_-(h_0)) = j_+(g_1 \cdot i_+(h_0)) \in j_+(K(A[X], M[X])).$ The latter subgroup is contained in $P_\alpha(*)$ by~\cref{Pstar-large}. Thus, $t_\alpha(aX^{-1})$ is defined on the representative $(g, h \cdot i_-(h_0), u)$ lying in the same $G_M^0$-orbit as $v_0$.
  
 Next, let us show that the value of $T_\alpha(aX^{-1})$ does not depend on the choice of representative.
 Let $v_1 = (g, h_1, u)$ and $v_2 = (g, h_2, u)$ be two elements of the same $G_M^0$-orbit for which both $p(v_1)$ and $p(v_2)$ belong to $P_\alpha(*)$.
 By definition, $h_1^{-1} h_2 = i_-(h_0)$, for some $h_0 \in G^0_M$, moreover, 
  $p(v_1)^{-1} \cdot p(v_2) = j_- i_-(h_0) = j_+ i_+(h_0) \in P_\alpha(*)$.
 By~\cref{Pstar-char} there exists $g_1 \in \overline{\St}(\Phi, A[X], M[X])$ such that $g_0 := ev^*_{X=0}(g_1) \in Z_\alpha(A, M)$ and $j_+(g_1) = j_+ i_+ (h_0)$.
 From the last equality and the injectivity of the homomorphism $\GG(\Phi, A[X]) \to \GG(\Phi, R)$ we obtain that the projections of $g_0$, $g_1$ and $h_0$ in $\GG(\Phi, R)$ are equal, which shows that $g_0 \cdot h_0^{-1} \in \overline{\K_2}(\Phi, A, M)$. It follows from~\cref{thm:Stein} that the latter subgroup is generated by relative Steinberg symbols $\{ a, 1 + m \}$ and hence by~\cref{Z-DS} it is contained in $Z_\alpha(A, M)$. Thus, we have obtained that $h_0 \in Z_\alpha(A, M)$ and hence that $i_-(h_0)$ is centralized by $X_\alpha(A[X^{-1}])$, which allows us to conclude that
 \begin{multline} \nonumber
  t_\alpha(aX^{-1})(v_1) = \left( x_\alpha(aX^{-1})\cdot g ,\ h_1 \cdot x_\alpha\left(-\tfrac{aX^{-1}}{1 + am}\right),\ u \cdot (1 + am)\right) \sim \\
  \sim \left( x_\alpha(aX^{-1})\cdot g ,\ h_1 i_-(h_0) \cdot x_\alpha\left(-\tfrac{aX^{-1}}{1 + am}\right),\ u \cdot (1 + am)\right) = t_\alpha(aX^{-1})(v_2). \qedhere
 \end{multline}\end{proof}

Our next goal is to define for $a + Xf \in A + XM[X]$ the operator $T_\alpha(a + Xf) \colon \overline{V} \to \overline{V}$.
Let $(g, h, u)$ be an element of $V$. We define the value of $T_\alpha(a + Xf)$ on $(g, h, u)$ via the following identity:
\begin{equation} \label{T_leq0} T_\alpha(a + Xf) \cdot (g, h, u) = (x_\alpha(a + Xf) \cdot g, h \cdot x_{\alpha}(-a), u).  \end{equation}
Notice that $p(T_\alpha(a + Xf) \cdot (g, h, u)) = x_\alpha(Xf) \cdot {}^{x_\alpha(a)}p(g, h, u) \in \overline{G}_M^{\geq 0},$ therefore
 the right-hand side of~\eqref{T_leq0} lies in $V$. Notice that for $h_0 \in G^0_M$ one has
\[T_\alpha(a + Xf) \cdot ((g, h, u) \cdot h_0) = (T_\alpha(a + Xf) \cdot (g, h, u)) \cdot {}^{x_{\alpha}(a)}\!h_0,\]  
therefore~\eqref{T_leq0} indeed gives rise to a well-defined map $\overline{V} \to \overline{V}$.

Thus far we have specified the action of the generators of the "truncated'' Steinberg group $\St^{\leq 1}(\Phi, B)$ from~\cref{sec:presentation} on $\overline{V}$ using formulae~\eqref{T_1} and~\eqref{T_leq0}. We need to verify that this action respects the defining relations of the group $\St^{\leq 1}(\Phi, B)$ (notice that $\leq 1$ here stands for the degree of relations with respect to $t = X^{-1}$). This is accomplished in the series of lemmas below.

\begin{lemma} \label{R1_d} The operators $T_\alpha$ satisfy Steinberg relations of type $\mathrm{R1}_d$ for all $d \leq 1$. \end{lemma}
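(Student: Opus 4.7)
The relation $\mathrm{R1}_d$ asserts $T_\alpha(b_1) T_\alpha(b_2) = T_\alpha(b_1+b_2)$ for $b_1, b_2 \in B_d$, and my plan is to split into two cases according to the sign of $d$.

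\textit{Case $d \leq 0$.} Each element of $B_d$ has the form $a + Xf$ with $a \in A$, $f \in M[X]$, so both operators are given by the formula~\eqref{T_leq0}. Direct composition followed by an appeal to Steinberg additivity~\eqref{Steinberg-additivity}, applied to the $g$-component in $\St(\Phi, R)$ and to the $h$-component in $\St(\Phi, A[X^{-1}])$, immediately collapses the composed operator to $T_\alpha((a_1+a_2) + X(f_1+f_2))$; the $u$-component is unaffected throughout.

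\textit{Case $d = 1$.} Fix $(g,[h],u) \in \overline{V}$ and, using the argument at the beginning of the proof of~\cref{lem:orbit-action}, choose a representative with $p(g,h,u) \in P_\alpha(m)$ for some $m \in M$. Applying $T_\alpha(a_2 X^{-1})$ via~\eqref{T_1} produces a triple whose $p$-value equals $S_\alpha(a_2, p(g,h,u))$; by~\cref{SR:additivity} the latter factors as $p'' \cdot h_\alpha(1+a_2 m)$ with $p'' \in P_\alpha\bigl(\tfrac{m}{1+a_2 m}\bigr)$. Since $h_{-\alpha}(1+a_2 m) \in G_M^0$, right-multiplying the middle component by $i_-(h_{-\alpha}(1+a_2 m))$ yields an equivalent representative whose $p$-value, in view of~\eqref{eq:h-inv}, is just $p''$ and hence lies in $P_\alpha(*)$. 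This enables the subsequent application of $T_\alpha(a_1 X^{-1})$.

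Comparing the resulting triple with the direct application of $T_\alpha((a_1+a_2) X^{-1})(g,[h],u)$ then reduces the claim to three sub-identities. The $g$-components match by~\eqref{Steinberg-additivity}; the $u$-components agree via the routine computation $(1+a_2 m)\bigl(1+\tfrac{a_1 m}{1+a_2 m}\bigr) = 1+(a_1+a_2)m$; and the $h$-components will agree modulo $i_-(G_M^0)$ after pushing $i_-(h_{-\alpha}(1+a_2 m))$ past the newly introduced $x_\alpha$-factor via~\eqref{eq:conj-h-x} (with exponent $\langle \alpha,-\alpha\rangle = -2$), whereupon the resulting $X_\alpha$-coefficient vanishes after clearing denominators. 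The main obstacle, while essentially bookkeeping, is tracking how the choice of representative must be adjusted between the two successive applications of $T_\alpha(-X^{-1})$; this is handled precisely by the combination of~\cref{SR:additivity} and~\eqref{eq:h-inv}.
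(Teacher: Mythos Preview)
Your approach is essentially identical to the paper's: for $d\leq 0$ appeal directly to~\eqref{T_leq0}, and for $d=1$ apply~\eqref{T_1} once, then right-multiply the middle component by $h_\alpha^{-1}(1+a_2m)$ (which you write as $h_{-\alpha}(1+a_2m)$ via~\eqref{eq:h-inv}) so that~\cref{SR:additivity} places the new $p$-value in $P_\alpha\bigl(\tfrac{m}{1+a_2m}\bigr)$, enabling a second application of~\eqref{T_1}; the remaining simplification of the $h$-component via~\eqref{eq:conj-h-x} and the telescoping fraction $-\tfrac{a_2}{1+a_2m}-\tfrac{a_1}{(1+a_2m)(1+(a_1+a_2)m)}=-\tfrac{a_1+a_2}{1+(a_1+a_2)m}$ is exactly what the paper does. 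One small wording issue: the $X_\alpha$-coefficient does not ``vanish'' but rather collapses to the single term $-\tfrac{(a_1+a_2)X^{-1}}{1+(a_1+a_2)m}$, matching the direct application of $T_\alpha((a_1+a_2)X^{-1})$; otherwise your outline is correct.
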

\begin{proof}
 For $d \leq 0$ the assertion immediately follows from~\eqref{T_leq0}, so let us consider the case $d = 1$. 
 
 Let $a$ and $b$ be elements of $A$ and $(g, h, u)$ be an element of $V$.
 By the first part of the proof of~\cref{lem:orbit-action} we may assume that $p(g,h,u)\in P_\alpha(m)$ for some $m \in M$.
 From~\eqref{T_1} we obtain that 
 \begin{equation}\label{add-computation}
  T_\alpha(bX^{-1}) \cdot T_\alpha(aX^{-1}) (g, [h], u) = T_\alpha(bX^{-1}) \cdot \left(x_\alpha(aX^{-1})\cdot g ,\ \left[ h' \right],\ u \cdot (1 + am)\right),
 \end{equation}
 where $h' = h \cdot x_\alpha\left(-\tfrac{aX^{-1}}{1 + am}\right)$.
 Notice that we can not invoke~\eqref{T_1} for the second time because the value of the function $p$ on the triple $(x_\alpha(aX^{-1})\cdot g ,\ h',\ u \cdot (1 + am))$ does not belong to $P_\alpha(*)$.
 
 However, it is very easy to fix this. Indeed, we are free to replace $[h]$ with $[h \cdot h_0]$ for any $h_0 \in G_M^0$, so in~\eqref{add-computation} we
  can replace $h'$ with $h'' = h' \cdot h^{-1}_\alpha(1+am)$. Now from~\cref{SR:additivity} we obtain that
 \[ p(x_\alpha(aX^{-1})\cdot g ,\ h'',\ u \cdot (1 + am)) = S_\alpha(a, p(g, h, u)) \cdot h^{-1}_\alpha(1+am) \in P_\alpha\left(\tfrac{m}{1+am}\right).\]
 Now we can invoke~\eqref{T_1} once again and can continue~\eqref{add-computation} as follows:
 \begin{equation}\label{add-computation2}
  \ldots = \left(x_\alpha((a + b)X^{-1})\cdot g ,\ \left[ h'' \cdot x_\alpha\left(\tfrac{-bX^{-1}}{1+\tfrac{bm}{1+am}}\right) \right],\ u \cdot (1 + am) \cdot \left(1+\tfrac{bm}{1+am}\right)\right).\end{equation}
 Notice that 
 \begin{multline} \nonumber \left[h'' \cdot x_\alpha\left(\tfrac{-b(1+am)X^{-1}}{1+am + bm}\right)\right] = \left[h \cdot x_\alpha\left(\tfrac{-aX^{-1}}{1 + am}\right) \cdot x_\alpha\left(\tfrac{-bX^{-1}}{(1+am)(1+am+bm)}\right)\right] = \\ =
 \left[h \cdot x_\alpha\left(\tfrac{-(a+b)X^{-1}}{1+(a+b)m}\right)\right].\end{multline}
 Thus, the expression in~\eqref{add-computation2} coincides with the expression for $T_\alpha((a+b)X^{-1})\cdot (g, [h], u)$ given by~\eqref{T_1}. \end{proof}

\begin{lemma} \label{R3_leq0_1} The operators $T_\alpha$ satisfy Steinberg relations of type $\mathrm{R3}^\angle_{d, 1}$, $\mathrm{R3}^\bot_{d,1}$ for $d\leq 0$. \end{lemma}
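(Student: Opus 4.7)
The plan is to verify directly that the operators $T_\alpha(a + Xf)$ and $T_\beta(bX^{-1})$ commute on $\overline V$ for arbitrary $a, b \in A$ and $f \in M[X]$, in both the cases $\alpha - \beta \in \Phi$ and $\alpha \perp \beta$. In either case one has $\alpha + \beta \notin \Phi \cup \{0\}$, so no correction term will arise from the Chevalley commutator formulae.

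Given $v \in \overline V$, I would first pick, following the argument at the beginning of the proof of~\cref{lem:orbit-action}, a representative $(g, h, u) \in V$ with $p(g, h, u) \in P_\beta(m)$ for some $m \in M$. The central technical point is to check that $p(T_\alpha(a + Xf)(g, h, u))$ lies in the same coset $P_\beta(m)$. Using centrality in $\St(\Phi, R)$ of the Steinberg symbol $\{X, u\}$, a direct computation gives
\[ p(T_\alpha(a + Xf)(g, h, u)) = x_\alpha(Xf) \cdot {}^{x_\alpha(a)} p(g, h, u). \]
Writing $p(g, h, u) = p_0 \cdot x_{-\beta}(mX)$ with $p_0 \in P_\beta(0)$, I would check that each factor remains in $P_\beta(0)$ up to the trailing $x_{-\beta}(mX)$: since $\alpha \in Z(\beta)$ in either case, \cref{P0-conj-invariant} yields ${}^{x_\alpha(a)} p_0 \in P_\beta(0)$; the factor $x_\alpha(Xf)$ is a type P3 generator when $\alpha \perp \beta$ and a type P1 generator when $\alpha - \beta \in \Phi$ (as then $\beta - \alpha \in \Phi$). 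Finally, by~\eqref{Chevalley-CCF1}--\eqref{Chevalley-CCF2}, ${}^{x_\alpha(a)} x_{-\beta}(mX)$ equals either $x_{-\beta}(mX)$ (case $\bot$) or $x_{\alpha - \beta}(N_{\alpha, -\beta} \cdot amX) \cdot x_{-\beta}(mX)$ (case $\angle$), the extra factor being a type P1 generator of $\widetilde{P}_{\beta, M}(0)$ because $\beta + (\alpha - \beta) = \alpha \in \Phi$.

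Once the coset $P_\beta(m)$ is preserved, formula~\eqref{T_1} applies unambiguously with the same $m$, producing explicit triples for both $T_\beta(bX^{-1})\,T_\alpha(a + Xf)(g, h, u)$ and $T_\alpha(a + Xf)\,T_\beta(bX^{-1})(g, h, u)$. The comparison reduces to two commutation identities in Steinberg groups: of $x_\alpha(a + Xf)$ with $x_\beta(bX^{-1})$ in $\St(\Phi, R)$ and of $x_\alpha(-a)$ with $x_\beta(-\tfrac{bX^{-1}}{1 + bm})$ in $\St(\Phi, A[X^{-1}])$. Both follow from~\eqref{Chevalley-CCF2} since $\alpha + \beta \notin \Phi \cup \{0\}$. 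The main obstacle is therefore the coset-preservation step of the previous paragraph; afterwards, the triviality of the relevant commutator in $\St$ makes the comparison immediate, with no Steinberg symbol appearing.
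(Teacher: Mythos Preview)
Your argument is correct. It differs from the paper's in a small but pleasant way. The paper fixes a representative with $p(g,h,u)\in P_\alpha(m)$, where $\alpha$ is the root carrying the degree-$1$ generator $aX^{-1}$, and then evaluates the four-fold composite $[T_\beta(b),\,T_\alpha(aX^{-1})]$ on it; after the first application of $T_\alpha(-aX^{-1})$ the value of $p$ leaves $P_\alpha(*)$, so the paper inserts an $h_\alpha^{-1}(1-am)$ factor and invokes \cref{SR:additivity} (together with \cref{P0-conj-invariant}) to land in $P_\alpha\!\left(\tfrac{m}{1-am}\right)$ before applying $T_\alpha$ a second time. You instead fix $p(g,h,u)\in P_\beta(m)$, with $\beta$ the degree-$1$ root, and simply compare the two compositions $T_\beta(bX^{-1})\,T_\alpha(a+Xf)$ and $T_\alpha(a+Xf)\,T_\beta(bX^{-1})$. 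The key observation is that $T_\alpha(a+Xf)$ preserves the coset $P_\beta(m)$: conjugation by $x_\alpha(a)$ keeps $P_\beta(0)$ by \cref{P0-conj-invariant} since $\alpha\in Z(\beta)$, while the leftover factors $x_\alpha(Xf)$ and (in the $\angle$ case) $x_{\alpha-\beta}(N_{\alpha,-\beta}\,amX)$ are visibly P3 or P1 generators of $P_\beta(0)$. Hence the same $m$ works in both applications of~\eqref{T_1}, and the comparison reduces immediately to $[x_\alpha,x_\beta]=1$, with no $h_\alpha$ correction and no appeal to \cref{SR:additivity}. The paper's route is of course also fine and keeps the bookkeeping parallel to the harder Lemmas~\ref{R3_1_1}--\ref{R2_0_1} that follow; yours is a shade more economical for this particular relation.
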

\begin{proof}
We will verify $\mathrm{R3}^\angle_{d, 1}$ and $\mathrm{R3}^\bot_{d, 1}$ simultaneously. Fix some $\beta \in Z(\alpha) = Z_0(\alpha) \sqcup Z_+(\alpha)$.

We need to show that $[T_\beta(b), T_\alpha(aX^{-1})](g, [h], u) = (g, [h], u)$ for $a\in A$, $b\in A + XM[X]$. 
Write $b = b_0 + Xf$ for some $b_0 \in A$, $f \in M[X]$.
As in the proof of the previous lemma, we may assume that $p(g, h, u) \in P_\alpha(m)$ for some $m \in M$.
From~\eqref{T_leq0}--\eqref{T_1} we obtain that
\begin{equation} \label{R3leq0-computation}
  \left[T_\beta(b),\ T_\alpha(aX^{-1}) \right] (g, [h], u) = T_\beta(b) \cdot T_\alpha(aX^{-1}) \left(g', [h'], u(1-am)\right), \end{equation}
where $g' = x_\beta(-b) \cdot x_\alpha(-aX^{-1}) \cdot g$, $h' = h \cdot x_{\alpha}\left(\tfrac{aX^{-1}}{1-am}\right) \cdot x_\beta(b_0)$.
Notice that by~\eqref{eq:conj-h-x} $[h'] = [h'']$, where $h'' = h \cdot x_{\alpha}\left(\tfrac{aX^{-1}}{1-am}\right) \cdot h^{-1}_\alpha(1-am) \cdot x_\beta(b_0).$
From~\cref{P0-conj-invariant} and \cref{SR:additivity} we obtain that
\begin{multline*} p(g',h'',u(1-am)) = x_{\beta}(-Xf) \cdot \left(S_\alpha(-a, p(g,h,u)) \cdot h_{\alpha}^{-1}(1-am)\right)^{x_\beta(b_0)} \in \\ \in x_{\beta}(-Xf) \cdot P_\alpha(\tfrac{m}{1-am})^{x_\beta(b_0)} \subseteq P_\alpha(\tfrac{m}{1-am}).\end{multline*}
Thus, we can invoke ~\eqref{T_1}--\eqref{T_leq0} once again and can continue~\eqref{R3leq0-computation} as follows:
\begin{equation} \label{R3leq0-computation2} \ldots = \left( [x_\beta(b), x_{\alpha}(aX^{-1})] \cdot g, [h'' \cdot x_{\alpha}\left(-a(1-am)X^{-1}\right) \cdot x_{\beta}(-b_0)], u \right) = \left(g, [h], u\right), \end{equation}
where in the last equality we use the following computation:
\begin{multline} \nonumber
 h'' \cdot x_{\alpha}\left(-a(1-am)X^{-1}\right) \cdot x_{\beta}(-b_0) = \\ = h \cdot h_\alpha^{-1}(1-am) \cdot \left[x_\alpha\left(a(1-am)X^{-1}\right),\ x_\beta(b_0)\right] = h \cdot h_\alpha^{-1}(1-am). \end{multline}
The assertion of the lemma now follows from~\eqref{R3leq0-computation}--\eqref{R3leq0-computation2}. \end{proof}

\begin{lemma} \label{R3_leqm1_1} The operators $T_\alpha$ satisfy Steinberg relations of type $\mathrm{R2}_{d, 1}$ for $d \leq -1$ . \end{lemma}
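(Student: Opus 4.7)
The plan is to verify the relation by a direct computation from~\eqref{T_1} and~\eqref{T_leq0}, closely modelled on~\cref{R1_d} and~\cref{R3_leq0_1}. Writing the two arguments homogeneously as $a = mX^k$ with $m \in M$ and $k = -d \geq 1$, and $b = b_0 X^{-1}$ with $b_0 \in A$, the identity to prove reads
\[
T_\alpha(mX^k) \cdot T_\beta(b_0 X^{-1}) \cdot v \;=\; T_{\alpha+\beta}(N_{\alpha,\beta}\, mb_0 X^{k-1}) \cdot T_\beta(b_0 X^{-1}) \cdot T_\alpha(mX^k) \cdot v
\]
for $v \in \overline{V}$. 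As in the proof of~\cref{lem:orbit-action}, I fix a representative $(g, h, u)$ of $v$ satisfying $p(g, h, u) \in P_\beta(m_0)$ for some $m_0 \in M$, so that $T_\beta(b_0 X^{-1})$ is legally computed via~\eqref{T_1}.

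The essential domain check is that $T_\beta(b_0 X^{-1})$ is still applicable after $T_\alpha(mX^k)$. Since $k \geq 1$, the element $x_\alpha(mX^k)$ lies in $K(A[X], M[X]) \subseteq \widetilde{P}_{\beta, M}(*)$ by~\cref{Pstar-large}, and its $p_\beta$-value vanishes by~\cref{rem:palpha} (the single factor $z_\alpha(mX^k,0)$ carries root $\alpha \neq -\beta$); hence $x_\alpha(mX^k) \in P_\beta(0)$ in $\St(\Phi, R)$. Consequently the intermediate $p$-value $x_\alpha(mX^k)\cdot p(g, h, u)$ again lies in $P_\beta(m_0)$, and~\eqref{T_1} can be invoked a second time on it.

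Plugging everything into~\eqref{T_1} and~\eqref{T_leq0}, the first coordinates on the two sides of the identity differ precisely by the commutator $[x_\beta(b_0 X^{-1}), x_\alpha(mX^k)] = x_{\alpha+\beta}(-N_{\alpha,\beta}\, mb_0 X^{k-1})$, computed inside $\St(\Phi, R)$ by a single application of~\eqref{Chevalley-CCF1}; this is exactly cancelled by the extra outer $T_{\alpha+\beta}(N_{\alpha,\beta}\, mb_0 X^{k-1})$ on the right. The $u$-coordinate evolves identically on both sides, since only $T_\beta(b_0 X^{-1})$ alters $u$. For the $[h]$-coordinate, when $k \geq 2$ the argument $N_{\alpha,\beta}\, mb_0 X^{k-1}$ has vanishing constant term, so~\eqref{T_leq0} leaves $h$ untouched and both sides agree verbatim; when $k = 1$ the remaining discrepancy is a right factor $x_{\alpha+\beta}(-N_{\alpha,\beta}\, mb_0)$ with $mb_0 \in M$, which lies in $i_-(G_M^0)$ and is therefore swallowed by the equivalence relation defining $\overline{V}$.

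The only substantive obstacle is the $P_\beta(0)$-membership verification for $x_\alpha(mX^k)$ in the second paragraph, without which neither side can be computed from the definitions; once that is granted, what remains is pure bookkeeping.
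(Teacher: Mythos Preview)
Your proof is correct and follows essentially the same route as the paper: a direct computation from~\eqref{T_1} and~\eqref{T_leq0}, the key step being the membership $x_\alpha(mX^k)\in P_\beta(0)$ so that the degree-$1$ operator can be legally reapplied. Two minor remarks: (i) the paper organises the computation as the full commutator $[T_\beta(Xf),T_\alpha(aX^{-1})]$, which forces a second application of $T_\alpha(aX^{-1})$ and hence the $h_\alpha^{-1}(1-am)$ correction via~\cref{SR:additivity}; your arrangement of the identity applies the degree-$1$ operator only once on each side and thereby sidesteps that corollary entirely, which is a small simplification; (ii) your justification of $x_\alpha(mX^k)\in P_\beta(0)$ through~\cref{Pstar-large} and~\cref{rem:palpha} is correct but roundabout---since $\alpha+\beta\in\Phi$, the element $x_\alpha(mX^k)=z_\alpha(mX^k,0)$ is already a generator of type~P1 in~\cref{defP0} (cf.~\cref{rem:recognition}), which is exactly what the paper invokes in one clause.
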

\begin{proof} Let $a \in A$, $f \in M[X]$ and $\alpha$, $\beta$ be a pair of roots forming an obtuse angle. As before, we may assume that $p(g, h, u) \in P_\alpha(m)$ for some $m \in M$.
From~\eqref{T_1}--\eqref{T_leq0} and the fact that $x_\beta(Xf) \in P_\alpha(0)$ we obtain that
\newline\scalebox{0.97}{\begin{minipage}{1.03\linewidth}
\begin{multline} \nonumber [T_\beta(Xf),\ T_\alpha(aX^{-1})] \left(g,\ [h],\ u\right) = \\ 
= T_\beta(Xf) \cdot T_\alpha(aX^{-1}) \left(x_{\beta}(-Xf) \cdot x_\alpha(-aX^{-1})\cdot g,\ \left[h \cdot x_\alpha\left(\tfrac{aX^{-1}}{1 - am}\right) \cdot h^{-1}_\alpha(1-am)\right],\ u(1-am)\right) = \\ = \left([x_\beta(Xf),\ x_\alpha(aX^{-1})] \cdot g,\ \left[h \cdot x_\alpha\left(-\tfrac{aX^{-1}}{1-am}\right) \cdot h^{-1}_\alpha(1-am) \cdot x_\alpha\left(\tfrac{aX^{-1}}{1 + \tfrac{am}{1-am}}\right)\right],\ u \right) = \\ = (x_{\alpha+\beta}(N_{\beta,\alpha} \cdot af) \cdot g,\ [h],\ u) = T_{\alpha+\beta}(N_{\beta, \alpha} \cdot af) (g, [h], u). \end{multline}\end{minipage}}\vskip \belowdisplayshortskip

As in the proof of Lemmas~\ref{R1_d}--\ref{R3_leq0_1}, in the above computation we had to add the factor $h_\alpha^{-1}(1-am)$ to the second triple, so that we could pass from the second line to the third one (again, we need to invoke \cref{SR:additivity} to check that the value of $p$ on the second triple lies in the coset $P_\alpha(\tfrac{m}{1-am})$). \end{proof}

\begin{lemma} \label{R3_1_1} The operators $T_\alpha(aX^{-1})$ satisfy Steinberg relations $\mathrm{R3}_{1,1}^{\angle}$. \end{lemma}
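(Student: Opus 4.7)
Fix roots $\alpha,\beta\in\Phi$ with $\alpha-\beta\in\Phi$ (so that $\alpha+\beta\notin\Phi$ in the simply-laced case) and elements $a,b\in A$. The plan is to verify that $T_\alpha(aX^{-1})$ and $T_\beta(bX^{-1})$ commute on $\overline{V}$, following closely the pattern of the proofs of~\cref{R1_d,R3_leq0_1,R3_leqm1_1}. Given $v\in\overline{V}$, we first pick a representative $(g,h,u)$ with $p(g,h,u)\in j_+(K(A[X],M[X]))$, as in the first paragraph of the proof of~\cref{lem:orbit-action}. By~\cref{Pstar-large} this forces $p(g,h,u)\in\widetilde{P}_{\alpha,M}(*)\cap\widetilde{P}_{\beta,M}(*)$, so that $p(g,h,u)\in P_\alpha(m)\cap P_\beta(m')$ for uniquely determined $m,m'\in M$.

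Now apply $T_\alpha(aX^{-1})$ via~\eqref{T_1}. To apply $T_\beta(bX^{-1})$ afterwards, one needs a representative of the resulting class whose $p$-value lies in $P_\beta(*)$. By~\cref{SR:sharp}, right-multiplying the $h$-component by $i_-(h_0)$ with $h_0=h_\alpha^{-1}(1+am)\cdot x_{\alpha-\beta}(-N_{\alpha,-\beta}\,am')\in G_M^0$ produces such a representative, the new $p$-value being $S_\alpha(a,p(g,h,u))\cdot h_0\in P_\beta\bigl(\tfrac{m'}{1+am}\bigr)$. Here, exactly as in the proofs of~\cref{R1_d,R3_leqm1_1}, we rely on the centrality of the Steinberg symbol $\{X,1+am\}\in\K_2(\Phi,R)$ in order to identify the modified $p$-value. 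A second application of~\eqref{T_1} then yields $T_\beta(bX^{-1})T_\alpha(aX^{-1})v=(x_\beta(bX^{-1})x_\alpha(aX^{-1})g,\,[h_{\alpha\beta}],\,u(1+am+bm'))$ for an explicit $h_{\alpha\beta}\in\St(\Phi,A[X^{-1}])$, and the symmetric computation produces $T_\alpha(aX^{-1})T_\beta(bX^{-1})v=(x_\alpha(aX^{-1})x_\beta(bX^{-1})g,\,[h_{\beta\alpha}],\,u(1+am+bm'))$.

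The first entries coincide because $[x_\alpha(aX^{-1}),x_\beta(bX^{-1})]=1$ in $\St(\Phi,R)$ by~\eqref{Chevalley-CCF2}; the third entries are identical. Thus the problem reduces to showing $h_{\alpha\beta}^{-1}\cdot h_{\beta\alpha}\in i_-(G_M^0)$, and this final bookkeeping is the main obstacle of the proof. Setting $c=1+am$, $c'=1+bm'$, $D=1+am+bm'$ and using~\eqref{eq:conj-h-x} to push $h_\alpha^{-1}(c)$ and $h_\beta^{-1}(c')$ to the rightmost position, one arrives at
\begin{align*}
h_{\alpha\beta} &= h\cdot x_\alpha\bigl(-\tfrac{a}{c}X^{-1}\bigr)\cdot x_{\alpha-\beta}\bigl(-\tfrac{N_{\alpha,-\beta}\,am'}{c}\bigr)\cdot x_\beta\bigl(-\tfrac{b}{D}X^{-1}\bigr)\cdot h_\alpha^{-1}(c),\\
h_{\beta\alpha} &= h\cdot x_\beta\bigl(-\tfrac{b}{c'}X^{-1}\bigr)\cdot x_{\beta-\alpha}\bigl(-\tfrac{N_{\beta,-\alpha}\,bm}{c'}\bigr)\cdot x_\alpha\bigl(-\tfrac{a}{D}X^{-1}\bigr)\cdot h_\beta^{-1}(c').
\end{align*}
One then iterates the commutator formulae~\eqref{Chevalley-CCF1}--\eqref{Chevalley-CCF2}, exploiting that $x_\alpha(\cdot)$ commutes with $x_\beta(\cdot)$ (no $\alpha+\beta$ root) and the identities $[x_\alpha(\xi),x_{\beta-\alpha}(\eta)]=x_\beta(N_{\alpha,\beta-\alpha}\xi\eta)$ and $[x_{\alpha-\beta}(\xi),x_\beta(\eta)]=x_\alpha(N_{\alpha-\beta,\beta}\xi\eta)$, together with the arithmetic identities $-\tfrac{a}{c}+\tfrac{a}{D}=-\tfrac{abm'}{cD}$ and $-\tfrac{b}{c'}+\tfrac{b}{D}=-\tfrac{abm}{c'D}$ (both lying in $M\cdot A$). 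Collecting all the resulting contributions via~\eqref{Steinberg-additivity}, every $X^{-1}$-dependent factor cancels, leaving $h_{\alpha\beta}^{-1}h_{\beta\alpha}$ as a product of root unipotents $x_\gamma(\mu)$ with $\mu\in M$ and semisimple elements $h_\alpha(c)^{\pm1}$, $h_\beta(c')^{\pm1}$ with $c,c'\in(1+M)^\times$, each of which belongs to $i_-(\overline{\St}(\Phi,A,M))=i_-(G_M^0)$.
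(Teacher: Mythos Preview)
Your proof is correct and follows essentially the same route as the paper: choose a representative with $p(g,h,u)\in j_+(K(A[X],M[X]))$, apply \cref{SR:sharp} to land in $P_\beta\!\left(\tfrac{m'}{1+am}\right)$ after the first operator, and then compare the two compositions. The only cosmetic difference is in the last step: the paper simplifies the middle component directly to the manifestly symmetric form $\left[h\cdot x_\alpha\!\left(-\tfrac{aX^{-1}}{1+am+bm'}\right)\cdot x_\beta\!\left(-\tfrac{bX^{-1}}{1+am+bm'}\right)\right]$ (using the commutator $[x_{\alpha-\beta},x_\beta]$ and \eqref{eq:conj-h-x}), whereas you compute $h_{\alpha\beta}$ and $h_{\beta\alpha}$ separately and argue that their ratio lies in $G_M^0$; the two verifications are equivalent.
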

\begin{proof}
Let $a, b \in A$ and $\alpha$, $\beta$ be a pair of roots forming an acute angle. Set $\epsilon = N_{\alpha,-\beta}$. From~\cref{K-a-b} and the proof of the first part of~\cref{lem:orbit-action} it follows that \[p(g, h, u) \in j_+\left(K(A[X], M[X])\right) \cap P_\alpha(m) \cap P_{\beta}(m')\text{ for some }m,m' \in M.\]
From~\eqref{T_1} we obtain that
\begin{multline} \label{R3_1_1-computation}
 T_\beta(bX^{-1}) \cdot T_\alpha(aX^{-1}) \left(g, [h], u\right) = \\
 = T_\beta(bX^{-1}) \left(x_\alpha(aX^{-1})\cdot g,\ \left[h \cdot x_\alpha\left(-\tfrac{aX^{-1}}{1 + am}\right)\right],\ u(1 + am)\right) = \\ 
 = T_\beta(bX^{-1}) \left(x_\alpha(aX^{-1})\cdot g,\ \left[h'\right],\ u(1 + am)\right),
 \end{multline}
where $h' = h \cdot x_\alpha\left(-\tfrac{aX^{-1}}{1 + am}\right) \cdot h^{-1}_\alpha(1 + am) \cdot x_{\alpha-\beta}(-\epsilon am').$
Notice that by~\cref{SR:sharp}
\[ p\left(x_\alpha(aX^{-1})\cdot g, h', u(1 + am)\right) = S_\alpha(a, p(g, h, u)) \cdot h^{-1}_\alpha(1 + am) \cdot x_{\alpha-\beta}(-\epsilon am') \in P_\beta(\tfrac{m'}{1+am}), \]
therefore we can invoke~\eqref{T_1} once again and can continue~\eqref{R3_1_1-computation} as follows:
\begin{equation}\label{R3_1_1-computation2} \ldots = \left(x_\beta(bX^{-1}) \cdot x_\alpha(aX^{-1}) \cdot g,\ \left[h' \cdot x_\beta\left(-\tfrac{bX^{-1}}{1 + \tfrac{bm'}{1+am}}\right)\right],\ u(1 + am + bm')\right). \end{equation}
The second component of the above triple can be simplified using~\eqref{eq:conj-h-x} as follows:
\begin{multline*}
 \left[ h \cdot x_\alpha\left(-\tfrac{aX^{-1}}{1 + am}\right) \cdot h^{-1}_\alpha(1 + am) \cdot x_{\alpha-\beta}(-\epsilon am') \cdot x_\beta\left(-\tfrac{bX^{-1}(1+am)}{1 + am + bm'}\right)\right] = \\
 = \left[ h \cdot x_\alpha\left(-\tfrac{aX^{-1}}{1 + am}\right) \cdot h^{-1}_\alpha(1 + am) \cdot x_{\alpha}\left(\tfrac{m'abX^{-1}(1+am)}{1 + am + bm'}\right) \cdot x_\beta\left(-\tfrac{bX^{-1}(1+am)}{1 + am + bm'}\right)\right] = \\
 = \left[ h \cdot x_\alpha\left(-\tfrac{aX^{-1}}{1 + am} + \tfrac{m'abX^{-1}}{(1+am)(1 + am + bm')}\right) \cdot x_\beta\left(-\tfrac{bX^{-1}}{1 + am + bm'}\right) \cdot h^{-1}_\alpha(1 + am)\right] = \\
 = \left[ h \cdot x_\alpha\left(-\tfrac{aX^{-1}}{1 + am + bm'}\right) \cdot x_\beta\left(-\tfrac{bX^{-1}}{1 + am + bm'}\right)\right].
\end{multline*}
Thus, we see that the expression in the right-hand side of~\eqref{R3_1_1-computation2} would remain unchanged if we swapped $(a,\alpha,m)$ with $(b,\beta,m')$. This implies the required assertion.
\end{proof}

\begin{lemma} \label{R2_0_1} The operators $T_\alpha$ satisfy Steinberg relations $\mathrm{R2}_{0,1}$. \end{lemma}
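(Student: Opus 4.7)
The plan is to adapt the template of Lemmas~\ref{R1_d}--\ref{R3_1_1}: using the first part of the proof of~\cref{lem:orbit-action}, I may arrange $p(v)\in j_+(K(A[X],M[X]))$, so that by~\cref{Pstar-large} the value $p(v)$ sits in $P_\gamma(*)$ for every root $\gamma$. Set $c := N_{\alpha,\beta}\,ab$ (so the target identity reads $[T_\alpha(a), T_\beta(bX^{-1})] = T_{\alpha+\beta}(cX^{-1})$), and let $m := p_\beta(p(v))$ and $n := p_{\alpha+\beta}(p(v))$; both will enter the coset bookkeeping in a nontrivial way.

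The main obstacle, absent from the earlier lemmas which composed only partial operators, is to understand how conjugation by $x_\alpha(a')$, $a'\in A$, shifts the $P_\beta$- and $P_{\alpha+\beta}$-cosets. I would prove the auxiliary identity
\[p_\beta({}^{x_\alpha(a')}q) = p_\beta(q) - N_{\alpha,\beta}\,a'\,p_{\alpha+\beta}(q),\qquad p_{\alpha+\beta}({}^{x_\alpha(a')}q) = p_{\alpha+\beta}(q)\]
for every $q\in j_+(K)$ and $a'\in A$. Both sides of each equality define group homomorphisms $j_+(K)\to(M,+)$, so it suffices to verify them on a generating set. I would use the decomposition $K(A[X],M[X]) = \widetilde{K}(\alpha,\alpha+\beta)\cdot X_{-\alpha}(M\cdot X)\cdot X_{-\alpha-\beta}(M\cdot X)$ given by~\cref{K-a-b} applied to the acute pair $(\alpha,\alpha+\beta)$. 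For $q\in\widetilde{K}(\alpha,\alpha+\beta)\subseteq \widetilde{P}_\alpha(0)\cap\widetilde{P}_{\alpha+\beta}(0)$, the $P_{\alpha+\beta}$-invariance is immediate from~\cref{P0-conj-invariant} (since $\alpha\in Z(\alpha+\beta)$), and the $P_\beta$-invariance is verified generator by generator using~\cref{Zrels} and~\cref{Crels}; the only generator needing more than a direct Chevalley computation is the K2 element $x_{-\alpha}(X^2f)$, but here the identity ${}^{x_\alpha(a')}z_{-\alpha}(s,\xi) = z_{-\alpha}(s,\xi-a')$ (a direct consequence of~\cref{Zrels}\ref{Z1}) places the conjugate $z_{-\alpha}(X^2f,-a')$ in $P_\beta(0)$ as a type-P1 generator (since $\beta+\alpha\in\Phi$). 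The same identity disposes of $q = x_{-\alpha}(mX)$, whose conjugate $z_{-\alpha}(mX,-a')$ is type P1 for both $P_\beta(0)$ and $P_{\alpha+\beta}(0)$. For $q = x_{-\alpha-\beta}(m'X)$, the Chevalley commutator formula together with $N_{\alpha,-\alpha-\beta}=-N_{\alpha,\beta}$ (from~\eqref{eq:simplest}) yields ${}^{x_\alpha(a')}x_{-\alpha-\beta}(m'X) = x_{-\beta}(-N_{\alpha,\beta}\,a'\,m'X)\cdot x_{-\alpha-\beta}(m'X)$, supplying exactly the $-N_{\alpha,\beta}\,a'\,m'$ shift in $p_\beta$ predicted by the formula while leaving $p_{\alpha+\beta}$ unchanged.

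With the auxiliary identity in hand, the left-hand side is computed directly from~\eqref{T_1} and~\eqref{T_leq0} to be
\[T_\alpha(a)T_\beta(bX^{-1})v = \bigl(x_\alpha(a)x_\beta(bX^{-1})g,\ [h\cdot x_\beta(-\tfrac{bX^{-1}}{1+bm})\cdot x_\alpha(-a)],\ u(1+bm)\bigr).\]
For the right-hand side, the auxiliary identity places $p(T_\alpha(a)v)={}^{x_\alpha(a)}p(v)$ in $P_\beta(m_1)\cap P_{\alpha+\beta}(n)$ with $m_1 := m - N_{\alpha,\beta}\,an$; the obvious analog of~\cref{SR:sharp} for the acute pair $(\beta,\alpha+\beta)$ (whose difference is $-\alpha$ and for which $N_{\beta,-\alpha-\beta}=N_{\alpha,\beta}$ by~\eqref{eq:simplest}) then tracks the $P_{\alpha+\beta}$-coset after $T_\beta(bX^{-1})$ to $\tfrac{n}{1+bm_1}$, at the cost of an $h_\beta(1+bm_1)\cdot x_{-\alpha}(-N_{\alpha,\beta}bn)\in G_M^0$ correction that is absorbed into the second-component equivalence. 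Applying $T_{\alpha+\beta}(cX^{-1})$ multiplies the third component by $1+c\cdot\tfrac{n}{1+bm_1} = \tfrac{1+bm}{1+bm_1}$, via the key algebraic identity $bm_1+cn = b(m-N_{\alpha,\beta}an)+N_{\alpha,\beta}abn = bm$, so the cumulative third component is exactly $u(1+bm)$ and matches the left-hand side. The first components agree by the Chevalley commutator formula $x_\alpha(a)x_\beta(bX^{-1}) = x_{\alpha+\beta}(cX^{-1})x_\beta(bX^{-1})x_\alpha(a)$. Finally, the second components are reconciled by pushing $x_\alpha(-a)$ past the accumulated $x_\beta$ and $h_\beta(1+bm_1)$ using Chevalley and~\eqref{eq:conj-h-x}, which produces an $x_{\alpha+\beta}$-term that cancels exactly against the one introduced by the final $T_{\alpha+\beta}$ step; the remaining residue lies in $G_M^0$ and is absorbed into the equivalence.
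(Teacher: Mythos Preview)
Your approach is correct, but it differs from the paper's in a substantive way. The paper computes the commutator $[T_\beta(b),T_\alpha(aX^{-1})]$ (with roles of $\alpha,\beta$ swapped relative to yours) and tracks the $P_\alpha$-coset throughout, invoking \cref{SR:obtuse} (applied to the acute pair $(\alpha,\alpha+\beta)$) to control how the degree-$0$ conjugation by $x_\beta(b)$ shifts that coset. You instead compare the two sides $T_\alpha(a)T_\beta(bX^{-1})v$ and $T_{\alpha+\beta}(cX^{-1})T_\beta(bX^{-1})T_\alpha(a)v$ directly, and the workhorse is a new ``shift formula'' $p_\beta({}^{x_\alpha(a')}q)=p_\beta(q)-N_{\alpha,\beta}a'p_{\alpha+\beta}(q)$, $p_{\alpha+\beta}({}^{x_\alpha(a')}q)=p_{\alpha+\beta}(q)$, followed by \cref{SR:sharp} for $(\beta,\alpha+\beta)$. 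In effect, the paper tracks the coset of the \emph{degree-$1$} root while you track the cosets of the other two roots; both routes require one auxiliary lemma of comparable difficulty, proved by the same $\widetilde K$-decomposition. Your shift formula is arguably more transparent (it is a statement about the homomorphisms $p_\gamma$), at the cost of a slightly longer generator check and a messier second-component reconciliation at the end.

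Two small points worth tightening. First, in the correction coming from \cref{SR:sharp} the factor should be $h_\beta^{-1}(1+bm_1)$, not $h_\beta(1+bm_1)$; the sign matters when you push $h_\beta$ past the subsequent $x_{\alpha+\beta}$ term via~\eqref{eq:conj-h-x}. Second, for the K5 generator in your auxiliary-identity check you should note that by \cref{K-a-b-cor} one may choose $\delta\in Z_0(\beta)$, so that \cref{rem:palpha} applies directly and the extra $x_\alpha$- and $x_{\alpha\pm\delta}$-terms produced by \cref{Crels} contribute nothing to $p_\beta$ (neither root can equal $-\beta$ since $\delta\notin\Psi$).
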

\begin{proof} 
Let $a, b \in A$ and $\alpha, \beta$ be a pair of roots such that $\alpha + \beta \in \Phi$. Set $\epsilon = N_{\alpha, \beta},$ $c = 1 - \epsilon abm'$.

As before, we may assume that \[p(g, h, u) \in j_+\left(K(A[X], M[X])\right) \cap P_\alpha(m) \cap P_{\alpha + \beta}(m')\text{ for some $m, m' \in M$.} \]

From~\eqref{T_1}--\eqref{T_leq0} and~\eqref{eq:conj-h-x} we obtain that
\begin{multline} \label{R2_0_1-computation}
[T_\beta(b),\ T_\alpha(aX^{-1})] \left(g, [h], u \right) = \\
= T_\beta(b) \cdot T_\alpha(aX^{-1}) \left(x_\beta(-b) \cdot x_\alpha (-a X^{-1})\cdot  g,\ [h\cdot x_\alpha\left(\tfrac{a X^{-1}}{1-am}\right) \cdot x_{\beta}(b)],\ u (1 - am) \right) = \\
= T_\beta(b) \cdot T_\alpha(aX^{-1}) \left(x_\beta(-b) \cdot x_\alpha (-a X^{-1})\cdot  g,\ [h'],\ u (1 - am) \right),
 \end{multline}
 where $h' = h\cdot x_\alpha\left(\tfrac{a X^{-1}}{1-am}\right) \cdot h_{\alpha}^{-1}(1-am)\cdot h_{\beta}(c^{-1}) \cdot x_{\beta}(bc)$. Applying~\cref{SR:obtuse} to the pair of roots $\alpha, \alpha+\beta$ we obtain that
\begin{multline*} p\left(x_\beta(-b) \cdot x_\alpha (-a X^{-1})\cdot  g,\ h',\ u (1 - am) \right) = \\ =
 x_\beta(-b) \cdot S_\alpha(-a, p(g,h,u)) \cdot h_{\alpha}^{-1}(1-am)\cdot h_{\beta}(c^{-1}) \cdot x_{\beta}(bc) \in P_\alpha\left(\tfrac{m-\epsilon bm'}{1-am}\right). \end{multline*}
Thus, we can continue~\eqref{R2_0_1-computation} using~\eqref{T_1}--\eqref{T_leq0}:
\begin{equation} \label{R2_0_1-computation2} \ldots = \left([x_\beta(b),\ x_\alpha(aX^{-1})]\cdot g,\ [h''],\ uc\right) = T_{\alpha+\beta}(-\epsilon abX^{-1})(g,\ [h],\ u), \end{equation} where the last equality is obtained from~\eqref{eq:conj-h-x} as follows:
\begin{multline} \nonumber
 [h''] = \left[ h' \cdot x_\alpha\left(\tfrac{-aX^{-1}}{1+\tfrac{a(m-\epsilon bm')}{1-am}}\right) \cdot x_\beta(-b) \right] = \\ = \left[ h\cdot x_\alpha\left(\tfrac{a X^{-1}}{1-am}\right) \cdot h_{\alpha}^{-1}(1-am)\cdot h_{\beta}(c^{-1})\cdot x_{\alpha + \beta}\left(\epsilon ab(1-am)X^{-1}\right)\cdot x_\alpha\left(-\tfrac{a(1 - am)X^{-1}}{c}\right) \right] = \\
 = \left[ h\cdot x_\alpha\left(\tfrac{a X^{-1}}{1-am}\right) \cdot x_{\alpha + \beta}\left(\tfrac{\epsilon \cdot abX^{-1}}{c}\right)\cdot x_\alpha\left(-\tfrac{aX^{-1}}{1 - am}\right) \cdot h_{\alpha}^{-1}(1-am)\cdot h_{\beta}(c^{-1}) \right]   = \\ = \left[ h\cdot x_{\alpha + \beta}\left(\tfrac{\epsilon   abX^{-1}}{1 - \epsilon a b m'}\right) \right].\end{multline}
The assertion of the lemma now follows from~\eqref{R2_0_1-computation}--\eqref{R2_0_1-computation2}.
\end{proof}

Now we are ready to prove the main result of this subsection.
\begin{prop} \label{prop43}
For $\Phi$ as in the statement of~\cref{lemma33} the operators $T_\alpha$ defined above specify a well-defined action of $\St(\Phi, B)$ on $\overline{V}$. 

This action satisfies the following additional properties.
\begin{enumerate}
 \item For any $h_1 \in \St(\Phi, A[X\inv])$ one has $j_B^-(h_1) \cdot [1, h, u] = [1, h_1 h, u]$, where $j_B^-$ denotes the homomorphism $\St(\Phi, A[X\inv]) \to \St(\Phi, B)$ (we identify $\overline{V}$ with $\overline{V}_T$ using the isomorphism~\eqref{eq:VVT}).
 \item If we consider $\St(\Phi, A[X, X\inv])$ as a set with the left multiplication action of $\St(\Phi, B)$ then the map $\overline{V} \to \St(\Phi, A[X, X\inv])$ given by $(g, [h], u) \mapsto g$ is a map of $\St(\Phi, B)$-sets.
\end{enumerate}
\end{prop}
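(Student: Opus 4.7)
The plan is to apply \cref{lemma33}, which identifies $\St(\Phi, B)$ with the truncated group $\St^{\leq 1}(\Phi, B)$: this reduces well-definedness to checking that the operators $T_\alpha(aX^{-1})$ and $T_\alpha(a + Xf)$ defined by \eqref{T_1} and \eqref{T_leq0}, which between them cover every homogeneous generator of $B$ of degree $\leq 1$ (where $t = X^{-1}$ has degree $1$), respect the Steinberg relations of degree $\leq 1$. This is the principal conceptual hurdle, since without it one would face infinitely many relations.

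The relations of degree $\leq 0$ involve only the operators $T_\alpha(a+Xf)$, which act on the first component of $(g, [h], u) \in \overline{V}$ by left multiplication via $\St(\Phi, A + XM[X]) \to \St(\Phi, R)$ and on the middle component by right multiplication by $x_\alpha(-a)$; both of these assignments satisfy the Steinberg relations in parallel, so a direct check disposes of them. For the relations of degree exactly $1$: $\text{R1}_1$ is handled by \cref{R1_d}; the mixed $\text{R3}^\angle_{d,1}$ and $\text{R3}^\bot_{d,1}$ for $d \leq 0$ by \cref{R3_leq0_1}; $\text{R2}_{d,1}$ for $d \leq -1$ by \cref{R3_leqm1_1}; $\text{R3}^\angle_{1,1}$ by \cref{R3_1_1}; and $\text{R2}_{0,1}$ by \cref{R2_0_1}. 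The borderline cases drop out for free: $\text{R2}_{1,1}$ is not imposed in $\St^{\leq 1}(\Phi, B)$ since $at \cdot a't$ has $t$-degree $2$, and $\text{R3}^\bot_{1,1}$ is excluded by \cref{superfluous-relations}.

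Property (2) is immediate from the construction: each of \eqref{T_1}, \eqref{T_leq0} shows that the corresponding generator acts on the first component by left multiplication by the image of $x_\alpha(b) \in \St(\Phi, B)$ in $\St(\Phi, R)$, and this extends to all of $\St(\Phi, B)$ by multiplicativity. For property (1), pass through the isomorphism \eqref{eq:VVT} so that $[1, h, u]_T$ becomes $(j_-(h)\{u, X\}, [h^{-1}], u) \in \overline{V}$. It suffices to verify the claim for $h_1$ running over a generating set of $\St(\Phi, A[X^{-1}])$, namely the $x_\alpha(aX^{-k})$ with $a \in A$ and $k \geq 0$. For $k = 0, 1$ a direct computation using \eqref{T_1}, \eqref{T_leq0} and the centrality of $\{u, X\} \in \K_2(\Phi, R)$ inside $\St(\Phi, R)$ yields the desired formula $[1, x_\alpha(aX^{-k}) h, u]_T$; for $k \geq 2$ combine the commutator identity $x_\alpha(aX^{-k}) = [x_{\alpha-\beta}(N_{\alpha-\beta, \beta} \cdot aX^{-(k-1)}), x_\beta(X^{-1})]$, valid in both $\St(\Phi, A[X^{-1}])$ and (via \cref{lemma33}) in $\St(\Phi, B)$, with induction on $k$. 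Since the claim is preserved under products of $h_1$'s, it then extends to all of $\St(\Phi, A[X^{-1}])$.
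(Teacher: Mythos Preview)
Your proof is correct and follows essentially the same route as the paper's. The paper likewise invokes \cref{lemma33} to reduce to $\St^{\leq 1}(\Phi, B)$, cites the same chain of lemmas for the degree-$1$ relations, uses \cref{superfluous-relations} to dispense with $\text{R3}^\bot_{1,1}$, and verifies properties (1)--(2) directly from \eqref{T_1}--\eqref{T_leq0}. Your induction on $k$ for property (1) amounts to the paper's observation that $\St(\Phi, A[X^{-1}])$ is generated by $x_\alpha(a)$ and $x_\alpha(aX^{-1})$; once (1) is known for $k=0,1$, closure under products gives it for the whole group without needing to track higher $k$ separately.
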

\begin{proof} By Lemmas~\ref{R1_d}--\ref{R2_0_1} the action of $\St^{\leq 1}(\Phi, B)$ on $\overline{V}$ given by~\eqref{T_1}--\eqref{T_leq0} is well-defined (thanks to~\cref{superfluous-relations} we do not need to verify that it satisfies $\text{R3}_{1,1}^\bot$). On the other hand, the group $\St^{\leq 1}(\Phi, B)$ is isomorphic to $\St(\Phi, B)$ by~\cref{lemma33} (recall that $t = X^{-1}$).
  
The first property can be verified directly using~\eqref{T_1}--\eqref{T_leq0} and the fact that $\St(\Phi, A[X\inv])$ is generated by $x_\alpha(a)$ and $x_\alpha(aX\inv)$ for $\alpha\in\Phi$, $a\in A$. The second property can be verified in a similar fashion. \end{proof}

For a pair $(R, I)$ denote by $\E(\Phi, R, I)$ the {\it relative elementary subgroup} of $\GG(\Phi, R)$, i.\,e.
the image of the relative Steinberg group $\overline{\St}(\Phi, R, I)$ under the homomorphism $\pi \colon \St(\Phi, R) \to \GG(\Phi, R)$.
From the second property of the above proposition we immediately obtain the following group factorization.
\begin{corollary} For $\Phi$ as in the statement of~\cref{lemma33} one has
\[\E(\Phi, A[X\inv] + M[X]) = \E(\Phi, A[X], M[X]) \cdot \E(\Phi, A[X\inv]).\] \end{corollary}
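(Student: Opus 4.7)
The plan is to deduce the identity as a direct consequence of property~(2) of~\cref{prop43}, applied to the orbit of the base point $[1,1,1] \in \overline{V}$ under the action of $\St(\Phi, B)$. The reverse inclusion ``$\supseteq$'' is immediate, since $A[X\inv]$ and $M[X]$ are both contained in $B = A[X\inv] + M[X]$, so both $\E(\Phi, A[X], M[X])$ and $\E(\Phi, A[X\inv])$ are subgroups of $\E(\Phi, B)$ inside $\GG(\Phi, R)$. Thus the only thing that requires argument is the inclusion ``$\subseteq$''.

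First, fix $g \in \E(\Phi, B)$ and lift it to $\widetilde{g} \in \St(\Phi, B)$. Next, I would verify that $(1,1,1) \in V$: the projection is $p(1,1,1) = 1 \in \overline{G}_M^{\geq 0}$, so this is a legitimate base point. Now apply the action of $\widetilde{g}$ to $[1,1,1]$. Writing $j$ for the natural homomorphism $\St(\Phi, B) \to \St(\Phi, R)$ and using property~(2) of~\cref{prop43}, which asserts that the first-coordinate projection $\overline{V} \to \St(\Phi, R)$ is a map of $\St(\Phi, B)$-sets, we obtain
\[ \widetilde{g} \cdot [1,1,1] = (j(\widetilde{g}), [h], u) \]
for some $h \in \St(\Phi, A[X\inv])$ and $u \in (1+M)^\times$. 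By definition of $V$, this forces
\[ p := j(\widetilde{g}) \cdot j_-(h) \cdot \{X, u\} \;\in\; \overline{G}_M^{\geq 0} = \Img\bigl(j_+^M\bigr), \]
so that $\pi(p) \in \E(\Phi, A[X], M[X])$ (viewed inside $\GG(\Phi, R)$).

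Finally, I would apply $\pi \colon \St(\Phi, R) \to \GG(\Phi, R)$ to both sides. Since $\pi$ kills Steinberg symbols (the map $h_\alpha \colon R^\times \to \GG(\Phi, R)$ being a homomorphism in the simply-connected case), one has $\pi(\{X, u\}) = 1$, hence
\[ g = \pi(j(\widetilde{g})) = \pi(p) \cdot \pi(j_-(h))^{-1} \;\in\; \E(\Phi, A[X], M[X]) \cdot \E(\Phi, A[X\inv]), \]
which is the desired factorization. There is no real obstacle here: all the nontrivial work—showing that the action of $\St(\Phi, B)$ on $\overline{V}$ is well-defined and satisfies property~(2)—has been carried out in~\cref{prop43}; the corollary is merely a matter of unwinding the definitions of $V$ and of $\overline{G}_M^{\geq 0}$.
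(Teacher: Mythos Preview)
Your argument is correct and is precisely the unwinding the paper has in mind: it derives the factorization from property~(2) of \cref{prop43} by tracking the first coordinate of $\widetilde{g}\cdot(1,[1],1)$ and then applying $\pi$ to kill the symbol $\{X,u\}$. One small point: your justification of the inclusion ``$\supseteq$'' (``since $M[X]\subseteq B$'') is not quite enough on its own, because $\E(\Phi,A[X],M[X])$ is the normal closure of the $X_\alpha(M[X])$ inside $\E(\Phi,A[X])$, and $A[X]\not\subseteq B$; the cleanest way to see that its image in $\GG(\Phi,R)$ lands in $\E(\Phi,B)$ is via the commutative diagram~\eqref{big-picture}, i.e.\ using the lifting $t$ of \cref{lem:lemma32}.
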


\subsection{Proof of Horrocks theorem} \label{sec:P1glueing}
Before we proceed with the proof of~\cref{thm:P1glueing} let us briefly recall the relevant notation. As before, $A$ denotes an arbitrary local ring with maximal ideal $M$ and $I = M[X, X\inv]$ is an ideal of both the ring $R=A[X, X\inv]$ and its subring $B = A[X\inv] + M[X]$.

Consider the following commutative diagram, in which the map $t$ is obtained from~\cref{lem:lemma32}:
\begin{equation} \label{big-picture} \begin{tikzcd} & \St(\Phi, A[X], M[X]) \arrow{r}[swap]{\mu_{A[X]}} \arrow{d} & \St(\Phi, A[X]) \arrow{dd}{j_+} \\ & \St(\Phi, R, I) \arrow{d}{t} & \\ \St(\Phi, A[X\inv]) \arrow{r}{j_B^-} & \St(\Phi, B) \arrow{r}{j_R} & \St(\Phi, R). \end{tikzcd} \end{equation}
Notice that the subgroup $\overline{G}_M^{\geq 0}$ (see~\cref{df:GM0_geq0}) coincides with the image of $j_+\mu_{A[X]}$.

Our main goal is to prove that the homomorphism $j_B^-$ is injective. 
In order to achieve this we need to show that $\St(\Phi, B)$ acts transitively on the first components of the triples from $\overline{V}_T$. These first components are parameterized by cosets $\overline{G}_M^{\geq 0}/G_M^0$ (cf.~\eqref{VT-def}). It is clear from~\eqref{big-picture} that $\overline{G}_M^{\geq 0}$ lies in the image of $j_R$.
To simplify notation we want to identify $\overline{G}_M^{\geq 0}$ with a subgroup of $\St(\Phi, B)$ by means of $j_R$. Thus, throughout this section we additionally assume that $j_R$ is injective. This assumption is innocent, since we already know from~\cref{thm41} that $j_R$ is injective under the assumptions of~\cref{thm:main}.

\begin{lemma}\label{lem:action} For any $[p, h, u]\in \overline{V}_T$ and any $p_1 \in \overline{G}^{\geq 0}_M$ one has 
\[j_R^{-1}(p_1) \cdot [p, h, u] = [p_1p, h, u].\] \end{lemma}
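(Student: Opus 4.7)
My plan is to verify the equality on a generating set of $\overline{G}_M^{\geq 0}$ and extend by group-theoretic arguments. First I would note that the subset $H \subseteq \overline{G}_M^{\geq 0}$ consisting of all $p_1$ for which $j_R^{-1}(p_1) \cdot [p, h, u] = [p_1 p, h, u]$ holds for every $[p, h, u] \in \overline{V}_T$ is in fact a subgroup: closure under products is immediate from the fact that the action is a group homomorphism into $\mathrm{Sym}(\overline{V}_T)$, and closure under inverses follows by applying the identity for $p_1$ to the triple $[p_1^{-1} p, h, u]$.

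Next, by~\cref{thm:Tits} applied to the pair $(A[X], M[X])$, the group $\overline{G}_M^{\geq 0} = j_+(\overline{\St}(\Phi, A[X], M[X]))$ is generated by elements $p_1 = j_+(z_\alpha(s, \xi))$ with $s \in M[X]$ and $\xi \in A[X]$; it thus suffices to place each such generator into $H$. In the cleanest subcase $\xi \in A + XM[X] = A[X] \cap B$, writing $\xi = a_0 + Xf_\xi$ and $s = m_0 + Xf_s$ with $a_0 \in A$, $m_0 \in M$, $f_\xi, f_s \in M[X]$, the element $b := x_{-\alpha}(-\xi) \cdot x_\alpha(s) \cdot x_{-\alpha}(\xi)$ is a product of three bona fide generators of $\St(\Phi, B)$ and satisfies $j_R(b) = p_1$. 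Three successive applications of formula~\eqref{T_leq0} yield
\[b \cdot [p, h, u] = [z_\alpha(s, \xi) \cdot p \cdot z_\alpha(-m_0, a_0),\ z_\alpha(m_0, a_0) \cdot h,\ u].\]
Applying the $V_T$-equivalence with $\gamma = z_\alpha(-m_0, a_0) \in G_M^0$, whose images $j_+i_+(\gamma)$ and $i_-(\gamma)$ are given by the same formula, and noting the cancellation $z_\alpha(-m_0, a_0) \cdot z_\alpha(m_0, a_0) = 1$, the right-hand side collapses to $[p_1 p, h, u]$ as desired.

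For the general case $\xi \in A[X]$, I would reduce to the previous subcase by decomposing $\xi = \xi_B + \xi'$ with $\xi_B \in A + XM[X]$ and $\xi' \in XA[X]$ and applying~\cref{Zrels}\ref{Z1} to write $z_\alpha(s, \xi) = z_\alpha(s, \xi_B)^{x_{-\alpha}(\xi')}$, then unwinding this conjugation using the identities~\ref{Z2}--\ref{Z5} of~\cref{Zrels} to produce a product of $z$-type elements whose second arguments all lie in $B$. This reduction is the main obstacle of the proof: when $\xi \notin B$, the naive lift $x_{-\alpha}(-\xi) \cdot x_\alpha(s) \cdot x_{-\alpha}(\xi)$ does not live in $\St(\Phi, B)$, so one must exploit the commutator structure of the simply-laced root system $\Phi$ (of rank $\geq 3$) to express the conjugating factor in terms of $B$-admissible generators.
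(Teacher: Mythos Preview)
Your reduction to a subgroup $H$ and invocation of \cref{thm:Tits} matches the paper exactly, and your treatment of the subcase $\xi \in A + XM[X]$ is correct (it is a mild strengthening of the paper's base case $\xi \in A$, worked out via three applications of~\eqref{T_leq0} plus the equivalence by $\gamma = z_\alpha(-m_0, a_0) \in G_M^0$).

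The gap is the general case. Your plan is to write $z_\alpha(s,\xi) = z_\alpha(s,\xi_B)^{x_{-\alpha}(\xi')}$ with $\xi' \in XA[X]$ and then ``unwind'' using \cref{Zrels}\ref{Z2}--\ref{Z5}. But \ref{Z2}--\ref{Z4} describe conjugation of $z_\beta(-,-)$ by $x_\gamma(\eta)$ with $\gamma \neq \pm\beta$; they say nothing about conjugation by $x_{-\alpha}(\xi')$, which is exactly what you need. Identity~\ref{Z5} is of a different shape and does not arise from any such conjugation. So the sketch, as written, does not close the argument, and you yourself flag this step as ``the main obstacle'' without resolving it.

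The paper's argument is genuinely different here: it runs an induction on $\deg_X(\xi)$. The induction step is the specific (and not at all automatic) substitution $s \mapsto fX$, $\eta \mapsto X^{-1}$, $\xi \mapsto X\xi$ into \cref{Zrels}\ref{Z5}, which expresses $z_{\alpha+\beta}(f, X\xi)$ as a product of factors each of which is either an $x_\gamma(\cdot)$ with argument in $M[X]$ (hence a base-case generator), or $z_\alpha(-\epsilon Xf, -\epsilon\xi)$ with second argument of strictly smaller degree (inductive hypothesis), or the special factor $z_{-\beta}(X^2 f\xi, -X^{-1})$. This last factor is outside the Stein--Tits--Vaserstein generating set (its second argument is not in $A[X]$), and the paper deals with it by a separate direct verification using~\eqref{T_1} in the manner of \cref{R1_d}. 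Both ingredients---the particular substitution in~\ref{Z5} and the auxiliary case $z_\gamma(X^2 f, aX^{-1})$---are missing from your outline and are what actually make the proof go through.
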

\begin{proof} 
By our assumption of the injectivity of $j_R$ and the above discussion the preimage $j_R^{-1}(p_1)$ consists of only one element, so the statement of the lemma is unambiguous.

In view of~\cref{thm:Tits} it suffices to verify the assertion of the lemma for the generators $p_1 = z_\alpha(f, \xi)$ of $\overline{G}_M^{\geq 0}$ for all $f \in M[X]$, $\xi\in A[X]$, $\alpha \in \Phi$. We accomplish this by induction on the degree of $\xi$ in $X$. 

Notice that in the base case $\xi = a \in A$ the assertion of the lemma is an immediate consequence of~\eqref{T_leq0}.
Since $x_\beta(X^2f) \in P_\alpha(0)$, an argument similar to the proof of~\cref{R1_d} shows that the assertion of the lemma also holds for $p_1 = z_\alpha(X^2f, aX^{-1})$.

Now let us verify the induction step.
Suppose that the assertion holds for all $p_1 = z_\alpha(f, \xi)$ for which $\xi$ has degree $\leq n$.
Substituting $s := fX$, $\eta := X^{-1}$, $\xi := X\xi$ into~\cref{Zrels}\ref{Z5} we obtain the following equality in $\St(\Phi, B)$:
\begin{multline*}
 z_{\alpha+\beta}(f, X\xi) = x_\alpha(\epsilon Xf) \cdot x_{-\beta}(-X^2 f\xi ) \cdot x_{\beta}(\xi f) \cdot x_{\alpha+\beta}(f) \cdot \\ \cdot z_\alpha(-\epsilon Xf, -\epsilon \xi) \cdot x_{-\alpha}(-\epsilon X\xi^2f) \cdot x_{-\alpha-\beta}(- X^2f \xi^2) \cdot z_{-\beta}(X^2f\xi, -X^{-1}), \end{multline*}
where $\epsilon = N_{\alpha, \beta}$.

From the inductive assumption we obtain that the assertion of the lemma holds for all the factors in the right-hand side and, therefore, also holds for $p_1 = z_\alpha(f, X\xi)$. It is easy to deduce from this that the assertion also holds for $p_1 = z_\alpha(f, X\xi + a) = x_\alpha(-a) \cdot z_\alpha(f, X\xi) \cdot x_\alpha(a)$. 
\end{proof}

\begin{rem} Although we do not need this for our main result, it can be noted that $\overline{V}$ is a left $\St(\Phi, B)$-torsor, i.\,e. the action of $\St(\Phi, B)$ on $\overline{V}$ is both transitive and faithful. The faithfulness follows from the second property of~\cref{prop43} and our assumption that $j_R$ is injective. The transitivity follows from~\cref{lem:action}, the first property of~\cref{prop43} and the following formula, which is a direct consequence of~\cref{lem:Tulenbaev-formula} and~\eqref{T_1}--\eqref{T_leq0}:
\begin{equation*} \langle a, m \rangle^{-1} \cdot \langle aX^{-1}, mX \rangle \cdot [1, 1, 1] = [1, 1, 1+am]. \end{equation*} \end{rem}

\begin{theorem} \label{thm:P1glueing}
 Let $\Phi$ be a root system of type $\rA_{\geq 4}, \rD_{\geq 5}$ or $\rE_{6,7,8}$.
 Assume additionally that the homomorphism $j_R \colon \St(\Phi, B) \to \St(\Phi, A[X, X\inv])$ is injective.  
 Then the homomorphism $j_-$ is injective and the following commutative square is pullback
 \[ \begin{tikzcd} \St(\Phi, A) \arrow{r} \arrow{d} & \St(\Phi, A[X]) \arrow{d}{j_+} \\ \St(\Phi, A[X\inv]) \arrow{r}{j_-} & \St(\Phi, A[X, X\inv]). \end{tikzcd} \] 
\end{theorem}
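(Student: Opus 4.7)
My plan is to separate the theorem into two parts: first prove injectivity of $j_-$, then prove the pullback property. Since the factorization $j_- = j_R \circ j_B^-$ displayed in~\eqref{big-picture} together with the hypothesis reduces the first part to showing $j_B^-$ is injective, I would exploit the $\St(\Phi, B)$-action on $\overline V_T$ from~\cref{prop43}. Assuming $j_B^-(h_1) = 1$, property (1) of~\cref{prop43} gives
\[
[1, h_1, 1]_T \;=\; j_B^-(h_1) \cdot [1,1,1]_T \;=\; [1,1,1]_T.
\]
A direct verification shows that the defining one-step relation on $V_T$ is already transitive, so this forces the existence of $\gamma \in G_M^0 = \overline{\St}(\Phi,A,M)$ with $j_+ i_+(\gamma) = 1$ and $h_1 = i_-(\gamma)$. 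Because $\mathrm{ev}^*_{X=1}$ is a retraction of $j_+ \circ i_+$, the latter is injective, whence $\gamma = 1$ and $h_1 = 1$.

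For the pullback property, uniqueness of the lift $c$ is immediate from the fact that $i_+$ is split by $\mathrm{ev}^*_{X=0}$. For existence, given $(a, b)$ with $j_+(a) = j_-(b)$, my first step is a reduction mod $M$ using~\cref{field-injectivity}: the images $\bar a, \bar b$ have the same image in $\St(\Phi, k[X, X^{-1}])$, so they are $\bar i_+(\bar c_0)$ and $\bar i_-(\bar c_0)$ for a common $\bar c_0 \in \St(\Phi, k)$. Lifting $\bar c_0$ to $c_0 \in \St(\Phi, A)$ and replacing $(a,b)$ with $(i_+(c_0)^{-1} a,\, i_-(c_0)^{-1} b)$, I may assume $a \in \overline{\St}(\Phi, A[X], M[X])$ and $b \in \overline{\St}(\Phi, A[X^{-1}], M[X^{-1}])$, while still $j_+(a) = j_-(b)$.

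In this reduced setting $j_+(a) \in \overline G_M^{\geq 0}$, so~\cref{lem:action} yields $j_R^{-1}(j_+(a)) \cdot [1,1,1]_T = [j_+(a), 1, 1]_T$. The injectivity of $j_R$ and the equality $j_+(a) = j_-(b) = j_R(j_B^-(b))$ identify $j_R^{-1}(j_+(a))$ with $j_B^-(b)$, so~\cref{prop43}(1) gives $[j_+(a), 1, 1]_T = [1, b, 1]_T$. Unwinding the equivalence relation on $V_T$ produces $\gamma \in G_M^0$ satisfying $b = i_-(\gamma)$ and $j_+(a) = j_+ i_+(\gamma)$. This is as far as the action argument takes me; \textbf{the main obstacle is then to upgrade $j_+(a) = j_+ i_+(\gamma)$ to the equality $a = i_+(\gamma)$}, since there is no a priori control of $\ker(j_+)$ on the relative subgroup.

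To overcome this obstacle I would invoke the obvious $X \leftrightarrow X^{-1}$ symmetry of the setup. The ring automorphism $\tau\colon R \to R$ sending $X$ to $X^{-1}$ restricts to an isomorphism $B \xrightarrow{\sim} B' := A[X] + M[X^{-1}]$, and a functoriality square shows the hypothesis that $j_R$ is injective is equivalent to the injectivity of the analogous map $j_{R'}\colon \St(\Phi, B') \to \St(\Phi, R)$. Applying the first half of this theorem (the injectivity statement established above) with the pair $(B', X^{-1})$ in place of $(B, X)$ yields that $j_+$ is injective. Combined with $j_+(a) = j_+ i_+(\gamma)$, this forces $a = i_+(\gamma)$, and then $c := c_0 \cdot \gamma$ satisfies $i_+(c) = a$, $i_-(c) = b$, completing the verification of the pullback property.
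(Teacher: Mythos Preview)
Your proof is correct and follows essentially the same approach as the paper's own argument: the injectivity of $j_B^-$ via the action on $\overline{V}_T$ and the fixed point $[1,1,1]$, the reduction modulo $M$ using \cref{field-injectivity}, the use of \cref{lem:action} together with \cref{prop43}(1) to extract $\gamma\in G_M^0$, and the appeal to the $X\leftrightarrow X^{-1}$ symmetry to obtain injectivity of $j_+$. The only cosmetic differences are that the paper multiplies by $g_0^{-1}$ on the right rather than the left, and organizes the $\overline{V}_T$ computation as $[j_+(h_+)^{-1},h_-,1]=[1,1,1]$ rather than your $[j_+(a),1,1]=[1,b,1]$; these are equivalent rearrangements.
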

\begin{proof} Since $j_- = j_R j_B^-$, where $j_B^-$ is as in~\cref{prop43}, it suffices to show that $j_B^-$ is injective.
 Set $v_0 = [1, 1, 1] \in \overline{V}_T$ and suppose $g \in \Ker(j_B^-)$.
 By~\cref{prop43} one has $v_0 = j_B^-(g) \cdot v_0 = [1, g, 1]$, therefore by the definition of $\overline{V}_T$, $g = i_-(\gamma)$ for some $\gamma \in G_M^0$ satisfying $j_-i_-(\gamma) = j_+i_+(\gamma) = 1$.
 Notice that the homomorphism $j_+i_+ = j_-i_-$ is split by the evaluation homomorphism $ev_{X=1}^*$ and therefore is injective. Thus, we conclude that $g = 1$ and that $j_-$ is injective. By symmetry, we also have that $j_+$ is injective (we can swap $X$ with $X^{-1}$ in all statements, including the statement of our assumption that $j_R$ is injective).
 
 Now suppose that $g_+ \in \St(\Phi, A[X])$ and $g_- \in \St(\Phi, A[X\inv])$ are such that $j_-(g_-) = j_+(g_+)$.
 By~\cref{field-injectivity} the image of $j_+(g_+) = j_-(g_-)$ in $\St(\Phi, k[X, X\inv])$ belongs to $\St(\Phi, k)$ and therefore coincides with the image in $\St(\Phi, k)$ of some $g_0 \in \St(\Phi, A)$.
 
 Set $h_+ = g_+\cdot g_0^{-1}$ and $h_- = g_- \cdot g_0^{-1}$.
 It is clear that \[ h_+ \in \overline{\St}(\Phi, A[X], M[X]),\ h_- \in \overline{\St}(\Phi, A[X\inv], M[X\inv])\]
 and that $j_-(h_-) = j_+(h_+).$
 
 From~\cref{prop43} and~\cref{lem:action} we obtain that
 \begin{equation} \nonumber [j_+(h_+)^{-1}, h_-, 1] = j_R^{-1}(j_+(h_+)^{-1}) \cdot [1, h_-, 1] = j_R^{-1}(j_+(h_+)^{-1} \cdot j_-(h_-)) \cdot [1, 1, 1] = [1, 1, 1], \end{equation} therefore, by the definition of~$\overline{V}_T$, $h_- = i_-(\gamma_1)$, $j_+(h_+) = j_+i_+(\gamma_1)$ for some $\gamma_1 \in G_M^0$. Since $j_+$ is injective we obtain that $h_+ = i_+(\gamma_1)$. Thus, we have shown that $g_+$ and $g_-$ are the images of $\gamma_1 \cdot g_0 \in \St(\Phi, A)$ under $i_+$ and $i_-$, respectively.
 \end{proof}
  
\begin{proof}[Proof of~\cref{thm:main}]
 Notice that the assertions of the theorem for $\KO_2(2\ell, -)$ and $\K_2(\rD_\ell, -)$ follow from the assertion for $\St(\rD_\ell, -)$. It is also clear that the latter assertion follows from~\cref{thm41} and~\cref{thm:P1glueing} in the special case when $A$ is a local ring.
 
 Now let $A$ be an arbitrary commutative ring and $\Phi = \rD_\ell$ for $\ell \geq 7$. If $g \in \St(\Phi, A[X])$ is such that its image in $\St(\Phi, A[X, X\inv])$ is trivial, then so is its image in all localizations $\St(\Phi, A_M[X, X\inv])$, where $M$ ranges over the maximal ideals of $A$.
 We denote by $\lambda_{M}$ (resp. $\lambda_{M, -}$) the localization homomorphism $A[X] \to A_M[X]$ (resp. $A[X\inv] \to A_M[X\inv]$) and by $\lambda_M^*$, $\lambda_{M,-}^*$ the corresponding homomorphisms of Steinberg groups.
 By the previous paragraph the images $\lambda^*_M(g)$ in all $\St(\Phi, A_M[X])$ are also trivial. Now by the local-global principle \cite[Theorem~2]{LS17} the element $g$ is trivial as well.

 Now suppose that $g_+ \in \St(\Phi, A[X])$ and $g_- \in \St(\Phi, A[X\inv])$ are such that $j_+(g_+) = j_-(g_-)$. Set $g_0 = ev^*_{X=0}(g_+)$, $h_+ = g_+ \cdot i_+(g_0^{-1})$, $h_- = g_- \cdot i_-(g_0^{-1})$.
 Notice that $h_+ \in \overline{\St}(\Phi, A[X], XA[X])$, moreover, for every maximal ideal $M$ of $A$ the image $\lambda_M^*(h_+)\in \overline{\St}(\Phi, A_M[X], XA_M[X])$ is trivial by~\cref{thm:P1glueing}
  (since $j_+(\lambda^*_M(h_+)) = j_-(\lambda^*_{M,-}(h_-))$).
 Thus, again by the local-global principle~\cite[Theorem~2]{LS17} the element $h_+$ is trivial, therefore $g_+ = i_+(g_0)$.
 Using similar argument one can show that $g_- = i_-(g_0')$ for some $g_0' \in \St(\Phi, A)$. But $g_0'$ must coincide with $g_0$ since $j_+i_+ = j_-i_-$ is injective.
\end{proof}

\printbibliography
\end{document}